\documentclass[11pt]{amsart}
\textwidth=14.5cm \oddsidemargin=1cm
\evensidemargin=1cm 
\usepackage{amsmath} \usepackage{amsxtra}
\usepackage{amscd} \usepackage{amsthm}  
\usepackage{amsfonts}\usepackage{amssymb} 
\usepackage{eucal}\usepackage{bm}
\usepackage{graphicx} 
\usepackage[latin1]{inputenc}
\setcounter{tocdepth}{2} 
\usepackage{hyperref}

\newtheorem{Cor}[subsubsection]{Corollary}
\newtheorem{Lm}[subsubsection]{Lemma}
\newtheorem{Pp}[subsubsection]{Proposition}

\newtheorem{Thm}[subsubsection]{Theorem}
\newtheorem{Def}[subsubsection]{Definition}
\newtheorem{Rem}[subsubsection]{Remark}

\newtheorem{Question}[subsubsection]{Question}


\theoremstyle{definition}

\theoremstyle{remark}



\newcommand{\nc}{\newcommand}
\nc{\renc}{\renewcommand}
\nc{\ssec}{\subsection}
\nc{\sssec}{\subsubsection}
\nc{\on}{\operatorname}

\newcommand{\cA}{{\mathcal A}}
\newcommand{\cB}{{\mathcal B}}

\newcommand{\cD}{{\mathcal D}}
\newcommand{\cH}{{\mathcal H}}

\newcommand{\cG}{{\mathcal G}}
\newcommand{\cI}{{\mathcal I}}

\newcommand{\cO}{{\mathcal O}}
\newcommand{\cL}{{\mathcal L}}
\newcommand{\cM}{{\mathcal M}}

\newcommand{\cF}{{\mathcal F}}
\newcommand{\cK}{{\mathcal K}}

\newcommand{\cS}{{\mathcal S}}

\newcommand{\cV}{{\mathcal V}}
\newcommand{\cW}{{\mathcal W}}

\newcommand{\cY}{{\mathcal Y}}
\newcommand{\cZ}{{\mathcal Z}}

\newcommand{\NN}{{\mathbb N}}
\newcommand{\ZZ}{{\mathbb Z}}

\newcommand{\PP}{{\mathbb P}}

\renewcommand{\gg}{\mathfrak{g}}  
\newcommand{\gm}{\mathfrak{m}}

\newcommand{\gq}{\mathfrak{q}}
\newcommand{\gu}{\mathfrak{u}}

\newcommand{\gt}{\mathfrak{t}}
\newcommand{\gr}{\mathfrak{r}}
\newcommand{\gs}{\mathfrak{s}}

\nc{\gJ}{\mathfrak{J}}
\nc{\gL}{\mathfrak{L}}
\nc{\gv}{\mathfrak{v}}
\nc{\gF}{\mathfrak{F}}

\newcommand{\Fl}{{\mathcal F}l}

\newcommand{\Rep}{{\on{Rep}}}
\newcommand{\Sch}{{\on{Sch}}}

\newcommand{\toup}[1]{\stackrel{#1}{\to}}
\newcommand{\hook}[1]{\stackrel{#1}{\hookrightarrow}}

\newcommand{\getsup}[1]{\stackrel{#1}{\gets}}

\newcommand{\IC}{\on{IC}}

\newcommand{\Hom}{\on{Hom}}
\newcommand{\Mod}{\on{Mod}}

\newcommand{\Ker}{\on{Ker}}

\newcommand{\Aut}{\on{Aut}}

\newcommand{\RG}{\on{R\Gamma}}

\newcommand{\triv}{\on{triv}}

\newcommand{\Bun}{\on{Bun}}

\newcommand{\Bunb}{\on{\overline{Bun}} }
\newcommand{\Bunt}{\on{\widetilde\Bun}}

\newcommand{\Spec}{\on{Spec}}

\newcommand{\supp}{\on{supp}}

\newcommand{\HOM}{{{\mathcal H}om}}

\newcommand{\Gr}{\on{Gr}}

\newcommand{\GL}{\on{GL}}

\newcommand{\Fun}{{\on{Fun}}}

\newcommand{\pr}{\on{pr}}
\newcommand{\id}{\on{id}}

\newcommand{\QED}{$\square$} 
  

\newcommand{\iso}{{\widetilde\to}}

\newcommand{\comp}{\circ}

\renewcommand{\H}{{\on{H}}}   

\newcommand{\DD}{\mathbb{D}}  
\newcommand{\wt}{\widetilde}
\newcommand{\ov}[1]{\overline{#1}}
\newcommand{\select}[1]{{\it{#1}}}

\newcommand{\Hecke}{\on{Hecke}}
\newcommand{\<}{\langle}
\renewcommand{\>}{\rangle}

\newcommand{\Av}{\on{Av}}
\newcommand{\ev}{\mathit{ev}}

\newcommand{\Conv}{\on{Conv}}

\newcommand{\Lie}{\on{Lie}}
\newcommand{\Sph}{\on{Sph}}
\newcommand{\Res}{\on{Res}}

\newcommand{\ttimes}{\tilde\times}

\newcommand{\act}{\on{act}}
\newcommand{\dimrel}{\on{dim.rel}}

\newcommand{\codim}{\on{codim}}

\newcommand{\tboxtimes}{\,\tilde\boxtimes\,}

\newcommand{\Vect}{\on{Vect}}

\newcommand{\glob}{\on{glob}}
\newcommand{\Ind}{\on{Ind}}


\newcommand{\ra}{\rightarrow}
\newcommand{\la}{\leftarrow}

\nc{\Hi}{\on{Hi}}
\nc{\Lo}{\on{Lo}}
\nc{\Inv}{\on{Inv}}
\nc{\Fix}{\on{Fix}}
\nc{\oV}{\overset{\scriptscriptstyle\circ}{V}}
\nc{\ssharp}{{\scriptstyle\sharp}}
\nc{\oHecke}{\overset{\scriptstyle\bullet}{\Hecke}}
\nc{\Ran}{\on{Ran}}
\nc{\Whit}{\on{Whit}}
\nc{\Maps}{\on{{\mathcal M}aps}}
\nc{\DGCat}{\on{DGCat}}
\nc{\colim}{\on{colim}}
\nc{\PreStk}{\on{PreStk}}
\nc{\otimesshriek}{\stackrel{!}{\otimes}}
\nc{\Cat}{\on{Cat}}
\nc{\QCoh}{\on{QCoh}}
\nc{\FactMod}{\on{FactMod}}
\nc{\oo}[1]{\overset{\scriptscriptstyle\circ}{#1}}
\nc{\inv}{\on{inv}}
\nc{\IndCoh}{\on{IndCoh}}
\nc{\Groth}{\on{Groth}}
\nc{\coGroth}{\on{coGroth}}
\nc{\Conf}{\on{Conf}}
\nc{\Map}{\on{Map}}
\nc{\Tr}{\on{Tr}}
\nc{\oblv}{\on{oblv}}
\nc{\Fact}{\on{Fact}}
\nc{\Spc}{\on{Spc}}
\nc{\Stk}{\on{Stk}}
\nc{\ee}{\mathrm{e}}
\nc{\bfitDelta}{\bm{\mathit{\Delta}}}
\nc{\Grpd}{\on{Grpd}}
\nc{\Corr}{\on{Corr}}
\nc{\Perv}{\on{Perv}}
\nc{\Sat}{\on{Sat}}

\nc{\coind}{\on{coind}}
\nc{\Ad}{\on{Ad}}
\nc{\DG}{\on{DG}}
\nc{\gRes}{\on{gRes}}
\nc{\bvartriangle}{\boldsymbol{\vartriangle}}
\nc{\SI}{\on{SI}}
\nc{\VD}{\on{VD}}
\nc{\ren}{\on{ren}}
\nc{\unren}{\on{un-ren}}
\nc{\astQ}{\overset{Q}{\ast}}
\nc{\astP}{\overset{P}{\ast}}
\nc{\astB}{\overset{B}{\ast}}
\nc{\astI}{\overset{I}{\ast}}
\nc{\astIP}{\overset{I_P}{\ast}}
\nc{\astPminus}{\overset{P^-}{\ast}}
\nc{\astBminus}{\overset{B^-}{\ast}}
\nc{\astIminus}{\overset{I^-}{\ast}}
\nc{\astIPminus}{\overset{I_{P^-}}{\ast}}
\nc{\Nilp}{\on{Nilp}}

\newcommand{\Step}[1]{\par\noindent{\bf Step {#1}}.}

\makeatletter
\newcommand*{\doublerightarrow}[2]{\mathrel{
  \settowidth{\@tempdima}{$\scriptstyle#1$}
  \settowidth{\@tempdimb}{$\scriptstyle#2$}
  \ifdim\@tempdimb>\@tempdima \@tempdima=\@tempdimb\fi
  \mathop{\vcenter{
    \offinterlineskip\ialign{\hbox to\dimexpr\@tempdima+1em{##}\cr
    \rightarrowfill\cr\noalign{\kern.5ex}
    \rightarrowfill\cr}}}\limits^{\!#1}_{\!#2}}}
\newcommand*{\triplerightarrow}[1]{\mathrel{
  \settowidth{\@tempdima}{$\scriptstyle#1$}
  \mathop{\vcenter{
    \offinterlineskip\ialign{\hbox to\dimexpr\@tempdima+1em{##}\cr
    \rightarrowfill\cr\noalign{\kern.5ex}
    \rightarrowfill\cr\noalign{\kern.5ex}
    \rightarrowfill\cr}}}\limits^{\!#1}}}
\makeatother

\begin{document}
\author{G. Dhillon}
\address{Department of Mathematics, University of California Los Angeles, 
460 Portola Plaza,  Los Angeles, CA, USA 90095}
\email{gsd@math.ucla.edu}
\author{S. Lysenko}
\address{Institut Elie Cartan Lorraine, Universit\'e de Lorraine, 
B.P. 239, F-54506 Vandoeuvre-l\`es-Nancy Cedex, France}
\email{Sergey.Lysenko@univ-lorraine.fr}
\thanks{We are grateful to Misha Finkelberg for regular fruitful discussions and to Sam Raskin for answering the second author's questions. We also thank Roma Bezrukavnikov and Dennis Gaitsgory for helpful correspondence. G.D. was supported by an NSF Postdoctoral Fellowship under grant No. 2103387.}
\title{Semi-infinite parabolic $\IC$-sheaf}
\begin{abstract}
Let $G$ be a connected reductive group, $P$ its parabolic subgroup. We consider the parabolic semi-infinite category of sheaves on the affine Grassmanian of $G$, and construct the parabolic version of the semi-infinite $\IC$-sheaf of each orbit. We establish some of its properties and relate it to sheaves on the Drinfeld compactification $\Bunt_P$ of the moduli stack $\Bun_P$ of $P$-torsors on a curve. We relate the parabolic semi-infinite $\IC$-sheaf with the dual baby Verma object on the spectral side. We also obtain new results on invertibility of some standard objects in parabolic Hecke categories.
\end{abstract}
\maketitle

\tableofcontents

\section{Introduction}

\ssec{Some context} 
\sssec{} Let $G$ be a split reductive group. In the usual Langlands correspondence, a basic  tool used in the construction of interesting automorphic forms on $G$, e.g. of constituents of the discrete spectrum, is the operation of taking residues of Eisenstein series. This paper is the first in a series whose aim is to set up an analogous technique in the geometric Langlands program, and study its consequences, e.g., in the geometric representation theory of certain vertex operator algebras.

\sssec{} We recall that the Eisenstein series used in residue constructions are often maximally degenerate, i.e., are defined with respect to a maximal proper parabolic subgroup $P = M \cdot U\footnote{In the main body of the text, we will write instead $U = U(P)$ for the unipotent radical of $P$.}$, and that their analysis involves a local study of poles of standard intertwining operators between the corresponding degenerate principal series representations. In particular, at almost every place $v$, one is in the unramified situation, and encounters spaces of functions of the form 
$$\on{Fun}( M(O_v)\cdot  U(F_v) \backslash G(F_v) / G(O_v)),$$
where $O_v$ and $F_v$ denote the corresponding completed ring of integers and its fraction field, respectively. 

\sssec{} To get started with the geometric theory of residues, for an algebraically closed field $k$, and a parabolic $P \subset G$ defined over $k$, if we write $\cO = k[[t]]$ and $F = k(\!(t)\!)$, we will therefore need certain basic properties of the corresponding (derived) category of sheaves 
$$Shv( M(\cO) \cdot U(F) \backslash G(F) /G(O_v)),$$ particularly a full subcategory of perverse sheaves therein, and certain local-global compatibilities. 

At one extreme, for $P = G$, this category reduces to the familiar derived Satake category. At the other extreme, for a Borel $P = B$, the corresponding category of sheaves, despite figuring in many influential ideas and conjectures of Feigin--Frenkel and Lusztig from nearly forty years ago, proved elusive due to the highly infinite-dimensional nature of the relevant geometry \cite{FeiFr, LusICM}. A remarkable substitute theory via finite-dimensional global models was developed by Finkelberg--Mirkovi\'c \cite{FM}, and finally a suitable direct definition was found recently by Gaitsgory \cite{Gai19SI}.\footnote{For the convenience of the reader, we also collect some potential errata for  \cite{Gai19SI} in Appendix~\ref{Section_corrections_for_Gai19SI}; specializing the present paper to $P = B$ in particular resolves the issues mentioned there.} 

\subsubsection{} For intermediate parabolics $P$, which we need for a good theory of residues, the analogous theory, particularly the analysis of perverse $t$-structures, and the study of certain basic perverse sheaves therein, the semi-infinite intersection cohomology sheaves, was not yet developed.\footnote{We should highlight, however, that many of the results of the present paper and its sequel \cite{DL2} which studies the corresponding categories and sheaves factorizably over Ran space,  were found independently by Hayash--F\ae rgeman, and will also appear in their forthcoming work.} 

However, the existence and properties of such a category of perverse sheaves again has been anticipated elsewhere in representation theory. Notably, the combinatorics of the simple and standard perverse sheaves in this category, and its analogue on the affine flag variety, were described by Lusztig via his  periodic $W$-graphs \cite{LusP}. Relatedly, in recent work Berukavnikov and Losev have suggested equivalences between such categories with positive characteristic coefficients and certain blocks of representations in modular representation theory \cite{BezLos}; analogues of their expectations for quantum groups at roots of unity will be formulated among the conjectures in Section \ref{sss:conjectures} below.

Our main goal in the present paper is to construct and study such a category of perverse sheaves.

 \subsubsection{} Before turning to a precise description of our results, we should acknowledge right away that at a high level, up to standard complications which arise when dealing with a general parabolic, the techniques we use are largely similar to those employed in previous analyses of the Satake category and the case of the Borel. In particular, we view the work of Gaitsgory \cite{Gai19SI} as a genuine breakthrough in this subject, and the technical advances therein are what make the present analysis possible. However, carrying out this analysis still requires a considerable amount of work, as can be seen e.g. in the length of the present paper.

\ssec{What is done in this paper?} 

 Let us describe our results and compare our situation with the Borel case.

\sssec{} 

 Recall that $\cO=k[[t]]\subset F=k((t))$. Let $U(P)$ be the unipotent radical of $P$. Set $H=M(\cO)U(P)(F)$. The parabolic semi-infinite category of sheaves is defined as
$$
\SI_P=Shv(\Gr_G)^H
$$
(cf. Section~\ref{Sect_Local automorphic side_begins} for details). 
The $H$-orbits on $\Gr_G$ are indexed by the set $\Lambda^+_M$ of dominant coweight for $M$. Namely, to $\lambda\in\Lambda_M^+$ there corresponds the $H$-orbit $S_P^{\lambda}$ through $t^{\lambda}G(\cO)$.  As in the Borel case, the $H$-orbits on $\Gr_G$ (unless $G=P$) are infinite-dimensional and have infinite codimension. For this reason one needs to work on the level of $\DG$-categories to develop this theory.

 If $\lambda,\nu\in\Lambda^+_M$ then $S_P^{\nu}$ lies in the closure $\bar S_P^{\lambda}$ of $S_P^{\lambda}$ iff $\lambda-\nu\in\Lambda^{pos}$. Here $\Lambda^{pos}$ is the $\ZZ_+$-span of simple positive coroots. While the definition and some first properties of $\SI_P$ were given in \cite{LC2, LC, BL}, we introduce the semi-infinite t-structure on $\SI_P$ and the objects $\IC^{\frac{\infty}{2}}_{P,\lambda}\in \SI_P$ playing the role of the semi-infinite $\IC$-sheaves of $\bar S_P^{\lambda}$. These are our main objects of study. We relate them to the globally defined objects (for a given smooth curve) as well as to the dual baby Verma objects on the spectral side.

\sssec{} We introduce the semi-infinite parabolic category $Shv(\Gr_G)^H$ and its renormalized version $Shv(\Gr_G)^{H, ren}$. We define actions of $\Sph_G=Shv(\Gr_G)^{G(\cO)}$ on these categories, and an action of $\Sph_M=Shv(\Gr_M)^{M(\cO)}$ on $Shv(\Gr_G)^H$. 
We define a natural semi-infinite t-structure on $\SI_P=Shv(\Gr_G)^H$ and show that the action of $\Rep(\check{G})^{\heartsuit}$ and some shifted\footnote{in the sense of (\ref{action_Rep(checkM)_shifted}).} action of $\Rep(\check{M})^{\heartsuit}$ on 
$\SI_P$ are t-exact. We also relate $\SI_P$ to the categories $Shv(\Gr_P)^H$ and $Shv(\Gr_M)^{M(\cO)}$, this is analogous to (\cite{BL}, Section~3.1).  

           
\sssec{} Let $T\subset B$ be a maximal torus. Write $\Lambda_{M, ab}$ for the set of coweights of $T$ orthogonal to the roots of $(M, T)$. Let $\Lambda_{M, ab}^+\subset \Lambda_{M, ab}$ be the subset of those coweights, which are dominant for $G$. Write $\check{M}$ (resp., $\check{G}$) for the Langlands dual group of $M$ (resp., of $G$). Set $\check{M}_{ab}=\check{M}/[\check{M}, \check{M}]$. Let $\check{P}\subset \check{G}$ be the parabolic subgroup dual to $P$.

\sssec{} As in the Borel case, for $\lambda\in\Lambda^+_M$ we give a local definition of $\IC^{\frac{\infty}{2}}_{P,\lambda}$ by some colimit formula. It differs from the Borel case in two aspects. First, we propose such a definition for any $H$-orbit on $\Gr_G$. Second, the index category over which the colimit is taken changes, instead of being the category of dominant coweights $\Lambda^+$ of $(G, T)$, now it is the category of $\mu\in\Lambda^+_{M, ab}$ such that $\lambda+\mu\in\Lambda^+$. 

\sssec{Drinfeld-Pl\"ucker formalism} In the case $\lambda=0$ we give a conceptual explanation of this formula. Namely, we propose an analog of the Drinfeld-Pl\"ucker formalism (that only applies for $\lambda=0$ for the moment) in the parabolic setting. There are two versions of this formalism corresponding, for historical reasons, to $\Bunt_P$ and $\Bunb_P$ respectively (these are some relative compactifications of the stack $\Bun_P$ of $P$-torsors on a smooth projective curve, cf. Section~\ref{Sect_Relation between local and global}). It is crucial for this formalism that both $\check{G}/[\check{P},\check{P}]$ and $\check{G}/U(\check{P})$ are quasi-affine, here $U(\check{P})$ is the unipotent radical of $\check{P}$. Denote by $\overline{\check{G}/[\check{P},\check{P}]}$ and $\overline{\check{G}/U(\check{P})}$ the corresponding affine closures.
 
 This formalism allows to produce analogs of the dual baby Verma objects in some situations, when we are given a $\DG$-category $C$ equipped with an action of $\Rep(\check{M})\otimes\Rep(\check{G})$ (resp., of $\Rep(\check{M}_{ab})\otimes\Rep(\check{G})$) for the $\Bunt_P$-version (resp., for $\Bunb_P$-version). 
 
 The corresponding colimit for the $\Bunb_P$-version describes the composition
\begin{multline*}
 \cO(\check{G}/[\check{P}, \check{P}])-mod(C)\to C\otimes_{\Rep(\check{M}_{ab})\otimes\Rep(\check{G})} \Rep(\check{P})\\  \to C\otimes_{\Rep(\check{M}_{ab})\otimes\Rep(\check{G})}\Rep(\check{M})\toup{\oblv} C,
\end{multline*}
where the unnamed arrows are given by pullbacks under natural maps 
$$
B(\check{M})\to B(\check{P})\hook{} \check{G}\backslash \overline{\check{G}/[\check{P},\check{P}]}/\check{M}_{ab},
$$ 
the second map being an open immersion. For the $\Bunt_P$-version it describes the similar composition
\begin{multline*}
 \cO(\check{G}/U(\check{P}))-mod(C)\to
C\otimes_{\Rep(\check{M})\otimes\Rep(\check{G})} \Rep(\check{P})\\  \to C\otimes_{\Rep(\check{M})\otimes\Rep(\check{G})}\Rep(\check{M})\toup{\oblv} C,
\end{multline*}  
where the unnamed arrows are given by pullbacks under natural maps 
$$
B(\check{M})\to B(\check{P})\hook{} \check{G}\backslash \overline{\check{G}/U(\check{P})}/\check{M},
$$
here the second map is an open immersion.

We think of the dual baby Verma object as an object of $C$ together with some version of a Hecke property, that is, lifted to an object of $C\otimes_{\Rep(\check{M}_{ab})\otimes\Rep(\check{G})}\Rep(\check{M})$ or $C\otimes_{\Rep(\check{M})\otimes\Rep(\check{G})}\Rep(\check{M})$ respectively.

 Our Drinfeld-Pl\"ucker formalism is further generalized in an essential way in (\cite{Ly10}, Section~6). 
 
\sssec{} We then introduce a version of the dual baby Verma object $\cM_{\check{G}, \check{P}^-}$ for 
$$
C=\IndCoh((\check{\gu}(P^-)\times_{\check{\gg}} 0)/\check{P}^-)
$$
(as well as its version $\cM_{\check{G}, \check{P}}$ with the roles of $P, B$ and $P^-, B^-$ exchanged). Here $\check{\gg}=\Lie \check{G}$ and $\check{\gu}(P^-)$ is the Lie algebra of the unipotent radical of $\check{P}^-$. One of our main results is Theorem~\ref{Thm_4.5.10} giving a precise relation between 
 $\IC^{\frac{\infty}{2}}_{P,0}$ and $\cM_{\check{G}, \check{P}}$ via an equivalence proven by G. Dhillon and H.~Chen (cf. Proposition~\ref{Pp_Chen_Dhillon}) composed with our equivalence (\ref{eq_ren_parahoric_versus_H}) and the so called \select{long intertwining operator}, cf. Section~\ref{Sect_4.5}. This also gives some new insight in the structure of the parahoric Hecke $\DG$-categories (cf. Proposition~\ref{Pp_key_for_baby_Verma_transform_for_P}). 
 
\sssec{} As an aside for our proof of Theorem~\ref{Thm_4.5.10}, we obtain a new result about the intertwining functors between two distinct parabolic Hecke categories. Namely, assume given a $\DG$-category $C$ with an action of $Shv(G)$ and two parabolics $P,Q\subset G$. We determine all the pairs $(P, Q)$ for which the composition
$$
C^P \;\toup{\oblv}\; C^{P\cap Q}\;\toup{\Av^{Q/(P\cap Q)}_*}\; C^Q
$$ 
is an equivalence (cf. Proposition~\ref{Pp_4.5.3} and Theorem~\ref{Thm_B.1.2}). Here $\Av^{Q/(P\cap Q)}_*$ is the right adjoint to the corresponding oblivion functor.
 
\sssec{} We prove a full Hecke property of $\IC^{\frac{\infty}{2}}_{P,0}$. Namely, our Proposition~\ref{Pp_2.5.18_upgrading_IC_semi-infinite_P} asserts that $\IC^{\frac{\infty}{2}}_{P,0}$ naturally upgrades to an object of
$$
\SI_P\otimes_{\Rep(\check{G})\otimes\Rep(\check{M})} \Rep(\check{M})
$$
This is conceptually explained by the Drinfeld-Pl\"ucker formalism in its $\Bunt_P$-version.

\sssec{} We show that for $\lambda\in\Lambda^+_M$, $\IC^{\frac{\infty}{2}}_{P,\lambda}$ lies in the heart $\SI_P^{\heartsuit}$ of the t-structure on $\SI_P$. Write $\Lambda_{G,P}$ for the lattice of cocharacters of $M/[M,M]$. For $\theta\in\Lambda_{G,P}$ we consider the usual diagram of affine Grassmanians
$$
\Gr_M^{\theta}\getsup{\gt^{\theta}_P}\Gr_P^{\theta}\toup{v^{\theta}_P} \Gr_G
$$
giving rise to the geometric restriction functor 
$$
(\gt_P^{\theta})_!(v^{\theta}_P)^*: Shv(\Gr_G)^{M(\cO)}\to Shv(\Gr_M^{\theta})^{M(\cO)}
$$ 
Recall that $\SI_P\subset Shv(\Gr_G)^{M(\cO)}$ is a full subcategory naturally. In Proposition~\ref{Pp_2.5.15_*-restriction} for $\eta\in\Lambda^+_M$ we express $(\gt_P^{\theta})_!(v^{\theta}_P)^*\IC^{\frac{\infty}{2}}_{P, \eta}$ in terms of the Satake equivalence for $M$. This answer is to be compared with the main result of \cite{BFGM}. The advantage is that our isomorphism is canonical, while the related isomorphisms from \cite{BFGM} are not. The importance of this result comes from the relation with the $\IC$-sheaf of $\Bunt_P$, which plays a crucial role in many aspects of the geometric Langlands program.  

 In Proposition~\ref{Pp_action_of_Rep(checkM)_on_SI-IC_P} we show that if $\eta\in\Lambda^+_M$ then $\IC^{\frac{\infty}{2}}_{P, \eta}$ is obtained from $\IC^{\frac{\infty}{2}}_{P, 0}$ by applying a Hecke functor for $M$
corresponding to the irreducible $\check{M}$-module with highest weight $\eta$.
 
\sssec{Relation to global objects} Assume in addition that $[G,G]$ is simply-connected\footnote{This is done to simplify the definition of the Drinfeld compactification of $\Bun_P$}. Pick a smooth projective connected curve $X$ over $k$. Write $\Bun_P$ for the stack of $P$-torsors on $X$, $\Bunt_P$ for its Drinfeld compactification. Pick a closed point $x\in G$.
We introduce a version $_{x,\infty}\Bunt_P$ of $\Bun_P$, cf. Section~\ref{Sect_2.3.7_local_vs_global}. Let $\cY_x$ denote the stack classifying collections: a $G$-torsor $\cF_G$ on $X$, a $M$-torsor $\cF_M$ on $X$ and an isomorphism $\cF_M\times_M G\,\iso\,\cF_G\mid_{X-x}$. We introduce a diagram
$$
M(\cO)\backslash \Gr_G\getsup{\pi_{loc}} \cY_x \toup{\pi_{glob}} {_{x,\infty}\Bunt_P},
$$
in Sections~~\ref{Sect_cY_x_definition}-\ref{Sect_pi_loc_def}, where $M(\cO)\backslash \Gr_G$ denotes the stack quotient of $\Gr_G$. Write $\IC_{\wt{glob}}$ for the $\IC$-sheaf of $\Bunt_P$. Let $j_{glob}: \Bun_P\hook{}\Bunt_P$ be the natural open immersion. 

 Section~\ref{Sect_2.6_Relation} is devoted to establishing precise relations between the semi-infinite parabolic category and the corresponding global objects. Our main results in this direction are Theorems~\ref{Thm_restriction_of_glob_first}, \ref{Th_restriction_of_glob_second} and Proposition~\ref{Pp_4.2.3_now}. They extend the corresponding results of \cite{Gai19SI} from the Borel case. Namely, we show that for $\eta\in\Lambda_M^+$, $\pi_{glob}^!\IC^{\frac{\infty}{2}}_{P,\eta}$ idenitifies canonically with $\pi_{loc}^!\IC^{\eta}_{\wt{glob}}$ up to a cohomological shift, and $\pi_{loc}^!\bvartriangle^0$ idenitifies canonically with $\pi_{glob}^!(j_{glob})_!\IC_{\Bun_P}$ up to a cohomological shift. Here for $\eta\in\Lambda^+_M$ we define $\IC^{\eta}_{\wt{glob}}$ as the $\IC$-sheaf of the closed substack 
$$
_{x, \ge -w_0^M(\eta)}\Bunt_P\hook{} {_{x,\infty}\Bunt_P},
$$ 
cf. Sections~\ref{Sect_2.3.7_local_vs_global}, \ref{Sect_2.3.12_local_vs_global}. 


\sssec{} Finally, we establish Theorem~\ref{Thm_4.1.10} saying that for $\lambda\in\Lambda_{M, ab}$, the standard object $\bvartriangle^{\lambda}\in Shv(\Gr_G)^H$ lies in the heart $Shv(\Gr_G)^{H, \heartsuit}$ of the semi-infinite t-structure. In the case $P=B$ this claim was derived in \cite{Gai19SI} from a global result going back to \cite{BG2}. We give a new purely local proof relating the question to the t-structure on the spectral side.
  
\ssec{Applications} 
\label{sss:conjectures}
\sssec{} All the applications of the semi-infinite $\IC$-sheaves foreseen in \cite{Gai19SI} are valid also in the parabolic case. We underline that, as in the Borel case, the semi-infinite parabolic $\IC$-sheaf admits a factorizable version considered in \cite{DL2}, which is more fundamental than our $\IC^{\frac{\infty}{2}}_{P, 0}$. 

 A metaplectic analog of the semi-infinite sheaf $\IC^{\frac{\infty}{2}}_{P, 0}$ is not difficult to define. It plays an important role in the metaplectic geometric Langlands program suggested in \cite{GL}. In particular, it is one of the key ingredients in the forthcoming proof of the factorizable version of the fundamental local equivalence in the metaplectic setting \cite{GHL}. 
 
\sssec{} One of our main motivation to study $\IC^{\frac{\infty}{2}}_{P, 0}$ was also its use for  constructing geometric analogs of residues of the geometric Eisenstein series. This paper is a part of that project.

\sssec{} We would like to propose, in addition, some relations we expect to representations of quantum groups, which we will return to elsewhere. As setup, consider the equivalence
$$
Shv(\Gr_G)^{H, ren} \simeq \on{IndCoh}((\check{\mathfrak{u}}(\check{P}^-) \times_{\check{\mathfrak{g}}} 0)/\check{P}^-),
$$
which is a combination of (\ref{iso_Gurbir_Chen}) and (\ref{eq_ren_parahoric_versus_H}). 
We expect this is $t$-exact, and hence restricts to an equivalence of abelian categories
$$
Shv(\Gr_G)^{H, ren, \heartsuit} \simeq \Rep(\check{P}^-)^{\heartsuit}.
$$
For $P = G$ this is the geometric Satake, and for $P= B$ this follows from
(\cite{Gai19SI}, 1.5.7). 

\sssec{}
To make contact with quantum groups, let us write $\mathring{I}$ for the prounipotent radical of the Iwahori subgroup, and correspondingly pass from the affine Grassmannian to the enhanced affine flag variety
$$
\widetilde{\Fl}_G := G(F)/\mathring{I}.
$$
The semi-infinite category of sheaves 
$Shv(\widetilde{\Fl}_G)^H$ again admits a $t$-structure similar to what is constructed in the present work for the affine Grassmannian.

\sssec{}To describe the corresponding category of quantum group representations, fix any sufficiently large even root of unity $q$. Associated to our parabolic $P$ is a {mixed} quantum group $$\mathfrak{U}_q(\mathfrak{g}, P),$$which has divided powers for the simple raising and lowering operators lying in $P$, and no divided powers for the remaining simple lowering operators. For $P = G$, this is the Lusztig form of the quantum group, and for $P = B$ this was considered by Gaitsgory in \cite{GaiKL}.

It has a renormalized derived category of representations 
$$\on{Rep}_q(\mathfrak{g}, P),$$obtained by ind-completing the pre-triangulated envelope of the parabolic Verma modules in its naive derived category, and we denote its principal block by 
$$\on{Rep}_q(\mathfrak{g}, P)_\circ \hookrightarrow \on{Rep}_q(\mathfrak{g}, P).$$

\sssec{} We conjecture a $t$-exact equivalence 
\begin{equation}
Shv(\widetilde{\Fl}_G)^H \simeq \on{Rep}_q(\mathfrak{g}, P)_\circ.
\end{equation}
This should match the natural structures of highest weight categories on the hearts, and moreover intertwine the pullback$$Shv(\Gr_G)^{H, \heartsuit} \rightarrow Shv(\widetilde{\Fl}_G)^{H, \heartsuit}$$with a suitably defined quantum Frobenius map
$$
\on{Fr}: \on{Rep}(\check{P}^-)^\heartsuit \rightarrow \on{Rep}_q(\mathfrak{g}, P)_\circ^\heartsuit.
$$
For $P=G$, this is a result of Arkhipov--Bezrukavnikov--Ginzburg \cite{ABG}. For all other cases, to our knowledge this may be new.\footnote{In the case of $P = B$, this may be proven, along with its variants for singular blocks, using an equivalence between representations of the mixed quantum group and affine Category $\mathcal{O}$ conjectured by Gaitsgory in \cite{GaiKL} and proven by Chen--Fu \cite{ChenFu}, combined with Kashiwara--Tanisaki localization. We were informed by Losev that he has also obtained a proof by quite different means.} 

\sssec{} In addition, to make contact with small quantum groups, as originally envisioned by Feigin-Frenkel and Lusztig \cite{FeiFr}, \cite{LusICM}, we may proceed as follows.

Our $Shv(\Gr_G)^H$ is naturally a factorization category, and $\on{IC}^{\frac{\infty}{2}}_{P, 0}$ is upgrades to a factorization algebra in this factorization category. For the factorization modules over the semi-infinite $\IC$ sheaf, we conjecture a $t$-exact equivalence of $\DG$-categories
$$
\on{IC}^{\frac{\infty}{2}}_{P, 0}-mod^{{fact}}(Shv(\Gr_G)^{H, ren}) \simeq \on{IndCoh}( (0 \underset{\check{\mathfrak{g}}}{\times} 0)/\check{M}),
$$
which in particular induces an equivalence of abelian categories
$$
\IC^{\frac{\infty}{2}}_{P, 0}-mod^{{fact}}(Shv(\Gr_G)^H)^{\heartsuit} \simeq \on{Rep}(\check{M})^{\heartsuit};
$$
for $P = B$ this is a forthcoming theorem of Campbell.

\sssec{} To make contact with quantum groups, we again pass to the enhanced affine flag variety, and consider 
$$\on{IC}^{\frac{\infty}{2}}_{P, 0}-mod^{{fact}}(Shv(\widetilde{\Fl}_G)^{H}).$$ 
Associated to the Levi factor $M$ of our parabolic $P$ is a version of the small quantum group, which we denote by 
$$\mathfrak{u}_q(\mathfrak{g}_1M),$$
which contains the small quantum group along with divided powers of the raising and lowering operators corresponding to simple roots of $M$.

It has a renormalized derived category of representations
$$\on{Rep}_q(\mathfrak{g}_1M),$$
obtained by ind-completing the pre-triangulated envelope of the baby parabolic Verma modules within its naive derived category of representations, and we denote its principal block by 
$$\on{Rep}_q(\mathfrak{g}_1M)_\circ \hookrightarrow \on{Rep}_q(\mathfrak{g}_1M).$$

\sssec{} We conjecture a $t$-exact equivalence
\begin{equation}
\on{IC}^{\frac{\infty}{2}}_{P, 0}-mod^{{fact}}(Shv(\widetilde{\Fl}_G)^{H}) \simeq \on{Rep}_q(\mathfrak{g}_1M)_\circ.
\end{equation}
This should match the natural structures of highest weight categories on the hearts, and moreover intertwine the pullback 
$$\IC^{\frac{\infty}{2}}_{P, 0}-mod^{{fact}}(Shv(\Gr_G)^H)^{\heartsuit} \rightarrow \IC^{\frac{\infty}{2}}_{P, 0}-mod^{{fact}}(Shv(\widetilde{\Fl}_G)^H)^{\heartsuit} $$
with a suitably defined quantum Frobenius map
$$\on{Fr}: \on{Rep}(\check{M})^\heartsuit \rightarrow \on{Rep}_q(\mathfrak{g}_1M)_\circ.$$
In the case of the Borel, this is closely related to the proposal of Gaitsgory for localization of Kac-Moody modules at critical level introduced in \cite{Gai19SI}, as well as the work of Arkhipov--Bezrukavnikov--Braverman--Gaitsgory--Mirkovi\'c (\cite{ABBGM}, Theorem 6.1.6). 

\ssec{Conventions and notations}

\sssec{} 
\label{Sect_1.4.1}
Work over an algebraically closed field $k$. Write $\Sch^{aff}$ for the category of affine schemes, $\Sch_{ft}$ for the category of schemes of finite type (over $k$).

Let $G$ be a connected reductive group over $k$, $T\subset B\subset G$ be a maximal torus and Borel subgroups, $B^-$ an opposite Borel subgroup with $B\cap B^-=T$. Write $U$ (resp., $U^-$) for the unipotent radical of $B$ (resp., $B^-$). 

 Let $P\subset G$ be a standard parabolic, $P^-$ an opposite parabolic with common Levi subgroup $M=P\cap P^-$. Write $w_0$ for the longest element of the Weyl group $W$, and similarly for $w_0^M\in W_M$, where $W_M$ is the Weyl group of $(M, T)$. Write $U(P)$ (resp., $U(P^-)$) for the unipotent radical of $P$ (resp., of $P^-$). Set $B_M=B\cap M$, $B_M^-=B^-\cap M$. 

 Let $\cI$ be the set of vertices of the Dynkin diagram. For $i\in\cI$ we write $\alpha_i$ (resp., $\check{\alpha}_i$) for the corresponding simple coroot (resp., simple root). Let $\cI_M\subset \cI$ correspond to the Dynkin diagram of $M$. Write $\check{P}, \check{B}, \check{T}$, $U(\check{P}), U(\check{P}^-)$ for the corresponding dual objects.
 
 Write $\Lambda$ (resp., $\check{\Lambda}$) for the coweights (resp., weights) lattice of $T$, $\Lambda^+$ for the dominant coweights. Write $\Lambda^+_M$ for the dominant coweights of $B_M$. Let $\Lambda^{pos}\subset\Lambda$ be the $\ZZ_+$-span of positive coroots. Let $\Lambda_M^{pos}\subset\Lambda$ be the $\ZZ_+$-span of $\alpha_i$, $i\in \cI_M$.

\sssec{} Our conventions about higher categories and sheaf theories are those of \cite{AGKRRV}. In particular, $\Spc$ denotes the $\infty$-category of spaces, $1-\Cat$ is the $\infty$-category of $(\infty, 1)$-categories (\cite{G}, ch. I.1, 1.1.1).  We fix an algebraically closed field $e$ of characteristic zero, the field of coefficients of our sheaf theory. Then $\Vect$ is the $\DG$-category of complexes of $e$-vector spaces defined in (\cite{G}, ch. I.1, 10.1). The categories $\DGCat_{cont}, \DGCat^{non-cocmpl}$ are defined in (\cite{G}, ch. I.1, 10.3). 

 For a scheme $S$ over $\Spec e$ write $\Gamma(S)$ (resp., $\cO(S)\in \Vect^{\heartsuit}$) for the quasi-coherent cohomology $\RG(S, \cO)$ (resp., for the space of functions on $S$). We work in the constructible context.

\section{Analog of the Drinfeld-Pl\"ucker formalism}

\ssec{Case of $\Bunt_P$}
\sssec{} For $\lambda\in\Lambda^+$ write $V^{\lambda}$ for the irreducible $\check{G}$-module with highest weight $\lambda$. We pick vectors $v^{\lambda}\in V^{\lambda}, (v^{\lambda})^*\in (V^{\lambda})^*$ as in (\cite{Gai19SI}, 2.1.2). Namely, $v^{\lambda}$ is a highest weight vector of $V^{\lambda}$. Then $(v^{\lambda})^*$ is characterised by the properties that $\<v^{\lambda}, (v^{\lambda})^*\>=1$, and $(v^{\lambda})^*$ vanished on  the weight spaces $V^{\lambda}(\mu)$ for $\mu\ne\lambda$. 
 
  For $\nu\in\Lambda^+_M$ let $U^{\nu}$ be the irreducible $\check{M}$-module with highest weight $\nu$. If moreover $\nu\in\Lambda^+$ then we simply assume 
$$
U^{\nu}=(V^{\nu})^{U(\check{P})},
$$ 
so we have the highest weight vector $v^{\nu}\in U^{\nu}$. We assume this choice of a highest weight vector $v^{\nu}\in U^{\nu}$ is extended for the whole of $\Lambda^+_M$. 
 
 It is known that for any finite-dimensional $\check{G}$-module $V$ the natural map $V^{U(\check{P})}\to V\to V_{U(\check{P}^-)}$ is an isomorphism. So, for $\nu\in\Lambda^+$ we get canonically
$$
((V^{\nu})^*)^{U(\check{P}^-)}\,\iso\, ((V^{\nu})_{U(\check{P}^-)})^*\,\iso\, (U^{\nu})^*
$$ 
This is an irreducible $\check{M}$-module with highest weight $-w_0^M(\nu)$, and 
$$
(v^{\nu})^*\in ((V^{\nu})^*)^{U(\check{P}^-)}\,\iso\,(U^{\nu})^*
$$
So, $(U^{\nu})^*$ is equipped with the highest weight vector $(v^{\nu})^*$ with respect to the Borel $B_M^-$. 

 Now define the $B_M^-$-highest weights vectors $(v^{\nu})^*\in (U^{\nu})^*$ for all $\nu\in\Lambda^+_M$ as for $\check{G}$. Namely, they satisfy
$\<v^{\nu},(v^{\nu})^*\>=1$, and $(v^{\nu})^*: U^{\nu}\to e^{\nu}$ vanish on all the weight spaces $U^{\nu}(\mu)$ for $\mu\ne\nu$.   

\sssec{} For $\lambda_i\in\Lambda^+$ we denote by $u^{\lambda_1,\lambda_2}: V^{\lambda_1}\otimes V^{\lambda_2}\to V^{\lambda_1+\lambda_2}$ and $v^{\lambda_1,\lambda_2}: V^{\lambda_1+\lambda_2}\to V^{\lambda_1}\otimes V^{\lambda_2}$ the maps fixed in (\cite{Gai19SI}, Section 2.1.4) as well as their duals. So, we have 
$$
(v^{\lambda_1+\lambda_2})^*\comp u^{\lambda_1,\lambda_2}=(v^{\lambda_1})^*\otimes (v^{\lambda_2})^*\;\;\;\mbox{and}\;\;\; v^{\lambda_1,\lambda_2}\comp
v^{\lambda_1+\lambda_2}=v^{\lambda_1}\otimes v^{\lambda_2}.
$$ 

 For $\lambda_1,\lambda_2\in\Lambda^+_M$ we also denote by $\bar v^{\lambda_1,\lambda_2}: U^{\lambda_1+\lambda_2}\to U^{\lambda_1}\otimes U^{\lambda_2}$ and $\bar u^{\lambda_1,\lambda_2}: U^{\lambda_1}\otimes U^{\lambda_2}\to U^{\lambda_1+\lambda_2}$ the maps defined similarly for $\check{M}$ as well as their duals. 
 
For $\lambda_i\in\Lambda^+$ the diagram commutes
\begin{equation}
\label{diag_for_Sect_1.0.3}
\begin{array}{ccc}
(U^{\lambda_1})^*\otimes (U^{\lambda_2})^* & \hook{} & (V^{\lambda_1})^*\otimes (V^{\lambda_2})^*\\
\downarrow\lefteqn{\scriptstyle \bar v^{\lambda_1,\lambda_2}} && \downarrow\lefteqn{\scriptstyle v^{\lambda_1,\lambda_2}}\\
(U^{\lambda_1+\lambda_2})^* & \hook{} & (V^{\lambda_1+\lambda_2})^*
\end{array}
\end{equation}

\sssec{} 
The above gives 
\begin{equation}
\label{decomp_O(G/U)}
\cO(\check{G}/U(\check{P}^-))\,\iso\,\mathop{\oplus}\limits_{\lambda\in\Lambda^+} V^{\lambda}\otimes (U^{\lambda})^*\in \Rep(\check{G})\otimes\Rep(\check{M})
\end{equation}  
 
  For each finite-dimensional representation $V$ of $\check{G}$ we have the matrix coefficient map $V^*\otimes V^{U(\check{P}^-)}\to \cO(\check{G}/U(\check{P}^-))$, 
$u\otimes v\mapsto (g\mapsto \<u, gv\>)$.  

\sssec{} Recall that $\check{G}/U(\check{P}^-)$ is quasi-affine by (\cite{BG}, 1.1.2), write $\overline{\check{G}/U(\check{P}^-)}$ for the affine closure of $\check{G}/U(\check{P}^-)$.
Consider the diagram
\begin{equation}
\label{diag_one}
\begin{array}{ccccc}
\check{G}\backslash\overline{\check{G}/U(\check{P}^-)}/\check{M} & \getsup{j_M} &
B(\check{P}^-) & \getsup{\eta} & B(\check{M})\\
&\searrow\lefteqn{\scriptstyle \bar q}&\downarrow\lefteqn{\scriptstyle q} & \swarrow\lefteqn{\scriptstyle q_M}\\
&& B(\check{G}\times\check{M}),
\end{array}
\end{equation}
where the maps come from the diagram $\check{M}\to \check{P}^-\to \check{G}\times\check{M}$, the second map being the diagonal morphism. Here $j_M$ is the open immersion obtained by passing to the stack quotient under the action of $\check{G}\times\check{M}$ in 
$$
\check{G}/U(\check{P}^-)\hook{}\overline{\check{G}/U(\check{P}^-)}. 
$$ 
After the base change $\Spec e\to B(\check{G}\times\check{M})$ the map $\eta$ becomes $\bar\eta: \check{G}\to \check{G}/U(\check{P}^-)$.  

 We get an adjoint pair 
\begin{equation}
\label{adjoint_pair_eta}
\eta^*: \QCoh(B(\check{P}^-))\leftrightarrows \QCoh(B(\check{M})): \eta_*
\end{equation}
in $\Rep(\check{G})\otimes\Rep(\check{M})-mod$ by (\cite{G}, ch. I.3, 3.2.4). 
We have $q_*\cO, (q_M)_*\cO\in Alg(\Rep(\check{G})\otimes\Rep(\check{M}))$. By (\cite{G}, ch. I.3, 3.3.3) one has 
$$
\QCoh(B(\check{P}^-))\,\iso\, q_*\cO-mod(\Rep(\check{G})\otimes\Rep(\check{M})),
$$ 
$$
\QCoh(\check{G}\backslash\overline{\check{G}/U(\check{P}^-)}/\check{M})\,\iso\,\cO(\check{G}/U(\check{P}^-))-mod(\Rep(\check{G})\otimes\Rep(\check{M})),
$$
and 
$$
\QCoh(B(\check{M}))\,\iso\, (q_M)_*\cO-mod(\Rep(\check{G})\otimes\Rep(\check{M}))
$$
Here $q_*\cO\,\iso\, \Gamma(\check{G}/U(\check{P}^-))$ and $(q_M)_*\cO\,\iso\, \cO(\check{G})$.  
\sssec{} 
\label{Sect_2.1.5_now}
Let $C\in \Rep(\check{G})\otimes\Rep(\check{M})-mod(\DGCat_{cont})$. Similarly to \cite{Gai19SI}, set $\Hecke_{\check{G}, \check{M}}(C)=C\otimes_{\Rep(\check{G})\otimes\Rep(\check{M})} \Rep(\check{M})$. The diagram (\ref{diag_one}) yields an adjoint pair
$$
\eta^*:
C\otimes_{\Rep(\check{G})\otimes\Rep(\check{M})} \Rep(\check{P}^-)\leftrightarrows \Hecke_{\check{G}, \check{M}}(C): \eta_*
$$ 
in $\DGCat_{cont}$. We want to describe the composition
\begin{equation}
\label{left_adj_DR-PL_for_P}
C\otimes_{\Rep(\check{G})\otimes\Rep(\check{M})} \QCoh(\check{G}\backslash\overline{\check{G}/U(\check{P}^-)}/\check{M})\toup{j_M^*}
C\otimes_{\Rep(\check{G})\otimes\Rep(\check{M})} \Rep(\check{P}^-)\toup{\eta^*}\Hecke_{\check{G}, \check{M}}(C).
\end{equation}
By (\cite{G}, ch. I.1, 8.5.7) 
$$
C\otimes_{\Rep(\check{G})\otimes\Rep(\check{M})} \QCoh(\check{G}\backslash\overline{\check{G}/U(\check{P}^-)}/\check{M})\,\iso\, \cO(\check{G}/U(\check{P}^-))-mod(C).
$$
First we want to describe the composition
\begin{equation}
\label{left_adj_DR-PL_for_P_comoposed}
\cO(\check{G}/U(\check{P}^-))-mod(C)
\toup{j_M^*}C\otimes_{\Rep(\check{G})\otimes\Rep(\check{M})} \Rep(\check{P}^-)\toup{\eta^*}
\Hecke_{\check{G}, \check{M}}(C)\toup{\oblv} C
\end{equation}

 By \select{loc.cit.}, one gets 
$$
C\otimes_{\Rep(\check{G})\otimes\Rep(\check{M})} \Rep(\check{P}^-)\,\iso\, \Gamma(\check{G}/U(\check{P}^-))-mod(C)
$$
and
$
\Hecke_{\check{G}, \check{M}}(C)\,\iso\,  \cO(\check{G})-mod(C)
$. 
Now (\ref{left_adj_DR-PL_for_P}) is the functor 
$$
\cO(\check{G}/U(\check{P}^-))-mod(C)\to \cO(\check{G})-mod(C)
$$  
sending $c$ to $\cO(\check{G})\otimes_{\cO(\check{G}/U(\check{P}^-))} c$ in the sense of (\cite{HA}, 4.4.2.12). The functor $\oblv: \cO(\check{G})-mod(C)\to C$ forgets the $\cO(\check{G})$-module structure.

\sssec{} The category $\cO(\check{G}/U(\check{P}^-))-mod(C)$ admits a description in Pl\"ucker style as follows. We will write the action of $\Rep(\check{G})$ on $c\in C$ on the right, and that of $\Rep(\check{M})$ on the left.
 
 An object $c\in \cO(\check{G}/U(\check{P}^-))-mod(C)$ can be seen as $c\in C$ with the following data. For each $V\in \Rep(\check{G})^{\heartsuit}$ finite-dimensional we should be given a map $\kappa_V: V^{U(\check{P}^-)}\ast c\to c\ast V$. For a morphism $V_1\to V_2$ of finite-dimensional $\check{G}$-modules, we are given a commutativity datum for the diagram
$$ 
\begin{array}{ccc}
V_1^{U(\check{P}^-)}\ast c &\toup{\kappa_{V_1}} & c\ast V_1\\
\downarrow && \downarrow\\
V_2^{U(\check{P}^-)}\ast c& \toup{\kappa_{V_2}}& c\ast V_2
\end{array}
$$
Besides, we are given a commutativity datum for the diagram
$$
\begin{array}{ccc}
(V_1^{U(\check{P}^-)}\otimes V_2^{U(\check{P}^-)})\ast c & \toup{\kappa_{V_2}} & V_1^{U(\check{P}^-)}\ast c\ast V_2 \\
\downarrow && \downarrow\lefteqn{\scriptstyle \kappa_{V_1}}\\
(V_1\otimes V_2)^{U(\check{P}^-)}\ast c & \toup{\kappa_{V_1\otimes V_2}} & c\ast (V_1\otimes V_2)
\end{array}
$$
Besides, for $V$ trivial we are given an identification $\kappa_V\,\iso\,\id$, plus coherent system of compatibilities. 

\begin{Rem} For example, if $C$ is equipped with a t-structure, the actions of $\Rep(\check{G})^{\heartsuit}$ and $\Rep(\check{M})^{\heartsuit}$ on $C$ are t-exact and $c\in C^{\heartsuit}$ then in the above description of a $\cO(\check{G}/U(\check{P}^-))$-module structure on $c$ the higher compatibilities are automatic.
\end{Rem}

\sssec{} Let $c\in \cO(\check{G}/U(\check{P}^-))-mod(C)$. Then for $\lambda\in\Lambda^+$ we get the action map $\kappa^{\lambda}: (U^{\lambda})^*\ast c\to c\ast (V^{\lambda})^*$. For $\lambda_i\in\Lambda^+$ the above using (\ref{diag_for_Sect_1.0.3}) yields the commutativity datum for the diagram 
\begin{equation}
\label{diag_for_Sect_1.0.8_U(checkP^-)}
\begin{array}{ccc}
(U^{\lambda_1})^*\otimes (U^{\lambda_2})^*\ast c & \toup{\kappa^{\lambda_2}}  (U^{\lambda_1})^*\ast c\ast (V^{\lambda_2})^* \toup{\kappa^{\lambda_1}} & c\ast (V^{\lambda_1})^*\otimes (V^{\lambda_2})^*\\
\downarrow\lefteqn{\scriptstyle \bar v^{\lambda_1,\lambda_2}} && \downarrow\lefteqn{\scriptstyle v^{\lambda_1,\lambda_2}}\\
(U^{\lambda_1+\lambda_2})^*\ast c & \toup{\kappa^{\lambda_1+\lambda_2}} & c\ast (V^{\lambda_1+\lambda_2})^*
\end{array}
\end{equation}

 For the convenience of the reader recall the following. If $\cA$ is a monoidal $\infty$-category, $D\in 1-\Cat$ then a lax action of $\cA$ on the left (resp., on the right) on $D$ is a right lax monoidal functor $\cA\to\Fun(D,D)$ (resp., $\cA^{rm}\to \Fun(D,D)$). Here $rm$ stands for the reversed multiplication. 

 Consider the following two lax actions of $\Lambda^+$ on $C$. For the first one $\lambda\in\Lambda^+$ sends $x$ to $(U^{\lambda})^*\ast x$, where the lax structure is given by the morphisms 
$$
(U^{\lambda_1})^*\ast ((U^{\lambda_2})^*\ast x)\,\iso\, ((U^{\lambda_1})^*\otimes (U^{\lambda_2})^*) \ast x\toup{\bar v^{\lambda_1,\lambda_2}} (U^{\lambda_1+\lambda_2})^*\ast x
$$
For the second one, $\lambda$ sends $x$ to $x\ast (V^{\lambda})^*$, and the lax structure is given by the morphisms
$$
(x\ast (V^{\lambda_1})^*)\ast (V^{\lambda_2})^*\,\iso\, x\ast ((V^{\lambda_1})^*\otimes (V^{\lambda_2})^*)\toup{v^{\lambda_1,\lambda_2}} (V^{\lambda_1+\lambda_2})^*
$$
Then $c$ inherits a \select{lax central object} structure in the sense of (\cite{Gai19SI}, 2.7) for these actions. That is, the commutativity datum for (\ref{diag_for_Sect_1.0.8_U(checkP^-)}) is equipped with coherent system of higher compatibilities.  

 This implies that one has a well-defined functor $\Lambda^+\to C$
\begin{equation}
\label{functor_from_Lambda^+}
f: \Lambda^+\to C, \; \lambda\mapsto U^{\lambda}\ast c\ast (V^{\lambda})^*
\end{equation} 
Here we consider $\Lambda^+$ with the relation $\lambda_1\le\lambda_2$ iff $\lambda_2-\lambda_1\in\Lambda^+$. This is not a partial order in general, but makes $\Lambda^+$ a filtered category. For $\lambda_i\in\Lambda^+$ with $\lambda_2-\lambda_1=\lambda\in\Lambda^+$ the transition map from $f(\lambda_1)$ to $f(\lambda_2)$ in this diagram is the composition
\begin{multline*}
U^{\lambda_1}\ast c\ast (V^{\lambda_1})^*\toup{\bar v^{\lambda, \lambda_1}} (U^{\lambda_2}\otimes (U^{\lambda})^*)\ast c\ast (V^{\lambda_1})^*\toup{\kappa^{\lambda}} \\ U^{\lambda_2}\ast (c\ast (V^{\lambda})^*)\ast (V^{\lambda_1})^*\toup{v^{\lambda, \lambda_1}} U^{\lambda_2}\ast c\ast (V^{\lambda_2})^*
\end{multline*}
The higher compatibilities for the above morphisms come automatically from the $\cO(\check{G}/U(\check{P}^-))$-module structure on $c$.

\begin{Question} Understand the functor 
$$
\cO(\check{G}/U(\check{P}^-))-mod(C)\to C,\; c\mapsto \mathop{\colim}\limits_{\lambda\in\Lambda^+}\,  U^{\lambda}\ast c\ast (V^{\lambda})^*,
$$ 
compare with the formalism developed in (\cite{Ly10}, Section~6). 
\end{Question}
\sssec{} Let $\Lambda_{M, ab}=\{\lambda\in \Lambda\mid \<\lambda, \check{\alpha}_i\>=0\;\mbox{for}\; i\in \cI_M\}$. This is the lattice of characters of $\check{M}_{ab}:=\check{M}/[\check{M},\check{M}]$. Set for brevity $\Lambda_{M, ab}^+=\Lambda_{M, ab}\cap \Lambda^+$. 

Consider the restriction of (\ref{functor_from_Lambda^+}) to the full subcategory
$$
\Lambda^+_{M, ab}\to C, \; \lambda\mapsto e^{\lambda}\ast c\ast (V^{\lambda})^* 
$$
We give two proofs of the following.

\begin{Pp}
\label{Pp_2.1.10_colimit_for_Bunt_P}
The functor (\ref{left_adj_DR-PL_for_P_comoposed}) identifies with
$$
c\mapsto \mathop{\colim}\limits_{\lambda\in(\Lambda^+_{M, ab},\le )}  e^{\lambda}\ast c\ast (V^{\lambda})^*
$$
\end{Pp}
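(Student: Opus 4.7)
The plan is to reduce to a universal case via colimit-preservation, then match the colimit explicitly with $\cO(\check{G})$ using the decomposition (\ref{decomp_O(G/U)}). Both sides are continuous in $c$: the composition (\ref{left_adj_DR-PL_for_P_comoposed}) is a composition of continuous left adjoints, and the colimit functor is a colimit of continuous functors (since colimits commute and each $e^\lambda \ast (-) \ast (V^\lambda)^*$ is continuous). Hence it suffices to check the claim on a set of compact generators, and a standard base-change argument reduces to the universal case $C = \Rep(\check{G}) \otimes \Rep(\check{M})$ with $c = \cO(\check{G}/U(\check{P}^-))$ the free module of rank one. In this case (\ref{left_adj_DR-PL_for_P_comoposed}) evaluates to $\cO(\check{G}) \in \Rep(\check{G}) \otimes \Rep(\check{M})$, so the task is to produce a canonical isomorphism $\colim_{\lambda \in \Lambda^+_{M,ab}} e^\lambda \otimes \cO(\check{G}/U(\check{P}^-)) \otimes (V^\lambda)^* \,\iso\, \cO(\check{G})$.

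To construct the comparison map, for each $\lambda \in \Lambda^+_{M,ab}$ one uses the Plücker-style matrix coefficient $e^\lambda \otimes (V^\lambda)^* \to \cO(\check{G})$ (well-defined since $U^\lambda = e^\lambda$ for $\lambda \in \Lambda_{M,ab}$). Multiplied through the embedding $\cO(\check{G}/U(\check{P}^-)) \hookrightarrow \cO(\check{G})$, these give maps $e^\lambda \otimes \cO(\check{G}/U(\check{P}^-)) \otimes (V^\lambda)^* \to \cO(\check{G})$ whose compatibility with the transition morphisms $\bar v^{\lambda, \lambda_1}$, $\kappa^\lambda$, $v^{\lambda, \lambda_1}$ follows from the commutativities of (\ref{diag_for_Sect_1.0.3}) and (\ref{diag_for_Sect_1.0.8_U(checkP^-)}). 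The universal property of the filtered colimit then produces the desired natural map.

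To verify it is an isomorphism, apply (\ref{decomp_O(G/U)}) to write each term as
\[
e^\lambda \otimes \cO(\check{G}/U(\check{P}^-)) \otimes (V^\lambda)^* \;=\; \bigoplus_{\mu \in \Lambda^+} V^\mu \otimes (V^\lambda)^* \otimes e^\lambda \otimes (U^\mu)^*.
\]
For each fixed $\nu \in \Lambda^+$, the $V^\nu$-isotypic component of the colimit is controlled by the summands with $\mu = \nu + \lambda$, using the canonical inclusion $V^{\nu+\lambda} \hookrightarrow V^\nu \otimes V^\lambda$ and the dual map $v^{\lambda, \nu}$; a cofinality argument in $(\Lambda^+_{M,ab}, \le)$ identifies the $V^\nu$-component of the filtered colimit with $V^\nu \otimes (V^\nu)^*$, and summation over $\nu$ reproduces the Peter--Weyl decomposition of $\cO(\check{G})$.

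The main technical obstacle is verifying, via careful bookkeeping of $v^{\lambda_1, \lambda_2}$, $\bar v^{\lambda_1, \lambda_2}$ and $\kappa^\lambda$, that the transition maps genuinely assemble into the multiplicative structure of $\cO(\check{G})$—in particular that the $\check{M}$-actions on the $(U^\mu)^*$ factors track correctly through the cofinality identification, since $\check{M}$ (not only $\check{M}_{ab}$) acts nontrivially on $\check{G}/U(\check{P}^-)$. A cleaner second argument—likely the second proof alluded to in the statement—should proceed by realizing the colimit as the localization of the affine Plücker algebra $\cO(\overline{\check{G}/U(\check{P}^-)})$ at the distinguished sections indexed by $\Lambda^+_{M,ab}$, thereby exhibiting the open inclusion $\check{G}/U(\check{P}^-) \hookrightarrow \overline{\check{G}/U(\check{P}^-)}$ scheme-theoretically and bypassing the explicit weight-by-weight check.
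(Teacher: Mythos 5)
Your overall strategy coincides with the paper's first proof: reduction to the universal case $C=\Rep(\check{G}\times\check{M})$, $c=\cO(\check{G}/U(\check{P}^-))$; construction of the comparison map to $\cO(\check{G})$ from the matrix coefficients (your Pl\"ucker-style maps $e^{\lambda}\otimes(V^{\lambda})^*\to\cO(\check{G})$ are exactly what the paper produces by Frobenius reciprocity as $\ev\otimes v^{\lambda}$); and verification on $\check{G}$-isotypic components. The paper's actual second proof is not the localization argument you gesture at: it fits $\bar\eta:\check{G}\to\check{G}/U(\check{P}^-)$ into a cartesian square over $\bar\eta_{ab}:\check{G}/[\check{M},\check{M}]\to\check{G}/[\check{P}^-,\check{P}^-]$, passes to affine closures to get $\cO(\check{G}/U(\check{P}^-))\otimes_{\cO(\check{G}/[\check{P}^-,\check{P}^-])}\cO(\check{G}/[\check{M},\check{M}])\iso\cO(\check{G})$, and reduces to the already-established $\Bunb_P$-version, Proposition~\ref{Pp_2.0.11}.

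The one substantive flaw is in your isotypic computation. For fixed $\nu\in\Lambda^+$ and $\lambda$ large, the summands of $e^{\lambda}\otimes\cO(\check{G}/U(\check{P}^-))\otimes(V^{\lambda})^*$ contributing to the $V^{\nu}$-isotypic part are \emph{not} only those with $\mu=\nu+\lambda$: by Lemma~\ref{Lm_2.0.15} one has $V^{\nu}\otimes V^{\lambda}\iso\oplus_{\mu'\in\Lambda^+_M}V^{\lambda+\mu'}\otimes\Hom_{\check{M}}(U^{\mu'},V^{\nu})$, so every $\mu=\lambda+\mu'$ with $U^{\mu'}$ occurring in $\Res^{\check{M}}V^{\nu}$ contributes, and the total multiplicity space is $\oplus_{\mu'}(U^{\mu'})^*\otimes\Hom_{\check{M}}(U^{\mu'},V^{\nu})^*\iso(V^{\nu})^*$ as an $\check{M}$-module. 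Keeping only the Cartan component $V^{\nu+\lambda}\hookrightarrow V^{\nu}\otimes V^{\lambda}$, as your sketch does, would yield the multiplicity space $(U^{\nu})^*$ rather than $(V^{\nu})^*$ --- correct only for $P=G$. Since you do state the correct final answer $V^{\nu}\otimes(V^{\nu})^*$ and yourself single out the $\check{M}$-structure on the multiplicity spaces as the delicate point, this is a repairable slip rather than a wrong approach; but the repair is precisely Lemma~\ref{Lm_2.0.15}, and as written the weight-by-weight argument does not close.
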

\begin{proof}[First proof]
By Section~\ref{Sect_2.1.5_now}, it suffices to establish the universal case when $C=\Rep(\check{G}\times\check{M})$, and $c=\cO(\check{G}/U(\check{P}^-))\in C$. 

\Step 1 First, define a compatible system of morphisms in $\Rep(\check{G}\times\check{M})$
\begin{equation}
\label{morphism_for_Pp_2.1.10_colimit_for_Bunt_P}
 e^{\lambda}\ast \cO(\check{G}/U(\check{P}^-))\ast (V^{\lambda})^*\to \cO(\check{G})
\end{equation} 
for $\lambda\in\Lambda^+_{M, ab}$ as follows. Let $\check{M}$ act on $\check{G}\times \check{M}$ by right translations via the diagonal homomorphism $\check{M}\to \check{G}\times \check{M}$, form the quotient $(\check{G}\times\check{M})/\check{M}$. Let $\check{G}\times\check{M}$ act by left translations on the latter quotient. We get a $\check{G}\times \check{M}$-equivariant isomorphism $(\check{G}\times\check{M})/\check{M}\,\iso\,\check{G}$, where on the RHS the group $\check{G}$ (resp., $\check{M}$) acts by left (resp., right) translations. 
  
 By Frobenius reciprocity, a datum of (\ref{morphism_for_Pp_2.1.10_colimit_for_Bunt_P}) is the same as a $\check{M}$-equivariant morphism
\begin{equation}
\label{morphism_for_Pp_2.1.10_second}
e^{\lambda}\ast \cO(\check{G}/U(\check{P}^-))\ast (V^{\lambda})^*\to e,
\end{equation}
where on the LHS the action is obtained from the $\check{G}\times \check{M}$-action
by restricting under the diagonal map $\check{M}\to \check{G}\times \check{M}$. Let $\ev: \cO(\check{G}/U(\check{P}^-))\to e$ be the evaluation at $U(\check{P}^-)\in \check{G}/U(\check{P}^-)$, it is invariant under the adjoint $\check{M}$-action. Note that $v^{\lambda}: e^{\lambda}\otimes (V^{\lambda})^*\to e$ is $\check{M}$-equivariant,
define (\ref{morphism_for_Pp_2.1.10_second}) as $\ev\otimes v^{\lambda}$. 

 Let $\lambda,\lambda_1\in\Lambda^+_{M, ab}$ and $\lambda_2=\lambda_1+\lambda$. Let us show that the diagram commutes
$$
\begin{array}{ccc}
e^{\lambda_1}\otimes \cO(\check{G}/U(\check{P}^-))\otimes (V^{\lambda_1})^*& \iso & e^{\lambda_2}\otimes e^{-\lambda}\otimes \cO(\check{G}/U(\check{P}^-))\otimes (V^{\lambda_1})^*\\ 
\downarrow\lefteqn{\scriptstyle v^{\lambda_1}\otimes \ev}
&& \downarrow\lefteqn{\scriptstyle{\kappa^{\lambda}}}\\
e && e^{\lambda_2}\otimes \cO(\check{G}/U(\check{P}^-))\otimes (V^{\lambda})^*\otimes (V^{\lambda_1})^*\\
& \hspace{-1em}\nwarrow\lefteqn{\scriptstyle v^{\lambda_2}\otimes\ev}\;\;\;\;\;\;\;
& \downarrow\lefteqn{\scriptstyle{v^{\lambda, \lambda_1}}}\\
&& e^{\lambda_2}\otimes \cO(\check{G}/U(\check{P}^-))\otimes (V^{\lambda_2})^*
\end{array}
$$
This follows easily from the commutativity of the diagrams
$$
\begin{array}{ccc}
e^{-\lambda} & \getsup{\ev} & e^{-\lambda}\otimes\cO(\check{G}/U(\check{P}^-))\\
\downarrow\lefteqn{\scriptstyle (v^{\lambda})^*} && \downarrow\lefteqn{\scriptstyle  \kappa^{\lambda}}\\
(V^{\lambda})^* &  \getsup{\ev} &\cO(\check{G}/U(\check{P}^-))\otimes (V^{\lambda})^*
\end{array}
$$
and
$$
\begin{array}{ccc}
e^{\lambda_1}\otimes (V^{\lambda_1})^* & \toup{(v^{\lambda})^*} & e^{\lambda_2}\otimes (V^{\lambda})^*\otimes (V^{\lambda_1})^*\\
\downarrow\lefteqn{\scriptstyle v^{\lambda_1}} && \downarrow\lefteqn{\scriptstyle v^{\lambda, \lambda_1}}\\
e & \getsup{v^{\lambda_2}} & e^{\lambda_2}\otimes (V^{\lambda_2})^*
\end{array}
$$

Thus, we get a morphism in $\Rep(\check{G}\times\check{M})$
\begin{equation}
\label{map_iso_would_be_for_Pp_2.1.10}
\mathop{\colim}\limits_{\lambda\in\Lambda^+_{M, ab}}  e^{\lambda}\otimes \cO(\check{G}/U(\check{P}^-))\otimes (V^{\lambda})^*\to \cO(\check{G})
\end{equation}
It remains to show it is an isomorphism.

\smallskip\Step 2
Pick $\nu\in\Lambda^+$. Assuming $\lambda\in\Lambda^+_{M, ab}$ deep enough 
on the wall of the corresponding Weyl chamber, we get
\begin{equation}
\label{exp_for_Step2_Pp2.1.10}
\Hom_{\check{G}}(V^{\nu}, e^{\lambda}\otimes \cO(\check{G}/U(\check{P}^-))\otimes (V^{\lambda})^*)\,\iso\, \Hom_{\check{G}}(V^{\nu}\otimes V^{\lambda}, e^{\lambda}\otimes \cO(\check{G}/U(\check{P}^-)))
\end{equation}
By Lemma~\ref{Lm_2.0.15}, $V^{\nu}\otimes V^{\lambda}\,\iso\, \mathop{\oplus}\limits_{\mu\in\Lambda^+_M} V^{\lambda+\mu}\otimes\Hom_{\check{M}}(U^{\mu}, V^{\nu})$. Using (\ref{decomp_O(G/U)}) now (\ref{exp_for_Step2_Pp2.1.10}) identifies with
\begin{multline*}
\mathop{\oplus}\limits_{\mu\in\Lambda^+_M} \Hom_{\check{G}}(V^{\lambda+\mu}, e^{\lambda}\otimes \Hom_{\check{M}}(U^{\mu}, V^{\nu})^*\otimes V^{\lambda+\mu}\otimes (U^{\lambda+\mu})^*)\,\iso\\ 
\mathop{\oplus}\limits_{\mu\in\Lambda^+_M} (U^{\mu})^*\otimes \Hom_{\check{M}}(U^{\mu}, V^{\nu})^*\,\iso\, (V^{\nu})^*
\end{multline*}
in $\Rep(\check{M})$. We have also in $\Rep(\check{M})$
$$
\Hom_{\check{G}}(V^{\nu}, \cO(\check{G})\,\iso\, (V^{\nu})^*
$$
and the map $(V^{\nu})^*\to (V^{\nu})^*$ induced by (\ref{map_iso_would_be_for_Pp_2.1.10}) is the identity.
\end{proof}

\sssec{Second proof of Proposition~\ref{Pp_2.1.10_colimit_for_Bunt_P}}
Let $\check{G}\times\check{M}$ act on $\check{G}/[\check{P}^-, \check{P}^-]$ and on $\check{G}/[\check{M}, \check{M}]$ naturally via its quotient $\check{G}\times\check{M}\to \check{G}\times\check{M}_{ab}$. Then one has a cartesian square in the category of schemes with a $\check{G}\times\check{M}$-action
\begin{equation}
\label{diag_main_cartesian_for_2nd_proof}
\begin{array}{ccc}
\check{G} & \toup{\bar\eta} & \check{G}/U(\check{P}^-)\\
\downarrow &&\downarrow\\
\check{G}/[\check{M}, \check{M}] & \toup{\bar\eta_{ab}} & \check{G}/[\check{P}^-, \check{P}^-].
\end{array}
\end{equation}

 It is obtained as follows. First, consider the diagonal map $\check{P}^-\to \check{G}\times \check{M}_{ab}$ yielding the morphism $B(\check{P}^-)\to B(\check{G}\times \check{M}_{ab})$, which gives in turn
\begin{equation}
\label{map_second_for_the_secon_proof_Pp_2.1.10_colimit_for_Bunt_P}
\eta\times\id: B(\check{M})\times_{B(\check{G}\times \check{M}_{ab})} \Spec k\to B(\check{P}^-)\times_{B(\check{G}\times \check{M}_{ab})} \Spec k.
\end{equation} 

 View (\ref{map_second_for_the_secon_proof_Pp_2.1.10_colimit_for_Bunt_P}) as a morphism in the category of stacks over $B(\check{G}\times \check{M})\times_{B(\check{G}\times \check{M}_{ab})} \Spec k$, here we use the map $B(\check{P}^-)\to B(\check{G}\times \check{M})$ coming from the diagonal morphism $\check{P}^-\to \check{G}\times\check{M}$. Then (\ref{diag_main_cartesian_for_2nd_proof}) is obtained from (\ref{map_second_for_the_secon_proof_Pp_2.1.10_colimit_for_Bunt_P}) by making the base change by 
$$
\Spec k\to B(\check{G}\times \check{M})\times_{B(\check{G}\times \check{M}_{ab})} \Spec k.
$$ 

The diagram (\ref{diag_main_cartesian_for_2nd_proof}) yields a diagram of affine closures
$$
\begin{array}{ccc}
\check{G} & \toup{\bar\eta} & \ov{\check{G}/U(\check{P}^-)}\\
\downarrow &&\downarrow\\
\check{G}/[\check{M}, \check{M}] & \toup{\bar\eta_{ab}} & \ov{\check{G}/[\check{P}^-, \check{P}^-]},
\end{array}
$$
which is also cartesian. This is seen using the Pl\"ucker description of points of $\ov{\check{G}/U(\check{P}^-)}$, $\ov{\check{G}/[\check{P}^-, \check{P}^-]}$ given in (\cite{BG}, 1.1.2). So, we have an isomorphism of algebras in $\Rep(\check{G}\times\check{M})$
$$
\cO(\check{G}/U(\check{P}^-))\otimes_{\cO(\check{G}/[\check{P}^-, \check{P}^-]}\cO(\check{G}/[\check{M}, \check{M}])\,\iso\, \cO(\check{G}).
$$ 

Now for $c\in \cO(\check{G}/U(\check{P}^-))-mod(C)$ one gets
$$
c\otimes_{\cO(\check{G}/U(\check{P}^-))} \cO(\check{G})\,\iso\, c\otimes_{\cO(\check{G}/[\check{P}^-, \check{P}^-]}\cO(\check{G}/[\check{M}, \check{M}])
$$
Our claim follows now from Proposition~\ref{Pp_2.0.11}. \QED

\ssec{Case of $\Bunb_P$}
\sssec{} Given $\lambda\in\Lambda^+$, $(V^{\lambda})^{[\check{P},\check{P}]}$ vanishes unless $\lambda\in\Lambda_{M, ab}^+$, and in the latter case it identifies with the highest weight line $e^{\lambda}\subset V^{\lambda}$ generated by $v^{\lambda}$. So,
$$
\cO(\check{G}/[\check{P},\check{P}])\,\iso\, \mathop{\oplus}\limits_{\lambda\in  \Lambda_{M, ab}^+} (V^{\lambda})^*\otimes e^{\lambda}\in \Rep(\check{G})\otimes\Rep(\check{M}_{ab})
$$
The product in this algebra is given for $\lambda, \mu\in\Lambda^+_{M, ab}$ by the maps
$$
(V^{\lambda})^*\otimes e^{\lambda}\otimes (V^{\mu})^*\otimes e^{\mu}\toup{v^{\lambda,\mu}} (V^{\lambda+\mu})^*\otimes e^{\lambda+\mu}
$$
We could replace in the above $e^{\lambda}$ by $U^{\lambda}$, as the corresponding $\check{M}$-module for $\lambda\in \Lambda_{M, ab}^+$ is 1-dimensional.

\sssec{} One similarly gets
\begin{equation}
\label{functions_for_G/P^-}
\cO(\check{G}/[\check{P}^-,\check{P}^-])\,\iso\, \mathop{\oplus}\limits_{\lambda\in  \Lambda_{M, ab}^+} V^{\lambda}\otimes e^{-\lambda}\in \Rep(\check{G})\otimes\Rep(\check{M}_{ab})
\end{equation}
Here $e^{-\lambda}$ coincides with $(U^{\lambda})^*$. 

 The product in this algebra is given for $\lambda,\mu\in \Lambda_{M, ab}^+$ by the maps
$$
V^{\lambda}\otimes e^{-\lambda}\otimes V^{\mu}\otimes e^{-\mu}\toup{u^{\lambda,\mu}} V^{\lambda+\mu}\otimes e^{-\lambda-\mu}
$$ 

\sssec{} Recall that $\check{G}/[\check{P}^-, \check{P}^-]$ is quasi-affine by (\cite{BG}, 1.1.2). Write $\overline{\check{G}/[\check{P}^-, \check{P}^-]}$ for its affine closure. Consider the diagram
$$
\begin{array}{ccccc}
\check{G}\backslash \overline{\check{G}/[\check{P}^-, \check{P}^-]}/\check{M}_{ab} & \getsup{j_{M, ab}} & B(\check{P}^-) & \getsup{\eta} & B(\check{M})\\
& \searrow\lefteqn{\scriptstyle \bar q_{ab}} & 
\downarrow\lefteqn{\scriptstyle q_{ab}} & \swarrow\lefteqn{\scriptstyle q_{M, ab}}\\
&& B(\check{G}\times \check{M}_{ab})
\end{array}
$$
obtained using the diagonal map $\check{P}^-\to \check{G}\times \check{M}_{ab}$. Here $j_{M, ab}$ is obtained by passing to the stack quotient under the $\check{G}\times \check{M}_{ab}$-action in
$$
\check{G}/[\check{P}^-, \check{P}^-]\hook{}
\overline{\check{G}/[\check{P}^-, \check{P}^-]}.
$$
After the base change $\Spec k\to B(\check{G}\times \check{M}_{ab})$ the map $\eta$ becomes 
$$
\bar\eta_{ab}: \check{G}/[\check{M}, \check{M}]\to \check{G}/[\check{P}^-, \check{P}^-]
$$ 

\sssec{} One has
$$
\cO(\check{G}/[\check{M},\check{M}])\,\iso\, \mathop{\oplus}\limits_{\nu\in\Lambda^+, \; \mu\in\Lambda_{M, ab}} V^{\nu}\otimes e^{-\mu}\otimes \Hom_{\check{M}}(U^{\mu}, V^{\nu})^*
$$
Indeed, for $\nu\in\Lambda^+$, 
$$
(V^{\nu})_{[\check{M},\check{M}]}\,\iso\, \mathop{\oplus}\limits_{\mu\in\Lambda^+_M} (U^{\mu})_{[\check{M},\check{M}]} \otimes \Hom(U^{\mu}, V^{\nu}))
$$
Now $(U^{\mu})_{[\check{M},\check{M}]}$ vanishes unless $\mu\in \Lambda_{M, ab}$, in which case it identifies with $e^{\mu}$ as a $\check{M}_{ab}$-module. Finally,
$$
((V^{\nu})^*)^{[\check{M},\check{M}]}\,\iso\, ((V^{\nu})_{[\check{M},\check{M}]})^*
$$

\sssec{} View now (\ref{adjoint_pair_eta}) as an adjoint pair in $\Rep(\check{G})\otimes\Rep(\check{M}_{ab})-mod$. We get 
$$
(q_{ab})_*\cO, (q_{M, ab})_*\cO\in \Rep(\check{G})\otimes\Rep(\check{M}_{ab})
$$
Since $\check{G}/[\check{P}^-, \check{P}^-]$ is quasi-affine and $\check{G}/[\check{M}, \check{M}]$ is affine, we similarly get
$$
\QCoh(B(\check{P}^-))\,\iso\, (q_{ab})_*\cO-mod(\Rep(\check{G})\otimes\Rep(\check{M}_{ab})),
$$
$$
\QCoh(B(\check{M}))\,\iso\,  (q_{M, ab})_*\cO-mod(\Rep(\check{G})\otimes\Rep(\check{M}_{ab})),
$$
and
$$
\QCoh(\check{G}\backslash \overline{\check{G}/[\check{P}^-, \check{P}^-]}/\check{M}_{ab})\,\iso\, \cO(\check{G}/[\check{P}^-, \check{P}^-])-mod(\Rep(\check{G})\otimes\Rep(\check{M}_{ab})).
$$
Here $(q_{M, ab})_*\cO\,\iso\, \cO(\check{G}/[\check{M}, \check{M}])$ and $(q_{ab})_*\cO\,\iso\, \Gamma(\check{G}/[\check{P}^-, \check{P}^-])$. 

\sssec{} Given $C\in \Rep(\check{G})\otimes\Rep(\check{M}_{ab}))-mod(\DGCat_{cont})$, 
set $\Hecke_{\check{G},\check{M}, ab}(C)=C\otimes_{\Rep(\check{G})\otimes\Rep(\check{M}_{ab}))}\Rep(\check{M})$.
The adjoint pair (\ref{adjoint_pair_eta}) gives an adjoint pair
$$
\eta^*: C\otimes_{\Rep(\check{G})\otimes\Rep(\check{M}_{ab}))}\Rep(\check{P}^-)\leftrightarrows 
\Hecke_{\check{G},\check{M}, ab}(C): \eta_*
$$
Note that 
$$
C\otimes_{\Rep(\check{G})\otimes\Rep(\check{M}_{ab}))} 
\QCoh(\check{G}\backslash \overline{\check{G}/[\check{P}^-, \check{P}^-]}/\check{M}_{ab})
\,\iso\, \cO(\check{G}/[\check{P}^-, \check{P}^-])-mod(C)
$$
We want to better understand the composition
\begin{equation}
\label{left_adjoint_ab_case}
 \cO(\check{G}/[\check{P}^-, \check{P}^-])-mod(C)
\toup{j_{M, ab}^*}C\otimes_{\Rep(\check{G})\otimes\Rep(\check{M}_{ab}))}\Rep(\check{P}^-) \toup{\eta^*} \Hecke_{\check{G},\check{M}, ab}(C)
\end{equation}
and also the composition
\begin{multline}
\label{left_adjoint_ab_case_after_oblv}
 \cO(\check{G}/[\check{P}^-, \check{P}^-])-mod(C)
\toup{j_{M, ab}^*}
C\otimes_{\Rep(\check{G})\otimes\Rep(\check{M}_{ab}))}\Rep(\check{P}^-)\\ \toup{\eta^*} C\otimes_{\Rep(\check{G})\otimes\Rep(\check{M}_{ab}))}\Rep(\check{M})\toup{\oblv} C
\end{multline}

\sssec{} By (\cite{G}, ch. I.1, 8.5.7), one gets 
$$
C\otimes_{\Rep(\check{G})\otimes\Rep(\check{M}_{ab}))}\Rep(\check{P}^-)\,\iso\, \Gamma(\check{G}/[\check{P}^-, \check{P}^-])-mod(C)
$$ 
and 
$$
C\otimes_{\Rep(\check{G})\otimes\Rep(\check{M}_{ab}))}\Rep(\check{M})
\,\iso\, \cO(\check{G}/[\check{M}, \check{M}])-mod(C)
$$
The functor (\ref{left_adjoint_ab_case}) is given by
\begin{equation}
\label{tens_product_in_C_for_Bunb_P}
c\mapsto \cO(\check{G}/[\check{M}, \check{M}])\otimes_{\cO(\check{G}/[\check{P}^-, \check{P}^-])} c
\end{equation} 
in the sense of (\cite{HA}, 4.4.2.12). 

\sssec{} Write the action of $\Rep(\check{G})$ on $C$ on the right, and that of $\Rep(\check{M}_{ab})$ on the left. 

 The category $\cO(\check{G}/[\check{P}^-, \check{P}^-])-mod(C)$ is described as the category of $c\in C$ equipped with maps
$$
\kappa^{\lambda}: c\ast V^{\lambda}\to e^{\lambda}\ast c, \;\; \lambda\in \Lambda_{M, ab}^+
$$ 
with the following additional structures and properties: i) if $\lambda=0$ then $\kappa^{\lambda}$ is identified with the identity map; ii) for $\lambda,\mu\in\Lambda_{M, ab}^+$ we are given a datum of commutativity for the diagram 
$$
\begin{array}{ccc}
(c\ast V^{\lambda})\ast V^{\mu} & \iso & c\ast (V^{\lambda}\otimes V^{\mu})\\
\downarrow\lefteqn{\scriptstyle \kappa^{\lambda}} && \downarrow\lefteqn{\scriptstyle u^{\lambda,\mu}}\\
e^{\lambda}\ast c\ast V^{\mu}&& c\ast V^{\lambda+\mu} \\ 
\downarrow\lefteqn{\scriptstyle \kappa^{\mu}} && \downarrow\lefteqn{\scriptstyle\kappa^{\lambda+\mu}}\\
e^{\lambda}\ast (e^{\mu}\ast c) & \iso & e^{\lambda+\mu}\ast c
\end{array}
$$
iii) a coherent system of higher compatibilities.

\begin{Rem}
For example, if $C$ is equipped with a t-structure, and both actions of $\Rep(\check{G})$ and of $\Rep(\check{M}_{ab})$ are t-exact then for $c\in C^{\heartsuit}$ in the above description of a $\cO(\check{G}/[\check{P}^-, \check{P}^-])$-module on $c$ the higher compatibilities are automatic.
\end{Rem}

\sssec{} 
\label{Sect_2.2.10_Now}
Let $c\in \cO(\check{G}/[\check{P}^-, \check{P}^-])-mod(C)$. For $\lambda\in\Lambda_{M, ab}^+$ by adjointness (and \cite{HA}, 4.6.2.1), rewrite $\kappa^{\lambda}$ as the map 
$$
\tau^{\lambda}: e^{-\lambda}\ast c\to c\ast (V^{\lambda})^*
$$ 

 Consider the following two lax actions of $\Lambda^+_{M, ab}$ on $C$. The left action of $\lambda$ is $c\mapsto e^{-\lambda}\ast c$. The right action of $\lambda$ is $c\ast (V^{\lambda})^*$. For $\lambda,\mu\in\Lambda^+_{M, ab}$ we are using here the lax structure on the right action given by 
$$
(c\ast (V^{\lambda})^*)\ast (V^{\mu})^*\,\iso\, c\ast (V^{\lambda}\otimes V^{\mu})^*\toup{v^{\lambda,\mu}} c\ast (V^{\lambda+\mu})^*
$$ 
\begin{Lm} In the situation of Section~\ref{Sect_2.2.10_Now}, a $\cO(\check{G}/[\check{P}^-, \check{P}^-])$-module structure on $c$ is the same as a structure of a \select{lax central object} on $c$ in the sense of (\cite{Gai19SI}, 2.7) with respect to the above lax actions of $\Lambda^+_{M, ab}$ on $C$. 
\end{Lm}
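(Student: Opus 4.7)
The plan is to match the two structures on $c$ directly, by unwinding the definitions and invoking the rigidity adjunction in $\Rep(\check{G})$ together with the invertibility of the one-dimensional objects $e^\lambda \in \Rep(\check{M}_{ab})$ for $\lambda \in \Lambda_{M,ab}$.

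First, the adjunction recorded in Section~\ref{Sect_2.2.10_Now} gives, for each individual $\lambda \in \Lambda^+_{M,ab}$, a bijection between maps $\kappa^\lambda \colon c \ast V^\lambda \to e^\lambda \ast c$ and maps $\tau^\lambda \colon e^{-\lambda}\ast c \to c \ast (V^\lambda)^*$. This is purely formal from the dualizability of $V^\lambda$ in $\Rep(\check{G})$ and the invertibility of $e^\lambda$ in $\Rep(\check{M}_{ab})$: the two mapping spaces are identified using coevaluation and evaluation for $V^\lambda$ and the canonical iso $e^{-\lambda}\otimes e^\lambda \,\iso\, e$. The unit condition $\kappa^0 = \id$ translates directly to $\tau^0 = \id$.

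The substantive step is matching the quadratic compatibility of the $\cO(\check{G}/[\check{P}^-,\check{P}^-])$-module description — asserting equality of two compositions $c \ast V^\lambda \otimes V^\mu \to e^{\lambda+\mu}\ast c$, one built from the algebra multiplication $u^{\lambda,\mu}$ followed by $\kappa^{\lambda+\mu}$ and the other from sequential application of $\kappa^\lambda$ and $\kappa^\mu$ — with the lax central compatibility of diagram~(\ref{diag_for_Sect_1.0.8_U(checkP^-)}) (specialized so that the left lax structure is the canonical $e^{-\lambda}\otimes e^{-\mu} \,\iso\, e^{-\lambda-\mu}$, while the right lax structure is the transpose $(v^{\lambda,\mu})^*$). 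I would apply the rigidity adjunction to both paths in the module compatibility to produce two maps $e^{-\lambda-\mu}\ast c \to c \ast (V^\lambda)^*\otimes(V^\mu)^*$, and then identify the resulting equality with the lax central condition. The bridging identity is $u^{\lambda,\mu}\circ v^{\lambda,\mu} = \id_{V^{\lambda+\mu}}$, which holds because both maps are normalized in Section 2.1 to send highest-weight vectors to highest-weight vectors, and which dualizes to $(v^{\lambda,\mu})^* \circ (u^{\lambda,\mu})^* = \id_{(V^{\lambda+\mu})^*}$. The main obstacle lies precisely here: the careful bookkeeping of the canonical tensor-order swap $(V^\lambda\otimes V^\mu)^* \,\iso\, (V^\mu)^*\otimes(V^\lambda)^*$, and the directional mismatch between the transposed multiplication $(u^{\lambda,\mu})^*$, which naturally appears after applying the adjunction to the module side, and the lax structure map $(v^{\lambda,\mu})^*$, which by definition governs the right lax action; the retraction identity above is what makes both directions of the equivalence go through.

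Finally, the coherent system of higher compatibilities transports through the adjunction because the underlying rigidity adjunction is an equivalence of mapping spaces at the level of $(\infty,1)$-categories, so the higher homotopies on both sides match automatically. Conceptually, the lemma records the fact that modules over the algebra $\cO(\check{G}/[\check{P}^-,\check{P}^-])$, whose multiplication is encoded by $u^{\lambda,\mu}$, are the same as comodule-style data over the dual "coalgebra" structure whose comultiplication is $(v^{\lambda,\mu})^*$, i.e.\ precisely a lax central object in the sense of (\cite{Gai19SI}, 2.7).
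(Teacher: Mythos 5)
Your proposal is correct and follows essentially the same route as the paper: the paper's proof consists precisely of the two facts you isolate, namely the retraction identity $u^{\lambda_1,\lambda_2}\comp v^{\lambda_1,\lambda_2}=\id_{V^{\lambda_1+\lambda_2}}$ and the compatibility of the multiplication maps $u^{\lambda_1,\lambda_2}$ with the units of the dualities, after which the translation between the two quadratic compatibilities via the rigidity adjunction is left to the reader. Your write-up just makes the dualization bookkeeping more explicit.
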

\begin{proof}
First, for any $\lambda_i\in\Lambda^+$ the composition $V^{\lambda_1+\lambda_2}\toup{v^{\lambda_1,\lambda_2}} V^{\lambda_1}\otimes V^{\lambda_2}\toup{u^{\lambda_1,\lambda_2}} V^{\lambda_1+\lambda_2}$ is $\id$. Second, 
for any $\lambda_i\in\Lambda^+$ the diagram commutes
$$
\begin{array}{ccc}
V^{\lambda_1}\otimes V^{\lambda_2}\otimes (V^{\lambda_1})^*\otimes (V^{\lambda_2})^* &\getsup{u\otimes u} & e  \\
\downarrow\lefteqn{\scriptstyle u^{\lambda_1,\lambda_2}}&& \downarrow\lefteqn{\scriptstyle u}\\
V^{\lambda_1+\lambda_2}\otimes (V^{\lambda_1})^*\otimes (V^{\lambda_2})^* & \getsup{u^{\lambda_1,\lambda_2}} & V^{\lambda_1+\lambda_2}\otimes(V^{\lambda_1+\lambda_2})^*,
\end{array}
$$ 
where $u$ every time denotes the unit of the corresponding duality.  
The desired claim follows.
\end{proof}

\sssec{} 
\label{Sect_2.0.11_functor_f}
Now given $c\in \cO(\check{G}/[\check{P}^-, \check{P}^-])-mod(C)$, we get a well-defined functor 
$$
f: \Lambda^+_{M, ab}\to C, \;\; \lambda\mapsto e^{\lambda}\ast c\ast (V^{\lambda})^*
$$ 
Here we consider $\Lambda^+_{M, ab}$ with the relation $\lambda_1\le\lambda_2$ iff $\lambda_2-\lambda_1\in \Lambda^+_{M, ab}$. This is not a partial order in general, but $\Lambda^+_{M, ab}$ with the relation $\le$ is a filtered category.\footnote{If $G$ is semi-simple then this is a partially ordered set.} For $\lambda_i\in \Lambda^+_{M, ab}$ the transition map from $f(\lambda_1)$ to $f(\lambda_1+\lambda_2)$ is
\begin{multline*}
e^{\lambda_1}\ast c\ast (V^{\lambda_1})^*\to e^{\lambda_1+\lambda_2}\ast e^{-\lambda_2}\ast c\ast (V^{\lambda_1})^*\toup{\tau^{\lambda_2}} \\ e^{\lambda_1+\lambda_2}\ast (c\ast (V^{\lambda_2})^*)\ast (V^{\lambda_1})^*\toup{v^{\lambda_1,\lambda_2}} e^{\lambda_1+\lambda_2}\ast c\ast (V^{\lambda_1+\lambda_2})^*
\end{multline*}
\begin{Pp} 
\label{Pp_2.0.11}
The functor (\ref{left_adjoint_ab_case_after_oblv}) identifies with
$$
c\mapsto \mathop{\colim}\limits_{\lambda\in(\Lambda^+_{M, ab},\le)} e^{\lambda}\ast c\ast (V^{\lambda})^*
$$
taken in $C$.
\end{Pp}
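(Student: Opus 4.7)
My plan is to mimic the first proof of Proposition~\ref{Pp_2.1.10_colimit_for_Bunt_P}, suitably adapted to the $\Bunb_P$-setting. The first step will be a reduction to the universal case: both the functor (\ref{left_adjoint_ab_case_after_oblv}) and the formation of $\mathop{\colim}\limits_{\lambda\in(\Lambda^+_{M,ab},\le)} e^{\lambda}\ast c\ast (V^{\lambda})^*$ preserve colimits in $c$ and are compatible with base change along morphisms in $\Rep(\check{G})\otimes \Rep(\check{M}_{ab})-mod(\DGCat_{cont})$. Since $\cO(\check{G}/[\check{P}^-,\check{P}^-])-mod(C)$ is generated under colimits by free modules $\cO(\check{G}/[\check{P}^-,\check{P}^-])\otimes c_0$ with $c_0\in C$, it suffices to prove the claim when $C=\Rep(\check{G}\times \check{M}_{ab})$ and $c = \cO(\check{G}/[\check{P}^-,\check{P}^-])$; in this case (\ref{left_adjoint_ab_case_after_oblv}) applied to $c$ produces $\cO(\check{G}/[\check{M},\check{M}])$.

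Next, I will construct a compatible family of $\check{G}\times\check{M}_{ab}$-equivariant maps
$$
\phi_\lambda: e^{\lambda}\otimes \cO(\check{G}/[\check{P}^-,\check{P}^-])\otimes (V^{\lambda})^* \to \cO(\check{G}/[\check{M},\check{M}]).
$$
Identifying $\cO(\check{G}/[\check{M},\check{M}])$ with the induction of the trivial representation along the embedding $\check{M}\hookrightarrow \check{G}\times \check{M}_{ab}$ given by $m\mapsto (m,[m])$, Frobenius reciprocity reduces this to producing a $\check{M}$-equivariant map of the source into $e$, with $\check{M}$ acting diagonally. I take this to be $\ev\otimes v^{\lambda}$, where $\ev:\cO(\check{G}/[\check{P}^-,\check{P}^-])\to e$ is evaluation at the identity coset and $v^\lambda: e^\lambda\otimes (V^\lambda)^* \to e$ is the canonical pairing. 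The map $\ev$ is invariant under the diagonal $\check{M}$-action because the left translation by $m$ and the right translation by $[m]\in\check{M}_{ab}$ cancel at the identity coset, while $v^\lambda$ is $\check{M}$-equivariant because for $\lambda\in\Lambda^+_{M,ab}$ the vector $v^\lambda\in V^\lambda$ is $[\check{M},\check{M}]$-fixed of $\check{M}_{ab}$-weight $\lambda$. Compatibility of the $\phi_\lambda$ with the transition maps of Section~\ref{Sect_2.0.11_functor_f} will then parallel the corresponding verification in the first proof of Proposition~\ref{Pp_2.1.10_colimit_for_Bunt_P}, using the defining relations among $v^{\lambda_1+\lambda_2}$, $v^{\lambda_1,\lambda_2}$, $u^{\lambda_1,\lambda_2}$ together with the algebra-homomorphism property of $\bar\eta_{ab}^*$. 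This assembles into a map $\Phi$ from the colimit into $\cO(\check{G}/[\check{M},\check{M}])$.

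Finally, to show $\Phi$ is an isomorphism, I will apply $\Hom_{\check{G}}(V^{\nu},-)$ for each $\nu\in\Lambda^+$ and match isotypic components as $\check{M}_{ab}$-representations. Using (\ref{functions_for_G/P^-}), the adjunction $\Hom_{\check{G}}(V^{\nu}, V^{\mu}\otimes (V^{\lambda})^*)\cong \Hom_{\check{G}}(V^{\nu}\otimes V^{\lambda}, V^{\mu})$, and Lemma~\ref{Lm_2.0.15} (which for $\lambda$ sufficiently deep in the appropriate face of the Weyl chamber gives $V^{\nu}\otimes V^{\lambda}\cong \bigoplus_{\eta\in\Lambda^+_M} V^{\lambda+\eta}\otimes \Hom_{\check{M}}(U^{\eta}, V^{\nu})$), the left-hand side stabilizes as $\lambda$ grows to $\bigoplus_{\eta\in\Lambda_{M,ab}} e^{-\eta}\otimes \Hom_{\check{M}}(U^{\eta}, V^{\nu})^*$, which matches the $V^\nu$-isotypic component of $\cO(\check{G}/[\check{M},\check{M}])$ computed above. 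A direct inspection confirms that $\Phi$ induces the identification on this level. I expect the main obstacle to be the transition-compatibility step in the construction of $\Phi$, where one must carefully unwind the definition of $\tau^{\lambda_2}$ in the universal case -- namely, as the adjoint of multiplication by the subspace $V^{\lambda_2}\otimes e^{-\lambda_2}\hookrightarrow \cO(\check{G}/[\check{P}^-,\check{P}^-])$ -- and verify naturality of the Frobenius reciprocity identification as $\lambda$ varies.
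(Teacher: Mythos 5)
Your proposal is correct and follows essentially the same route as the paper's proof: reduction to the universal case $C=\Rep(\check{G}\times\check{M}_{ab})$, $c=\cO(\check{G}/[\check{P}^-,\check{P}^-])$, construction of the comparison map by Frobenius reciprocity, and stabilization of $\check{G}$-isotypic components via Lemma~\ref{Lm_2.0.15}. The only (cosmetic) difference is that the paper first decomposes into $\check{M}_{ab}$-weight spaces and defines the maps summand by summand via $(v^{\lambda-\nu})^*\otimes v^{\lambda}$, whereas you define a single map $\ev\otimes v^{\lambda}$; these agree, since $\ev$ restricted to the summand $V^{\lambda-\nu}\otimes e^{\nu-\lambda}$ of (\ref{functions_for_G/P^-}) is exactly $(v^{\lambda-\nu})^*$.
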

\begin{Rem} i) Proposition~\ref{Pp_2.0.11} is a particular case of the Drinfeld-Pl\"ucker formalism developed in (\cite{Ly10}, Section~6).\\
ii) If $G=P$ then $\Lambda_{M, ab}^+$ is the category equivalent to $pt$, and the above colimit identifies with $c$ itself.
\end{Rem}
\begin{proof}[Proof of Proposition~\ref{Pp_2.0.11}]

\smallskip\noindent
{\bf Step 1} We must show that (\ref{tens_product_in_C_for_Bunb_P}) identifies with the above colimit in $C$. For this it suffices to show that
\begin{equation}
\label{colimit_for_functions_over_Lambda^+_Mab}
\mathop{\colim}\limits_{\lambda\in(\Lambda^+_{M, ab},\le)} e^{\lambda}\ast \cO(\check{G}/[\check{P}^-, \check{P}^-])\ast (V^{\lambda})^*\,\iso\, \cO(\check{G}/[\check{M}, \check{M}])
\end{equation}
in $\Rep(\check{G})\otimes\Rep(\check{M}_{ab})$. Recall the decomposition (\ref{functions_for_G/P^-}). Given $\lambda_i\in \Lambda_{M, ab}^+$, the transition map
$$
e^{\lambda_1}\ast \cO(\check{G}/[\check{P}^-, \check{P}^-])\ast (V^{\lambda_1})^*\to e^{\lambda_1+\lambda_2}\ast \cO(\check{G}/[\check{P}^-, \check{P}^-])\ast (V^{\lambda_1+\lambda_2})^*
$$
restricts for each $\lambda\in\Lambda_{M, ab}^+$ to a morphism
$$
e^{\lambda_1}\ast (V^{\lambda}\otimes e^{-\lambda})\ast (V^{\lambda_1})^*\to e^{\lambda_1+\lambda_2}\ast (V^{\lambda+\lambda_2}\otimes e^{-\lambda-\lambda_2})
\ast (V^{\lambda_1+\lambda_2})^*
$$
So, the LHS of (\ref{colimit_for_functions_over_Lambda^+_Mab}) identifies with the direct sum over $\nu\in\Lambda_{M, ab}$ of
\begin{equation}
\label{mult_space_for_nu_in_LHS_for_Pp2.0.11}
\mathop{\colim}\limits_{\lambda\in(\Lambda^+_{M, ab},\le)} e^{\lambda}\ast (V^{\lambda-\nu}\otimes e^{\nu-\lambda})\ast (V^{\lambda})^*,
\end{equation}
where the colimit is taken over those $\lambda$ satisfying $\lambda-\nu\in \Lambda^+_{M, ab}$. More precisely, (\ref{mult_space_for_nu_in_LHS_for_Pp2.0.11}) is the multiplicity space of $e^{\nu}\in\Rep(\check{M}_{ab})$ in  the LHS of (\ref{colimit_for_functions_over_Lambda^+_Mab}).

 For each $\lambda\in \Lambda^+_{M, ab}$ such that $\lambda-\nu\in \Lambda^+_{M, ab}$ we define the morphism of $\check{G}$-modules
\begin{equation}
\label{map_for_individual_lambda_for_Pp2.0.11}
V^{\lambda-\nu}\otimes (V^{\lambda})^*\to \cO(\check{G}/[\check{M},\check{M}])
\end{equation}
as the map that corresponds via the Frobenius reciprocity to the $[\check{M},\check{M}]$-equivariant morphism $(v^{\lambda-\nu})^*\otimes v^{\lambda}: 
 V^{\lambda-\nu}\otimes (V^{\lambda})^*\to e$. It is easy to see that the maps (\ref{map_for_individual_lambda_for_Pp2.0.11}) are compatible with the transition maps in the diagram (\ref{mult_space_for_nu_in_LHS_for_Pp2.0.11}), so define by passing to the colimit the $[\check{M},\check{M}]$-equivariant morphism from (\ref{mult_space_for_nu_in_LHS_for_Pp2.0.11}) to $e$. This gives by the Frobenius reciprocity a morphism of $\check{G}$-modules
\begin{equation}
\label{map_to_be_proved_iso_Pp2.0.12}
\mathop{\colim}\limits_{\lambda\in(\Lambda^+_{M, ab},\le)} e^{\lambda}\ast (V^{\lambda-\nu}\otimes e^{\nu-\lambda})\ast (V^{\lambda})^*\to  \cO(\check{G}/[\check{M},\check{M}])_{\nu}
\end{equation}
where the subscript $\nu$ stands for the subspace of $\cO(\check{G}/[\check{M},\check{M}])$ on which $\check{M}$ acts by $\nu$. 

 For $v\in V^{\lambda-\nu}, u\in (V^{\lambda})^*$ the map (\ref{map_for_individual_lambda_for_Pp2.0.11}) sends $v\otimes u$ to the function on $G$
$$
g\mapsto \<(v^{\lambda-v})^*, g^{-1} v\>\<v^{\lambda}, g^{-1}u\>
$$ 

{\bf Step 2} Let $\eta\in\Lambda^+$. To finish the proof, it remains to show that for $\lambda\in\Lambda^+_{M, ab}$ large enough with respect to $\eta$ and $\nu$ (that is, $\<\lambda, \check{\alpha}_i\>$ large enough for $i\notin \cI_M$), one has naturally 
$$
\Hom_{\check{G}}(V^{\eta}, V^{\lambda-\nu}\otimes (V^{\lambda})^*)\,\iso\,\Hom_{\check{M}}(e^{-\nu}, V^{\eta})^*
$$
By Lemma~\ref{Lm_2.0.15} below, 
$$
V^{\eta}\otimes V^{\lambda}\,\iso\, \mathop{\oplus}\limits_{\mu\in\Lambda^+_M} V^{\lambda+\mu}\otimes \Hom_{\check{M}}(U^{\mu}, V^{\eta})
$$
Our claim follows.
\end{proof} 

\sssec{} Write $\coind_{\check{P}^-}^{\check{G}}: \QCoh(B(\check{P}^-))\to \QCoh(B(\check{G}))$ for the $*$-direct image map, the right adjoint to the restriction. Then $\coind_{\check{P}^-}^{\check{G}}(U^{\nu})\,\iso\, V^{\nu}$ for $\nu\in \Lambda^+$. These isomorphisms are uniquely normalized by the property that the diagram is required to commute
$$
\begin{array}{ccc}
\coind_{\check{P}^-}^{\check{G}}(U^{\nu}) & \iso & V^{\nu}\\
\downarrow && \downarrow\lefteqn{\scriptstyle (v^{\nu})^*}\\
U^{\nu} & \toup{(v^{\nu})^*} & e^{\nu},
\end{array}
$$
where the left vertical arrow comes from adjunction. 

\begin{Lm} 
\label{Lm_2.0.15}
Let $V\in\Rep(\check{G})^{\heartsuit}$ be finite-dimensional, $\lambda\in\Lambda^+_{M, ab}$. Assume that for any $\nu\in\Lambda_M^+$ appearing in $\Res^{\check{M}} V$, $\nu+\lambda\in\Lambda^+$. Then one has canonically
$$
V\otimes V^{\lambda}\,\iso\, \mathop{\oplus}\limits_{\mu\in\Lambda^+_M} V^{\lambda+\mu}\otimes \Hom_{\check{M}}(U^{\mu}, V)
$$
in $\Rep(\check{G})$.
\end{Lm}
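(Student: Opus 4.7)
The plan is to realize $V^{\lambda}$ by parabolic induction and exploit the projection formula. Since $\lambda \in \Lambda^+_{M, ab}$ we have $U^{\lambda} = e^{\lambda}$, so the identification $\coind_{\check{P}^-}^{\check{G}}(U^{\nu}) \iso V^{\nu}$ recorded just before the lemma statement reads $V^{\lambda} \iso \coind_{\check{P}^-}^{\check{G}}(e^{\lambda})$, where $e^{\lambda}$ is viewed as a $\check{P}^-$-module via $\check{P}^- \twoheadrightarrow \check{M}$. The projection formula in $\Rep(\check{G})$ then yields
$$
V \otimes V^{\lambda} \;\iso\; \coind_{\check{P}^-}^{\check{G}}\bigl(V|_{\check{P}^-} \otimes e^{\lambda}\bigr).
$$

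Next, I would filter the $\check{P}^-$-module $V|_{\check{P}^-}$ by $\check{P}^-$-submodules using the powers of the augmentation ideal of $U(\check{P}^-)$. Since $U(\check{P}^-)$ is unipotent and we are in characteristic zero, this is a finite $\check{P}^-$-stable filtration whose successive quotients carry trivial $U(\check{P}^-)$-action and thus are $\check{M}$-modules; by complete reducibility of $\Rep(\check{M})$, the total $\check{M}$-module of the associated graded recovers the canonical isotypic decomposition
$$
V|_{\check{M}} \;\iso\; \bigoplus_{\mu \in \Lambda^+_M} U^{\mu} \otimes \Hom_{\check{M}}(U^{\mu}, V).
$$
Applying $\coind_{\check{P}^-}^{\check{G}}(-\otimes e^{\lambda})$ to this filtration, each graded piece is of the form $\coind_{\check{P}^-}^{\check{G}}(U^{\mu} \otimes e^{\lambda})$. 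Since $\lambda \in \Lambda_{M, ab}$, $U^{\mu} \otimes e^{\lambda}$ is the irreducible $\check{M}$-module $U^{\mu+\lambda}$, and the dominance hypothesis $\lambda + \mu \in \Lambda^+$ for every $\mu$ appearing in $V|_{\check{M}}$ is precisely what makes parabolic Borel--Weil--Bott applicable: it gives $\coind_{\check{P}^-}^{\check{G}}(U^{\mu+\lambda}) \iso V^{\lambda+\mu}$ and $R^i\coind_{\check{P}^-}^{\check{G}}(U^{\mu+\lambda}) = 0$ for $i>0$. Hence coinduction is exact on this filtration, so $V \otimes V^{\lambda}$ acquires a filtration in $\Rep(\check{G})$ with associated graded $\bigoplus_{\mu \in \Lambda^+_M} V^{\lambda+\mu} \otimes \Hom_{\check{M}}(U^{\mu}, V)$. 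Complete reducibility of $\Rep(\check{G})$ splits the filtration, and since the highest weights $\{\lambda+\mu\}_{\mu}$ are pairwise distinct under the dominance hypothesis, the $\check{G}$-isotypic decomposition pins down the claimed canonical isomorphism; functoriality of each step in $V$ identifies it with the one asserted.

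The step I expect to be the most delicate is the application of parabolic Borel--Weil--Bott to each graded piece. The dominance hypothesis is doing double duty: it precludes both nonzero higher coinduced cohomology (which would destroy the exactness of coinduction on our filtration) and Bott twists replacing $V^{\lambda+\mu}$ by $V^{w \cdot (\lambda+\mu)}$ for nontrivial $w$ in the Weyl group; without it, the right-hand side would have to be corrected by alternating contributions coming from $w \in W/W_M$.
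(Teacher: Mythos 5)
Your proof is correct and follows essentially the same route as the paper's: write $V^{\lambda}\iso\coind_{\check{P}^-}^{\check{G}}(e^{\lambda})$, apply the projection formula, filter $\Res^{\check{P}^-}(V)$ so that the associated graded is the $\check{M}$-isotypic decomposition, coinduce each graded piece to $V^{\lambda+\mu}$ using the dominance hypothesis, and split the resulting filtration canonically. Your explicit remarks on the vanishing of higher coinduction and the distinctness of the highest weights $\lambda+\mu$ merely spell out details the paper leaves implicit.
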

\begin{proof}
By the projection formula, 
$$
V\otimes V^{\lambda}\,\iso\, V\otimes \coind_{\check{P}^-}^{\check{G}}(e^{\lambda})\,\iso\,
\coind_{\check{P}^-}^{\check{G}}(e^{\lambda}\otimes \Res^{\check{P}^-}(V))
$$ 
Now $\Res^{\check{P}^-}(V)$ is filtered with the associated graded being $\mathop{\oplus}\limits_{\mu\in\Lambda^+_M} U^{\mu}\otimes \Hom_{\check{M}}(U^{\mu}, V)$. So, $e^{\lambda}\otimes \Res^{\check{P}^-}(V)$ is filtered with the associated graded being 
$\mathop{\oplus}\limits_{\mu\in\Lambda^+_M} U^{\mu+\lambda}\otimes \Hom_{\check{M}}(U^{\mu}, V)$. For each $\mu$ as above, $\coind_{\check{P}^-}^{\check{G}}(U^{\mu+\lambda})\,\iso\, V^{\mu+\lambda}$. So, the corresponding filtration on $\coind_{\check{P}^-}^{\check{G}}(e^{\lambda}\otimes \Res^{\check{P}^-}(V))$, splits canonically.
\end{proof}

If $V\in\Rep(\check{G})^{\heartsuit}$ is finite-dimensional, $\lambda\in\Lambda^+_{M, ab}$
is large enough for $V$ then Lemma~\ref{Lm_2.0.15} also rewrites as a canonical isomorphism
\begin{equation}
\label{Lm__2.0.15_rewritten}
V\otimes (V^{\lambda})^*\,\iso\, \mathop{\oplus}\limits_{\mu\in\Lambda^+_M} (V^{\lambda+\mu})^*\otimes \Hom_{\check{M}}((U^{\mu})^*, V)
\end{equation}

\sssec{Version of Hecke property} 
\label{Sect_version_of_Hecke_property_2.0.16}
Since $\Rep(\check{G})\otimes\Rep(\check{M}_{ab}))$ and $\Rep(\check{M})$ are rigid, by (\cite{Ly}, 9.2.43) we get
$$
C\otimes_{\Rep(\check{G})\otimes\Rep(\check{M}_{ab}))}\Rep(\check{M})\,\iso\,\Fun_{\Rep(\check{G})\otimes\Rep(\check{M}_{ab}))}(\Rep(\check{M}), C)
$$

For $c\in \cO(\check{G}/[\check{P}^-, \check{P}^-])-mod(C)$, the $\cO(\check{G}/[\check{M}, \check{M}])$-action on 
$$
\bar c=\mathop{\colim}\limits_{\lambda\in\Lambda^+_{M, ab}} e^{\lambda}\ast c\ast (V^{\lambda})^*
$$ 
is as follows. Let $V\in\Rep(\check{G})^{\heartsuit}$ finite-dimensional. It suffices to provide the action of $V\otimes (V^*)^{[\check{M}, \check{M}]}$ for each such $V$. It is given by a map $\bar c\ast V\to V_{[\check{M}, \check{M}]}\ast \bar c$. Pick $\lambda\in \Lambda^+_{M, ab}$ large enough for $V$. 
Note that 
$$
V_{[\check{M}, \check{M}]}\,\iso\,
\mathop{\oplus}\limits_{\mu\in\Lambda_{M, ab}} e^{-\mu}\otimes \Hom_{\check{M}}((U^{\mu})^*, V)
$$ 
Using (\ref{Lm__2.0.15_rewritten}), the desired map is the composition 
\begin{multline*}
(e^{\lambda}\ast c\ast (V^{\lambda})^*)\ast V\,\iso\, \mathop{\oplus}\limits_{\mu\in\Lambda^+_M} (e^{\lambda}\ast c)\ast (V^{\lambda+\mu})^*\otimes \Hom_{\check{M}}((U^{\mu})^*, V)\,\iso\\ 
\mathop{\oplus}\limits_{\mu\in\Lambda^+_M} (e^{-\mu}\otimes \Hom_{\check{M}}((U^{\mu})^*, V))\ast (e^{\lambda+\mu}\ast c\ast (V^{\lambda+\mu})^*)\to\\ \mathop{\oplus}\limits_{\mu\in\Lambda_{M, ab}} (e^{-\mu}\otimes \Hom_{\check{M}}((U^{\mu})^*, V))\ast (e^{\lambda+\mu}\ast c\ast (V^{\lambda+\mu})^*),
\end{multline*}
where the latter map is the projection on the corresponding summands. More precisely, when we pass to the colimit over $\lambda$, this becomes the desired morphism.

\sssec{} 
\label{Sect_2.0.17}
For our convenience, we spell a version of the above with $\check{P}^-$ replaced by $\check{P}$.

 Let $C\in \Rep(\check{G})\otimes\Rep(\check{M}_{ab})-mod(\DGCat_{cont})$. As above write $\Rep(\check{G})$-action on the right, and $\Rep(\check{M}_{ab})$-action on the left.
 
 The category of $\cO(\check{G}/[\check{P},\check{P}])-mod(C)$ is described as the category of $c\in C$ equipped with maps
$$
\kappa^{\lambda}: e^{\lambda}\ast c\to c\ast V^{\lambda}, \; \lambda\in\Lambda^+_{M, ab}
$$ 
with the following structures and properties: i) if $\lambda=0$ then $\kappa^{\lambda}$ is identified with the identity map; ii) for $\lambda,\mu\in\Lambda^+_{M, ab}$ we are given a datum of commutativity for the diagram
$$
\begin{array}{ccc}
(c\ast V^{\lambda})\ast V^{\mu} & \iso & c\ast (V^{\lambda}\otimes V^{\mu})\\
\uparrow\lefteqn{\scriptstyle \kappa^{\lambda}} && \downarrow\lefteqn{\scriptstyle u^{\lambda,\mu}}\\
e^{\lambda}\ast c\ast V^{\mu}&& c\ast V^{\lambda+\mu} \\ 
\uparrow\lefteqn{\scriptstyle \kappa^{\mu}} && \uparrow\lefteqn{\scriptstyle  \kappa^{\lambda+\mu}}\\
e^{\lambda}\ast (e^{\mu}\ast c) & \iso & e^{\lambda+\mu}\ast c
\end{array}
$$
iii) a coherent system of higher compatibilities.

 For $c\in C$ a $\cO(\check{G}/[\check{P},\check{P}])$-module structure is the same as the structure of a \select{lax central object} on $c$ in the sense of (\cite{Gai19SI}, 2.7) with respect to the following actions of $\Lambda^+_{M, ab}$ on $C$. The left action of $\lambda\in\Lambda^+_{M, ab}$ is $c\mapsto e^{\lambda}\ast c$. The right lax action of $\lambda$ is $c\mapsto c\ast V^{\lambda}$. The lax structure on the right action is given by 
$$
(c\ast V^{\lambda})\ast V^{\mu}\,\iso\, c\ast (V^{\lambda}\otimes V^{\mu})\toup{u^{\lambda,\mu}} c\ast V^{\lambda+\mu}
$$ 
for $\lambda,\mu\in \Lambda^+_{M, ab}$.

 For $c\in \cO(\check{G}/[\check{P},\check{P}])-mod(C)$ we get as above a well-defined functor 
$$
f: \Lambda^+_{M, ab}\to C, \;\; \lambda\mapsto e^{-\lambda}\ast c\ast V^{\lambda}
$$
For $\lambda_i\in\Lambda^+_{M, ab}$ the transition map from $f(\lambda_1)$ to $f(\lambda_1+\lambda_2)$ is
\begin{multline*}
e^{-\lambda_1}\ast c\ast V^{\lambda_1}\,\iso\, e^{-(\lambda_1+\lambda_2)}\ast (e^{\lambda_2}\ast c)\ast V^{\lambda_1}\toup{\kappa^{\lambda_2}}\\ e^{-(\lambda_1+\lambda_2)}\ast c\ast (V^{\lambda_2}\otimes V^{\lambda_1})\toup{u^{\lambda_1,\lambda_2}} 
e^{-(\lambda_1+\lambda_2)}\ast c\ast V^{\lambda_1+\lambda_2}
\end{multline*}

 We have similarly 
$$
C\otimes_{\Rep(\check{G})\otimes\Rep(\check{M}_{ab})} \Rep(\check{P})\,\iso\, \Gamma(\check{G}/[\check{P},\check{P}])-mod(C),
$$ 
$$
C\otimes_{\Rep(\check{G})\otimes \Rep(\check{M}_{ab})}
\QCoh(\check{G}\backslash \overline{\check{G}/[\check{P},\check{P}]}/\check{M}_{ab})\,\iso\, \cO(\check{G}/[\check{P},\check{P}])-mod(C).
$$ 

 Consider the composition
\begin{multline}
\label{functor_for_2.0.17_for_checkP}
\cO(\check{G}/[\check{P},\check{P}])-mod(C)\to
C\otimes_{\Rep(\check{G})\otimes\Rep(\check{M}_{ab})} \Rep(\check{P})\to \\ C\otimes_{\Rep(\check{G})\otimes\Rep(\check{M}_{ab})} \Rep(\check{M})\toup{\oblv} C,
\end{multline}
where the unnamed arrows are the pullbacks along 
$$
B(\check{M})\to B(\check{P})\to \check{G}\backslash \overline{\check{G}/[\check{P},\check{P}]}/\check{M}_{ab}.
$$ 
A version of Proposition~\ref{Pp_2.0.11} in this case affirms that (\ref{functor_for_2.0.17_for_checkP}) identifies with the functor
$$
c\mapsto \mathop{\colim}\limits_{\lambda\in (\Lambda^+_{M, ab},\le )} \; e^{-\lambda}\ast c\ast V^{\lambda},
$$
the colimit being taken in $C$. 

\ssec{A version of dual baby Verma object} 

\sssec{} Assume $C\in \Rep(\check{P}^-)-mod(\DGCat_{cont})$. We let $\Rep(\check{G})\otimes\Rep(\check{M}_{ab})$ act on $C$ via the pull-back along the diagonal map $B(\check{P}^-)\to B(\check{G}\times \check{M}_{ab})$.  

 Let $\check{P}^-$ act on $\check{G}/[\check{P}^-,\check{P}^-]$ adjointly, so that the unit map $\Spec e\to \check{G}/[\check{P}^-,\check{P}^-]$ is $\check{P}^-$-equivariant. Viewing in this way $\cO(\check{G}/[\check{P}^-,\check{P}^-])\in\Rep(\check{P}^-)$, we get the category $\cO(\check{G}/[\check{P}^-,\check{P}^-])-mod(C)$ and the functor 
\begin{equation}
\label{functor_from_C_for_Sect_2.1.1}
C\to \cO(\check{G}/[\check{P}^-,\check{P}^-])-mod(C)
\end{equation}
of restriction of scalars via the evaluation $\cO(\check{G}/[\check{P}^-,\check{P}^-])\to e$ at $1$.

 
\sssec{}  Let $\Rep(\check{G})\otimes\Rep(\check{M})$ act on $C$ via the pull-back along the diagonal map $B(\check{P}^-)\to B(\check{G}\times \check{M})$. 

 Let $\check{P}^-$ act on $\check{G}/U(\check{P}^-)$ adjointly, so that the unit map $\Spec e\to \check{G}/U(\check{P}^-)$ is $\check{P}^-$-equivariant. Viewing in this way $\cO(\check{G}/U(\check{P}^-))\in \Rep(\check{P}^-)$, we get the category $\cO(\check{G}/U(\check{P}^-))-mod(C)$ and the functor
\begin{equation}
\label{functor_from_C_for_Sect_2.3.2}
C\to \cO(\check{G}/U(\check{P}^-))-mod(C) 
\end{equation} 
of restriction of scalars via the evaluation $\cO(\check{G}/U(\check{P}^-))\to e$ at $1$. 
 
\sssec{} Take for a moment $C=\QCoh(B(\check{P}^-))$. Note that 
$$
\check{P}^-\backslash(\check{M}_{ab}\times\check{G})/\check{P}^-\,\iso\, (\check{G}/[\check{P}^-, \check{P}^-])/\Ad(\check{P}^-),
$$ 
where $\check{P}^-$ acts on $\check{M}_{ab}\times\check{G}$ by left and right translations via the diagonal map $\check{P}^-\to \check{M}_{ab}\times\check{G}$. Write 
$$
j_{ab}: (\check{G}/[\check{P}^-, \check{P}^-])/\Ad(\check{P}^-)\hook{} (\overline{\check{G}/[\check{P}^-, \check{P}^-]})/\Ad(\check{P}^-)
$$
for the natural open immersion.

Consider the diagram where both squares are cartesian
$$
\begin{array}{ccccc}
&& (\overline{\check{G}/[\check{P}^-, \check{P}^-]})/\Ad(\check{P}^-)\\
& \nearrow\lefteqn{\scriptstyle i} & \uparrow\lefteqn{\scriptstyle j_{ab}}\\
B(\check{P}^-) & \gets & \check{P}^-\backslash(\check{M}_{ab}\times\check{G})/\check{P}^- & \gets & \check{M}\backslash (\check{M}_{ab}\times \check{G})/\check{P}^-\\
\downarrow &&\downarrow && \downarrow\\
B(\check{M}_{ab}\times\check{G}) &\gets & B(\check{P}^-) & \getsup{\eta} & B(\check{M}),
\end{array}
$$
we use here the diagonal maps $\check{P}^-\to \check{M}_{ab}\times\check{G}$ to form the diagram. Here $i$ is the closed immersion obtained by passing to the stack quotients under the $\check{P}^-$-actions on the unit map 
$$
\Spec e\to \overline{\check{G}/[\check{P}^-, \check{P}^-]}.
$$ 

 The functor (\ref{functor_from_C_for_Sect_2.1.1}) in these terms is nothing but $i_*$. 
By (\cite{G}, ch. I.1, 3.3.5) we have 
$$
C\otimes_{\Rep(\check{G})\otimes\Rep(\check{M}_{ab})} \Rep(\check{P}^-)\,\iso\, \QCoh(\check{P}^-\backslash(\check{M}_{ab}\times\check{G})/\check{P}^-)
$$
Now 
$$
\check{M}\backslash (\check{M}_{ab}\times G)/\check{P}^-\,\iso\, (\check{G}/[\check{P}^-, \check{P}^-])/\Ad(\check{M})
$$ 
Let 
$$
i_M: B(\check{M})\hook{} (\check{G}/[\check{P}^-, \check{P}^-])/\Ad(\check{M})
$$ 
be the closed immersion given by the point $1$. Taking for $c$ the trivial $\check{P}^-$-module $e\in C$, applying (\ref{functor_from_C_for_Sect_2.1.1}) and further (\ref{tens_product_in_C_for_Bunb_P}) one gets the direct image $(i_M)_*\cO$ of the structure sheaf on $B(\check{M})$. 

 Note that its further direct image to $B(\check{P}^-)$ identifies with $\cO(\check{P}^-/\check{M})\in \Rep(\check{P}^-)$. So, Proposition~\ref{Pp_2.0.11} gives for this $c$ an isomorphism
\begin{equation}
\label{iso_in_Rep_checkP-}
\mathop{\colim}\limits_{\lambda\in(\Lambda^+_{M, ab},\le )} e^{\lambda}\otimes (V^{\lambda})^*\,\iso\,\cO(\check{P}^-/\check{M})
\end{equation}
in $\Rep(\check{P}^-)$, here the colimit is taken in $\Rep(\check{P}^-)$.

\sssec{} Note that $i$ decomposes as
$$
B(\check{P}^-)\toup{i'} \overline{\check{G}/U(\check{P}^-)}/\Ad(\check{P}^-)\to (\overline{\check{G}/[\check{P}^-, \check{P}^-]})/\Ad(\check{P}^-)
$$
The functor (\ref{functor_from_C_for_Sect_2.3.2}) is nothing but $i'_*$. 
Note that 
$$
\check{P}^-\backslash (\check{M}\times\check{G})/\check{P}^-\,\iso\, (\check{G}/U(\check{P}^-))/\Ad(\check{P}^-),
$$
where $\check{P}^-$ acts on $\check{M}\times\check{G}$ by left and right translations via the diagonal map $\check{P}^-\to \check{M}\times\check{G}$. Write
$$
j'_{ab}: (\check{G}/U(\check{P}^-))/\Ad(\check{P}^-)\hook{} (\overline{\check{G}/U(\check{P}^-)})/\Ad(\check{P}^-)
$$
for the natural open immersion.

Consider the diagram, where both squares are cartesian
$$
\begin{array}{ccccc}
&& (\overline{\check{G}/U(\check{P}^-)})/\Ad(\check{P}^-)\\
& \nearrow\lefteqn{\scriptstyle i'} & \uparrow\lefteqn{\scriptstyle j'_{ab}}\\
B(\check{P}^-) & \gets & \check{P}^-\backslash(\check{M}\times\check{G})/\check{P}^- & \gets & \check{M}\backslash (\check{M}\times \check{G})/\check{P}^-\\
\downarrow &&\downarrow && \downarrow\\
B(\check{M}\times\check{G}) &\gets & B(\check{P}^-) & \getsup{\eta} & B(\check{M}),
\end{array}
$$
we used the diagonal map $\check{P}^-\to \check{M}\times\check{G}$ to form this diagram. 
 
 Note that
$$
\check{M}\backslash (\check{M}\times \check{G})/\check{P}^-\,\iso\, (\check{G}/U(\check{P}^-))/\Ad(\check{M})
$$ 
Let 
$$
i'_M: B(\check{M})\to (\check{G}/U(\check{P}^-))/\Ad(\check{M})
$$ 
be the closed immeersion given by the point 1. Taking for $c$ the trivial $\check{P}^-$-module $e\in C$, applying (\ref{functor_from_C_for_Sect_2.3.2}) and further (\ref{left_adj_DR-PL_for_P}) one gets $(i'_M)_*\cO\in \Hecke_{\check{G}, \check{M}}(C)$. Its further direct image to $B(\check{P}^-)$ identifies with $\cO(\check{P}^-/M)\in\Rep(\check{P}^-)$. 

So, as in Proposition~\ref{Pp_2.1.10_colimit_for_Bunt_P}, (\ref{iso_in_Rep_checkP-}) naturally lifts to an object of both $\Hecke_{\check{G}, \check{M}}(C)$ and $\Hecke_{\check{G}, \check{M}, ab}(C)$. In particular, it has the Hecke property similar to that of $\IC_{\Bunt_P}$, cf. Section~\ref{Sect_2.3.12_local_vs_global}.  

\begin{Rem} One may consider the compactification $U(\check{P}^-)\hook{} \check{G}/\check{P}$ and describe (\ref{iso_in_Rep_checkP-}) as the cohomology of the structure sheaf of $\check{G}/\check{P}$ with prescribed poles along the boundary, as the order of poles goes to infinity. Namely, for $i\in \cI$ write $s_i\in W$ for the corresponding simple reflection. Set $W^M=\{w\in W\mid \ell(w s_i)>\ell(w), \; \mbox{for all}\; i\in\cI_M\}$. 
The multiplication $W^M\times W_M\to W$ is bijective by (\cite{Sw}, 2.3.1). If $w\in W$ then $W^M\cap wW_M$ consists of a unique element denoted $w^M$. Then $\check{G}/\check{P}-U(\check{P}^-)$ is a divisor on $\check{G}/\check{P}$, whose irreducible components are the closures of $w_0\check{B}(w_0s_i)^M \check{P}=\check{B}^-s_i \check{P}$ for $i\in \cI-\cI_M$ by (\cite{Sw}, 3.3.3).
\end{Rem}

\sssec{} Let $\check{\gg}=\Lie\check{G}$, $\check{\gu}(P^-)=\Lie U(\check{P}^-)$. Let $\cO=k[[t]]\subset F=k((t))$. Write $\Gr_G$ for the affine Grassmanian of $G$ viewed as the moduli of pairs $(\cF_G, \beta)$, where $\cF_G$ is a $G$-torsor on $D=\Spec\cO$ with a trivialization $\beta: \cF_G\,\iso\,\cF^0_G\mid_{D^*}$, here $D^*=\Spec F$. Write $\Sat: \Rep(\check{G})\to Shv(\Gr_G)^{G(\cO)}$ for the Satake functor.

Write $I_P$ for the preimage of $P$ under $G(\cO)\to G$, this is a parahoric subgroup. Consider the full subcategory 
$$
Shv(\Gr_G)^{I_P, constr}\subset Shv(\Gr_G)^{I_P}
$$
of those objects whose image in $Shv(\Gr_G)$ is compact. Then $Shv(\Gr_G)^{I_P, constr}\in\DGCat^{non-cocmpl}$. Set
$$
Shv(\Gr_G)^{I_P, ren}=\Ind(Shv(\Gr_G)^{I_P, constr})
$$

 The renormalization is a general procedure, for algebraic stacks locally of finite type with an affine diagonal it is studied in (\cite{AGKRRV}, F.5). As in Section~\ref{Sect_A.5.4}, we have an adjoint pair $Shv(\Gr_G)^{I_P}\leftrightarrows Shv(\Gr_G)^{I_P, ren}$ in $\DGCat_{cont}$, where the left adjoint is fully faithful.  

\sssec{} 
\label{Sect_2.1.4_objects_cB}
For $\mu\in\Lambda^+_M$ write $\cB_{\mu, !}, \cB_{\mu, *}\in Shv(\Gr_G)^{I_P}$ for the $\IC$-sheaf of $I_Pt^{\mu}G(\cO)/G(\cO)$ extended by zero (resp., by $*$-extension) to $\Gr_G$. 
 
 If $\mu\in\Lambda_{M, ab}$ then $I_Pt^{\mu}G(\cO)/G(\cO)=It^{\mu}G(\cO)/G(\cO)$, where $I\subset G(\cO)$ is the Iwahori subgroup. If moreover $\mu\in\Lambda^+_{M, ab}$ then $It^{\mu}G(\cO)/G(\cO)=U(\cO)t^{\mu}G(\cO)/G(\cO)=S_B^{\mu}\cap \ov{\Gr}_G^{\mu}$ by (\cite{MV}, proof of Theorem~3.2). In the latter case the open embedding $I_Pt^{\mu}G(\cO)/G(\cO)\hook{} \ov{\Gr}_G^{\mu}$ is affine by (\cite{MV}, 3.1), so that $\cB_{\mu, !}, \cB_{\mu, *}$ are perverse. Note that for $\mu\in\Lambda^+_{M, ab}$ we have a canonical map
\begin{equation}
\label{map_from_Sat(Vmu)_to_cB_mu*}
\Sat(V^{\mu})\to \cB_{\mu, *}
\end{equation}
of $I_P$-equivariant perverse sheaves on $\Gr_G$. 

\sssec{} Let $\Fl_P=G(F)/I_P$ and $\cH_P(G)=Shv(\Fl_P)^{I_P}$. It is well known that 
$(\cH_P(G), *)$ acts on $Shv(\Gr_G)^{I_P}$ by convolutions. It also similarly acts on 
$Shv(\Gr_G)^{I_P, ren}$. 

 Indeed, $\cH_P(G)$ is compactly generated. So, it suffices to show that $\cH_P(G)^c$ acts on $Shv(\Gr_G)^{I_P, constr}$ naturally. Given $K\in \cH_P(G)^c$, there is a $I_P$-invariant closed subscheme of finite type $Y\subset \Fl_P$ such that $\oblv(K)\in Shv(\Fl_P)$ is the extension by zero from $Y$. Let $\tilde Y\subset G(F)$ be the preimage of $Y$ under $G(F)\to \Fl_P$. The desired claim follows now from the fact that the convolution map $\tilde Y\times^{I_P} \Gr_G\to \Gr_G$ is proper.
 
  Write $\tilde W$ for the affine extended Weyl group of $(G, T)$. The $I_P$-orbits on $\Fl_P$ are indexed by $W_M\backslash \tilde W/W_M$. For $w\in \tilde W$ write $j_{w, !}, j_{w, *}\in \cH_P(G)$ for the standard and costandard objects attached to $w\in \tilde W$ and normalized to be perverse on the $I_P$-orbit $I_PwI_P/I_P$. For $\lambda\in\Lambda$ we write for brevity $j_{\lambda, !}=j_{t^{\lambda}, !}$ and $j_{\lambda, *}=j_{t^\lambda, *}$. 

 For the definition of the category $\IndCoh$ on a quasi-smooth Artin stack with a specified singular support condition we refer to (\cite{AG}, Section~8). Gurbir Dhillon and Harrison Chen have proven the following, see \cite{CD} (compare also with Conjecture~3.6.1 in \cite{BL}).
\begin{Pp} 
\label{Pp_Chen_Dhillon}
There is a canonical equivalence
\begin{equation}
\label{iso_Gurbir_Chen}
\IndCoh((\check{\gu}(P^-)\times_{\check{\gg}} 0)/\check{P}^-)\,\iso\, Shv(\Gr_G)^{I_P, ren}
\end{equation}
with the following properties:\\
(i) The $\Rep(\check{G})$-action on $\IndCoh((\check{\gu}(P^-)\times_{\check{\gg}} 0)/\check{P}^-)$ arising from the projection
$$
(\check{\gu}(P^-)\times_{\check{\gg}} 0)/\check{P}^-\to pt/\check{P}^-\to pt/\check{G}
$$
corresponds to the $\Rep(\check{G})$-action on $Shv(\Gr_G)^{I_P, ren}$ via $\Sat: \Rep(\check{G})\to Shv(\Gr_G)^{G(\cO)}$ and the right convolutions.

\smallskip\noindent
(ii) The $\Rep(\check{M}_{ab})$-action on $\IndCoh((\check{\gu}(P^-)\times_{\check{\gg}} 0)/\check{P}^-)$ arising from the projection
$$
(\check{\gu}(P^-)\times_{\check{\gg}} 0)/\check{P}^-\to pt/\check{P}^-\to pt/\check{M}\to pt/\check{M}_{ab}
$$
corresponds to the $\Rep(\check{M}_{ab})$-action on $Shv(\Gr_G)^{I_P, ren}$ such that for $\lambda\in\Lambda_{M, ab}^+$, $e^{\lambda}$ sends $F$ to $j_{\lambda, *}\ast F$. So, it comes from the monoidal functor (\ref{mon_functor_*}). 
  
\smallskip\noindent
iii) The object $\cO_{pt/\check{P}^-}\in \IndCoh((\check{\gu}(P^-)\times_{\check{\gg}} 0)/\check{P}^-)$ corresponds under (\ref{iso_Gurbir_Chen}) to $\delta_{1,\Gr_G}\in Shv(\Gr_G)^{I_P, ren}$. 

\smallskip\noindent
iv)  For $\lambda\in\Lambda^+_{M, ab}$ the map $(v^{\lambda})^*: V^{\lambda}\to e^{\lambda}$ in $\Rep(\check{P}^-)$ corresponds under (\ref{iso_Gurbir_Chen}) to the morphism $\Sat(V^{\lambda})\to \cB_{\lambda,*}$ in $Shv(\Gr_G)^{I_P, ren}$ given by (\ref{map_from_Sat(Vmu)_to_cB_mu*}).

\smallskip\noindent
v) The equivalence (\ref{iso_Gurbir_Chen}) restricts to an equivalence of full subcategories
$$
\IndCoh_{\Nilp}((\check{\gu}(P^-)\times_{\check{\gg}} 0)/\check{P}^-)\,\iso\, Shv(\Gr_G)^{I_P},
$$
here $\Nilp$ stands for the nilpotent singular support.
\end{Pp}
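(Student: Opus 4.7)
The natural strategy is to bootstrap from the Iwahori case $P=B$ via parabolic descent. I would begin by invoking the affine Arkhipov--Bezrukavnikov--Ginzburg type equivalence
$$
\IndCoh((\check{\gn}(\check{B}^-)\times_{\check{\gg}} 0)/\check{B}^-) \,\iso\, Shv(\Gr_G)^{I, ren},
$$
together with the analogues of properties (i)--(v) in that setting (right $\Rep(\check{G})$-convolutions, the $\Rep(\check{T})$-action by $j_{\lambda,*}$, the unit, and the nilpotent singular support restriction). The task then reduces to descending from $\check{B}^-$ to $\check{P}^-$ on the spectral side in a way compatible with the descent from $I$ to $I_P$ on the automorphic side.

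For the descent step, the key geometric input is $I_P = I \cdot M(\cO)$, giving $I_P/I \simeq M/B_M$; dually, $\check{P}^-/\check{B}^- \simeq \check{M}/\check{B}_M^-$. The inclusion $\check{\gu}(P^-) \subset \check{\gn}(\check{B}^-)$ is $\check{B}^-$-equivariant with complement $\check{\gn}(\check{B}_M^-)$ living inside $\check{\gm}$, which together with the projection $\check{\gb}^- \to \check{\gm}$ yields a homotopy-Cartesian diagram realizing $(\check{\gu}(P^-)\times_{\check{\gg}} 0)/\check{P}^-$ as the appropriate $\check{M}/\check{B}_M$-descent of $(\check{\gn}(\check{B}^-)\times_{\check{\gg}} 0)/\check{B}^-$. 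Passing to $\IndCoh$ and using proper descent along $\check{P}^-/\check{B}^- \to \mathrm{pt}$, one obtains a canonical functor matching, under ABG, the parallel $M/B_M$-averaging on the automorphic side that produces $Shv(\Gr_G)^{I_P, ren}$ from $Shv(\Gr_G)^{I, ren}$.

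The compatibilities (i)--(v) are then verified by transport from the $P=B$ case combined with tracking of generators. Properties (i)--(ii) descend from the Iwahori-equivariant analogues; property (iii) follows because both $\cO_{pt/\check{P}^-}$ and $\delta_{1,\Gr_G}$ are units for the respective descent adjunctions starting from the Iwahori units; property (iv) reduces, via the Hecke compatibility, to identifying a single map of irreducibles, pinned down up to scalar by its behavior on the open stratum $S_B^\lambda \cap \ov{\Gr}_G^\lambda$; and property (v) follows by combining the analogous statement in the Iwahori case with the compactness of the $M/B_M$-averaging functor, which preserves the nilpotent singular support condition.

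The main technical obstacle is the derived bookkeeping. The fiber product $\check{\gu}(P^-)\times_{\check{\gg}} 0$ is genuinely derived (quasi-smooth but not classical), and the descent between the $\check{B}^-$- and $\check{P}^-$-quotients must respect this derived structure precisely. In parallel, property (ii) prescribes the $\Rep(\check{M}_{ab})$-action via $j_{\lambda,*}$ rather than $j_{\lambda,!}$, so this asymmetric normalization must be tracked through the descent; coherently verifying the interaction between renormalization, the $t$-structure, and the $M/B_M$-averaging is where the bulk of the work will live.
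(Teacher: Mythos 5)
The paper does not actually prove this statement: Proposition~\ref{Pp_Chen_Dhillon} is imported as a theorem of Chen and Dhillon, with \cite{CD} cited as the source, so there is no internal argument to compare yours against. What can be assessed is whether your sketch would stand on its own, and as written it does not.

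Your overall strategy (bootstrap from the Iwahori ABG equivalence and descend from $\check{B}^-$ to $\check{P}^-$, matching this with the passage from $I$ to $I_P$) is a sensible one, but the central descent step is asserted rather than proved, and it is not correct as literally stated. There is no homotopy-Cartesian square exhibiting $(\check{\gu}(P^-)\times_{\check{\gg}}0)/\check{P}^-$ as an ``$\check{M}/\check{B}_M$-descent'' of $(\check{\gn}^-\times_{\check{\gg}}0)/\check{B}^-$: the underlying derived schemes are different ($\check{\gu}(P^-)$ is a proper subspace of $\check{\gn}^-$), and $\check{\gn}^-$ is not $\check{P}^-$-stable in $\check{\gg}$, so one cannot simply quotient further by $\check{P}^-/\check{B}^-$. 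What one actually has is a correspondence
$$
(\check{\gn}^-\times_{\check{\gg}}0)/\check{B}^-\;\hookleftarrow\;(\check{\gu}(P^-)\times_{\check{\gg}}0)/\check{B}^-\;\to\;(\check{\gu}(P^-)\times_{\check{\gg}}0)/\check{P}^-,
$$
a quasi-smooth closed embedding followed by a smooth gerbe-like projection, and the substance of the theorem is (a) to show that the associated push-pull functor is (co)monadic and identifies $\IndCoh$ of the right-hand stack with the correct full subcategory or localization of $\IndCoh((\check{\gn}^-\times_{\check{\gg}}0)/\check{B}^-)$, and (b) to show that under ABG this functor matches $\Av_*^{I_P/I}$ (equivalently, that $Shv(\Gr_G)^{I_P,ren}\subset Shv(\Gr_G)^{I,ren}$ is cut out by the spectrally corresponding condition), with the precise cohomological shifts and the $*$- versus $!$-normalizations that properties (ii) and (iv) demand. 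None of this is ``derived bookkeeping''; it is the theorem. Similarly, for (v) the behavior of singular support under this correspondence (a closed embedding changes singular support nontrivially) must be computed, not inferred from compactness of the averaging functor. As it stands your text records a plausible plan together with the list of things that would need to be checked, which is not yet a proof.
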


\begin{Rem} We also expect that the $\Rep(\check{M})$-action on $Shv(\Gr_G)^{H, ren}$
given below by (\ref{action_Rep(checkM)_shifted}) is related 
to the $\Rep(\check{M})$-action on $\IndCoh((\check{\gu}(P^-)\times_{\check{\gg}} 0)/\check{P}^-)$ arising from the projection
$$
(\check{\gu}(P^-)\times_{\check{\gg}} 0)/\check{P}^-\to pt/\check{P}^-\to pt/\check{M}
$$
via (\ref{iso_Gurbir_Chen}) composed with the equivalence (\ref{eq_ren_parahoric_versus_H}).
\end{Rem}

\sssec{} Now take $C=\IndCoh((\check{\gu}(P^-)\times_{\check{\gg}} 0)/\check{P}^-)$ equipped with an action of $\Rep(\check{P}^-)$ coming from the projection
$$
(\check{\gu}(P^-)\times_{\check{\gg}} 0)/\check{P}^-\to B(\check{P}^-)
$$
Let $c\in C$ be the direct image of the structure sheaf $\cO$ along the closed embedding
$$
\{0\}/\check{P}^-\to (\check{\gu}(P^-)\times_{\check{\gg}} 0)/\check{P}^-
$$
Applying to $c$ the functors (\ref{functor_from_C_for_Sect_2.1.1}) and further (\ref{left_adjoint_ab_case}), one gets an object of $\Hecke_{\check{G},\check{M} ,ab}(C)$
denoted $\cM_{\check{G},\check{P}^-}$. This is a version of the dual baby Verma object we are interested in.

\sssec{} 
\label{Sect_2.3.10_almost_final_version}
For future applications, we write down an analog of the isomorphism (\ref{iso_in_Rep_checkP-}) with $\check{P}^-$ replaced by $\check{P}$. Take for a moment $C=\QCoh(B(\check{P}))$. Consider the diagram, where both squares are cartesian
$$
\begin{array}{ccccc}
B(\check{P}) & \gets & \check{P}\backslash(\check{M}_{ab}\times\check{G})/\check{P} & \gets & \check{M}\backslash (\check{M}_{ab}\times \check{G})/\check{P}\\
\downarrow &&\downarrow && \downarrow\\
B(\check{M}_{ab}\times\check{G}) &\gets & B(\check{P}) & \gets & B(\check{M}),
\end{array}
$$
we use here the diagonal maps $\check{P}\to \check{M}_{ab}\times\check{G}$ to form the diagram. 

 The analog of (\ref{functor_from_C_for_Sect_2.1.1}) for $\check{P}$ is the functor
\begin{equation}
\label{functor_from_C_for_checkP_Sect_2.1.6}
C\to \cO(\check{G}/[\check{P}, \check{P}])-mod(C)
\end{equation} 
sending $c$ to itself with the action maps $e^{\lambda}\ast c\ast (V^{\lambda})^*\to c$ obtained by appling the functor $\act(\cdot, c): \Rep(\check{P})\to C$ to $v^{\lambda}: e^{\lambda}\otimes (V^{\lambda})^*\to e$. 

 Taking for $c$ the trivial $\check{P}$-module, applying (\ref{functor_from_C_for_checkP_Sect_2.1.6}) and further the pullbacks
$$
\cO(\check{G}/[\check{P}, \check{P}])-mod(C)\to C\otimes_{\Rep(\check{G})\otimes\Rep(\check{M}_{ab})} \Rep(\check{P})\to C\otimes_{\Rep(\check{G})\otimes\Rep(\check{M}_{ab})} \Rep(\check{M}),
$$
one gets an object of $\QCoh(\check{M}\backslash (\check{M}_{ab}\times \check{G})/\check{P})$ whose direct image to $B(\check{P})$ identifies with $\cO(\check{P}/\check{M})\in \Rep(\check{P})$. So, an analog of Proposition~\ref{Pp_2.0.11} gives for this $c$ an isomorphism 
\begin{equation}
\label{iso_O(P/M)_for_Sect_2.1.6}
\mathop{\colim}_{\lambda\in(\Lambda^+_{M, ab},\le )} e^{-\lambda}\otimes V^{\lambda}\,\iso\, \cO(\check{P}/\check{M})
\end{equation}
in $\Rep(\check{P})$. Here the colimit is taken in $\Rep(\check{P})$, and the inductive system is described in Section~\ref{Sect_2.0.17}. 

 Exchanging the roles of $P$ and $P^-$, one gets an analog of the object $\cM_{\check{G}, \check{P}^-}$ denoted by $\cM_{\check{G}, \check{P}}\in\Hecke_{\check{G}, \check{M}, ab}(C')$, where $C'=\IndCoh((\check{\gu}(P)\times_{\check{\gg}} 0)/\check{P})$. 

\sssec{} We need the following generalization of the isomorphism (\ref{iso_O(P/M)_for_Sect_2.1.6}). Fix $\eta\in\Lambda^+_M$. Consider the diagram
\begin{equation}
\label{diag_for_Sect_2.1.7}
\{\lambda\in\Lambda_{M, ab}\mid \lambda+\eta\in\Lambda^+\}\to \Rep(\check{P}), \;\lambda\mapsto e^{-\lambda}\otimes V^{\lambda+\eta}
\end{equation}
Here we consider $\{\lambda\in\Lambda_{M, ab}\mid \lambda+\eta\in\Lambda^+\}$ with the relation $\lambda_1\le \lambda_2$ iff $\lambda_2-\lambda_1\in\Lambda^+$. This is not an order relation, but defines instead a structure of a filtered category on this set. 

 For $\lambda_i\in \Lambda_{M, ab}$ with $\lambda_i+\eta\in\Lambda^+$ and $\lambda=\lambda_2-\lambda_1\in\Lambda^+$ the transition map  
$$
e^{-\lambda_1}\otimes V^{\lambda_1+\eta}\to e^{-\lambda_2}\otimes V^{\lambda_2+\eta}
$$
is the composition
\begin{multline*}
e^{-\lambda_1}\otimes V^{\lambda_1+\eta}\,\iso\, e^{-\lambda_2}\otimes e^{\lambda}\otimes V^{\lambda_1+\eta}\toup{v^{\lambda}} e^{-\lambda_2}\otimes V^{\lambda}\otimes V^{\lambda_1+\eta}\toup{u^{\lambda, \lambda_1+\eta}} e^{-\lambda_2}\otimes V^{\lambda_2+\eta},
\end{multline*}

 Write $\coind_{\check{M}}^{\check{P}}: \Rep(\check{M})\to \Rep(\check{P})$ for the right adjoint to the restriction functor.
\begin{Lm} 
\label{Lm_2.1.8_some_colimit}
Let $\eta\in\Lambda^+_M$. One has canonically in $\Rep(\check{P})$
\begin{equation}
\label{iso_for_Lm_2.1.8}
\mathop{\colim}\limits_{\lambda\in\Lambda_{M, ab},\; \lambda+\eta\in\Lambda^+} \; e^{-\lambda}\otimes V^{\lambda+\eta}\,\iso\, \coind_{\check{M}}^{\check{P}}(U^{\eta})
\end{equation}
\end{Lm}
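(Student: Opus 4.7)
The plan is to geometrically realize both sides as sections of the same vector bundle via a Borel--Weil restriction to the big cell, followed by a ``twist-and-extend'' argument. By Borel--Weil for $\check{P}^-$, one has $V^{\lambda+\eta}\iso H^0(\check{G}/\check{P}^-,\cV^{\lambda+\eta})$, where $\cV^{\lambda+\eta}$ is the $\check{G}$-equivariant vector bundle associated to the $\check{P}^-$-module $U^{\lambda+\eta}\iso e^\lambda\otimes U^\eta$ (extending trivially on $U(\check{P}^-)$). Similarly, $\coind_{\check{M}}^{\check{P}}(U^\eta)\iso H^0(\check{P}/\check{M},\cV^\eta)$. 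The big cell $\check{P}\check{P}^-/\check{P}^-\iso \check{P}/\check{M}\iso U(\check{P})$ is open dense in $\check{G}/\check{P}^-$, with complement a Schubert divisor $D=\bigcup_{i\in\cI\setminus\cI_M}\overline{\check{B}^-s_i\check{P}^-/\check{P}^-}$.

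For $\lambda\in\Lambda^+_{M,ab}$, the factorization $\cV^{\lambda+\eta}\iso \cV^\eta\otimes \cL^\lambda$ allows one to trivialize $\cL^\lambda$ on the big cell via the Pl\"ucker section $v^\lambda\in V^\lambda$: since $v^\lambda$ is fixed by $U(\check{B})\supset U(\check{P})$, it takes the constant value $1\in e^\lambda$ on $U(\check{P})\cdot\check{P}^-/\check{P}^-$ (using representatives in $U(\check{P})$), and hence trivializes $\cL^\lambda$ there. Restricting global sections to the big cell and using this trivialization then yields a $\check{P}$-equivariant injection
$$
\psi_\lambda: e^{-\lambda}\otimes V^{\lambda+\eta}\hookrightarrow \coind_{\check{M}}^{\check{P}}(U^\eta).
$$
Compatibility of the $\psi_\lambda$ with the prescribed transition maps reduces to the defining property $u^{\lambda_2-\lambda_1,\lambda_1+\eta}(v^{\lambda_2-\lambda_1}\otimes v^{\lambda_1+\eta})=v^{\lambda_2+\eta}$ of the multiplication maps of Section~2.1.2, so that both routes produce the same section upon restriction to the big cell.

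To show the induced map $\psi:\mathop{\colim}_\lambda e^{-\lambda}\otimes V^{\lambda+\eta}\to\coind_{\check{M}}^{\check{P}}(U^\eta)$ is an isomorphism, I use the classical fact that any algebraic section of $\cV^\eta$ on the big cell extends to a rational section of $\cV^\eta$ on $\check{G}/\check{P}^-$ with possible poles along $D$; as $\lambda$ grows in the relative interior of $\Lambda^+_{M,ab}$ (the ample cone of $\check{G}/\check{P}^-$), the section $v^\lambda$ vanishes to arbitrarily high order on each irreducible component of $D$, so multiplying by $v^\lambda$ for $\lambda$ sufficiently large absorbs all poles and produces a global section of $\cV^{\lambda+\eta}$. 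Hence $\psi$ is surjective, and injectivity is automatic from density of the big cell. The hardest part will be verifying cleanly that the geometric construction yields exactly the prescribed transition maps rather than differing by a nonzero scalar; an alternative purely algebraic route constructs $\psi_\lambda$ via Frobenius reciprocity as the $\check{M}$-equivariant composition $e^{-\lambda}\otimes V^{\lambda+\eta}\twoheadrightarrow e^{-\lambda}\otimes U^{\lambda+\eta}\xrightarrow{(\bar u^{\lambda,\eta})^{-1}} U^\eta$, with the first map the canonical projection onto $(V^{\lambda+\eta})^{U(\check{P})}\iso U^{\lambda+\eta}$, and establishes the isomorphism by computing the multiplicity spaces $\dim\Hom_{\check{M}}(U^{\lambda+\mu},V^{\lambda+\eta})$, which stabilize as $\lambda$ grows and match the decomposition of $\cO(U(\check{P}))\otimes U^\eta\iso \coind_{\check{M}}^{\check{P}}(U^\eta)|_{\check{M}}$.
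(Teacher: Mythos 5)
Your geometric argument is correct but takes a genuinely different route from the paper. The paper's proof is purely algebraic: it constructs the forward map $e^{-\lambda}\otimes V^{\lambda+\eta}\to\coind_{\check M}^{\check P}(U^\eta)$ by Frobenius reciprocity from $(v^{\lambda+\eta})^*$ (which matches your ``alternative algebraic route'' construction of $\psi_\lambda$, up to the slight misphrasing that one projects onto $(V^{\lambda+\eta})_{U(\check P^-)}$ rather than ``onto'' the subspace $(V^{\lambda+\eta})^{U(\check P)}$), then constructs an explicit inverse map by reducing to the already-established $\eta=0$ case (\ref{iso_O(P/M)_for_Sect_2.1.6}), first for $\eta\in\Lambda^+$ and then for general $\eta\in\Lambda_M^+$ via twisting by $e^{-\lambda_0}$ and the projection formula. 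Your primary Borel--Weil/big-cell argument buys a clean conceptual picture and avoids the reduction to $\eta\in\Lambda^+$: the statement that a section of $\cV^\eta$ on $U(\check P)$ extends to a rational section of $\cV^{\lambda+\eta}$ on $\check G/\check P^-$ with pole order absorbed by $v^\lambda$ for $\lambda$ deep in $\Lambda^+_{M,ab}$ gives surjectivity of the colimit map directly. What it costs is the additional burden you already flagged---pinning down the exact normalization of $\psi_\lambda$ against the paper's chosen transition maps $u^{\lambda_2-\lambda_1,\lambda_1+\eta}$ and $v^{\lambda_2-\lambda_1}$, and making rigorous the order-of-vanishing bound for $v^\lambda$ along each boundary component $D_i$ (it vanishes to order $\langle\lambda,\check\alpha_i\rangle$, so ``deep enough'' works)---plus the dependence on Borel--Weil rather than pure highest-weight combinatorics. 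Your fallback multiplicity-space computation for $\dim\Hom_{\check M}(U^{\lambda+\mu},V^{\lambda+\eta})$ is also a valid third route (it amounts to the restriction statement behind the paper's Lemma~\ref{Lm_2.0.15}), and again differs from the paper's explicit-inverse strategy.
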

\begin{proof}
{\bf Step 1}. Let $\lambda\in\Lambda_{M, ab}$ with $\lambda+\eta\in\Lambda^+$. By Frobenius reciprocity, a $\check{P}$-equivariant map $e^{-\lambda}\otimes V^{\lambda+\eta}\to \coind_{\check{M}}^{\check{P}}(U^{\eta})$ is the same as a $\check{M}$-equivariant map $e^{-\lambda}\otimes V^{\lambda+\eta}\to U^{\eta}$. The latter is the same as a $\check{M}$-equivariant morphism 
$$
V^{\lambda+\eta}\to U^{\eta}\otimes e^{\lambda}\,\iso\, U^{\lambda+\eta}\,\iso\, \coind_{\check{B}^-_M}^{\check{M}} e^{\lambda+\eta}
$$ 
The latter comes from the $\check{B}^-_M$-equivariant morphism $(v^{\lambda+\eta})^*: V^{\lambda+\eta}\to e^{\lambda+\eta}$. 

 It is easy to check that the morphism so obtained are compatible with the transition maps in the diagram (\ref{diag_for_Sect_2.1.7}). It remains to check that the obtained map (\ref{iso_for_Lm_2.1.8}) is an isomorphism. 
 
  The $\check{P}$-module $\coind_{\check{M}}^{\check{P}}(U^{\eta})$ identifies with $\cO(\check{P}/\check{M})\otimes U^{\eta}$, where $\check{P}$ acts diagonally. Here $\check{P}$ acts by left translations on $\check{P}/\check{M}$,  by functoriality on $\cO(\check{P}/\check{M})$, and via the quotient $\check{P}\to \check{M}$ on $U^{\eta}$.
   
\medskip\noindent
{\bf Step 2} Assume in addition $\eta\in\Lambda^+$. Then we construct a morphism of $\check{P}$-modules 
\begin{equation}
\label{map_for_proof_of_Lm_2.1.8}
\coind_{\check{M}}^{\check{P}}(U^{\eta})\to \mathop{\colim}\limits_{\lambda\in\Lambda_{M, ab},\; \lambda+\eta\in\Lambda^+} \; e^{-\lambda}\otimes V^{\lambda+\eta}
\end{equation}
as follows. For any $\lambda\in\Lambda^+_{M, ab}$ consider the morphism
$$
e^{-\lambda}\otimes V^{\lambda}\otimes V^{\eta}\toup{v^{\lambda,\eta}} e^{-\lambda}\otimes V^{\lambda+\eta}
$$ 
in $\Rep(\check{P})$. These morphisms are compatible with the transition maps in the inductive systems (\ref{diag_for_Sect_2.1.7}) and (\ref{iso_O(P/M)_for_Sect_2.1.6}). Passing to the colimit, from (\ref{iso_O(P/M)_for_Sect_2.1.6}) we get a morphism
$$
\cO(\check{P}/\check{M})\otimes V^{\eta}\to  \mathop{\colim}\limits_{\lambda\in\Lambda_{M, ab},\; \lambda+\eta\in\Lambda^+} \; e^{-\lambda}\otimes V^{\lambda+\eta}
$$
in $\Rep(\check{P})$. Now (\ref{map_for_proof_of_Lm_2.1.8}) is defined as the restriction of the latter map under $U^{\eta}\hook{} V^{\eta}$. The two morphisms so obtained are inverse of each other. 

\medskip\noindent
{\bf Step 3}. Let now $\eta\in\Lambda^+_M$. We reduce our claim to the case of Step 2 as follows. Pick $\lambda_0\in\Lambda^+_{M, ab}$ such that $\eta_0=\eta+\lambda_0\in\Lambda^+$. The LHS of (\ref{iso_for_Lm_2.1.8}) identifies with
$$
\mathop{\colim}\limits_{\lambda\in\Lambda_{M, ab},\; \lambda+\lambda_0+\eta\in\Lambda^+} \; e^{-\lambda-\lambda_0}\otimes V^{\lambda+\lambda_0+\eta}\,\iso\, e^{-\lambda_0}\otimes \coind_{\check{M}}^{\check{P}}(U^{\eta_0})
$$
by Steps 1 and 2. By the projection formula,  
$$
e^{-\lambda_0}\otimes \coind_{\check{M}}^{\check{P}}(U^{\eta_0})\,\iso\, \coind_{\check{M}}^{\check{P}}(e^{-\lambda_0}\otimes U^{\eta_0})
$$
Since $e^{-\lambda_0}\otimes U^{\eta_0}\,\iso\, U^{\eta}$ in $\Rep(\check{M})$, we are done.
\end{proof}

\section{Parabolic semi-infinite category of sheaves}
\ssec{Finite-dimensional counterpart}

\sssec{} 
\label{Sect_Local automorphic side_begins}
Let us explain that $H:=U(P)(F)M(\cO)$ is a placid ind-scheme. We equip $\Lambda^+_{M, ab}$ with the relation $\le$ as in Section~\ref{Sect_2.0.11_functor_f}. For $\lambda\in\Lambda^+_{M, ab}$ set $H_{\lambda}=t^{-\lambda}P(\cO)t^{\lambda}$. This is a placid group scheme. If $\lambda\le\mu$ in $\Lambda^+_{M, ab}$ then $H_{\lambda}\subset H_{\mu}$ is a placid closed immersion, and $H\,\iso\, \mathop{\colim}\limits_{\lambda\in\Lambda^+_{M, ab}} H_{\lambda}$ a placid ind-scheme. So, for $C\in Shv(H)-mod$, $C^H$ makes sense. 

We will relate the RHS of (\ref{iso_Gurbir_Chen}) to $Shv(\Gr_G)^{H}$ in way similar to \cite{Gai19SI}. 

 Note that $H$-orbits on $\Gr_G$ are indexed by $\Lambda^+_M$, to $\mu\in\Lambda^+_M$ we attach the orbit passing through $t^{\mu}$. 
 
\sssec{} By (\cite{LyWhit_loc_glob}, 1.3.4), we get $Shv(\Gr_G)^{H}\,\iso\,\mathop{\lim}\limits_{\lambda\in (\Lambda^+_{M, ab})^{op}} Shv(\Gr_G)^{H_{\lambda}}$. For each $\lambda$ the functor 
$$
\oblv: Shv(\Gr_G)^{H_{\lambda}}\to Shv(\Gr_G)^{M(\cO)}
$$ 
is a full embedding by Section~\ref{Sect_A.0.2}. By (\cite{Ly}, 2.7.7), $\mathop{\lim}\limits_{\lambda\in (\Lambda^+_{M, ab})^{op}} Shv(\Gr_G)^{H_{\lambda}}$ is a full subcategory of $Shv(\Gr_G)^{M(\cO)}$ equal to 
$$
\mathop{\cap}\limits_{\lambda\in \Lambda^+_{M, ab}} Shv(\Gr_G)^{H_{\lambda}}
$$ 
taken in $Shv(\Gr_G)^{M(\cO)}$.
 
\sssec{} Recall that for any smooth affine algebraic group $\cG$ of finite type, $\cG(F)$ is a placid ind-scheme (cf. \cite{Ly4}, 0.0.51). So, $P(F)$ is a placid ind-scheme, and 
$$
P(F)/H\,\iso\, M(F)/M(\cO)\,\iso\, \Gr_M
$$ 
is an ind-scheme of ind-finite type. 

 As in Section~\ref{Sect_A.1}, one gets an action of $Shv(\Gr_M)^{M(\cO)}$ on $Shv(\Gr_G)^H$. In fact, $Shv(M(F))$ acts naturally on $Shv(\Gr_G)^{U(F)}$, and the desired $Shv(\Gr_M)^{M(\cO)}$-action is obtained by functoriality after passing to $M(\cO)$-invariants, cf. Remark~\ref{Rem_A.1.3_action_of_M(F)}.

 Composing $\Rep(\check{M})\to Shv(\Gr_M)^{M(\cO)}$ one gets a $\Rep(\check{M})$-action on $Shv(\Gr_G)^{H}$. 
 
\sssec{} Write $I$ for the Iwahori subgroup. Let $\Fl=G(F)/I$ be the affine flags. Write $\cH(G)=Shv(\Fl)^I$ for the geometric Iwahori-Hecke algebra. For $\lambda\in\Lambda$ write $j_{\lambda, !}^I, j_{\lambda, *}^I$ for the corresponding objects of $\cH(G)$ attached to $t^\lambda$. Write $\ast^I$ for the convolution in $\cH(G)$. More generally, for $w\in\tilde W$ we have the standard/costandard objects $j_{w, !}^I, j_{w, *}^I\in \cH(G)$.

\begin{Lm} 
\label{Lm_2.2.5} i) Let $\lambda\in\Lambda^+_{M, ab}$. Then $j_{-\lambda, *}\ast j_{\lambda, !}\,\iso\, \delta_1$ in $\cH_P(G)$.\\
ii) Let $\lambda,\mu\in \Lambda^+_{M, ab}$. Then $j_{\lambda, !}\ast j_{\mu, !}\,\iso\, j_{\lambda+\mu, !}$ in $\cH_P(G)$. More generally, for $\oblv: Shv(\Gr_G)^{I_P}\to Shv(\Gr_G)^I$ and $F\in Shv(\Gr_G)^{I_P}$ one has
$$
\oblv(j_{\lambda, !}\ast F)\,\iso\, j_{\lambda, !}^I \ast^I \oblv(F),\;\;\;\;\;
\oblv(j_{\lambda, *}\ast F)\,\iso\, j_{\lambda, *}^I \ast^I \oblv(F)
$$ 
in $Shv(\Gr_G)^I$. 
\end{Lm}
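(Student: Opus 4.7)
The proof hinges on the decomposition $I_P = M(\cO) \cdot I$ together with the fact that $t^\lambda$ commutes with $M(\cO)$ for $\lambda \in \Lambda^+_{M, ab}$. The commutativity follows because $\lambda$ is central in $M$: $\langle\lambda, \check\alpha_i\rangle = 0$ for all $i \in \cI_M$, so conjugation by $t^\lambda$ fixes every $M$-root subgroup (and every element of $T$). One deduces
$$I_P t^\lambda I_P \,=\, M(\cO) \cdot I t^\lambda I \,=\, I t^\lambda I_P,$$
and hence that the projection $\pi \colon \Fl \to \Fl_P$ restricts to an isomorphism of Schubert cells $I t^\lambda I / I \iso I_P t^\lambda I_P/I_P$. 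Both sides are irreducible of the same dimension $\ell(t^\lambda) = \sum_{\check\alpha} \langle\lambda, \check\alpha\rangle$, where the sum runs over roots $\check\alpha$ of $U(\check P)$ (the $M$-roots pair trivially with $\lambda$).

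For part~(ii), first assertion: using the $M(\cO)$-commutativity with $t^\lambda$ and the classical Iwahori identity $I t^\lambda I \cdot I t^\mu I \iso I t^{\lambda+\mu} I$ (valid since $\lambda,\mu \in \Lambda^+$ are both dominant so lengths add), one computes
$$I_P t^\lambda I_P \,\ttimes^{I_P}\, I_P t^\mu I_P / I_P \,\iso\, I_P t^{\lambda+\mu} I_P/I_P,$$
with matching dimensions $\ell(t^\lambda) + \ell(t^\mu) = \ell(t^{\lambda+\mu})$. Matching perverse normalizations yields $j_{\lambda, !} \ast j_{\mu, !} \iso j_{\lambda+\mu, !}$. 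For the more general assertion, given $F \in Shv(\Gr_G)^{I_P}$, compare the two convolution diagrams
$$I t^\lambda I \,\ttimes^I\, \Gr_G \,\to\, \Gr_G \qquad \text{and} \qquad I_P t^\lambda I_P \,\ttimes^{I_P}\, \Gr_G \,\to\, \Gr_G.$$
The source of the first is, via the iso $I t^\lambda I/I \iso I_P t^\lambda I_P/I_P$, a section of the $M/B_M$-bundle $I_P t^\lambda I_P \ttimes^I \Gr_G \to I_P t^\lambda I_P \ttimes^{I_P} \Gr_G$, while the $I_P$-equivariance of $F$ absorbs the extra $M(\cO)$-direction in the $I_P$-action. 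Consequently the Iwahori-level convolution $j^I_{\lambda,!} \ast^I \oblv(F)$ descends to the parahoric-level convolution $j_{\lambda,!} \ast F$ under $\oblv$, giving the claimed identification. The $\ast$-version is analogous.

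For part~(i): the statement asserts that $j_{\lambda, !}$ is invertible in $\cH_P(G)$ with inverse $j_{-\lambda, *}$, paralleling Bezrukavnikov's classical Iwahori identity $j^I_{-\lambda, *} \ast^I j^I_{\lambda, !} \iso \delta_1^I$ for $\lambda \in \Lambda^+$. The reduction uses the same $M(\cO)$-commutativity: by the compatibility from part~(ii), the parahoric convolution $j_{-\lambda, *} \ast j_{\lambda, !}$ becomes, after applying $\oblv$, the Iwahori convolution $j^I_{-\lambda, *} \ast^I \oblv(j_{\lambda, !})$, which unpacks to the classical $\delta_1^I$-computation averaged over the $M/B_M$-direction; this averaging reassembles into $\delta_1$ in $\cH_P(G)$ via the $I_P$-equivariance.

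The principal technical subtleties are: (a) upgrading the bijection $I t^\lambda I/I \iso I_P t^\lambda I_P/I_P$ to an iso of schemes, which requires a careful use of the Bruhat decomposition of $I_P = \bigsqcup_{w \in W_M} IwI$ to show that the fibers of $\pi$ restricted to the Iwahori cell are trivial; (b) making precise the claim in part~(ii) that the $I_P$-equivariance of $F$ absorbs the $M/B_M$-direction, so the Iwahori and parahoric convolutions agree after $\oblv$; (c) tracking perverse normalizations (shifts by cell dimensions) throughout to verify that the identifications are at the level of the perverse heart and not merely up to a shift.
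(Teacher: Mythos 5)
Your proof is correct and follows essentially the same route as the paper: identify the Iwahori cell $It^{\lambda}I/I$ with the parahoric cell $I_Pt^{\lambda}I_P/I_P$ (and likewise for $t^{-\lambda}$), deduce the isomorphism of convolution spaces, and then push the known Iwahori-level identities $j^I_{\lambda,!}\ast^I j^I_{\mu,!}\iso j^I_{\lambda+\mu,!}$ and $j^I_{-\lambda,*}\ast^I j^I_{\lambda,!}\iso\delta_{1,\Fl}$ down along the proper map $\tau:\Fl\to\Fl_P$. The only real difference is that you derive the cell isomorphism from $I_P=M(\cO)\cdot I$ together with the centrality of $t^{\lambda}$ in $M(F)$, whereas the paper simply asserts it (noting both cells are affine spaces of dimension $\langle\lambda,2\check{\rho}\rangle$); your added justification is compatible with, and slightly more detailed than, what is in the text.
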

\begin{proof} 
Let $\lambda\in\Lambda^+_{M, ab}$. Then the natural map $It^{\lambda}I/I\to I_Pt^{\lambda}I_P/I_P$ is an isomorphism, both are affine spaces of dimension $\<\lambda, 2\check{\rho}\>$. For the natural map $\tau: \Fl\to \Fl_P$ we get $\tau_!(j_{\lambda, !}^I)\,\iso\, j_{\lambda, !}$. Note that $\tau$ is proper.

 Similarly, the map $It^{-\lambda}I/I\to I_Pt^{-\lambda}I_P/I_P$ is an isomorphism, so that $\tau_!j_{-\lambda, *}^I\,\iso\, j_{-\lambda, *}$. 

\medskip\noindent
ii) We have 
\begin{equation}
\label{equality_of_sets_for_Lm2.2.5}
I_Pt^{\lambda}I_P\times^{I_P} \Fl_P\,\iso\, It^{\lambda}I\times^{I} \Fl_P
\end{equation}
Recall that $j^I_{\lambda, !}\ast^I j^I_{\mu, !}\,\iso\, j_{\lambda+\mu, !}^I$. Applying $\tau_!$ to this isomorphism, one gets the desired result. More generally, for any $F\in Shv(\Fl_P)^{I_P}$ let $\oblv(F)\in Shv(\Fl_P)^I$ then 
$$
\oblv(j_{\lambda, !}\ast F)\,\iso\, j_{\lambda, !}^I \ast^I \oblv(F)
$$ 
in $Shv(\Gr_G)^I$. 

\medskip\noindent
i) We have $j_{-\lambda, *}^I\ast^I j_{\lambda, !}^I\,\iso\, \delta_{1, \Fl}$ in $\cH(G)$. 
Applying $\tau_*$ this gives $j_{-\lambda, *}^I \ast^I j_{\lambda, !}\,\iso\, \delta_{1, \Fl_P}$. Finally $j_{-\lambda, *}^I \ast^I j_{\lambda, !}\,\iso\, j_{-\lambda, *}\ast j_{\lambda, !}$ from (\ref{equality_of_sets_for_Lm2.2.5}) also. 
\end{proof}

 From this lemma we conclude that there are monoidal functors $\Lambda_{M, ab}\to \cH_P(G)$, 
\begin{equation}
\label{mon_functor_*}
\lambda\mapsto j_{\lambda, *}, \;\lambda\in\Lambda^+_{M, ab}
\end{equation}
and 
\begin{equation}
\label{mon_functor_!}
\lambda\mapsto j_{\lambda, !}, \;\lambda\in\Lambda^+_{M, ab}
\end{equation} 

\sssec{} Consider the forgetful functor $\oblv: Shv(\Gr_G)^{I_P}\to Shv(\Gr_G)^{M(\cO)}$. It has a continuous right adjoint denoted $\Av^{I_P/M(\cO)}_*$ by Section~\ref{Section_A2_some_invarinats}. Define now $Shv(\Gr_G)^{M(\cO), ren}$ as follows. Let $Shv(\Gr_G)^{M(\cO), constr}\subset Shv(\Gr_G)^{M(\cO)}$ be the full subcategory of those objects which remain compact in $Shv(\Gr_G)$. Set 
$$
Shv(\Gr_G)^{M(\cO), ren}=\Ind(Shv(\Gr_G)^{M(\cO), constr})
$$ 

 We have the evident forgetful functor 
$$
\oblv: Shv(\Gr_G)^{I_P, constr}\to Shv(\Gr_G)^{M(\cO), constr}
$$ 
By construction of $\Av^{I_P/M(\cO)}_*$, we actually get an adjoint pair
$$
\oblv: Shv(\Gr_G)^{I_P, constr}\leftrightarrows Shv(\Gr_G)^{M(\cO), constr}: \Av^{I_P/M(\cO)}_*
$$
Their ind-extensions also give an adjoint pair
$$
\oblv^{ren}: Shv(\Gr_G)^{I_P, ren}\leftrightarrows Shv(\Gr_G)^{M(\cO), ren}: \Av^{I_P/M(\cO), ren}_*
$$ 
This is a general phenomenon, see Remark~\ref{Rem_A.2.2}.

\begin{Pp} 
\label{Pp_2.2.6}
The functor $\Av^{I_P/M(\cO)}_*$ restricted to $Shv(\Gr_G)^H\subset Shv(\Gr_G)^{M(\cO)}$ defines an equivalence
\begin{equation}
\label{equiv_Iwahori_vs_SI}
Shv(\Gr_G)^H\to Shv(\Gr_G)^{I_P}
\end{equation}
\end{Pp}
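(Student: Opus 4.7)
My plan is to model the argument on the Borel case in \cite{Gai19SI}, with Lemma~\ref{Lm_2.2.5} playing the role of the invertibility of Wakimoto objects. Using the presentation $Shv(\Gr_G)^H \simeq \lim_{\lambda \in (\Lambda^+_{M, ab})^{\on{op}}} Shv(\Gr_G)^{H_\lambda}$ established above, it suffices to construct a quasi-inverse $\Psi: Shv(\Gr_G)^{I_P} \to Shv(\Gr_G)^H$ level-by-level and to verify that it is indeed inverse to the restriction of $\Av^{I_P/M(\cO)}_*$.

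For each $\lambda \in \Lambda^+_{M, ab}$ I would define a functor $\Psi_\lambda: Shv(\Gr_G)^{I_P} \to Shv(\Gr_G)^{H_\lambda}$ sending $F$ to the convolution $j_{\lambda, !} \ast F$, equipped with an enhanced $H_\lambda$-equivariance. The enhancement is possible because the support of $j_{\lambda,!} \ast F$ is translated by $t^\lambda$, and $H_\lambda = t^{-\lambda} P(\cO) t^\lambda$ is conjugate to the subgroup $P(\cO) \subset I_P$ under which $F$ is already equivariant; concretely one decomposes $H_\lambda$ as generated by $M(\cO)$ (which acts trivially on $I_P$-equivariant sheaves) and $t^{-\lambda} U(P)(\cO) t^\lambda$, and uses the support condition to produce the required equivariance structure. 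The compatibility of the $\Psi_\lambda$ under the transition functors in the inverse limit — i.e., that they glue into a functor $\Psi$ with values in $\lim_\lambda Shv(\Gr_G)^{H_\lambda}$ — is delivered by the monoidal identities $j_{\lambda,!} \ast j_{\mu,!} \simeq j_{\lambda+\mu,!}$ of Lemma~\ref{Lm_2.2.5}(ii), i.e., by the monoidal functor (\ref{mon_functor_!}).

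Next, I would check that $\Av^{I_P/M(\cO)}_* \circ \Psi \simeq \on{id}_{Shv(\Gr_G)^{I_P}}$ and $\Psi \circ \Av^{I_P/M(\cO)}_* \simeq \on{id}_{Shv(\Gr_G)^H}$. The first composition is computed levelwise, and reduces to showing $\Av^{I_P/M(\cO)}_*(j_{\lambda, !} \ast F) \simeq F$ functorially in $F$: this uses Lemma~\ref{Lm_2.2.5}(i), namely $j_{-\lambda,*} \ast j_{\lambda,!} \simeq \delta_1$, together with the compatibility of averaging with convolution to absorb the added translation. The second composition follows formally by unwinding the limit description of $Shv(\Gr_G)^H$ and feeding in the same invertibility input.

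The principal technical obstacle I anticipate is the coherent $\infty$-categorical bookkeeping — promoting the pointwise convolution isomorphisms of Lemma~\ref{Lm_2.2.5} to a functor of $\DG$-categories compatible with the limit, and constructing canonically the $H_\lambda$-equivariance enhancement on $j_{\lambda,!} \ast F$. Both tasks depend crucially on the monoidality of $\lambda \mapsto j_{\lambda,!}$, which is precisely the content of (\ref{mon_functor_!}) produced from Lemma~\ref{Lm_2.2.5}.
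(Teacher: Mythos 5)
There are two genuine gaps here, both at steps you describe as routine.

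First, the levelwise construction of $\Psi$ does not assemble into an object of the inverse limit. An object of $\lim_{\lambda}Shv(\Gr_G)^{H_\lambda}$ is (since each $\oblv$ is fully faithful) a \emph{single} object of $Shv(\Gr_G)^{M(\cO)}$ lying in every $Shv(\Gr_G)^{H_\lambda}$; the transition functors are forgetful, so your compatibility requirement amounts to $j_{\mu,!}\ast F\simeq j_{\lambda,!}\ast F$ as objects of $Shv(\Gr_G)^{M(\cO)}$ for $\lambda\le\mu$, which is false -- the supports differ. The identity $j_{\lambda,!}\ast j_{\nu,!}\simeq j_{\lambda+\nu,!}$ relates $j_{\mu,!}\ast F$ to $j_{\lambda,!}\ast(j_{\nu,!}\ast F)$, not to $j_{\lambda,!}\ast F$, so it cannot supply the gluing datum. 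Moreover $j_{\lambda,!}\ast F$ itself is not $H_\lambda$-equivariant: $H_\lambda=t^{-\lambda}P(\cO)t^\lambda$ is not contained in $G(\cO)$ for $\lambda\neq 0$, so $I_P$-equivariance does not restrict to it; one must first translate by $t^{-\lambda}$ and shift, i.e.\ use $\Av^{U_\lambda}_!(F)\simeq t^{-\lambda}j_{\lambda,!}\ast F[\langle\lambda,2\check\rho\rangle]$ as in Step 1 of the paper's proof. Even after this correction the objects for varying $\lambda$ are not isomorphic; they form an \emph{inductive} system (with non-invertible transition maps coming from the adjunction units $\Av^{U_\lambda}_!\to\Av^{U_\mu}_!$), and the quasi-inverse is its colimit $\Av^{U(P)(F)}_!\circ\oblv=\colim_\lambda\Av^{U_\lambda}_!$, computed in $Shv(\Gr_G)^{M(\cO)}$ and landing in $Shv(\Gr_G)^H$. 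So the correct structure is a colimit, not a compatible system over the limit.

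Second, the assertion that $\Psi\circ\Av^{I_P/M(\cO)}_*\simeq\id$ "follows formally by unwinding the limit description" is not justified and hides the hardest part of the proof. Once one knows the unit $\cF\to\Av^{I_P/M(\cO)}_*\Av^{U(P)(F)}_!(\cF)$ is an isomorphism (your first composition, which does follow from Lemma~\ref{Lm_2.2.5}(i) together with the explicit identifications $\Av^{I_P/M(\cO)}_*(t^{-\lambda}\cF)\simeq j_{-\lambda,*}\ast\cF[-\langle\lambda,2\check\rho\rangle]$ -- these identifications themselves require an argument, not just "compatibility of averaging with convolution"), one only knows the left adjoint is fully faithful. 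To conclude it is an equivalence one still needs that $\Av^{I_P/M(\cO)}_*$ restricted to $Shv(\Gr_G)^H$ is conservative. The paper's Step 4 proves this by a substantive argument: a nonzero $\cF\in Shv(\Gr_G)^H$ is detected by some congruence subgroup $K_n$, hence by $t^{-\lambda}U(P^-)(\cO)_1t^\lambda$ for $\lambda$ large, and the Iwahori-type decomposition $I_P=U(P^-)(\cO)_1M(\cO)U(P)(\cO)$ together with Lemma~\ref{Lm_2.2.5} shows the averaging is then nonzero. Nothing in your setup replaces this step.
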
 
\sssec{} Now we define the renormalized version $Shv(\Gr_G)^{H, ren}$ as follows. Denote by  
$$
Shv(\Gr_G)^{H, constr}\subset Shv(\Gr_G)^H
$$ 
the full subcategory that corresponds under  (\ref{equiv_Iwahori_vs_SI}) to $Shv(\Gr_G)^{I_P, constr}\subset Shv(\Gr_G)^{I_P}$. Set $Shv(\Gr_G)^{H, ren}=\Ind(Shv(\Gr_G)^{H, constr})$. 
 
\sssec{Proof of Proposition~\ref{Pp_2.2.6}} The fully faithful inclusion $Shv(\Gr_G)^{H}\subset Shv(\Gr_G)^{M(\cO)}$ admits a left adjoint $\Av_!^{U(P)(F)}: Shv(\Gr_G)^{M(\cO)}\to Shv(\Gr_G)^{H}$. So, we get adjoint pairs
$$
Shv(\Gr_G)^{I_P} \leftrightarrows Shv(\Gr_G)^{M(\cO)}\leftrightarrows Shv(\Gr_G)^H
$$
where the left composition is $\Av_!^{U(P)(F)}\oblv$, and the right composition is (\ref{equiv_Iwahori_vs_SI}).

\smallskip
\noindent
{\bf Step 1} We equip $\Lambda^+_{M, ab}$ with the relation $\le$ as in Section~\ref{Sect_2.0.11_functor_f}. For $\lambda\in\Lambda^+_{M, ab}$ set $U_{\lambda}=t^{-\lambda}U(P)(\cO)t^{\lambda}$. This is a placid group scheme.
 Given $\lambda,\mu\in \Lambda^+_{M, ab}$ with $\lambda\le\mu$ we get a placid closed immersion $U_{\lambda}\subset U_{\mu}$, and 
$$
\mathop{\colim}\limits_{\lambda\in\Lambda^+_{M, ab}} U_{\lambda}\,\iso\, U(P)(F)
$$ 
is a placid ind-scheme. 

 If $\lambda\in \Lambda^+_{M, ab}$ then $U_{\lambda}G(\cO)/G(\cO)=U_{\lambda}/U_0$ is an affine space of dimension $\<\lambda, 2\check{\rho}-2\check{\rho}_M\>=\<\lambda, 2\check{\rho}\>$. For $\lambda\in\Lambda_{M, ab}$ set 
$$
I_P^{\lambda}=t^{-\lambda}I_Pt^{\lambda}
$$  

  A version of the Iwahori decomposition for $P$ is 
$$
I_P=U(P^-)(\cO)_1M(\cO)U(P)(\cO)
$$ 
with $U(P^-)(\cO)_1=\Ker(U(P^-)(\cO)\to U(P^-))$. For $\lambda\in\Lambda^+_{M, ab}$, $t^{-\lambda}U(P^-)(\cO)_1t^{\lambda}\subset U(P^-)(\cO)_1$, so 
$$
I_P^{\lambda}\subset U_{\lambda}M(\cO)U(P^-)(\cO)_1
$$
and 
$$
I^{\lambda}_P I_P/I_P=U_{\lambda}I_P/I_P=U_{\lambda}/U_0
$$ 
Consider the action map $a: I^{\lambda}_PI_P\times^{I_P} \Gr_G\to \Gr_G$. For $F\in Shv(\Gr_G)^{I_P}$ the object $t^{-\lambda}j_{\lambda, !}\ast F[\<\lambda, 2\check{\rho}\>]$ writes as 
$$
a_!(\IC\tboxtimes F)[\<\lambda, 2\check{\rho}\>],
$$ 
where the functor of the twisted exteriour product $\tboxtimes$ is normalized to preserve perversity, and $\IC=e[\<\lambda, 2\check{\rho}\>]$ is the $\IC$-sheaf of the affine space $I^{\lambda}_P I_P/I_P$. We see that the composition 
$$
Shv(\Gr_G)^{I_P}\,\toup{\oblv}\, Shv(\Gr_G)^{P(\cO)}\toup{\Av^{U_{\lambda}}_!} Shv(\Gr_G)^{M(\cO)U_{\lambda}}
$$ 
identifies with the functor $F\mapsto t^{-\lambda}j_{\lambda, !}\ast F[\<\lambda, 2\check{\rho}\>]$. 

\smallskip\noindent
{\bf Step 2} Consider the left adjoint $\Av_!^{U(P)(F)}: Shv(\Gr_G)^{M(\cO)}\to Shv(\Gr_G)^H$ to the inclusion. Recall that it is given as
\begin{equation}
\label{functor_Av_!_for_Pp2.2.7}
F\mapsto \mathop{\colim}_{\lambda\in\Lambda^+_{M, ab}} \Av^{U_{\lambda}}_!(F)
\end{equation}
as in Lemma~\ref{Lm_A.2.3}.

 For $K\in Shv(\Gr_G)^{M(\cO)}$ and $\lambda\in\Lambda_{M, ab}$ one has canonically
\begin{equation}
\label{iso_for_Step2_Av!_versus_t_lambda}
t^{-\lambda}\Av_!^{U(P)(F)}(t^{\lambda}K)\,\iso\, \Av_!^{U(P)(F)}(K)
\end{equation}
Indeed, for $L\in Shv(\Gr_G)^H$
\begin{multline*}
\HOM_{Shv(\Gr_G)^{M(\cO)}}(t^{-\lambda}\Av_!^{U(P)(F)}(t^{\lambda}K), L)\,\iso\,\HOM_{Shv(\Gr_G)^{M(\cO)}}(\Av_!^{U(P)(F)}(t^{\lambda}K), t^{\lambda}L)\\ \iso\, \HOM_{Shv(\Gr_G)^{M(\cO)}}(t^{\lambda}K, t^{\lambda}L)\,\iso\, \HOM_{Shv(\Gr_G)^{M(\cO)}}(K, L)
\end{multline*}

 Now from Step 1 we see that for $\lambda\in\Lambda^+_{M, ab}$ and $\cF\in Shv(\Gr_G)^{I_P}$ one has
\begin{equation}
\label{iso_Av_!_intertwines}
t^{\lambda}\Av_!^{U(P)(F)}(\cF)[-\<\lambda, 2\check{\rho}\>]\,\iso\, \Av_!^{U(P)(F)}(j_{\lambda, !}\ast \cF)
\end{equation} 

\smallskip\noindent
{\bf Step 3} Let us show that the unit of the adjunction 
$$
\cF\to \Av_*^{I_P/M(\cO)}\Av_!^{U(P)(F)}(\cF)
$$
is an isomorphism for $\cF\in Shv(\Gr_G)^{I_P}$. First, for $\lambda\in\Lambda^+_{M, ab}$ and $\cF'\in Shv(\Gr_G)^{I_P}$ we have $t^{-\lambda}\cF'\in Shv(\Gr_G)^{I_P^{\lambda}}$ naturally. Now the composition
$$
Shv(\Gr_G)^{I_P^{\lambda}}\toup{\oblv} Shv(\Gr_G)^{M(\cO)}\,\toup{\Av_*^{I_P/M(\cO)}}Shv(\Gr_G)^{I_P}
$$
identifies with the composition
$$
Shv(\Gr_G)^{I_P^{\lambda}}\toup{\oblv} Shv(\Gr_G)^{I_P^{\lambda}\cap I_P} \,\toup{\Av_*^{I_P/I_P^{\lambda}\cap I_P}}\; Shv(\Gr_G)^{I_P},
$$
because for for a prounipotent group the inclusion of invariants is fully faithful. The latter functor writes as $K\mapsto \act_*(\IC\tboxtimes K)$ for the action map $\act: I_PI^{\lambda}_P\times^{I^{\lambda}_P} \Gr_G\to \Gr_G$. 

This gives
$$
\act_*(\IC\tboxtimes t^{-\lambda}\cF)\,\iso\, j_{-\lambda, *}\ast \cF[-\<\lambda, 2\check{\rho}\>],
$$ 
because $I_P I_P^{\lambda}=I_Pt^{-\lambda}I_Pt^{\lambda}$. So, for $\cF\in Shv(\Gr_G)^{I_P}$ one gets canonically
\begin{equation}
\label{iso_for_Step3_first} 
\Av_*^{I_P/M(\cO)}(t^{-\lambda}\cF)\,\iso\,  j_{-\lambda, *}\ast \cF[-\<\lambda, 2\check{\rho}\>]
\end{equation}
 
 Thus, for $\lambda\in\Lambda^+_{M, ab}$ we get
$$
\Av_*^{I_P/M(\cO)}\Av_!^{U_{\lambda}}(\cF)\,\iso\, j_{-\lambda, *}\ast j_{\lambda, !}\ast \cF\,\iso\, \cF,
$$ 
where the last isomorphism is given by Lemma~\ref{Lm_2.2.5}. This gives finally
$$
\Av_*^{I_P/M(\cO)}\Av_!^{U(P)(F)}(\cF)\,\iso\, \mathop{\colim}\limits_{\lambda\in\Lambda^+_{M, ab}} \Av_*^{I_P/M(\cO)}\Av_!^{U_{\lambda}}(\cF)\,\iso\, \mathop{\colim}\limits_{\lambda\in\Lambda^+_{M, ab}}\cF\,\iso\cF,
$$
because $\Lambda^+_{M, ab}$ is filtered.

\smallskip\noindent
{\bf Step 4} It suffices now to show that  $\Av_*^{I_P/M(\cO)}: Shv(\Gr_G)^H\to Shv(\Gr_G)^{I_P}$ is conservative. 

 Let $0\ne \cF\in Shv(\Gr_G)^H$. By Section~\ref{Sect_A.2.4}, there is $\cF'\in Shv(\Gr_G)^{K_n}$ for some congruence subgroup $K_n\subset G(\cO)$, $n>0$ such that $\HOM_{Shv(\Gr_G)}(\cF', \cF)\ne 0$, here $\HOM_{Shv(\Gr_G)}\in\Vect$ denotes the inner hom for the $\Vect$-action on $Shv(\Gr_G)$. 
 
  By assumption, $\cF'$ is equivariant with respect to $t^{-\lambda}U(P^-)(\cO)_1t^{\lambda}$ for $\lambda\in\Lambda^+_{M, ab}$ large enough, so $\Av_*^{t^{-\lambda}U(P^-)(\cO)_1t^{\lambda}}(\cF)\ne 0$. Here $\Av_*^{t^{-\lambda}U(P^-)(\cO)_1t^{\lambda}}: Shv(\Gr_G)\to Shv(\Gr_G)^{t^{-\lambda}U(P^-)(\cO)_1t^{\lambda}}$ is the right adjoint to the inclusion. 
  
   Now viewing $\Av_*^{t^{-\lambda}U(P^-)(\cO)_1t^{\lambda}}: Shv(\Gr_G)^{M(\cO)}\to Shv(\Gr_G)^{t^{-\lambda}U(P^-)(\cO)_1t^{\lambda}M(\cO)}$ as the right adjoint to the inclusion, the above also gives $\Av_*^{t^{-\lambda}U(P^-)(\cO)_1t^{\lambda}}(\cF)\ne 0$ in $Shv(\Gr_G)^{t^{-\lambda}U(P^-)(\cO)_1t^{\lambda}M(\cO)}$.

Since $I_P=U(P^-)(\cO)_1M(\cO)U(P)(\cO)$, we get 
$$
\Av_*^{t^{-\lambda}U(P^-)(\cO)_1t^{\lambda}}(\cF)\in Shv(\Gr_G)^{I_P^{\lambda}}
$$ 
for $\lambda\in\Lambda^+_{M, ab}$ large enough. For any $\cF''\in Shv(\Gr_G)^{I_P^{\lambda}}$ we have
$$
t^{\lambda}\cF''\in Shv(\Gr_G)^{I_P}
$$
naturally and 
$$
\Av_*^{U(P^-)(\cO)_1}(\cF'')\,\iso\, j_{-\lambda, *}\ast (t^{\lambda}\cF'')[-\<\lambda, 2\check{\rho}\>]
$$ 
by (\ref{iso_for_Step3_first}). 

 Finally, for $\cF$ as above letting $\cF''=\Av_*^{t^{-\lambda}U(P^-)(\cO)_1t^{\lambda}}(\cF)$ we get
$$
\Av_*^{I_P/M(\cO)}(\cF)\,\iso\, \Av_*^{U(P^-)(\cO)_1}(\cF)\,\iso\, \Av_*^{U(P^-)(\cO)_1}
(\cF'')\,\iso\, j_{-\lambda, *}\ast (t^{\lambda}\cF'')[-\<\lambda, 2\check{\rho}\>]
$$ 
Applying again Lemma~\ref{Lm_2.2.5}, we see that the latter object is nonzero. Proposition~\ref{Pp_2.2.6} is proved. \QED

\sssec{Actions of $\Lambda_{M, ab}$} 
\label{Sect_2.2.10_action_of_Lambda_Mab}
For $\cF\in Shv(\Gr_G)$, $\lambda\in\Lambda$ we denote by $t^{\lambda}\cF$ the direct image of $\cF$ under the multiplication $\Gr_G\to\Gr_G$ by $t^{\lambda}$. 
Consider $\oblv: Shv(\Gr_G)^{M(\cO)}\to Shv(\Gr_G)$. We think of $Shv(\Gr_G)^H$ as a full subcategory of $Shv(\Gr_G)^{M(\cO)}$. There is an action of $\Lambda_{M, ab}$ on $Shv(\Gr_G)^{M(\cO)}$ such that $\lambda\in\Lambda_{M, ab}$ sends $K$ to $t^{\lambda}K[-\<\lambda, 2\check{\rho}\>]$. This means by definition that for $\oblv: Shv(\Gr_G)^{M(\cO)}\to Shv(\Gr_G)$ one has canonically 
$$
\oblv(t^{\lambda}K)\,\iso\, t^{\lambda}(\oblv(K))
$$
This action preserves the full subcategory $Shv(\Gr_G)^{H}$. 

 Consider the $\Lambda_{M, ab}$-action on $Shv(\Gr_G)^{I_P}$ given by restricting the action of $\cH_P(G)$ via the monoidal functor (\ref{mon_functor_!}). Proposition~\ref{Pp_2.2.6} also shows that the equivalence $\Av^{U(P)(F)}_!: Shv(\Gr_G)^{I_P}\,\iso\, Shv(\Gr_G)^H$ intertwines these two actions of $\Lambda_{M, ab}$. Namely, for $\lambda\in\Lambda_{M, ab}^+$, $\cF\in Shv(\Gr_G)^{I_P}$ one has the isomorphism (\ref{iso_Av_!_intertwines}). 

\sssec{} 
\label{Sect_3.1.11_action_of}
The equivalence $\Av^{U(P)(F)}_!: Shv(\Gr_G)^{I_P}\,\iso\, Shv(\Gr_G)^H$ commutes with the actions of $\Rep(\check{G})$ on both sides. Indeed, this can be seen for example from (\ref{functor_Av_!_for_Pp2.2.7}). 

 We equip $Shv(\Gr_G)^{I_P}$, $Shv(\Gr_G)^{I_P, ren}$ with t-structures as in Section~\ref{Sect_A.3}. So, we have the t-exact oblivion functor $\oblv[\dimrel]: Shv(\Gr_G)^{I_P}\to Shv(\Gr_G)$.  
 
 The action of $\Rep(\check{G})^c$ on $Shv(\Gr_G)^{I_P}$ preserves the full subcategory $Shv(\Gr_G)^{I_P, constr}$, and the obtained action on $Shv(\Gr_G)^{I_P, constr}$ is t-exact in each variable by (\cite{G_central}, Proposition~6). Passing to the ind-completion this yields a $\Rep(\check{G})$-action on $Shv(\Gr_G)^{I_P, ren}$ which is moreover t-exact in each variable.
 
  
 The equivalence of Proposition~\ref{Pp_2.2.6} yields an equivalence 
$$
Shv(\Gr_G)^{H, constr}\,\iso\, Shv(\Gr_G)^{I_P, constr}
$$ 
which commutes with the actions of $\Rep(G)^c$. Passing to the ind-completion, this gives an equivalence
\begin{equation}
\label{eq_ren_parahoric_versus_H}
\Av_*^{I_P/M(\cO), ren}: Shv(\Gr_G)^{H, ren}\,\iso\, Shv(\Gr_G)^{I_P, ren}
\end{equation}
We equip $Shv(\Gr_G)^{H, ren}$ with $\Rep(\check{G})$-action coming from the ind-completion of the $\Rep(\check{G})^c$-action on $Shv(\Gr_G)^{H, constr}$. 

\ssec{Relation between local and global: geometry}  
\label{Sect_Relation between local and global}

\sssec{} 
\label{Sect_2.3.1}
From now on we assume $[G,G]$ is simply-connected. Let $\Lambda_{G,P}$ be the lattice of cocharacters of $M/[M,M]$, so $\Lambda_{G,P}$ is the quotient of $\Lambda$ by the span of $\alpha_i, i\in\cI_M$. Let $\check{\Lambda}_{G,P}$ denote the dual lattice. Let $\check{\Lambda}^+$ be the dominant weights for $G$. Write $\Lambda_{G,P}^{pos}$ for the $\ZZ_+$-span of $\alpha_i$, $i\in \cI-\cI_M$ in $\Lambda_{G,P}$. 

For $\theta\in\Lambda_{G,P}$ denote by $\Gr_M^{\theta}$ the connected component of the affine Grassmanian $\Gr_M$ containing $t^{\lambda}M(\cO)$ for any $\lambda\in\Lambda$ over $\theta$. 

As in (\cite{BG}, 4.3.1) for $\theta\in\Lambda_{G,P}$ denote by $\ov{\Gr}_P^{\theta}\subset \Gr_G$ the closed ind-subscheme given by the property that for $\check{\lambda}\in\check{\Lambda}_{G,P}\cap \check{\Lambda}^+_G$ the map
$$
\cL^{\check{\lambda}}_{\cF^0_{M/[M,M]}}(-\<\theta, \check{\lambda}\>)\to \cV^{\check{\lambda}}_{\cF_G}
$$
is regular on the disk $D$. Let $\Gr^{\theta}_P\subset \ov{\Gr}^{\theta}_P$ be the open subscheme where the above maps have no zeros on $D$.

 For $\theta,\theta'\in\Lambda_{G,P}$ one has $\Gr_P^{\theta'}\subset \ov{\Gr}^{\theta}_P$ iff $\theta-\theta'\in\Lambda_{G,P}^{pos}$.

Consider the natural map $\gt^{\theta}_P: \Gr^{\theta}_P\to \Gr_M^{\theta}$ defined in (\cite{BG}, Pp. 4.3.2). For $\mu\in\Lambda^+_M$ write $\Gr_M^{\mu}$ for the $M(\cO)$-orbit on $\Gr_M$ through $t^{\mu}$. For $\mu\in\Lambda^+_M$ over $\theta\in\Lambda_{G,P}$ write $S^{\mu}_P$ for the preimage of $\Gr_M^{\mu}$ under $\gt^{\theta}_P$. So, $\{S^{\mu}_P\}_{\mu\in\Lambda^+_M}$ are the $H$-orbits on $\Gr_G$. 
The restriction of $\gt^{\theta}_P$ is denoted
$$
\gt^{\mu}_P: S^{\mu}_P\to \Gr_M^{\mu}
$$ 

 For $\theta\in\Lambda_{G,P}$ let $i^{\theta}_P: \Gr_M^{\theta}\to \Gr^{\theta}_P$ be the natural map, so that $\gt^{\theta}_P i^{\theta}_P=\id$. Write $v^{\theta}_P: \Gr^{\theta}_P\to \Gr_G$ for the natural inclusion. For $\mu\in\Lambda^+_M$ write $\bar S^{\mu}_P$ for the closure of $S^{\mu}_P$ in $\Gr_G$. 
 
 For $\theta\in\Lambda_{G,P}$ let $\Gr_{P^-}^{\theta}\subset \ov{\Gr}_{P^-}^{\theta}\subset \Gr_G$ be the analogs of the corresponding ind-schemes with $P$ replaced by $P^-$. The corresponding morphisms are denoted $\gt^{\theta}_{P^-}:  \Gr_{P^-}^{\theta}\to \Gr_M^{\theta}$ and
$$
\Gr_M^{\theta}\toup{i^{\theta}_{P^-}} \Gr_{P^-}^{\theta}\toup{v^{\theta}_{P^-}} \Gr_G
$$ 
For $\mu\in\Lambda^+_M$ over $\theta\in\Lambda_{G,P}$ write $S_{P^-}^{\mu}$ for the preimage of $\Gr_M^{\mu}$ under $\gt^{\theta}_{P^-}$. Let $\gt^{\mu}_{P^-}: S_{P^-}^{\mu}\to \Gr_M^{\mu}$ denote the restriction of $\gt^{\theta}_{P^-}$. 

 Recall the following consequence of a theorem of Braden (\cite{DG1}, \cite{Bra}).
\begin{Lm} 
\label{Lm_theorem_of_Braden_for_theta}
Let $\theta\in\Lambda_{G,P}$.\\
a) For $K\in Shv(\Gr_G)^T$ one has canonically
$$
(i^{\theta}_P)^!(v^{\theta}_P)^*K\,\iso\, (i^{\theta}_{P^-})^*(v^{\theta}_{P^-})^!K
$$
b) For $K\in Shv(\Gr_P^{\theta})^T$ one has canonically $(\gt^{\theta}_P)_!K\,\iso\,(i^{\theta}_P)^!K$ and $(\gt^{\theta}_P)_*K\,\iso\,(i^{\theta}_P)^*K$ in $Shv(\Gr_M^{\theta})^T$, and similarly for $\Gr_{P^-}^{\theta}$. \QED
\end{Lm}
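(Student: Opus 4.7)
\begin{Prf}
The plan is to realize $\Gr_P^{\theta}$, $\Gr_M^{\theta}$, $\Gr_{P^-}^{\theta}$ as the attracting, fixed, and repelling loci of a single $\Gm$-action on $\Gr_G$, and then derive (a) from Braden's theorem and (b) from the contraction principle, both taken in the ind-scheme formulation of \cite{DG1}.

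Choose a cocharacter $\gamma: \Gm \to T$ with $\<\gamma, \check{\alpha}_i\> = 0$ for $i \in \cI_M$ and $\<\gamma, \check{\alpha}_i\> > 0$ for $i \in \cI - \cI_M$, and let $\Gm$ act on $\Gr_G$ by left multiplication via $\gamma$. Since $\gamma$ centralizes $M$ while conjugation by $\gamma(z)$ contracts (resp.\ expands) $U(P)(F)$ to the identity as $z \to 0$, the Iwasawa decomposition identifies the $\Gm$-fixed locus with $\Gr_M$, the attracting locus with $\bigsqcup_\theta \Gr_P^{\theta}$ having limit map $\gt_P^{\theta}$ and zero section $i_P^{\theta}$, and the repelling locus with $\bigsqcup_\theta \Gr_{P^-}^{\theta}$ having limit map $\gt_{P^-}^{\theta}$ and zero section $i_{P^-}^{\theta}$. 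Agreement with the Pl\"ucker definition of $\Gr_P^{\theta}$ recalled in Section~\ref{Sect_2.3.1} can be verified by testing on the $T$-stable Schubert subschemes $\ov{\Gr}_G^{\le\lambda}$ that exhaust $\Gr_G$. Since any $K \in Shv(\Gr_G)^T$ is in particular $\Gm$-equivariant, it is monodromic in the sense required below.

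For (b), we apply the contraction principle of Drinfeld--Gaitsgory to the $\Gm$-contracting map $\gt^{\theta}_P: \Gr_P^{\theta} \to \Gr_M^{\theta}$ with zero section $i^{\theta}_P$: for any $\Gm$-equivariant $K$ on $\Gr_P^{\theta}$, and in particular for $K \in Shv(\Gr_P^{\theta})^T$, this yields canonical isomorphisms $(\gt^{\theta}_P)_! K \iso (i^{\theta}_P)^! K$ and $(\gt^{\theta}_P)_* K \iso (i^{\theta}_P)^* K$. The analogous claims for $\Gr_{P^-}^{\theta}$ follow by replacing $\gamma$ with $-\gamma$, under which $\Gr_{P^-}^{\theta}$ becomes the attracting locus.

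For (a), we apply Braden's theorem for the chosen $\Gm$-action on $\Gr_G$: for any $\Gm$-monodromic $K$ on $\Gr_G$ there is a canonical isomorphism
$$(\gt^{\theta}_P)_! (v^{\theta}_P)^* K \iso (\gt^{\theta}_{P^-})_* (v^{\theta}_{P^-})^! K.$$
The restrictions $(v^{\theta}_P)^* K$ and $(v^{\theta}_{P^-})^! K$ remain $T$-equivariant, so combining the above with part (b) applied to each side converts the left side into $(i^{\theta}_P)^!(v^{\theta}_P)^* K$ and the right side into $(i^{\theta}_{P^-})^*(v^{\theta}_{P^-})^! K$, giving the claim. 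The only subtlety is that $\Gr_G$ is an ind-scheme rather than of finite type; this is handled precisely by the ind-scheme version of Braden's theorem in \cite{DG1}, applied levelwise to the $T$-stable exhaustion by Schubert closures.
\end{Prf}
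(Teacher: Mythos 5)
Your proof is correct and is exactly the argument the paper has in mind: the lemma is stated without proof as a "consequence of a theorem of Braden (\cite{DG1}, \cite{Bra})", and your derivation — choosing a cocharacter trivial on $\cI_M$ and positive off it, identifying $\Gr_P^{\theta}/\Gr_M^{\theta}/\Gr_{P^-}^{\theta}$ with the attracting/fixed/repelling loci, then invoking the contraction principle for (b) and hyperbolic localization for (a) in the ind-scheme formulation of \cite{DG1} — is the standard way to supply it. No gaps.
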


\sssec{} 
\label{Sect_2.3.3_loc_vs_glob}
Let $X$ be a smooth projective connected curve. The stack $\Bunb_P$ is defined in (\cite{BG}, 1.3.2). 

Fix a point of our curve $x\in X$. Let $_{x,\infty}\Bunb_P$ be the stack classifying $M/[M,M]$-torsor $\cF_{M/[M,M]}$ on $X$, $G$-torsor $\cF_G$ on $X$, and a collection of maps
$$
\kappa^{\check{\lambda}}: \cL^{\check{\lambda}}_{\cF_{M/[M,M]}}\to \cV^{\check{\lambda}}_{\cF_G}(\infty x), \check{\lambda}\in\check{\Lambda}^+\cap \check{\Lambda}_{G,P}
$$
satisfying the Pl\"ucker relations. 

 Pick a uniformizer $t_x\in \cO_x$, hence an isomorphism $\cO\,\iso\, \cO_x$. This allows to view $\Gr_G$ as the ind-scheme classifying $(\cF_G, \beta)$, where $\cF_G$ is a $G$-torsor on $X$, $\beta: \cF_G\,\iso\,\cF^0_G$ is trivialization over $X-x$. We get the morphism $\pi: \Gr_G\to {_{x,\infty}\Bunb_P}$ sending $(\cF_G, \beta)$ to $(\cF^0_{M/[M,M]}, \cF_G, \kappa)$, where $\kappa$ is induced by the $P$-structure on the trivial $P$-torsor.
 
 The preimage $\pi^{-1}\Bunb_P$ identifies with $\ov{\Gr}^0_P$.  

We let $\Rep(\check{G})$ act on $_{x,\infty}\Bunb_P$ so that $V\in \Rep(\check{G})$ acts as 
$$
_x\H^{\ra}_G(\Sat(V), \cdot): Shv(_{x,\infty}\Bunb_P)\to Shv(_{x,\infty}\Bunb_P)
$$ 
in the notations of (\cite{BG}, 3.2.4). Since we are in the constructible context, we have the adjoint pair in $\DGCat_{cont}$
$$
\pi_!: Shv(\Gr_G)\leftrightarrows Shv(_{x,\infty}\Bunb_P): \pi^!
$$
By $(*, !)$-base change, both these functors commute with $\Rep(\check{G})$-actions. 
 
\sssec{} For $\theta\in\Lambda_{G,P}$ let $_{\le\theta, x}\Bunb_P\subset {_{x,\infty}\Bunb_P}$ be the closed substack given by the property that for any $\check{\lambda}\in\check{\Lambda}_{G,P}\cap \check{\Lambda}^+$ the map
\begin{equation}
\label{map_for_x_infty_Bunb_P_Sect_2.3.3}
\cL^{\check{\lambda}}_{\cF_{M/[M,M]}}(-\<\theta, \check{\lambda}\>x)\to \cV^{\check{\lambda}}_{\cF_G}
\end{equation}
is regular on $X$. Let also $_{=\theta, x}\Bunb_P\subset {_{\le\theta, x}\Bunb_P}$ be the open substack given by the property that (\ref{map_for_x_infty_Bunb_P_Sect_2.3.3}) have no zeros everywhere on $X$.  Note that 
$$
\pi^{-1}(_{\le\theta, x}\Bunb_P)=\ov{\Gr}^{\theta}_P\;\;\; \mbox{and}\;\;\; \pi^{-1}(_{=\theta, x}\Bunb_P)=\Gr^{\theta}_P
$$

\sssec{} For $\lambda\in\Lambda$ write $\Bun_T^{\lambda}$ for the connected component of $\Bun_T$ classifying $\cF_T\in\Bun_T$ such that for any $\check{\lambda}\in\check{\Lambda}$, $\deg\cL^{\check{\lambda}}_{\cF_T}=-\<\lambda, \check{\lambda}\>$. Similarly, for $\theta\in\Lambda_{G,P}$ let $\Bun_M^{\theta}$ be the preimage of $\Bun_{M/[M,M]}^{\theta}$, this normalization agrees with \cite{BG}. 

 For $\theta'\in\Lambda_{G,P}$ let $\Bun_P^{\theta'}, \Bunb_P^{\theta'}$ and so on be the preimage of the component $\Bun_M^{\theta'}$. We have 
$$
\dim\Bun_P^{\theta}=(g-1)\dim P+\<\theta, 2\check{\rho}-2\check{\rho}_M\>=\dim(_{=\theta, x}\Bunb_P^0)
$$ 
This explains the shift in the definition of the t-structure on $Shv(\Gr^{\theta}_P)^H$ in Section~\ref{Sect_t-str_on_S_theta_P}.

 Let $\Lambda_{G,P}$ act on $_{x,\infty}\Bunb_P$ so that $\theta\in\Lambda_{G,P}$ acts as
$$
(\cF_{M/[M,M]}, \cF_G,\kappa)\mapsto (\cF_{M/[M,M]}(\theta x), \cF_G, \kappa)
$$ 
Let now $\Lambda_{M, ab}$ act on $_{x,\infty}\Bunb_P$ via the inclusion $\Lambda_{M, ab}\hook{} \Lambda_{G,P}$. Then $\pi: \Gr_G\to {_{x,\infty}\Bunb_P}$ is $\Lambda_{M,ab}$-equivariant, where $\lambda\in\Lambda_{M, ab}$ acts on $\Gr_G$ as $t^{\lambda}$. 
 
\sssec{} 
\label{Sect_2.3.6_positive part_Gr_M^+}
Set $\Lambda_{M,G}^+=\Lambda_M^+\cap w_0^M(\Lambda^{pos})$. 

 We define the positive part of the affine Grassmanian $\Gr_M^+\subset\Gr_M$ as the subscheme of $(\cF_M, \beta)\in\Gr_M$, where $\cF_M$ is a $M$-torsor on the disk $D$, and $\beta: \cF_M\,\iso\cF^0_M\mid_{D^*}$ such that for any $V\in\Rep(G)^{\heartsuit}$ finite-dimensional, the natural map
$$
V^{U(P)}_{\cF_M}\toup{\beta} V^{U(P)}_{\cF^0_M} 
$$
is regular over $D$.
Recall that for $\nu\in\Lambda^+_M$ we have $\Gr_M^{\nu}\subset \Gr_M^+$ iff $\nu\in \Lambda_{M,G}^+$ by (\cite{BG}, Proposition~6.2.3). For $\theta\in\Lambda_{G,P}$ we set $\Gr_M^{+,\theta}=\Gr_M^{\theta}\cap \Gr_M^+$.

\sssec{} 
\label{Sect_2.3.7_local_vs_global}
Let $\Bunt_P$ and $_{x,\infty}\Bunt_P$ be defined as in (\cite{BG}, 4.1.1). As in \select{loc.cit}. for $\nu\in\Lambda^+_M$ define the closed substack $_{x,\ge\nu}\Bunt_P\subset {_{x,\infty}\Bunt_P}$ by requiring that for any finite-dimensional $G$-module $\cV$ whose weights are $\le \check{\lambda}$, the map
\begin{equation}
\label{map_for_2.3.5}
\cV^{U(P)}_{\cF_M}\to \cV_{\cF_G}(-\<w_0^M(\nu), \check{\lambda}\>x)
\end{equation}
is regular on $X$. In particular, $\Bunt_P={_{x,\ge 0}\Bunt_P}$. Let $_{x,\nu}\Bunt_P\subset {_{x,\ge\nu}\Bunt_P}$ be the open substack defined as in (\cite{BG}, 4.2.2). In fact, it classifies $(\cF_M, \cF_G, \kappa)\in {_{x,\infty}\Bunt_P}$ such that there is a modification $\cF_M\,\iso\, \cF'_M\mid_{X-x}$ of $M$-torsors at $x$ for which $\cF'_M$ defines a true $P$-structure on $\cF_G$ in a neighbourhood of $x$, and such that $\cF_M$ is in the position $\nu$ with respect to $\cF'_M$ at $x$. 

 Recall that by (\cite{BG}, 4.2.3) the stacks $_{x,\nu'}\Bunt_P$ for $\nu'\in\Lambda_M^+$ with $w_0^M(\nu'-\nu)\in\Lambda^{pos}$ form a stratification of $_{x,\ge \nu}\Bunt_P$. 
 
 For $\theta\in\Lambda_{G,P}$ denote by $\Bunt_P^{\theta}$ be the preimage of $\Bun_M^{\theta}$ under $\Bunt_P\to\Bun_M$.  
 
\sssec{} 
\label{Sect_2.3.8_local_vs_global}
Define the morphism $\tilde\pi: \Gr_G\to {_{x,\infty}\Bunt_P}$ sending $(\cF_G,\beta)$ to $(\cF_M^0, \cF_G, \kappa)$. The composition 
$$
\Gr_G\toup{\tilde\pi}{_{x,\infty}\Bunt_P}\toup{\gr}{_{x,\infty}\Bunb_P}
$$
equals $\pi$. Here $r$ is the map sending $(\cF_M, \cF_G,\kappa)$ to $(\cF_{M/[M,M]},\cF_G, \kappa)$ with $\cF_{M/[M,M]}$ induced from $\cF_M$.

If $\nu\in\Lambda_M^+$ then $\tilde\pi^{-1}(_{x,-w_0^M(\nu)}\Bunt_P)$ coincides with $S_P^{\nu}$. This gives the fact that if $\nu\in\Lambda_M^+$ then $\bar S^{\nu}_P$ is stratified by locally closed ind-schemes $S^{\mu}_P$ for $\mu\in\Lambda^+_M$ satisfying  $\nu-\mu\in\Lambda^{pos}$. 

 We let $Shv(\Gr_G)^{G(\cO)}$ act on $Shv(_{x,\infty}\Bunt_P)$, so that $\cS\in Shv(\Gr_G)^{G(\cO)}$ acts as 
$$
_x\H^{\ra}_{P, G}(\cS, \cdot): Shv(_{x,\infty}\Bunt_P)\to Shv(_{x,\infty}\Bunt_P)
$$ 
in the notations of (\cite{BG}, 4.1.4). Write for brevity $\_\ast \cS={_x\H^{\ra}_{P, G}(\cS, \_)}$. As above, we have the adjoint pair
$$
\tilde\pi_! : Shv(\Gr_G)\leftrightarrows Shv(_{x,\infty}\Bunt_P): \tilde\pi^!,
$$
and both these functors commute with $\Rep(\check{G})$-actions. Similarly, the functors
$$
\gr_!: Shv(_{x,\infty}\Bunt_P)\leftrightarrows Shv(_{x,\infty}\Bunb_P): \gr^!
$$
commute with $\Rep(\check{G})$-actions at $x$.  Note that $\tilde\pi^{-1}(\Bunt_P)=\bar S^0_P$. 

 For $\nu\in\Lambda^+_M$ write $i_{\nu, glob}: {_{x,-w_0^M(\nu)}\Bunt_P}\to {_{x,\infty}\Bunt_P}$ for the natural inclusion and set
$$
\bvartriangle^{\nu}_{\glob}=(i_{\nu, glob})_!\IC(_{x,-w_0^M(\nu)}\Bunt_P),\;\;\;\;\; 
\nabla^{\nu}_{glob}=(i_{\nu, glob})_*\IC(_{x,-w_0^M(\nu)}\Bunt_P)
$$ 

\sssec{} We let $\Rep(\check{M})$ act on $Shv(_{x,\infty}\Bunt_P)$, so that $V\in \Rep(\check{M})$ acts as 
$$
_x\H^{\la}_{P, M}(\Sat_M(V), \cdot): Shv(_{x,\infty}\Bunt_P)\to Shv(_{x,\infty}\Bunt_P)
$$ 
in the notations of (\cite{BG}, 4.1.2). 

 For $\cS\in \Sph_M$ and $K\in Shv(\Gr_G)^{M(\cO)}$ we write $\cS\ast K$ for the natural left action of $\cS$ on $K$. For $\cS\in Shv(\Gr_M)^{M(\cO)}$ we also write for brevity $\cS\ast \_={_x\H^{\la}_{P, M}(\cS, \_)}$. 

\sssec{} 
\label{Sect_cY_x_definition}
Let $\cY_x$ be the stack classifying $\cF_G\in\Bun_G, \cF_M\in\Bun_M$ and an isomorphism $\xi: \cF_M\times_M G\,\iso\, \cF_G\mid_{X-x}$. Let 
$$
\pi_{glob}: \cY_x\to {_{x,\infty}\Bunt_P}
$$ 
be the map sending the above point to $(\cF_G, \cF_M,\kappa)$. Here $\kappa$ is obtained from the $P$-structure $\cF_M\times_M P$ on $\cF_G\mid_{X-x}$. Note that $\cY_x$ is locally of finite type as a prestack. 

 We define the $\Sph_{G, x}$ and $\Sph_{M,x}$-actions on $Shv(\cY_x)$ along the lines of the corresponding actions on $Shv(_{x,\infty}\Bunt_P)$. One easily checks that the functors in the adjoint pair
$$
(\pi_{glob})_!: Shv(\cY_x)\leftrightarrows Shv(_{x,\infty}\Bunt_P): \pi_{glob}^!
$$
are both $\Sph_{G, x}$ and $\Sph_{M, x}$-linear.   

 For $\theta\in\Lambda_{G,P}$ write $\cY^{\theta}_x$ for the preimage of $\Bun_M^{\theta}$ under the projection $\cY_x\to \Bun_M$, $(\cF_G,\cF_M,\xi)\mapsto \cF_M$. 

\sssec{} 
\label{Sect_pi_loc_def}
Write $\pi_{loc}: \cY_x\to M(\cO_x)\backslash \Gr_{G, x}$ for the map sending $(\cF_G, \cF_M, \xi)$ to their restrictions to $D_x$. Note that $\pi_{loc}$ is schematic. Indeed, the prestack classifying $\cF_M\in\Bun_M$ with a trivialization over $D_x$ is known to be a scheme. 

 For a $M(\cO_x)$-stable locally closed ind-subscheme $Z$ of $\Gr_{M, x}$ (or of $\Gr_{G, x}$) set 
$$
Z(\cY)=\Bun_M\times_{pt/M(\cO_x)} (M(\cO_x)\backslash Z).
$$ 
For a $M(\cO_x)$-equivariant map $h: Z\to Z'$ between two such ind-subschemes, we write $h_{\cY}: Z(\cY)\to Z'(\cY)$ for the morphism obtained from $M(\cO_x)\backslash Z\to M(\cO_x)\backslash Z'$ by base change via $\Bun_M\to pt/M(\cO_x)$.  

  In particula, for $\nu\in\Lambda_M^+$ we get the inclusion $i_{\nu, \cY}: S^{\nu}_P(\cY)\hook{} \cY_x$.

  
\sssec{}  Let us explain that the functor $\pi_{loc}^!: Shv(M(\cO_x)\backslash \Gr_{G, x})\to Shv(\cY_x)$ is well-defined.

 We write $\Gr_G\,\iso\, \colim_{i\in i} Z_i$, where $I$ is small filtered, $Z_i\subset \Gr_G$ is a closed $M(\cO)$-invariant subscheme of finite type, and for $i\to i'$ the map $Z_i\to Z_{i'}$ is a closed immersion. Recall that $Shv(M(\cO_x)\backslash \Gr_{G, x})\,\iso\, \colim_i Shv(M(\cO_x)\backslash Z_i)$, and similarly $Shv(\cY_x)\,\iso\, \colim_i Shv(\Bun_M\times_{pt/M(\cO_x)} (M(\cO_x)\backslash Z_i)$. It suffices to define the corresponding functor
$$
\pi_{i, loc}^!: Shv(M(\cO_x)\backslash Z_i)\to Shv(\Bun_M\times_{pt/M(\cO_x)} (M(\cO_x)\backslash Z_i)
$$
then $\pi_{loc}^!$ is obtained by passing to the colimit over $i\in I$. Fix $i\in I$ and pick a quotient group scheme $M(\cO_x)\to M_i$ with $M_i$ smooth of finite type such that the $M(\cO_x)$-action on $Z_i$ factors through $M_i$. The corresponding morphism
\begin{equation}
\label{smooth_map_part_of_pi_loc}
\Bun_M\times_{pt/M_i} (M_i\backslash Z_i)\to M_i\backslash Z_i
\end{equation}
is obtained by base change from $\Bun_M\to pt/M_i$. Since (\ref{smooth_map_part_of_pi_loc}) is a morphism of algebraic stacks locally of finite type, the !-inverse image under this map is well-defined (and commutes with the transition functors in the above diagrams). This concludes the construction of $\pi_{loc}^!$. 

 For $i\in I$ the map (\ref{smooth_map_part_of_pi_loc}) is smooth. This implies, in particular, that for any $\nu\in\Lambda^+_M$ one has canonically
$$
\pi_{loc}^!(i_{\nu})_!\omega\,\iso\, (i_{\nu, \cY})_!\omega
$$ 
in $Shv(\cY_x)$. 

\sssec{} One easily checks that $\pi_{loc}^!: Shv(M(\cO_x)\backslash \Gr_{G, x})\to Shv(\cY_x)$ is both $\Sph_{G, x}$ and $\Sph_{M, x}$-linear.   
  
\begin{Rem} 
\label{Rem_2.3.7}
i) Let $\nu\in\Lambda_M^+$ and $\lambda\in\Lambda^+$. If $S^{\nu}_P\cap \ov{\Gr}_G^{\lambda}\ne\emptyset$ then $\lambda-\nu, \nu+w_0(\lambda)\in \Lambda^{pos}$. Indeed, the map $\gt^{\nu}_P$ is $M(\cO)$-equivariant, so $S^{\nu}\cap \ov{\Gr}_G^{\lambda}\ne\emptyset$, where $S^{\nu}$ is the $U(F)$-orbit through $t^{\nu}$.

\smallskip\noindent
ii) For $\mu\in \Lambda_{M, ab}, \nu\in \Lambda^+_M$ one has $t^{\mu}S^{\nu}_P=S^{\nu+\mu}_P$. 

\smallskip\noindent
iii) Let $\mu\in\Lambda_{M, ab}$ over $\bar\mu\in\Lambda_{G,P}$, let $\theta\in\Lambda_{G,P}$. Then $t^{\mu}\Gr_M^{\theta}=\Gr_M^{\theta+\bar\mu}$ and $t^{\mu}\Gr^{\theta}_P=\Gr^{\theta+\bar\mu}_P$. The natural map $\Lambda_{M, ab}\to \Lambda_{G,P}$ is injective.
\end{Rem}

\sssec{} 
\label{Sect_2.3.12_local_vs_global}
Write $\IC_{\wt{glob}}$ for the $\IC$-sheaf of $\Bunt_P$. Its Hecke property is given by (\cite{BG}, 4.1.5). It says that for $V\in\Rep(\check{G})$ one has isomorphisms
$$
\IC_{\wt{glob}}\ast V\,\iso\, \Res(V)\ast \IC_{\wt{glob}}
$$
in a way compatible with the monoidal structures on $\Rep(\check{G}), \Rep(\check{M})$. Here $\Res: \Rep(\check{G})\to \Rep(\check{M})$ is the restriction.  

 This shows that $\IC_{\wt{glob}}$ naturally upgrades to an object of 
$$
Shv(_{x,\infty}\Bunt_P)\otimes_{\Rep(\check{G})\otimes\Rep(\check{M})} \Rep(\check{M})
$$ 

 For $\nu\in\Lambda^+_M$ write $\IC^{\nu}_{\wt{glob}}$ for the $\IC$-sheaf of $_{x,\ge -w_0^M(\nu)}\Bunt_P$ extended by zero to $_{x,\infty}\Bunt_P$. 
  
\ssec{Structure of $\SI_P$}

\sssec{} 
\label{Sect_2.3.8}
For $\nu\in\Lambda_M^+$ write $j_{\nu}: S^{\nu}_P\hook{} \bar S^{\nu}_P$ for the open immersion. Let $\bar i_{\nu}: \bar S^{\nu}_P\hook{} \Gr_G$ be the closed immersion and $i_{\nu}=\bar i_{\nu}\comp j_{\nu}$. The ind-schemes $S^{\nu}_P$, $\bar S^{\nu}_P$ are acted on by $H$, so we also consider the categories of invariants 
$$
Shv(\bar S^{\nu}_P)^H, \;\; Shv(S^{\nu}_P)^H
$$ 
By restriction we get the adjoint pairs $(\bar i_{\nu})_!: Shv(\bar S^{\nu}_P)^H\leftrightarrows Shv(\Gr_G)^H: (\bar i_{\nu})^!$ and 
$$
j_{\nu}^*: Shv(\bar S^{\nu}_P)^H\leftrightarrows Shv(S^{\nu}_P)^H: (j_{\nu})_*
$$ 
with $(\bar i_{\nu})_!, (j_{\nu})_*$ fully faithful. 
By Lemma~\ref{Lm_A.5.3} and Section~\ref{Sect_A.5.5}, we have the adjoint pair 
$$
(j_{\nu})_!:  Shv(S^{\nu}_P)^H\leftrightarrows Shv(\bar S^{\nu}_P)^H: j_{\nu}^!
$$ 
with $(j_{\nu})_!$ fully faithful. Let $i_P^{\nu}: \Gr_M^{\nu}\to S^{\nu}_P$ be the closed embedding, the $M(\cO)$-orbit through $t^{\nu}$, so that $\gt^{\nu}_Pi_P^{\nu}=\id$. 
The following is close to (\cite{LC2}, Lemma~2.1.5).

\begin{Lm} 
\label{Lm_2.3.8}
i) Let $\nu\in\Lambda_M^+$. The $!$-restriction under $\gt^{\nu}_P: S^{\nu}_P\to \Gr_M^{\nu}$ yields a fully faithfull embedding $Shv(\Gr_M^{\nu})^{M(\cO)}\hook{} Shv(S^{\nu}_P)^{M(\cO)}$ whose image is $Shv(S^{\nu}_P)^H$. The composition 
\begin{equation}
\label{equiv_for_Lm_2.3.8}
Shv(S^{\nu}_P)^H\hook{} Shv(S^{\nu}_P)^{M(\cO)}\toup{(i^{\nu}_P)^!} Shv(\Gr_M^{\nu})^{M(\cO)}
\end{equation}
is an equivalence.  

\smallskip\noindent
ii) Let $\theta\in\Lambda_{G,P}$. The functor $(\gt^{\theta}_P)^!: Shv(\Gr_M^{\theta})^{M(\cO)}\to Shv(\Gr^{\theta}_P)^{M(\cO)}$ is fully faithful, its essential image is $Shv(\Gr^{\theta}_P)^H$. The composition 
\begin{equation}
\label{composition_for_Lm_2.3.8}
Shv(\Gr^{\theta}_P)^H\hook{} Shv(\Gr^{\theta}_P)^{M(\cO)}\toup{(i^{\theta}_P)^!}Shv(\Gr_M^{\theta})^{M(\cO)}
\end{equation}
is an equivalence. The natural transformation $(\gt_P^{\theta})_!(\gt_P^{\theta})^!\to \id$ on $Shv(\Gr_M^{\theta})^{M(\cO)}$ is an equivalence.
\end{Lm}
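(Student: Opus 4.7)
The overall plan is to exploit the retraction $\gt^{\nu}_P\colon S^{\nu}_P\to \Gr_M^{\nu}$ with its section $i^{\nu}_P$, viewing $\gt^{\nu}_P$ as $U(P)(F)$-equivariant for the trivial $U(P)(F)$-action on $\Gr_M^{\nu}$. Pullback along such a retraction automatically upgrades $M(\cO)$-equivariance to $H=M(\cO)U(P)(F)$-equivariance, and the section supplies a candidate inverse. The same idea handles part (ii), with the last assertion reduced to Braden's theorem in the form of Lemma~\ref{Lm_theorem_of_Braden_for_theta}.

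For part (i), I would first observe that $(\gt^{\nu}_P)^!$ sends $M(\cO)$-equivariant sheaves to $H$-equivariant ones, and that $(i^{\nu}_P)^!\comp(\gt^{\nu}_P)^!=\id$ on $Shv(\Gr_M^{\nu})^{M(\cO)}$ because $\gt^{\nu}_P\comp i^{\nu}_P=\id$. This already gives full faithfulness of $(\gt^{\nu}_P)^!$ and exhibits $Shv(\Gr_M^{\nu})^{M(\cO)}$ as a full subcategory of $Shv(S^{\nu}_P)^H$. The substantive content is essential surjectivity, which reduces to the descent equivalence
$$
(\gt^{\nu}_P)^!\colon Shv(\Gr_M^{\nu})\,\iso\, Shv(S^{\nu}_P)^{U(P)(F)}.
$$
Here I would use the presentation $H\,\iso\,\mathop{\colim}\limits_{\lambda\in\Lambda^+_{M,ab}}H_\lambda$ from Section~\ref{Sect_Local automorphic side_begins} together with the filtration $U(P)(F)\,\iso\,\mathop{\colim}\limits U_\lambda$ introduced in the proof of Proposition~\ref{Pp_2.2.6}, combined with the placid ind-scheme formalism of the appendix, to reduce descent to a finite-type statement along the affine-space fibrations $U_\lambda\cdot t^\nu\to\{t^\nu\}$ at each finite stage. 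Granted this descent, any $F\in Shv(S^{\nu}_P)^H$ satisfies $F\,\iso\,(\gt^{\nu}_P)^!(i^{\nu}_P)^!F$, so $(i^{\nu}_P)^!$ is inverse to $(\gt^{\nu}_P)^!$.

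For part (ii), the identical strategy applies to $\gt^{\theta}_P\colon\Gr^{\theta}_P\to\Gr_M^{\theta}$, whose restriction to each $H$-orbit $S^{\nu}_P\subset\Gr^{\theta}_P$ (for $\nu\in\Lambda_M^+$ mapping to $\theta$) is the retraction treated in (i); $U(P)(F)$-equivariance and the section $i^{\theta}_P$ are both present globally on $\Gr^{\theta}_P$. For the final assertion, let $K\in Shv(\Gr_M^{\theta})^{M(\cO)}$ and set $K'=(\gt^{\theta}_P)^!K$; by the equivalence just established, $K'\in Shv(\Gr^{\theta}_P)^H$, and hence in particular is $T$-equivariant. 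Applying Lemma~\ref{Lm_theorem_of_Braden_for_theta}(b) to $K'$ then gives
$$
(\gt^{\theta}_P)_!(\gt^{\theta}_P)^!K\;=\;(\gt^{\theta}_P)_!K'\;\iso\;(i^{\theta}_P)^!K'\;=\;(i^{\theta}_P)^!(\gt^{\theta}_P)^!K\;=\;K,
$$
using $\gt^{\theta}_P\comp i^{\theta}_P=\id$, and one verifies this identification is the counit of the adjunction $((\gt^{\theta}_P)_!,\,(\gt^{\theta}_P)^!)$.

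The hard part will be the descent equivalence $Shv(S^{\nu}_P)^{U(P)(F)}\,\iso\, Shv(\Gr_M^{\nu})$: since $U(P)(F)$ is an infinite-dimensional placid ind-scheme and the fibers of $\gt^{\nu}_P$ are themselves semi-infinite, no naive pullback-pushforward argument is available. The key is to filter by the $U_\lambda$ and pass to invariants at each stage, reducing to descent along finite-dimensional affine-space fibrations, after which the argument is standard.
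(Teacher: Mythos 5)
Your overall strategy --- exploit the retraction/section structure of $\gt^{\nu}_P$, reduce to descent for the $U(P)(F)$-action via the filtration by the $U_{\lambda}$, and invoke Braden for the last claim --- is the same circle of ideas as the paper's. The paper organizes (i) slightly differently: it writes $S^{\nu}_P\,\iso\,\colim_{\lambda}H_{\lambda}t^{\nu}$ as a homogeneous ind-space for $H$ itself, identifies $Shv(S^{\nu}_P)^H$ with $Shv(B(St_{\nu}))$ where $St_{\nu}$ is the stabilizer of $t^{\nu}$ in $H$, and then uses prounipotence of $\Ker(St_{\nu}\to St^M_{\nu})$ to land in $Shv(B(St^M_{\nu}))\,\iso\,Shv(\Gr_M^{\nu})^{M(\cO)}$; for (ii) it gets full faithfulness from the appendix lemma on transitive actions of ind-pro-unipotent groups on fibres, and essential surjectivity by d\'evissage over the orbits $S^{\nu}_P$, reducing to (i). Your two-step route ($U(P)(F)$-descent first, then $M(\cO)$-invariants) is a legitimate reorganization of the same argument.

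Two steps are wrong or incomplete as stated. First, the claim that $(i^{\nu}_P)^!\comp(\gt^{\nu}_P)^!=\id$ ``already gives full faithfulness'' of $(\gt^{\nu}_P)^!$ is false: a functor admitting a retraction is faithful (the map on mapping spaces is split injective) but need not be full --- e.g.\ the diagonal $\cC\to\cC\times\cC$ is retracted by a projection without being full. Fullness is precisely part of the descent equivalence you defer to, so the proof is repairable, but the claimed shortcut is not available and the logical order should be: prove the descent equivalence, deduce full faithfulness from it. Second, for the final assertion: Braden's theorem gives an abstract identification $(\gt^{\theta}_P)_!K'\,\iso\,(i^{\theta}_P)^!K'$, but showing that the resulting composite agrees with the \emph{counit} of the adjunction is exactly the step you dismiss with ``one verifies,'' and it is not automatic. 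The clean argument is that once $(\gt^{\theta}_P)^!$ is known to be a fully faithful right adjoint of $(\gt^{\theta}_P)_!$, the counit $(\gt^{\theta}_P)_!(\gt^{\theta}_P)^!\to\id$ is an equivalence by general nonsense; no appeal to Braden is needed for this particular claim.
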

\begin{proof} i) We have $S^{\nu}_P\,\iso\,\mathop{\colim}\limits_{\lambda\in\Lambda^+_{M, ab}} H_{\lambda}t^{\nu}$. So, 
$$
Shv(S^{\nu}_P)^H\,\iso\, \mathop{\lim}\limits_{\lambda\le\lambda'\in (\Fun([1], \Lambda^+_{M, ab}))^{op}} Shv(H_{\lambda'}t^{\nu})^{H_{\lambda}}
$$ 
However, the diagonal map $\Lambda^+_{M, ab}\to \Fun([1], \Lambda^+_{M, ab})$ is cofinal, so the latter limit identifies with 
$$
\mathop{\lim}\limits_{\lambda\in\Lambda^+_{M, ab}} Shv(H_{\lambda}t^{\nu})^{H_{\lambda}}
$$
The stabilizor $St_{\nu}$ of $t^{\nu}$ in $H$ is the preimage of $St^M_{\nu}:=M(\cO)\cap t^{\nu}M(\cO)t^{-\nu}$ under $t^{\nu}P(\cO)t^{-\nu}\to t^{\nu}M(\cO)t^{-\nu}$. For for $\lambda$ large enough we have $St_{\nu}\subset H_{\lambda}$ and 
$$
Shv(H_{\lambda}t^{\nu})^{H_{\lambda}}\,\iso\, Shv(B(St_{\nu}))
$$ 
This gives an equivalence
$$
Shv(S^{\nu}_P)^H\,\iso\, Shv(B(St_{\nu}))
$$
The kernel of $St_{\nu}\to St^M_{\nu}$ is prounipotent, so 
$$
Shv(B(St_{\nu}))\,\iso\, Shv(B(St^M_{\nu}))\,\iso\, Shv(\Gr_M^{\nu})^{M(\cO)}
$$

\smallskip\noindent
ii)  Since $H/M(\cO)$ is ind-pro-unipotent, $\oblv: Shv(\Gr^{\theta}_P)^H\to Shv(\Gr^{\theta}_P)^{M(\cO)}$ is fully faithful. The map $\gt_P^{\theta}$ is $H$-equivariant, so 
\begin{equation}
\label{functor_gt_P^theta^!_for_Lm_2.4.2}
 (\gt_P^{\theta})^!: Shv(\Gr_M^{\theta})^{M(\cO)}\to Shv(\Gr^{\theta}_P)^{M(\cO)}
\end{equation}
takes values in $Shv(\Gr^{\theta}_P)^H$.
 
 The group $U(P(F))$ acts transitively on the fibres of $\Gr_P^{\theta}\to \Gr_M^{\theta}$.
By Lemma~\ref{Lm_A.3.6_fully_faithful_functors}, (\ref{functor_gt_P^theta^!_for_Lm_2.4.2}) is fully faithful. It remains to show that 
$$
(\gt^{\theta}_P)^!: Shv(\Gr_M^{\theta})^{M(\cO)}\to Shv(\Gr^{\theta}_P)^H
$$ 
is essentially surjective.

 The objects of the form $(i_{\nu})_!F$ for $\nu\in\Lambda_M^+$ over $\theta$, $F\in Shv(S^{\nu}_P)^H$, generate $Shv(\Gr_P^{\theta})^H$. The desired claim follows now from part i).
\end{proof}

\sssec{} 
\label{Sect_2.3.9_object_cB}
By Lemma~\ref{Lm_2.3.8}, for each $\nu\in\Lambda^+_M$ we have the object $\omega\in Shv(S^{\nu}_P)^H$ and $(i_{\nu})_!\omega\in Shv(\Gr_G)^H$. For $\lambda\in\Lambda_M^+$ set 
$$
\bvartriangle^{\lambda}=(i_{\lambda})_!\omega[-\<\lambda, 2\check{\rho}\>],\;\;\;\;
\nabla^{\lambda}=(i_{\lambda})_*\omega[-\<\lambda, 2\check{\rho}\>]
$$
in $Shv(\Gr_G)^H$. 

For $\lambda\in\Lambda_{M, ab}$ the ind-scheme $S^{\lambda}_P$ coincides with the $U(P)(F)$-orbit of $t^{\lambda}\in\Gr_G$. 
 
\sssec{} For $\lambda\in \Lambda_{M, ab}$ let $\cW_{\lambda}\in \cH_P(G)$ denote the Wakimoto object defined as follows. Writing $\lambda=\lambda_1-\lambda_2$ with $\lambda_i\in\Lambda_{M, ab}^+$ we set $\cW_{\lambda}=j_{\lambda_1, !}\ast j_{-\lambda_2, *}$. 

\begin{Lm} 
\label{Lm_2.3.13_about_Av!}
i) For $\lambda\in\Lambda_{M, ab}$ one has naturally $\Av^{U(P)(F)}_!(\cW_{\lambda}\ast \delta_{1,\Gr_G})\,\iso\, \bvartriangle^{\lambda}$ in $Shv(\Gr_G)^H$. Besides, we have canonically
$$
\Av_!^{U(P)(F)}(\delta_{t^{\lambda},\Gr_G})[-\<\lambda, 2\check{\rho}\>]\,\iso\, \bvartriangle^{\lambda}
$$   
for $\Av_!^{U(P)(F)}: Shv(\Gr_G)^{M(\cO)}\to Shv(\Gr_G)^H$.
\\
ii) If $\lambda\in\Lambda^+_{M, ab}$ then 
$$
\cB_{\lambda, !}\,\iso\, j_{\lambda, !}\ast \delta_{1,\Gr_G}
$$ 
iii) For $\lambda\in\Lambda_{M, ab}$, $\mu\in\Lambda^+_M$, one has canonically $t^{\lambda}\bvartriangle^{\mu}[-\<\lambda, 2\check{\rho}\>]\,\iso\, \bvartriangle^{\mu+\lambda}$.
\end{Lm}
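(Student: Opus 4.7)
The plan is to prove the three parts in the order (ii), (iii), (i), since the later parts build on the earlier ones.

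For (ii), I would unwind the convolution $j_{\lambda, !} \ast \delta_{1, \Gr_G}$ via the standard convolution diagram $I_P t^{\lambda} I_P \times^{I_P} \Gr_G \to \Gr_G$: the result is the $!$-pushforward from the image $I_P t^{\lambda} G(\cO)/G(\cO)$. By the discussion in Section~\ref{Sect_2.1.4_objects_cB} (following (\cite{MV}, proof of Thm~3.2)), for $\lambda \in \Lambda^+_{M, ab}$ this image coincides with $U(\cO) t^{\lambda} G(\cO)/G(\cO) = S_B^{\lambda} \cap \ov{\Gr}_G^{\lambda}$, an affine scheme of dimension $\<\lambda, 2\check\rho\>$ with affine open embedding into $\ov{\Gr}_G^{\lambda}$. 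Matching the perverse normalization of the $!$-extension of the $\IC$-sheaf yields the identification with $\cB_{\lambda, !}$ by definition.

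For (iii), the automorphism $t^{\lambda}$ of $\Gr_G$ maps $S_P^{\mu}$ isomorphically onto $S_P^{\mu+\lambda}$ by Remark~\ref{Rem_2.3.7} (ii), compatibly with the inclusions $i_{\mu}$ and $i_{\mu+\lambda}$. Since $t^{\lambda}$ commutes with $!$-pushforward (being an automorphism of $\Gr_G$), this immediately gives $t^{\lambda} (i_{\mu})_! \omega \iso (i_{\mu+\lambda})_! \omega$, and accounting for the cohomological shift in the definition of $\bvartriangle^{\mu}$ yields the claimed identity.

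For (i), I would prove the second statement first. Since $\lambda \in \Lambda_{M, ab}$ normalizes $U(P)(F)$, the isomorphism (\ref{iso_for_Step2_Av!_versus_t_lambda}) from the proof of Proposition~\ref{Pp_2.2.6} rearranges to $\Av_!^{U(P)(F)}(t^{\lambda} K) \iso t^{\lambda} \Av_!^{U(P)(F)}(K)$. Applying this to $K = \delta_{1, \Gr_G}$, and noting that $M(\cO)$ fixes the basepoint so its $H$-orbit equals $S_P^0$, gives $\Av_!^{U(P)(F)}(\delta_{1, \Gr_G}) \iso (i_0)_! \omega = \bvartriangle^0$. Part (iii) then produces the second statement. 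For the first statement, write $\lambda = \lambda_1 - \lambda_2$ with $\lambda_i \in \Lambda^+_{M, ab}$ and $\cW_{\lambda} = j_{\lambda_1, !} \ast j_{-\lambda_2, *}$; the identity (\ref{iso_Av_!_intertwines}) handles $j_{\lambda_1, !}$ directly, and an analogous averaging identity for $j_{-\lambda_2, *}$, obtained by inverting (\ref{iso_Av_!_intertwines}) (after verifying $j_{\lambda_2, !} \ast j_{-\lambda_2, *} \iso \delta_{1, \Fl_P}$ as a companion to Lemma~\ref{Lm_2.2.5} (i)), handles $j_{-\lambda_2, *}$. Combining these with the base case $\Av_!^{U(P)(F)}(\delta_{1, \Gr_G}) \iso \bvartriangle^0$ and part (iii) yields $\Av_!^{U(P)(F)}(\cW_{\lambda} \ast \delta_{1, \Gr_G}) \iso t^{\lambda_1 - \lambda_2} \bvartriangle^0[-\<\lambda, 2\check\rho\>] \iso \bvartriangle^{\lambda}$.

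The main obstacle will be establishing the averaging identity for the costandard $j_{-\lambda_2, *}$; carrying out the inversion of (\ref{iso_Av_!_intertwines}) cleanly requires either a full two-sided inverse relation in $\cH_P(G)$ or routing the argument through the equivalence $\Av_*^{I_P/M(\cO)}$ of Proposition~\ref{Pp_2.2.6} via the dual identity (\ref{iso_for_Step3_first}). Once this bookkeeping is in place, the calculation is forced by the translation structure on $\Gr_G$.
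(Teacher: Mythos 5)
Your proposal is correct and follows essentially the same route as the paper: the paper proves (ii) by identifying $j_{\lambda, !}\ast \delta_{1,\Gr_G}$ with $\act_!\IC$ for $\act: t^{\lambda}U_{\lambda}/U_0\to\Gr_G$, reduces (i) to the base case $\Av_!^{U(P)(F)}(\delta_{1,\Gr_G})\,\iso\,\bvartriangle^0$ computed as $\mathop{\colim}\limits_{\mu\in\Lambda^+_{M,ab}}\omega_{U_{\mu}/U_0}$ (you should make this colimit explicit rather than appealing only to the orbit being $S^0_P$), and then translates via (\ref{iso_Av_!_intertwines}) and (\ref{iso_for_Step2_Av!_versus_t_lambda}). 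The two-sided invertibility you flag as the main obstacle for the costandard factor of $\cW_{\lambda}$ is exactly what the paper extracts from Lemma~\ref{Lm_2.2.5} when it upgrades $\lambda\mapsto j_{\lambda,*}$ and $\lambda\mapsto j_{\lambda,!}$ to monoidal functors out of the group $\Lambda_{M,ab}$, so that point is already in place.
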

\begin{proof} i) By (\ref{iso_Av_!_intertwines}) for any $\lambda\in\Lambda_{M, ab}$ one has canonically 
$$
t^{\lambda}\Av_!^{U(P)(F)}(\delta_{1,\Gr_G})[-\<\lambda, 2\check{\rho}\>]\,\iso\, \Av_!^{U(P)(F)}(\cW_{\lambda}\ast \delta_{1,\Gr_G})
$$ 
One has $t^{\lambda}\bvartriangle^0[-\<\lambda, 2\check{\rho}\>]\,\iso\, \bvartriangle^{\lambda}$. So, we are reduced to the case $\lambda=0$. For each $\mu\in\Lambda^+_{M, ab}$ consider the embedding $\act: U_{\mu}/U_0\to \Gr_G$, $zU_0\mapsto zG(\cO)$. By definition, $\Av_!^{U_{\mu}}(\delta_{1,\Gr_G})\,\iso\, \act_!\omega$. Now
$$
\Av_!^{U(P)(F)}(\delta_{1,\Gr_G})\,\iso\,\mathop{\colim}\limits_{\mu\in\Lambda^+_{M, ab}} \Av^{U_{\mu}}_!(\delta_{1,\Gr_G})\,\iso\, \mathop{\colim}\limits_{\mu\in\Lambda^+_{M, ab}} \omega_{U_{\mu}/U_0}\,\iso\,\bvartriangle^0
$$

 For the second claim, by (\ref{iso_for_Step2_Av!_versus_t_lambda}) we have $\Av_!^{U(P)(F)}(\delta_{\lambda,\Gr_G})\,\iso\, t^{\mu}\Av_!^{U(P)(F)}(\delta_{1,\Gr_G})$. The claim follows now from the above.

\smallskip\noindent
ii) Let $\act: t^{\lambda}U_{\lambda}/U_0\to\Gr_G$ be the embedding sending $zU_0$ to $zG(\cO)$. By Section~\ref{Sect_2.1.4_objects_cB}, $j_{\lambda, !}\ast \delta_{1,\Gr_G}\,\iso\, \act_!\IC$.
\end{proof}


\sssec{$t$-structure on $Shv(\Gr_G)^H$} First, for $\nu\in\Lambda^+_M$ define a new t-structrure on $Shv(S^{\nu}_P)^H$ by declaring that $K\in Shv(S^{\nu}_P)^H$ lies in $Shv(S^{\nu}_P)^{H,\le 0}$ iff $(i^{\nu}_P)^!(K)$ lies in perverse degrees $\le \<\nu, 2\check{\rho}-2\check{\rho}_M\>$. In fact, $Shv(S^{\nu}_P)^{H,\le 0}\subset Shv(S^{\nu}_P)^H$ is the smallest full subcategory containing $\bvartriangle^{\nu}$, stable under extensions and small colimits. 

Define 
$$
Shv(\Gr_G)^{H, \le 0}\subset Shv(\Gr_G)^H
$$ 
as the smallest full subcategory containing $(i_{\nu})_!F$ for $\nu\in\Lambda_M^+, F\in Shv(S^{\nu}_P)^{H, \le 0}$, closed under extensions and small colimits. By (\cite{HA}, 1.4.4.11), this defines an accessible t-structure on $Shv(\Gr_G)^H$. 

 So, $F\in Shv(\Gr_G)^H$ lies in $Shv(\Gr_G)^{H, >0}$ iff for any $\nu\in\Lambda^+_M$, $i_{\nu}^!F\in Shv(\Gr_G)^{H, >0}$. This shows that the t-structure on $Shv(\Gr_G)^H$ is compatible with filtered colimits.

\begin{Rem} 
\label{Rem_2.3.14}
i) The objects of the form $(i_{\nu})_!F$ for $\nu\in\Lambda_M^+, F\in Shv(S^{\nu}_P)^H$ generate $Shv(\Gr_G)^H$.\\
ii) The objects of the form $(v^{\theta}_P)_!F$ for $\theta\in\Lambda_{G,P}$, $F\in Shv(\Gr_P^{\theta})^H$ generate $Shv(\Gr_G)^H$.
\end{Rem}
\begin{proof}
i) If $S\in\Sch_{ft}$ and $S\to \Gr_G$ is a map then the stratification of $\Gr_G$ by $S^{\nu}_P$, $\nu\in\Lambda_M^+$ defines a finite stratification of $S$ by $S\cap S^{\nu}_P$. 
So, if $i_{\nu}^!F=0$ for all $\nu$ then $F=0$.\\
ii) is similar. 
\end{proof}

\begin{Lm} 
\label{Lm_functor_i^nu^*}
For each $\nu\in\Lambda_M^+$ the adjoint functors
$$
i_{\nu}^*: Shv(\Gr_G)^{M(\cO)}\leftrightarrows Shv(S^{\nu}_P)^{M(\cO)}: (i_{\nu})_*
$$ 
preserve the full subcategories of $H$-invariants and give rise to an adjoint pair 
$$
i_{\nu}^*: Shv(\Gr_G)^{H}\leftrightarrows Shv(S^{\nu}_P)^{H}: (i_{\nu})_*
$$
Besides, the diagram canonically commutes 
$$
\begin{array}{ccc}
Shv(\Gr_G)^{M(\cO)} & \toup{i_{\nu}^*} &Shv(S^{\nu}_P)^{M(\cO)}\\
\downarrow\lefteqn{\scriptstyle \Av^{U(P)(F)}_!} && \downarrow\lefteqn{\scriptstyle \Av^{U(P)(F)}_!}\\
Shv(\Gr_G)^H & \toup{i_{\nu}^*} &Shv(S^{\nu}_P)^H
\end{array}
$$
\end{Lm}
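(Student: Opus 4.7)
\begin{Prf}[Proof plan]
The key geometric input is that $i_\nu: S^\nu_P \hookrightarrow \Gr_G$ is $H$-equivariant; indeed, $S^\nu_P$ is an $H$-orbit, and its closure $\bar S^\nu_P$ is $H$-stable, as is $\Gr_G$. Hence for every $\lambda \in \Lambda^+_{M, ab}$ the map $i_\nu$ is $H_\lambda$-equivariant, where $H_\lambda = t^{-\lambda}P(\cO)t^\lambda$ as in Section~\ref{Sect_Local automorphic side_begins}. The first task is then to verify that both $i_\nu^*$ and $(i_\nu)_*$ send $Shv(-)^{H_\lambda}$ to $Shv(-)^{H_\lambda}$; this is a formal consequence of $H_\lambda$-equivariance of $i_\nu$. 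Passing to the limit over $\lambda$ and invoking the presentation $Shv(-)^H \simeq \mathop{\lim}_\lambda Shv(-)^{H_\lambda}$ realized as the intersection inside $Shv(-)^{M(\cO)}$ (as explained just before Section~\ref{Sect_2.3.8}), we conclude that both $i_\nu^*$ and $(i_\nu)_*$ preserve $H$-invariants.

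For the adjunction claim, note that $Shv(-)^H \hookrightarrow Shv(-)^{M(\cO)}$ is fully faithful since $H/M(\cO)$ is ind-pro-unipotent. The adjunction $(i_\nu^*, (i_\nu)_*)$ at the level of $M(\cO)$-invariants therefore restricts without modification to $H$-invariants: for $F \in Shv(\Gr_G)^H$ and $G \in Shv(S^\nu_P)^H$ one computes
$$
\Hom_{Shv(S^\nu_P)^H}(i_\nu^*F, G) = \Hom_{Shv(S^\nu_P)^{M(\cO)}}(i_\nu^*F, G) = \Hom_{Shv(\Gr_G)^{M(\cO)}}(F, (i_\nu)_*G) = \Hom_{Shv(\Gr_G)^H}(F, (i_\nu)_*G),
$$
using the full faithfulness of the inclusions of invariants.

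For the commutation of the diagram, the plan is to invoke the mate (Beck--Chevalley) correspondence. The right adjoint of $\Av^{U(P)(F)}_!$ is the fully faithful inclusion $\oblv: Shv(-)^H \hookrightarrow Shv(-)^{M(\cO)}$, and the right adjoint of $i_\nu^*$ is $(i_\nu)_*$. The square of right adjoints becomes
$$
\begin{array}{ccc}
Shv(\Gr_G)^{M(\cO)} & \getsup{(i_\nu)_*} & Shv(S^\nu_P)^{M(\cO)} \\
\uparrow\lefteqn{\scriptstyle \oblv} && \uparrow\lefteqn{\scriptstyle \oblv} \\
Shv(\Gr_G)^H & \getsup{(i_\nu)_*} & Shv(S^\nu_P)^H,
\end{array}
$$
which commutes tautologically: the functor $(i_\nu)_*$ on $H$-invariants is by construction obtained by restriction from $(i_\nu)_*$ on $M(\cO)$-invariants. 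Consequently the original square of left adjoints commutes as well.

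I expect no serious obstacle: the entire argument is formal once one has (i) the $H$-equivariance of $i_\nu$ and (ii) the full faithfulness of the invariants inclusion. The only care needed is to ensure the mate formalism applies in the DG/placid ind-scheme setting of Appendix~A, which is exactly the context in which the background adjunctions have been set up.
\end{Prf}
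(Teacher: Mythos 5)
Your proof is correct and takes essentially the same route as the paper, whose own proof simply defers to the formal results of Appendix A.5 (equivariance of $i_{\nu}$, preservation of $H$-invariants by $i_{\nu}^*$ and $(i_{\nu})_*$, and compatibility with $\Av^{U(P)(F)}_!$). Your passage to left adjoints of the two composites in the tautologically commuting square of right adjoints is a clean way to obtain the commutativity that the appendix asserts; the one point you treat as purely formal — that $i_{\nu}^*$ computed at the $M(\cO)$-level preserves $H_{\lambda}$-invariants — is precisely what the appendix's construction of $f^*$ on quotient stacks (Section~\ref{Sect_A.5}) is there to supply.
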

\begin{proof}
This follows from the results of Section~\ref{Sect_A.5}.
\end{proof}
\begin{Rem} 
\label{Rem_2.3.17_about_t-structure}
Let $\mu\in\Lambda^+_M$ and $F\in Shv(\Gr_G)^H$ be the extension by zero from $\bar S^{\mu}_P$. Then $F\in Shv(\Gr_G)^{H, \le 0}$ iff $i_{\nu}^*F\in Shv(S^{\nu}_P)^{H, \le 0}$ for all $\nu\in\Lambda_M^+$.
\end{Rem}
\begin{proof} First, let $\lambda\in\Lambda_M^+$, $K\in Shv(S^{\lambda}_P)^{H,\le 0}$. Then for any $\nu\in\Lambda_M^+$, $i_{\nu}^*(i_{\lambda})_!K\in Shv(S^{\nu}_P)^{H, \le 0}$. So, if $F\in Shv(\Gr_G)^{H, \le 0}$ then $i_{\nu}^*F\in Shv(S^{\nu}_P)^{H, \le 0}$ for all $\nu\in\Lambda_M^+$.

 Conversely, let $0\ne F\in Shv(\Gr_G)^{H, >0}$ be the extension by zero from $\bar S^{\mu}_P$. Assume $i_{\nu}^*F\in Shv(S^{\nu}_P)^{H, \le 0}$ for all $\nu\in\Lambda_M^+$. We must show that $F=0$. Let $\lambda$ be the largest orbit $S^{\lambda}_P$ such that $i_{\lambda}^!F\ne 0$. By definition of the t-structure, $i_{\lambda}^!F\in Shv(S^{\lambda}_P)^{H, >0}$. On the other hand, $i_{\lambda}^!F\,\iso\, i_{\lambda}^*F$, and the assertion follows.
\end{proof}

\medskip

 Note that if $G=P$ then the above t-structure on $Shv(\Gr_G)^H\,\iso\, Shv(\Gr_G)^{G(\cO)}$ is the perverse t-structure on the latter category.  
 
\sssec{} 
\label{Sect_t-str_on_S_theta_P}
For $\theta\in\Lambda_{G,P}$ define a t-structure on $Shv(\Gr^{\theta}_P)^H$ by declaring that $K\in Shv(\Gr^{\theta}_P)^H$ lies in $
Shv(\Gr^{\theta}_P)^{H,\le 0}$ iff $(i^{\theta}_P)^!K\in Shv(\Gr_M^{\theta})^{M(\cO)}$ lies in perverse degrees  $\le \<\theta, 2\check{\rho}-2\check{\rho}_M\>$. By Lemma~\ref{Lm_2.3.8}, this is equivalent to the property that 
$$
(\gt_P^{\theta})_!K\in Shv(\Gr_M^{\theta})^{M(\cO)}
$$ 
lies in perverse degrees $\le  \<\theta, 2\check{\rho}-2\check{\rho}_M\>$. 

 Now $Shv(\Gr_G)^{H,\le 0}$ is the smallest full subcategory containing $(v^{\theta}_P)_!K$ for $\theta\in\Lambda_{G,P}$, $K\in Shv(\Gr^{\theta}_P)^{H,\le 0}$, stable under extensions and small colimits. 

 This implies that $K\in Shv(\Gr_G)^H$ lies in $Shv(\Gr_G)^{H, >0}$ iff for any $\theta\in\Lambda_{G,P}$, $(v^{\theta}_P)^!K\in Shv(\Gr^{\theta}_P)^{H, >0}$. 
 
 Set $Sph_M=Shv(\Gr_M)^{M(\cO)}$.
\begin{Lm} For $\theta\in\Lambda_{G,P}$ the adjoint functors
$$
(v^{\theta}_P)^*: Shv(\Gr_G)^{M(\cO)}\leftrightarrows Shv(\Gr^{\theta}_P)^{M(\cO)}: (v^{\theta}_P)_*
$$ 
preserve the full subcategories of $H$-invariants and give rise to an adjoint pair
$$
(v^{\theta}_P)^*: Shv(\Gr_G)^H\leftrightarrows Shv(\Gr^{\theta}_P)^H: (v^{\theta}_P)_*
$$ 
The functor $(v_P)_*$ is $Sph_M$-linear. Besides, the diagram canonically commutes
$$
\begin{array}{ccc}
Shv(\Gr_G)^{M(\cO)} & \toup{(v^{\theta}_P)^*} & Shv(\Gr^{\theta}_P)^{M(\cO)}\\
\downarrow\lefteqn{\scriptstyle \Av^{U(P)(F)}_!} && \downarrow\lefteqn{\scriptstyle \Av^{U(P)(F)}_!} \\
Shv(\Gr_G)^H & \toup{(v^{\theta}_P)^*} & Shv(\Gr^{\theta}_P)^H
\end{array}
$$
The functor $\Av^{U(P)(F)}_!: Shv(\Gr^{\theta}_P)^{M(\cO)}\to Shv(\Gr^{\theta}_P)^H$ identifies canonically with $(\gt_P^{\theta})^!(\gt_P^{\theta})_!$. 
\end{Lm}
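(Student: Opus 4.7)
\begin{Prf}
The plan is to check each assertion in turn, with the main content being Lemma~\ref{Lm_2.3.8}(ii) for the last claim.

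First, since $\Gr^{\theta}_P \subset \Gr_G$ is an $H$-invariant locally closed ind-subscheme, the morphism $v^{\theta}_P$ is $H$-equivariant. Hence by the general formalism of equivariant sheaves recalled in Section~\ref{Sect_A.5}, both $(v^{\theta}_P)^*$ and $(v^{\theta}_P)_*$ preserve the full subcategories of $H$-invariants, giving the claimed adjoint pair on $H$-invariants. For the $\Sph_M$-linearity of $(v_P)_*$, I would argue it in the $M(F)$-equivariant framework: the inclusion $v_P: \Gr_P \hook{} \Gr_G$ is compatible with the $M(F)$-action on both sides (where on $\Gr_P$ the group $M(F)$ permutes the components as described in Remark~\ref{Rem_2.3.7}(iii)), so $(v_P)_*$ is $Shv(M(F))$-linear on $U(F)$-invariants; then pass to $M(\cO)$-invariants to deduce $\Sph_M$-linearity.

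For the commutativity of the diagram, I would argue via uniqueness of left adjoints. Both vertical arrows $\Av^{U(P)(F)}_!$ are left adjoint to the oblivion functors $Shv(-)^H \hook{} Shv(-)^{M(\cO)}$. On the other hand, $(v^{\theta}_P)^*$ is left adjoint to $(v^{\theta}_P)_*$ both on $M(\cO)$-invariants and (by the previous paragraph) on $H$-invariants. Since the two oblivion functors commute with $(v^{\theta}_P)_*$ tautologically (they are restrictions of the same functor), passing to left adjoints yields the asserted commuting square.

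Finally, to identify $\Av_!^{U(P)(F)}$ on $Shv(\Gr_P^{\theta})^{M(\cO)}$ with $(\gt_P^{\theta})^!(\gt_P^{\theta})_!$, I would apply Lemma~\ref{Lm_2.3.8}(ii). That lemma says $(\gt_P^{\theta})^!$ lands in $Shv(\Gr_P^{\theta})^H$, so the composite $(\gt_P^{\theta})^!(\gt_P^{\theta})_!$ takes values in $H$-invariants; and it further provides that $(\gt_P^{\theta})_!(\gt_P^{\theta})^! \simeq \id$ on $Shv(\Gr_M^{\theta})^{M(\cO)}$, which combined with fully-faithfulness of $(\gt_P^{\theta})^!$ says that on the full subcategory $Shv(\Gr_P^{\theta})^H \subset Shv(\Gr_P^{\theta})^{M(\cO)}$, the endofunctor $(\gt_P^{\theta})^!(\gt_P^{\theta})_!$ is the identity. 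These two properties together exhibit $(\gt_P^{\theta})^!(\gt_P^{\theta})_!$ as left adjoint to the inclusion $Shv(\Gr_P^{\theta})^H \hook{} Shv(\Gr_P^{\theta})^{M(\cO)}$; by uniqueness of adjoints it must therefore agree with $\Av_!^{U(P)(F)}$. The main subtlety to check carefully is the $\Sph_M$-linearity, since the $\Sph_M$-action mixes components $\Gr_P^\theta$ via Remark~\ref{Rem_2.3.7}(iii), so one must track this action at the level of the ambient $U(F)$-invariants before restricting to each component.
\end{Prf}
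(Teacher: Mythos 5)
Your treatment of the first claims, the $\Sph_M$-linearity, and the commutative square is in line with the paper's (which simply refers back to Section~\ref{Sect_A.5}, as in Lemma~\ref{Lm_functor_i^nu^*}); in particular, passing to left adjoints in the tautologically commuting square of right adjoints $(\oblv, (v^{\theta}_P)_*)$ is a perfectly good way to obtain the diagram.

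The last step, however, contains a genuine logical gap. Knowing that $F:=(\gt_P^{\theta})^!(\gt_P^{\theta})_!$ takes values in the full subcategory $Shv(\Gr_P^{\theta})^H$ and restricts to the identity on it does \emph{not} exhibit $F$ as the left adjoint of the inclusion. The right adjoint of a fully faithful inclusion also restricts to the identity on the subcategory, as do many functors that are not adjoints at all: for the inclusion of trivial representations into $\Rep(\ZZ/2)^{\heartsuit}$, both invariants and coinvariants satisfy your two properties, but only one of them is the left adjoint. What is missing is a unit map $\id\to F$ together with the verification that it induces the adjunction isomorphism. This is exactly what the paper supplies: taking as candidate unit the unit of the $((\gt_P^{\theta})_!,(\gt_P^{\theta})^!)$-adjunction, one computes for $K\in Shv(\Gr_P^{\theta})^{M(\cO)}$ and $L\in Shv(\Gr_M^{\theta})^{M(\cO)}$
$$
\HOM(K, (\gt_P^{\theta})^!L)\,\iso\,\HOM((\gt_P^{\theta})_!K, L)\,\iso\,\HOM((\gt_P^{\theta})^!(\gt_P^{\theta})_!K, (\gt_P^{\theta})^!L),
$$
the second isomorphism by full faithfulness of $(\gt_P^{\theta})^!$; since by Lemma~\ref{Lm_2.3.8} ii) every object of $Shv(\Gr_P^{\theta})^H$ is of the form $(\gt_P^{\theta})^!L$, this chain is precisely the universal property characterizing $\Av_!^{U(P)(F)}(K)$. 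Your argument is repaired by inserting this computation, but as written the inference ``lands in the subcategory and is the identity there, hence is the left adjoint'' is false.
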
  
\begin{proof}
The first claims are obtained as in Lemma~\ref{Lm_functor_i^nu^*}. Now using Lemma~\ref{Lm_2.3.8} for $K\in Shv(\Gr_P)^{M(\cO)}, L\in Shv(\Gr_M^{\theta})^{M(\cO)}$ we get
\begin{multline*}
\HOM_{Shv(\Gr_P)^{M(\cO)}}(K, (\gt_P^{\theta})^!L)\,\iso\, \HOM_{Shv(\Gr_M^{\theta})^{M(\cO)}}((\gt_P^{\theta})_! K, L)\,\iso\\
\HOM_{Shv(\Gr_P)^{M(\cO)}}((\gt_P^{\theta})^!(\gt_P^{\theta})_! K, (\gt_P^{\theta})^!L)
\end{multline*}
This gives the last claim.
\end{proof}

\begin{Rem} The functor $v_P^*: Shv(\Gr_G)^H\to Shv(\Gr_P)^H$ is left-lax $Sph_M$-linear. We will see in Proposition~\ref{Pp_2.4.19} that this left-lax structure is strict (in the case of $\cD$-modules this is automatic by (\cite{GaLocWhit}, D.4.4)). 
\end{Rem}

\sssec{} Write $\upsilon: Shv(\Gr_G)^{G(\cO)}\to Shv(\Gr_G)^{G(\cO)}$ for the functor induced by $G(F)\to G(F), g\mapsto g^{-1}$, and similarly for $\upsilon: Shv(\Gr_M)^{M(\cO)}\to Shv(\Gr_M)^{M(\cO)}$. 

By Section~\ref{Sect_A.3}, we have canonical Verdier self-dualities 
$$
\VD: Shv(\Gr_G)^{I_P}\,\iso\, (Shv(\Gr_G)^{I_P})^{\vee}
$$ 
and $Shv(\Gr_G)^{I_P, ren}\,\iso\, (Shv(\Gr_G)^{I_P, ren})^{\vee}$. 
Under this duality, for $K\in Shv(\Gr_G)^{G(\cO)}$ the dual of the functor $Shv(\Gr_G)^{I_P}\to Shv(\Gr_G)^{I_P}$, $F\mapsto F\ast K$ identifies with the functor $F\mapsto F\ast (\upsilon K)$. The same holds for the renormalized versions. 

 In view of Proposition~\ref{Pp_2.2.6} and the equivalence (\ref{eq_ren_parahoric_versus_H}), the above yields self-dualities
$$
\VD_H: Shv(\Gr_G)^H\,\iso\,(Shv(\Gr_G)^H)^{\vee}\;\;\;\mbox{and}\;\;\; Shv(\Gr_G)^{H, ren}\,\iso\, (Shv(\Gr_G)^{H, ren})^{\vee}
$$

\sssec{} Fix a Chevalley involution $\sigma\in \Aut(G)$ acting as $z\mapsto z^{-1}$ on $T$ and sending each simple root subspace $\gg_{\alpha_i}$ isomorphically to $\gg_{-\alpha_i}$ (so, sending $B$ to $B^-$). Then $\sigma(P)=P^-$. Write $I_{P^-}$ for the preimage of $P^-$ under $G(\cO)\to G$. Note that $\sigma(I_P)=I_{P^-}$. By abuse of notations, write also $\sigma: \Gr_G\to\Gr_G$ for the map $zG(\cO)\mapsto \sigma(z)G(\cO)$. It induces an equivalence 
$$
\sigma: Shv(\Gr_G)^{I_P}\,\iso\, Shv(\Gr_G)^{I_{P^-}}
$$

 Let $H^-=M(\cO)U(P^-)(F)$. The map $\sigma: \Gr_G\to\Gr_G$ intertwines the $H$ and $H^-$-actions via the isomorphism also denoted $\sigma: H\,\iso\, H^-$, hence induces an equivalence
$$
\sigma: Shv(\Gr_G)^H\,\iso\, Shv(\Gr_G)^{H^-}
$$
The following diagram commutes
$$
\begin{array}{ccccc}
Shv(\Gr_G)^H & \toup{\oblv} & Shv(\Gr_G)^{M(\cO)} &\toup{\Av_*^{I_P/M(\cO)}} & Shv(\Gr_G)^{I_P}\\
\downarrow\lefteqn{\scriptstyle \sigma} && \downarrow\lefteqn{\scriptstyle \sigma} &&\downarrow\lefteqn{\scriptstyle \sigma}\\
Shv(\Gr_G)^{H^-} & \toup{\oblv} & Shv(\Gr_G)^{M(\cO)} &\toup{\Av_*^{I_{P^-}/M(\cO)}} & Shv(\Gr_G)^{I_{P^-}}
\end{array}
$$
The equivalence $\sigma$ is compatible with $\VD$, namely the diagram canonically commutes
$$
\begin{array}{ccc}
Shv(\Gr_G)^{I_P} & \toup{\sigma} & Shv(\Gr_G)^{I_{P^-}}\\
\downarrow\lefteqn{\scriptstyle \VD} && \downarrow\lefteqn{\scriptstyle \VD}\\
(Shv(\Gr_G)^{I_P})^{\vee} & \getsup{\sigma^{\vee}} & (Shv(\Gr_G)^{I_{P^-}})^{\vee}
\end{array}
$$
This gives the commutativity of the diagram
$$
\begin{array}{ccc}
Shv(\Gr_G)^H & \toup{\VD_H} & ((Shv(\Gr_G)^H)^{\vee}\\
\downarrow\lefteqn{\scriptstyle \sigma} && \uparrow\lefteqn{\scriptstyle \sigma^{\vee}}\\
Shv(\Gr_G)^{H^-} & \toup{\VD_{H^-}} & (Shv(\Gr_G)^{H^-})^{\vee},
\end{array}
$$
where $\VD_{H^-}$ is defined similarly to $\VD_H$ (replacing $P$ by $P^-$). 

 The involution $\sigma$ similarly yields a monoidal equivalence $\sigma: \Sph_G\,\iso\,\Sph_G$, for $\lambda\in\Lambda^+$ we have $\sigma\Sat(V^{\lambda})\,\iso\, V^{-w_0(\lambda)}$. For $K\in Shv(\Gr_G)^H$, $F\in \Sph_G$ one has canonically $\sigma(K\ast F)\,\iso\, \sigma(K)\ast \sigma(F)$ in $Shv(\Gr_G)^{H^-}$.

\begin{Pp} 
\label{Pp_2.3.21}
The action of $\Rep(\check{G})^{\heartsuit}$ on $Shv(\Gr_G)^H$ is t-exact.
\end{Pp}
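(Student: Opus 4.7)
The plan is to establish separately right and left $t$-exactness of the endofunctors $K\mapsto K\ast\Sat(V)$ for $V\in\Rep(\check G)^{\heartsuit}$. Since every object of $\Rep(\check G)^{\heartsuit}$ is a direct sum of irreducibles $V^{\lambda}$, $\lambda\in\Lambda^+$, it is enough to treat $\Sat(V^{\lambda})$. I would handle right exactness directly and deduce left exactness by Verdier duality combined with the Chevalley involution $\sigma$.

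For right exactness, I would use the presentation of $Shv(\Gr_G)^{H,\le 0}$ from Section~\ref{Sect_t-str_on_S_theta_P}: it is the smallest full subcategory closed under extensions and small colimits containing $(v^{\theta}_P)_! K$ for $\theta\in\Lambda_{G,P}$ and $K\in Shv(\Gr^{\theta}_P)^{H,\le 0}$. By Lemma~\ref{Lm_2.3.8}, any such $K$ is of the form $(\gt^{\theta}_P)^! L$ with $L\in Shv(\Gr_M^{\theta})^{M(\cO)}$ in perverse degrees $\le\<\theta,2\check\rho-2\check\rho_M\>$. Since convolution with $\Sat(V^{\lambda})$ preserves colimits and extensions, it is enough to check $(v^{\theta}_P)_!(\gt^{\theta}_P)^! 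L\ast\Sat(V^{\lambda})\in Shv(\Gr_G)^{H,\le 0}$. For this, I would perform a stratum-by-stratum analysis: the convolution diagram $\ov{\Gr}^{\theta}_P\tilde\times\ov{\Gr}^{\lambda}_G\to\Gr_G$ together with the stratification of $\Gr_G$ by the $\Gr_P^{\theta'}$ produces a finite filtration of $(v^{\theta}_P)_!(\gt^{\theta}_P)^! L\ast\Sat(V^{\lambda})$ whose associated graded pieces are of the form $(v^{\theta'}_P)_!(\gt^{\theta'}_P)^!\bigl(L\ast_M\cN_{\theta'-\theta}\bigr)$, where $\ast_M$ is the $\Sph_M$-convolution on $\Gr_M$ and $\cN_{\theta''}:=(\gt^{\theta''}_P)_!(v^{\theta''}_P)^*\Sat(V^{\lambda})\in Shv(\Gr_M^{\theta''})^{M(\cO)}$. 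The parabolic Mirkovi\'c--Vilonen theorem (cf.\ \cite{BG}, \cite{BFGM}) identifies each $\cN_{\theta''}$ with a perverse sheaf on $\Gr_M^{\theta''}$ up to the cohomological shift built into our semi-infinite $t$-structure; combined with $t$-exactness of $\ast_M$ on $Sph_M$, careful tracking of the shifts places each graded piece in $Shv(\Gr_G)^{H,\le 0}$ by the defining generating condition, so the extension does as well.

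For left exactness, I would apply the Verdier self-duality $\VD_H$ together with the Chevalley involution $\sigma: Shv(\Gr_G)^H\iso Shv(\Gr_G)^{H^-}$, as spelled out just before this proposition. Under the composite, convolution with $\Sat(V^{\lambda})$ on $Shv(\Gr_G)^H$ is exchanged with convolution with $\sigma\Sat(V^{\lambda})\simeq\Sat(V^{-w_0(\lambda)})$ on $Shv(\Gr_G)^{H^-}$, and the $t$-structures are interchanged with their opposites; hence left exactness on $Shv(\Gr_G)^H$ reduces to right exactness for the analogous convolution action on $Shv(\Gr_G)^{H^-}$, which is the same statement with $P$ replaced by $P^-$, already treated in the first step. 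The main obstacle I expect is the stratum-by-stratum convolution analysis for right exactness, i.e.\ producing the filtration of $(v^{\theta}_P)_!(\gt^{\theta}_P)^! L\ast\Sat(V^{\lambda})$ and matching the cohomological shifts $\<\theta,2\check\rho-2\check\rho_M\>$ against the parabolic Mirkovi\'c--Vilonen degrees; this is the parabolic analog of the argument carried out in \cite{Gai19SI} for the Borel case.
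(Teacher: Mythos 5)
Your right $t$-exactness argument is essentially the paper's: reduce to generators of $Shv(\Gr_G)^{H,\le 0}$, base-change the convolution diagram along the stratification by the $\Gr_P^{\theta'}$, identify the resulting pieces via the parabolic geometric restriction functor $(\gt_P^{\theta''})_!(v_P^{\theta''})^*\Sat(V)\simeq\Sat_M(\Res(V)_{\theta''})$ (Proposition~\ref{Pp_gRes_for_Levi}), and conclude by $t$-exactness of convolution for $M$ (\cite{G_central}, Proposition~6). The only cosmetic difference is that you generate by $(v^{\theta}_P)_!(\gt^{\theta}_P)^!L$ rather than by the $\bvartriangle^{\nu}$ and then test against $(v^{\theta'}_P)^*$; both descriptions of the connective part are available.

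The left $t$-exactness step, however, has a genuine gap. You want to transport the statement through $\VD_H$ composed with $\sigma$ and assert that ``the $t$-structures are interchanged with their opposites.'' This compatibility is neither established in the paper nor obvious: $\VD_H$ is defined by transporting Verdier duality from $Shv(\Gr_G)^{I_P}$ through the equivalence of Proposition~\ref{Pp_2.2.6}, and the semi-infinite $t$-structure on $Shv(\Gr_G)^H$ is not (visibly) the transport of the perverse $t$-structure on $Shv(\Gr_G)^{I_P}$ — indeed the much weaker statement that the single object $\bvartriangle^{\lambda}$ lies in the heart is Theorem~\ref{Thm_4.1.10}, whose proof occupies Section~\ref{Sect_4.6}. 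Because the $H$-orbits are infinite-dimensional, Verdier duality does not interact with the semi-infinite $t$-structure the way it does with a perverse $t$-structure on a finite-dimensional stratified space, so your reduction of left exactness to right exactness for $P^-$ rests on an unproved claim at least as hard as the proposition itself. The paper avoids all of this with a one-line adjunction argument: for $V$ finite-dimensional, $K\mapsto K\ast\Sat(V^*)$ is both left and right adjoint to $K\mapsto K\ast\Sat(V)$; since the former is right $t$-exact by Step~1 applied to $V^*$, its right adjoint $K\mapsto K\ast\Sat(V)$ is left $t$-exact. You should replace your duality argument by this.
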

\begin{proof} For $G=P$ the claim follows from (\cite{G_central}, Proposition~6). For $P=B$ this is (\cite{Gai19SI}, Proposition~2.8.2). Consider now any $P$. Take $V\in \Rep(\check{G})^{\heartsuit}$. It suffices to show that for $V$ finite-dimensional the functor $\_ \ast \Sat(V)$ is t-exact.

\smallskip\noindent
{\bf Step 1} We show that $\_ \ast \Sat(V)$ is right t-exact. Note that $Shv(\Gr_G)^{H,\le 0}\subset Shv(\Gr_G)^H$ is the smallest full subcategory containing $\bvartriangle^{\nu}$ for $\nu\in\Lambda^+_M$, stable under extensions and small colimits. 
 
 Let $\nu\in \Lambda^+_M$ and $\theta$ be its image in $\Lambda_{G,P}$. Now it suffices to show $\bvartriangle^{\nu}\ast \Sat(V)\in Shv(\Gr_G)^{H,\le 0}$. For this by Remark~\ref{Rem_2.3.17_about_t-structure} it suffices to show that for any $\theta'\in\Lambda_{G,P}$, 
$$
(v^{\theta'}_P)^*(\bvartriangle^{\nu}\ast \Sat(V))\in Shv(\Gr^{\theta'}_P)^{H, \le 0}
$$ 
 
Recall the convolution diagram $\Gr_G\ttimes \Gr_G:=\Conv_G=G(F)\times^{G(\cO)}\Gr_G$ with the product map $\act: \Conv_G\to\Gr_G$. 

  Define the ind-scheme $\Gr^{\theta}_P\ttimes \Gr^{\theta'-\theta}_P$ as follows. Let $P(F)^{\theta}$ be the preimage of $\Gr_M^{\theta}$ under $P(F)\to M(F)\to \Gr_M$.  After passing to reduced ind-schemes, one has an isomorphism $\Gr_P^{\theta}\,\iso\, P(F)^{\theta}/P(\cO)$.  We ignore the nilpotents here, as they do not change the category of sheaves on a given ind-scheme of ind-finite type. Set 
$$
\Gr^{\theta}_P\ttimes \Gr^{\theta'-\theta}_P=P(F)^{\theta}\times^{P(\cO)} \Gr^{\theta'-\theta}_P
$$

 The base change of $\act: \Gr^{\theta}_P\ttimes \Gr_G\to \Gr_G$ by $v^{\theta'}_P: \Gr^{\theta'}_P\hook{} \Gr_G$ is the convolution map
$$
\act': \Gr^{\theta}_P\ttimes \Gr^{\theta'-\theta}_P\to \Gr^{\theta'}_P.
$$ 
So, we must show that $\act'_!(\bvartriangle^{\nu}\tboxtimes (v^{\theta'-\theta}_P)^*\Sat(V))$ lies in $Shv(\Gr^{\theta'}_P)^{H,\le 0}$ or, equivalently, that 
\begin{equation}
\label{complex_one_for_Pp_2.3.21}
(\gt_P^{\theta'})_!\act'_!(\bvartriangle^{\nu}\tboxtimes (v^{\theta'-\theta}_P)^*\Sat(V))
\end{equation}
lies in perverse degrees $\le \<\theta', 2\check{\rho}-2\check{\rho}_M\>$. 

 Note that $(\gt^{\theta}_P)_!\bvartriangle^{\nu}$ is the extension by zero of $\omega[-\<\nu, 2\check{\rho}\>]$ under $\Gr_M^{\nu}\hook{} \Gr_M^{\theta}$. So, $(\gt^{\theta}_P)_!\bvartriangle^{\nu}$ is placed in perverse degrees $\le \<\theta, 2\check{\rho}-2\check{\rho}_M\>$.
 
  By Proposition~\ref{Pp_gRes_for_Levi} below, 
$$
(\gt_P^{\theta'-\theta})_!(v^{\theta'-\theta}_P)^*\Sat(V)[\<\theta'-\theta, 2\check{\rho}-2\check{\rho}_M\>]\,\iso\, \Sat_M(\Res(V)_{\theta'-\theta}),
$$ 
where $\Res(V)$ denotes its restriction under $\check{M}\to \check{G}$, $\Sat_M$ is the Satake functor for $M$, and for $\cV\in\Rep(\check{M})$ we denote by $\cV_{\theta}$ the direct summand on which $Z(\check{M})$ acts by $\theta$. Now (\ref{complex_one_for_Pp_2.3.21}) identifies with 
$$
((\gt^{\theta}_P)_!\bvartriangle^{\nu})\ast \Sat_M(\Res(V)_{\theta'-\theta})[\<\theta-\theta', 2\check{\rho}-2\check{\rho}_M\>],
$$
where the convolution is for $M$ now.  Our claim follows now from (\cite{G_central}, Proposition~6). 

\smallskip\noindent
{\bf Step 2} Recall that $\dim V<\infty$. The left and right adjoint of $Shv(\Gr_G)^H\to Shv(\Gr_G)^H$, $K\mapsto K\ast \Sat(V)$ is the functor $K\mapsto K\ast \Sat(V^*)$. Since the left adjoint if right t-exact, the right adjoint is left t-exact.
\end{proof}

\sssec{} Recall the following more precise version of (\cite{BG}, Theorem~4.3.4).   

\begin{Pp} 
\label{Pp_gRes_for_Levi} Let $\theta\in\Lambda_{G,P}$. The functor 
$$
(\gt_P^{\theta})_!(v_P^{\theta})^*[\<\theta, 2\check{\rho}-2\check{\rho}_M\>]: \Perv(\Gr_G)^{G(\cO)}\to Shv(\Gr_M^{\theta})^{M(\cO)}
$$ 
takes values in $\Perv(\Gr_M^{\theta})^{M(\cO)}$. Let $\gRes: \Perv(\Gr_G)^{G(\cO)}\to \Perv(\Gr_M)^{M(\cO)}$ be the functor 
$$
\mathop{\oplus}\limits_{\theta\in\Lambda_{G,P}} (\gt_P^{\theta})_!(v_P^{\theta})^*[\<\theta, 2\check{\rho}-2\check{\rho}_M\>]
$$
The diagram commutes
$$
\begin{array}{ccc}
\Rep(\check{G})^{\heartsuit} & \toup{\Sat_G} & \Perv(\Gr_G)^{G(\cO)}\\
\downarrow && \downarrow\lefteqn{\scriptstyle \gRes}\\
\Rep(\check{M})^{\heartsuit} & \toup{\Sat_M} & \Perv(\Gr_M)^{M(\cO)},
\end{array}
$$
where $\Sat_G, \Sat_M$ denote the Satake functors for $G,M$, and the left vertical arrow is the restriction along $\check{M}\hook{} \check{G}$.  
\end{Pp}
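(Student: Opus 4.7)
The plan is to deduce the proposition from the Borel case (Mirković--Vilonen together with the strict-perversity version in (\cite{BG}, Theorem~4.3.4) for $B\subset G$) and its analog for $B_M\subset M$, by factoring hyperbolic restriction through the chain $T\subset B_M\subset M$ composed with $P\subset G$.

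First, for the perversity claim, fix $\mu\in\Lambda$ whose image in $\Lambda_{G,P}$ is $\theta$. I would use the commutative diagram
\begin{equation*}
\begin{array}{ccccc}
\Gr_T^\mu & \getsup{\gt_{B_M}^\mu} & \Gr_{B_M}^\mu & \toup{v_{B_M}^\mu} & \Gr_M^\theta\\
\| && \downarrow && \downarrow\lefteqn{\scriptstyle v_P^\theta}\\
\Gr_T^\mu & \getsup{\gt_B^\mu} & \Gr_B^\mu & \toup{v_B^\mu} & \Gr_G,
\end{array}
\end{equation*}
whose right-hand square is cartesian, to obtain by base change a canonical isomorphism
$$
(\gt_{B_M}^\mu)_!(v_{B_M}^\mu)^*\comp(\gt_P^\theta)_!(v_P^\theta)^*\,\iso\,(\gt_B^\mu)_!(v_B^\mu)^*.
$$
By the Borel case applied to $G$, the right-hand side evaluated on any $K\in\Perv(\Gr_G)^{G(\cO)}$ and shifted by $\<\mu,2\check{\rho}\>$ lies in $\Perv(\Gr_T^\mu)$ in degree $0$. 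By the Borel case applied to $M$, the functor $(\gt_{B_M}^\mu)_!(v_{B_M}^\mu)^*[\<\mu,2\check{\rho}_M\>]$ is t-exact on $\Perv(\Gr_M^\theta)^{M(\cO)}$, and the sum of these functors over $\mu$ mapping to a fixed $\theta$ is conservative. Combining these facts with the identity $\<\mu,2\check{\rho}\>=\<\theta,2\check{\rho}-2\check{\rho}_M\>+\<\mu,2\check{\rho}_M\>$ forces $(\gt_P^\theta)_!(v_P^\theta)^*K[\<\theta,2\check{\rho}-2\check{\rho}_M\>]$ to be concentrated in perverse degree $0$.

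For the identification with restriction along $\check{M}\hook{}\check{G}$, the same factorization supplies a canonical isomorphism of functors
$$
\gRes^M_{B_M}\comp \gRes^G_P\,\iso\,\gRes^G_B:\Perv(\Gr_G)^{G(\cO)}\to\Perv(\Gr_T)^{T(\cO)},
$$
writing $\gRes^G_P$ for the candidate functor of the statement. Precomposing with $\Sat_G$ and applying the Borel case for both $G$ and $M$, the right-hand side becomes $\on{Res}^{\check{G}}_{\check{T}}\,\iso\,\on{Res}^{\check{M}}_{\check{T}}\comp \on{Res}^{\check{G}}_{\check{M}}\,\iso\,\gRes^M_{B_M}\comp \Sat_M\comp \on{Res}^{\check{G}}_{\check{M}}$. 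Since $\gRes^M_{B_M}$ is conservative on $\Perv(\Gr_M)^{M(\cO)}$ (being identified under $\Sat_M$ with the forgetful functor $\on{Res}^{\check{M}}_{\check{T}}$ on $\Rep(\check{M})^\heartsuit$, which is faithful), this forces $\gRes^G_P\comp \Sat_G\,\iso\,\Sat_M\comp \on{Res}^{\check{G}}_{\check{M}}$ as desired.

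The main obstacle I foresee is justifying that the Borel-case results are invoked in their strict-perversity form (exact cohomological shift, not merely a bound), which is the content of (\cite{BG}, Theorem~4.3.4) and ultimately rests on the dimension estimates for $S_B^\mu\cap\ov{\Gr}_G^\lambda$. A secondary point is to check that the comparison in the last paragraph is natural in $K$ and compatible with the monoidal structures carried by $\Sat_G$, $\Sat_M$, and $\on{Res}^{\check G}_{\check M}$; once the base-change isomorphism in the first paragraph is constructed functorially, both of these follow from Tannakian rigidity for $\check{M}$.
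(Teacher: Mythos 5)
Your first half—deducing perversity of $(\gt_P^{\theta})_!(v_P^{\theta})^*K[\<\theta,2\check{\rho}-2\check{\rho}_M\>]$ by factoring the parabolic hyperbolic restriction as $\gRes^M_{B_M}\comp\gRes^G_P\iso\gRes^G_B$ and sandwiching between the Borel cases for $G$ and for $M$—is correct and is essentially the standard argument (the paper itself offers no proof, citing (\cite{BG}, Theorem~4.3.4)). One point to state precisely: you need the weight functors $(\gt_{B_M}^{\mu})_!(v_{B_M}^{\mu})^*[\<\mu,2\check{\rho}_M\>]$ for $M$ to be t-exact on the equivariant \emph{derived} category $Shv(\Gr_M^{\theta})^{M(\cO)}$, not merely on its heart, and jointly conservative there; this is what lets you run "$F$ t-exact and conservative, $F(L)$ in the heart, hence $L$ in the heart." Both facts follow from Mirkovi\'c--Vilonen's estimates together with Braden's theorem (Lemma~\ref{Lm_theorem_of_Braden_for_theta}), since the attracting-$!$ and repelling-$*$ restrictions coincide on $T$-equivariant objects and detect the two halves of the perverse t-structure.

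The second half has a genuine gap. From a natural isomorphism $\gRes^M_{B_M}\comp(\gRes^G_P\comp\Sat_G)\iso\gRes^M_{B_M}\comp(\Sat_M\comp\Res^{\check{G}}_{\check{M}})$ you cannot conclude $\gRes^G_P\comp\Sat_G\iso\Sat_M\comp\Res^{\check{G}}_{\check{M}}$ by invoking conservativity or faithfulness of $\gRes^M_{B_M}$: a faithful functor $\omega$ does not reflect isomorphisms of functors (take $F_1=\id$ and $F_2=(\text{twist by a nontrivial character})$ on $\Rep(\ZZ/2)$ with $\omega$ the forgetful functor to $\Vect$; then $\omega F_1\iso\omega F_2$ while $F_1\not\iso F_2$). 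What is actually needed is the Tannakian route: first establish that $\gRes^G_P$ is a \emph{tensor} functor, i.e.\ that parabolic hyperbolic restriction is compatible with convolution (this is a substantive geometric input—proved via the fusion interpretation or a stratification of the convolution diagram—and nothing in your first paragraph supplies it); then a tensor functor $\Rep(\check{G})^{\heartsuit}\to\Rep(\check{M})^{\heartsuit}$ compatible with the fiber functors corresponds to a homomorphism $\check{M}\to\check{G}$, which one identifies with the canonical embedding by comparing with $\gRes^G_B$ on $\check{T}$ and matching based root data (the $\check{T}$-level comparison alone only pins the homomorphism down up to conjugation). Your closing sentence gestures at "Tannakian rigidity," but the monoidality of $\gRes^G_P$ is precisely the missing ingredient without which that machinery cannot be started.
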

\begin{Rem} For $K\in \Perv(\Gr_G)^{G(\cO)}$ one has canonically $\upsilon \gRes(K)\,\iso\,\gRes(\upsilon K)$ in $\Perv(\Gr_M^{\theta})^{M(\cO)}$, as $\DD\upsilon =\upsilon \DD$ corresponds to the contragredient duality on $\Rep(\check{G})^{\heartsuit}$ by (\cite{FG}, 5.2.6). 
\end{Rem}

 This gives the following dual version of Proposition~\ref{Pp_gRes_for_Levi}. For $\theta\in\Lambda_{G,P}$ the functor 
$$
(\gt_P^{\theta})_*(v_P^{\theta})^![-\<\theta, 2\check{\rho}-2\check{\rho}_M\>]: \Perv(\Gr_G)^{G(\cO)}\to Shv(\Gr_M^{\theta})^{M(\cO)}
$$ 
takes values in $\Perv(\Gr_M^{\theta})^{M(\cO)}$. Let $\gRes^!: \Perv(\Gr_G)^{G(\cO)}\to \Perv(\Gr_M)^{M(\cO)}$ be the functor 
$$
\mathop{\oplus}\limits_{\theta\in\Lambda_{G,P}} (\gt_P^{\theta})_*(v_P^{\theta})^![-\<\theta, 2\check{\rho}-2\check{\rho}_M\>]
$$
The diagram commutes
$$
\begin{array}{ccc}
\Rep(\check{G})^{\heartsuit} & \toup{\Sat_G} & \Perv(\Gr_G)^{G(\cO)}\\
\downarrow && \downarrow\lefteqn{\scriptstyle \gRes^!}\\
\Rep(\check{M})^{\heartsuit} & \toup{\Sat_M} & \Perv(\Gr_M)^{M(\cO)},
\end{array}
$$
where the left vertical arrow is the restriction along $\check{M}\hook{} \check{G}$.

\sssec{} 
\label{Sect_true_Rep(checkM)-action_on_SI}
The category $Shv(\Gr_M)^{M(\cO)}$ acts on $Shv(\Gr_G)^{M(\cO)}$ by convolutions on the left. For $F\in Shv(\Gr_M)^{M(\cO)}$, $K\in Shv(\Gr_G)^{M(\cO)}$ we denote this (left) action by $(F, K)\mapsto F\ast K$. 

 Consider the action of $\Rep(\check{M})$ on $Shv(\Gr_G)^{M(\cO)}$ such that $V\in\Rep(\check{M})$ on which $Z(\check{M})$ acts by $\theta\in\Lambda_{G,P}$ sends $K\in Shv(\Gr_G)^{M(\cO)}$ to
\begin{equation}
\label{action_Rep(checkM)_shifted}
\Sat_M(V)\ast K[-\<\theta, 2\check{\rho}-2\check{\rho}_M\>]
\end{equation} 

 Write $\Gr_M\ttimes \Gr_G=M(F)\times^{M(\cO)} \Gr_G$ with the action map $\act: \Gr_M\ttimes \Gr_G\to\Gr_G$ coming from $M(F)\times \Gr_G\to \Gr_G$, $(m, gG(\cO))\mapsto mgG(\cO)$. More generally, for a $M(\cO)$-invariant ind-subscheme $Y\subset \Gr_G$, one similarly has $\Gr_M\ttimes \, Y$ and $\act: \Gr_M\ttimes \, Y\to \Gr_G$. 

\begin{Pp} 
\label{Pp_2.4.19}
i) The full subcategory $Shv(\Gr_G)^H\subset Shv(\Gr_G)^{M(\cO)}$ is preserved under the action of $Shv(\Gr_M)^{M(\cO)}$ on $Shv(\Gr_G)^{M(\cO)}$ by convolutions on the left. The functor 
$$
\Av^{U(P)(F)}_!: Shv(\Gr_G)^{M(\cO)}\to Shv(\Gr_G)^H
$$ 
is $\Rep(\check{M})$-linear.\\
ii) If $V\in \Rep(\check{M})^{\heartsuit}$ and $Z(\check{M})$ acts on $V$ by $\theta'\in\Lambda_{G,P}$ then the functor 
\begin{equation}
\label{functor_for_shifted_action_of_Rep_checkM}
Shv(\Gr_G)^H\to Shv(\Gr_G)^H, \,  K\mapsto \Sat_M(V)\ast K[-\<\theta', 2\check{\rho}-2\check{\rho}_M\>]
\end{equation}
is t-exact.\\
iii) Let $\theta,\theta'\in\Lambda_{G,P}$, $F\in Shv(\Gr_M^{\theta'})^{M(\cO)}$, $K\in Shv(\Gr_G)^{M(\cO)}$. One has a canonical isomorphism
$$
F\ast ((\gt_P^{\theta})_!(v^{\theta}_P)^*K)\,\iso\, (\gt_P^{\theta'+\theta})_!(v^{\theta'+\theta}_P)^*(F\ast K)
$$
in $Shv(\Gr_M^{\theta'+\theta})^{M(\cO)}$ functorial in $F$ and $K$. The analog of the latter isomorphism with $P$ replaced by $P^-$ also holds.
\end{Pp}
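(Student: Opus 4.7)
I will prove (i) first, then (iii), and deduce (ii) from (iii).

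\textbf{Plan for (i).} The action of $Shv(\Gr_M)^{M(\cO)}$ on $Shv(\Gr_G)^H$ already appeared in the construction preceding Proposition~\ref{Pp_2.2.6}, and I would verify it cleanly as follows: because $P=M\ltimes U(P)$, the group $M(F)$ normalizes $U(P)(F)$, so the natural left $M(F)$-convolution on $Shv(\Gr_G)$ preserves $Shv(\Gr_G)^{U(P)(F)}$; passing to $M(\cO)$-invariants on the $M(F)$-factor yields the claimed action of $Shv(\Gr_M)^{M(\cO)}$. For the $\Rep(\check M)$-linearity of $\Av^{U(P)(F)}_!$, I would appeal to the colimit formula (\ref{functor_Av_!_for_Pp2.2.7}): the subgroup schemes $U_\lambda=t^{-\lambda}U(P)(\cO)t^\lambda$ are normalized by $M(\cO)$, so $\Av_!^{U_\lambda}$ commutes with $M(\cO)$-convolution, and the colimit over $\lambda\in\Lambda^+_{M,ab}$ is filtered. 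Alternatively, one notes that the fully faithful inclusion $Shv(\Gr_G)^H\hookrightarrow Shv(\Gr_G)^{M(\cO)}$ is $Sph_M$-linear by construction, and its left adjoint $\Av^{U(P)(F)}_!$ is automatically $Sph_M$-colinear; rigidity of $Sph_M$ converts this into strict linearity.

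\textbf{Plan for (iii).} The geometric input is a two-step cartesian analysis of the convolution diagram. I would first observe that if $m\in M(F)^{\theta'}$ and $p\in P(F)^\theta$, then $mp\in P(F)^{\theta+\theta'}$ (since $M(F)^{\theta'}\cdot P(F)^\theta\subset P(F)^{\theta'+\theta}$); the same argument together with the reverse inclusion shows that in the convolution diagram one has a cartesian square
\[
\begin{array}{ccc}
\Gr_M^{\theta'}\ttimes\Gr_P^\theta & \to & \Gr_M^{\theta'}\ttimes\Gr_G \\
\downarrow & & \downarrow\lefteqn{\scriptstyle\act}\\
\Gr_P^{\theta+\theta'} & \toup{v_P^{\theta+\theta'}} & \Gr_G.
\end{array}
\]
Second, one has the commutative square
\[
\begin{array}{ccc}
\Gr_M^{\theta'}\ttimes\Gr_P^\theta & \toup{\act'} & \Gr_P^{\theta+\theta'} \\
\downarrow\lefteqn{\scriptstyle\id\ttimes\gt_P^\theta} & & \downarrow\lefteqn{\scriptstyle\gt_P^{\theta+\theta'}}\\
\Gr_M^{\theta'}\ttimes\Gr_M^\theta & \toup{\act_M} & \Gr_M^{\theta+\theta'},
\end{array}
\]
which is cartesian because the fibres of both $\gt_P^{\theta+\theta'}$ and $\id\ttimes\gt_P^\theta$ are $U(P)(F)$-orbits fitting together via $m\cdot(u\cdot x)=(mum^{-1})\cdot mx$ with $mum^{-1}\in U(P)(F)$. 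Combining these two squares and applying $(!,{*})$-base change yields the desired isomorphism. The case of $P^-$ is formally identical; equivalently, conjugation by the Chevalley involution $\sigma$ of Section~2.3 reduces it to the case of $P$.

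\textbf{Plan for (ii).} Right t-exactness is the crux; left t-exactness then follows by passing to right adjoints, since $\Sat_M(V)\ast(-)$ has adjoint $\Sat_M(V^*)\ast(-)$ with a compensating shift. To verify right t-exactness on the generators $\bvartriangle^\nu$, $\nu\in\Lambda^+_M$, I would check via (iii) that for every $\theta''\in\Lambda_{G,P}$,
\[
(\gt_P^{\theta''})_!(v_P^{\theta''})^*\bigl(\Sat_M(V)\ast\bvartriangle^\nu[-\<\theta',2\check\rho-2\check\rho_M\>]\bigr)
\;\iso\;\Sat_M(V)\ast\bigl((\gt_P^{\theta''-\theta'})_!(v_P^{\theta''-\theta'})^*\bvartriangle^\nu\bigr)[-\<\theta',2\check\rho-2\check\rho_M\>],
\]
where the right-hand convolution is taken in $Sph_M$. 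For $\nu$ over $\theta_0\in\Lambda_{G,P}$, the factor $(\gt_P^{\theta_0})_!(v_P^{\theta_0})^*\bvartriangle^\nu$ is the extension by zero of $\omega_{\Gr_M^\nu}[-\<\nu,2\check\rho\>]$ to $\Gr_M^{\theta_0}$, which lies in perverse degree $\leq\<\theta_0,2\check\rho-2\check\rho_M\>$; convolving on $Sph_M$ with the perverse sheaf $\Sat_M(V)$ preserves this bound by t-exactness of $M$-Satake, and the numerical shift $-\<\theta',2\check\rho-2\check\rho_M\>$ together with the shift of the target component $\theta''=\theta_0+\theta'$ produces exactly the bound $\<\theta'',2\check\rho-2\check\rho_M\>$ required by the definition of the t-structure on $Shv(\Gr_P^{\theta''})^H$.

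\textbf{Main obstacle.} The crux is verifying that the second square in the argument for (iii) is cartesian at the level of ind-schemes (and compatible with the reduced structures used by convention in the paper); once this geometric input is in hand, the rest is $(!,{*})$-base change and a bookkeeping of cohomological shifts, and (ii) becomes a routine consequence of (iii) together with t-exactness of $M$-Satake.
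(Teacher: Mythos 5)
Your proposal is correct and follows essentially the same route as the paper: the key input is the cartesian square relating $\Gr_M^{\theta'}\ttimes\Gr_P^{\theta}$ to $\Gr_P^{\theta+\theta'}$ plus base change, rigidity of $\Rep(\check{M})$ for the linearity of $\Av^{U(P)(F)}_!$, and t-exactness of $M$-Satake convolution combined with adjunction for part (ii). The only cosmetic differences are that the paper handles the $(v_P^{\theta+\theta'})^*$-compatibility in (iii) by a finite filtration of $K$ by the strata $\Gr_P^{\mu}$ rather than your first cartesian square, and checks right t-exactness on arbitrary $K$ in bounded perverse degrees rather than on the generators $\bvartriangle^{\nu}$.
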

\begin{proof}
i) Let $\theta,\theta'\in \Lambda_{G,P}$, $F\in Shv(\Gr_M^{\theta'})^{M(\cO)}$, and $K\in Shv(\Gr_M^{\theta})^{M(\cO)}$. By Remark~\ref{Rem_2.3.14} and Lemma~\ref{Lm_2.3.8}, it suffices to show that $F \ast ((\gt^{\theta}_P)^!K)\in Shv(\Gr_G)^H$. The action map $\act: \Gr_M^{\theta'}\ttimes \Gr_P^{\theta}\to \Gr_G$ factors through $v^{\theta+\theta'}_P: \Gr_P^{\theta+\theta'}\hook{} \Gr_G$, and the square is cartesian
\begin{equation}
\label{diag_for_Pp_2.4.19}
\begin{array}{ccc}
\Gr_M^{\theta'}\ttimes \Gr_P^{\theta} &\toup{\act} & \Gr_P^{\theta+\theta'}\\
\downarrow\lefteqn{\scriptstyle\id\times \gt^{\theta}_P} && \downarrow\lefteqn{\scriptstyle \gt^{\theta+\theta'}_P}\\
\Gr_M^{\theta'}\ttimes \Gr_M^{\theta} &\toup{\act} & \Gr_M^{\theta+\theta'}
\end{array}
\end{equation}
We have 
$$
F\ast ((\gt^{\theta}_P)^!K)\,\iso\, \act_*(\id\times\gt_P^{\theta})^!(F\tboxtimes K)\,\iso\, (\gt^{\theta+\theta'}_P)^!(F\ast K)
$$
The first claim follows now from Lemma~\ref{Lm_2.3.8}. 

 Since $\Rep(\check{M})$ is rigid, the second claim follows from (\cite{G}, ch. I.1, 9.3.6). 

\smallskip\noindent
ii) We may assume $V\in \Rep(\check{M})^{\heartsuit}$ is finite-dimensional. 
Let us first show that (\ref{functor_for_shifted_action_of_Rep_checkM}) is right t-exact. Take $\theta\in\Lambda_{G,P}$. It suffices to show that for any $K\in Shv(\Gr_M^{\theta})^{M(\cO)}$ placed in perverse degrees $\le \<\theta, 2\check{\rho}-2\check{\rho}_M\>$, the object 
$$
\Sat_M(V)\ast ((\gt_P^{\theta})^!K)[-\<\theta', 2\check{\rho}-2\check{\rho}_M\>]
$$
lies in $Shv(\Gr_P^{\theta+\theta'})^{H, \le 0}$, that is, 
$$
(\gt_P^{\theta+\theta'})_!(\Sat_M(V)\ast ((\gt_P^{\theta})^!K))
$$
is placed in perverse degrees $\le \<\theta, 2\check{\rho}-2\check{\rho}_M\>$ over $\Gr_M^{\theta+\theta'}$. The latter complex identifies with $\Sat_M(V)\ast K$. Our claim follows now from (\cite{G_central}, Proposition~6). 

 To see that (\ref{functor_for_shifted_action_of_Rep_checkM}) is left t-exact argue as in Step 2 of Proposition~\ref{Pp_2.3.21}. Here we use the fact that $Z(\check{M})$ acts on $V^*$ as $-\theta'$.
 
\medskip\noindent
iii) We may and do assume that $K$ is the extension by zero from a closed $M(\cO)$-invariant subscheme of finite type in $\Gr_G$. 

\smallskip\noindent
{\bf Step 1}. We claim that 
$$
(v^{\theta'+\theta}_P)^*(F\ast K)\,\iso\, F\ast ((v^{\theta}_P)^*K)
$$ 
canonically in $Shv(\Gr_P^{\theta+\theta'})^{M(\cO)}$. Indeed, $K$ in $Shv(\Gr_G)^{M(\cO)}$ admits a finite filtration with succesive quotients $(v^{\mu}_P)_!(v^{\mu}_P)^*K$, $\mu\in \Lambda_{G,P}$. So, $F\ast K$ admits a finite filtration in $Shv(\Gr_P^{\theta+\theta'})^{M(\cO)}$ with succesive quotients $F\ast ((v^{\mu}_P)_!(v^{\mu}_P)^*K)$, $\mu\in \Lambda_{G,P}$. Our claim follows.

\smallskip\noindent
{\bf Step 2}. From (\ref{diag_for_Pp_2.4.19}) we get  
$$
(\gt^{\theta+\theta'}_P)_!(F\ast (v^{\theta}_P)^*K)\,\iso\, F\ast ((\gt_P^{\theta})_!(v^{\theta}_P)^*K)
$$
Our claim follows.
\end{proof}  

\sssec{} The action of $\Lambda_{M, ab}$ on $Shv(\Gr_G)^H$ from Section~\ref{Sect_2.2.10_action_of_Lambda_Mab} is obtained by restricting the $\Rep(\check{M})^{\heartsuit}$-action given by (\ref{functor_for_shifted_action_of_Rep_checkM}) to $\Rep(\check{M}_{ab})^{\heartsuit}$. 

\sssec{} For $\nu\in\Lambda^+_M$ denote by $j_{\nu}^-: S^{\nu}_{P^-}\hook{} \bar S^{\nu}_{P^-}$ this open immersion. Let $\bar i_{\nu}^-: \bar S^{\nu}_{P^-}\hook{} \Gr_G$ be the closed immersion and $i_{\nu}^-=\bar i_{\nu}^-\comp j_{\nu}^-$. We also have the closed embedding $i^{\nu}_{P^-}: \Gr_M^{\nu}\hook{} S^{\nu}_{P^-}$.

\sssec{} The following is not used in the paper. To complete the properties of $\SI_P$, we recall the following result of Lin Chen. 
\begin{Pp}[\cite{LC}, Corollary 1.4.5] Consider the composition
$$
Shv(\Gr_G)^H\otimes Shv(\Gr_G)^{H^-}\toup{\oblv\otimes\oblv} Shv(\Gr_G)^{M(\cO)}\otimes Shv(\Gr_G)^{M(\cO)}\to\Vect
$$
where the right arrow is the Verdier duality from Section~\ref{Sect_A.3}. This is a counit of a duality $Shv(\Gr_G)^H\,\iso\, (Shv(\Gr_G)^{H^-})^{\vee}$.  
\end{Pp}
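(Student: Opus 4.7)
The strategy is to reduce the claim to the known Verdier self-duality on $Shv(\Gr_G)^{I_P}$, transporting it through the equivalences of Proposition~\ref{Pp_2.2.6} and the Chevalley involution $\sigma$ from Section~2.4. First I would use the equivalence $\Av_*^{I_P/M(\cO)}: Shv(\Gr_G)^H\iso Shv(\Gr_G)^{I_P}$ and its $P^-$-counterpart to translate the pairing in question into one on $Shv(\Gr_G)^{I_P}\otimes Shv(\Gr_G)^{I_{P^-}}$. Composing with $\sigma: Shv(\Gr_G)^{I_{P^-}}\iso Shv(\Gr_G)^{I_P}$ yields a pairing on $Shv(\Gr_G)^{I_P}\otimes Shv(\Gr_G)^{I_P}$, which I would then identify with the Verdier self-duality pairing on $Shv(\Gr_G)^{I_P}$, a known counit of duality.

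The key technical input is a contraction (Braden-type) argument. For a cocharacter $\gamma: \Gm\to Z(M)$ with $\<\gamma, \check{\alpha}_i\>>0$ for $i\in\cI\setminus\cI_M$ and $\<\gamma, \check{\alpha}_i\>=0$ for $i\in\cI_M$, the attractor, repeller, and fixed points of the induced $\Gm$-action on $\Gr_G$ are (the closures of) the $H$-orbits $S_P^{\nu}$, the $H^-$-orbits $S_{P^-}^{\nu}$, and the $M(\cO)$-orbits $\Gr_M^{\nu}$, respectively. The equivalence $(i_P^{\nu})^!(v_P^{\nu})^*\iso (i_{P^-}^{\nu})^*(v_{P^-}^{\nu})^!$ from Lemma~\ref{Lm_theorem_of_Braden_for_theta} then lets me compare $\oblv\circ\Av_!^{U(P)(F)}$ with its $P^-$-analogue in a way compatible with Verdier duality on $Shv(\Gr_G)^{M(\cO)}$. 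Combined with the presentation of $\Av_!^{U(P)(F)}$ as a shifted filtered colimit over $\Lambda^+_{M, ab}$ (as in the proof of Proposition~\ref{Pp_2.2.6}) and the inversion $j_{-\lambda, *}\ast j_{\lambda, !}\iso\delta_1$ of Lemma~\ref{Lm_2.2.5}, this yields the required identification of pairings stratum by stratum.

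The main obstacle is justifying the Braden-style comparison in the infinite-dimensional setting: $\Gr_G$ is an ind-scheme, the orbits $S_P^{\nu}$ have both infinite dimension and infinite codimension, and the pairing lands in $\Vect$ only after absorbing the cohomological shift $\<\nu, 2\check{\rho}-2\check{\rho}_M\>$ on each stratum. One must verify carefully that the contraction principle survives composition with the ind-extension $\Av_*^{I_P/M(\cO), \ren}$, that the resulting pairing is degree-preserving, and that the zig-zag identities for the induced unit and counit hold at the level of $\DG$-categories rather than merely on generators. For the Borel case these issues were handled in \cite{Gai19SI}; the general parabolic case follows the same scheme, with the role of $\Lambda^+$ played by $\Lambda^+_{M, ab}$ and the role of the simple coroots by those indexed by $\cI\setminus\cI_M$.
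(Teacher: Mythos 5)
First, a point of reference: the paper does not prove this statement. It is quoted from Lin Chen's work (\cite{LC}, Corollary 1.4.5), is explicitly flagged as ``not used in the paper'', and no argument is given. So there is no internal proof to compare yours against; I can only assess your proposal on its own terms and against Chen's published argument, which proceeds quite differently (via nearby cycles on the Drinfeld--Gaitsgory--Vinberg interpolation Grassmannian and the invertibility of the long intertwining functor $\Av_!^{U(P)(F)}\circ\oblv: Shv(\Gr_G)^{H^-}\to Shv(\Gr_G)^H$), not by transporting the Verdier self-duality of $Shv(\Gr_G)^{I_P}$.

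As for your proposal, there is a genuine gap. Your first paragraph only reassembles what the paper already records: $Shv(\Gr_G)^H\simeq Shv(\Gr_G)^{I_P}$ carries a Verdier self-duality, and $\sigma$ identifies $Shv(\Gr_G)^{H}$ with $Shv(\Gr_G)^{H^-}$ compatibly with it. That produces \emph{a} duality $Shv(\Gr_G)^H\simeq(Shv(\Gr_G)^{H^-})^{\vee}$, but the content of the proposition is that its counit is the \emph{specific} pairing $(\cF,\cG)\mapsto\RG_{\blacktriangle}(M(\cO)\backslash\Gr_G,\,\oblv\cF\otimes^!\oblv\cG)$. Your step ``which I would then identify with the Verdier self-duality pairing on $Shv(\Gr_G)^{I_P}$'' is exactly where all the difficulty lives, and nothing in your sketch carries it out: on the $I_P$-side the counit is computed after applying $\Av_*^{I_P/M(\cO)}$, i.e.\ after convolving with $j_{-\lambda,*}$ and passing to a limit, whereas the pairing in the statement is computed on the finite-type intersections $S_P^{\nu}\cap S_{P^-}^{\nu'}$ of Lemma~\ref{Lm_2.6.27_inclusion}. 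Relating the two is essentially equivalent to the invertibility of the long intertwining functor, i.e.\ to the main theorem of \cite{LC}, so your ``identification'' silently assumes the hard part. The Braden input (Lemma~\ref{Lm_theorem_of_Braden_for_theta}) is the right kind of tool, but it compares $(i_P^{\theta})^!(v_P^{\theta})^*$ with $(i_{P^-}^{\theta})^*(v_{P^-}^{\theta})^!$ stratum by stratum; it neither produces a unit for the duality nor verifies the zig-zag identities, and you never construct a unit at all (perfectness on compact generators would suffice here, but that too is not checked). A smaller inaccuracy: the attractor and repeller cells for your $\Gm$-action are $\Gr_P^{\theta}$ and $\Gr_{P^-}^{\theta}$, not the finer $H$- and $H^-$-orbits $S_P^{\nu}$, $S_{P^-}^{\nu}$.
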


\section{The semi-infinite $\IC$-sheaf $\IC^{\frac{\infty}{2}}_P$}

\ssec{Definition and first properties}
\sssec{} Recall that $\Lambda^+$ is the set of dominant coweights of $G$. Equip it with the relation $\lambda\le\mu$ iff $\mu-\lambda\in\Lambda^+$. Then $(\Lambda^+, \le)$ is a filtered category. In (\cite{Gai19SI}, Sections 2.3, 2.7) the functor 
\begin{equation}
\label{functor_initial_for_IC_SI}
(\Lambda^+,\le)\to Shv(\Gr_G)
\end{equation}
given by $\lambda\mapsto t^{-\lambda}\Sat(V^{\lambda})[\<\lambda, 2\check{\rho}\>]$ was constructed. Here $\Sat: \Rep(\check{G})\to Shv(\Gr_G)^{G(\cO)}$ is the Satake functor.  
Recall the construction of the transition maps in (\ref{functor_initial_for_IC_SI}). 

 First, for $\lambda\in\Lambda^+$ one has a canonical map
\begin{equation}
\label{map_fibre_of_Sat(V^lambda)} 
\delta_{1, \Gr_G}\to t^{-\lambda}\Sat(V^{\lambda})[\<\lambda, 2\check{\rho}\>]
\end{equation}
If moreover $\lambda\in\Lambda^+_{M, ab}$ then (\ref{map_fibre_of_Sat(V^lambda)}) is a map in $Shv(\Gr_G)^{M(\cO)}$ naturally. Now for $\lambda_i\in\Lambda^+$ with $\lambda_2-\lambda_1=\lambda\in\Lambda^+$ the morphism
\begin{equation}
\label{transition_map_for_IC_SI}
t^{-\lambda_1}\Sat(V^{\lambda_1})[\<\lambda_1, 2\check{\rho}\>]\to t^{-\lambda_2}\Sat(V^{\lambda_2})[\<\lambda_2, 2\check{\rho}\>]
\end{equation}
is defined as the composition
\begin{multline*}
t^{-\lambda_1}\Sat(V^{\lambda_1})[\<\lambda_1, 2\check{\rho}\>]\,\iso\, t^{-\lambda_1}\delta_{1,\Gr_G} \ast \Sat(V^{\lambda_1})[\<\lambda_1, 2\check{\rho}\>]\toup{(\ref{map_fibre_of_Sat(V^lambda)})}\\  t^{-\lambda_1}t^{-\lambda}\Sat(V^{\lambda})\ast \Sat(V^{\lambda_1})[\<\lambda+\lambda_1, 2\check{\rho}\>]\,\iso\, t^{-\lambda_2}\Sat(V^{\lambda}\otimes V^{\lambda_1})[\<\lambda_2, 2\check{\rho}\>]\toup{u^{\lambda, \lambda_1}}\\  t^{-\lambda_2}\Sat(V^{\lambda_2})[\<\lambda_2, 2\check{\rho}\>]
\end{multline*}

\sssec{} Consider the restriction
\begin{equation}
\label{functor_for_IC_SI_parabolic}
(\Lambda^+_{M, ab},\le)\to Shv(\Gr_G)
\end{equation}
of (\ref{functor_initial_for_IC_SI}) under $(\Lambda^+_{M, ab},\le)\to (\Lambda^+,\le)$. 
 If $\lambda\in\Lambda^+_{M, ab}$ then $t^{-\lambda}\Sat(V^{\lambda})$ is naturally $M(\cO)$-equivariant. Besides, (\ref{transition_map_for_IC_SI}) naturally upgraded to a morphism in $Shv(\Gr_G)^{M(\cO)}$. Our next purpose is to show that (\ref{functor_for_IC_SI_parabolic}) naturally upgraded to a functor with values in $Shv(\Gr_G)^{M(\cO)}$. The argument of \cite{Gai19SI} applies in this case, as we are going to explain.
 
 Consider a left action of $\Lambda^+_{M, ab}$ on $Shv(\Gr_G)^{M(\cO)}$ such that $\lambda\in\Lambda^+_{M, ab}$ sends $K$ to $t^{\lambda}K[-\<\lambda, 2\check{\rho}\>]$, this is the action given by (\ref{action_Rep(checkM)_shifted}). Consider also the right lax action of $\Lambda^+_{M, ab}$ on $Shv(\Gr_G)^{M(\cO)}$ such that $\lambda$ acts on $K$ as $K\ast \Sat(V^{\lambda})$. The right lax structure on this action is given by the morphisms
$$
(K\ast \Sat(V^{\lambda}))\ast \Sat(V^{\mu})\,\iso\, K\ast \Sat(V^{\lambda}\otimes V^{\mu})\toup{u^{\lambda,\mu}} K\ast \Sat(V^{\lambda+\mu})
$$ 
for $\lambda,\mu\in\Lambda^+_{M, ab}$. We claim that $\delta_{1,\Gr_G}$  acquires a structure of a lax central object with respect to these actions in the sense of (\cite{Gai19SI}, 2.7.1). The corresponding map
$$
t^{\lambda}\ast \delta_{1,\Gr_G}[-\<\lambda, 2\check{\rho}\>]\to \Sat(V^{\lambda})
$$
is (\ref{map_fibre_of_Sat(V^lambda)}). 

 Indeed, as in \select{loc.cit.} it suffices to show that for any $\lambda\in\Lambda^+_M$ the object
\begin{equation}
\label{space_is_indeed_discrete_for_Sect_2.5.2}
\Map_{Shv(\Gr_G)^{M(\cO)}}(\delta_{t^{\lambda}}[-\<\lambda, 2\check{\rho}\>], \Sat(V^{\lambda}))\in\Spc
\end{equation}
is discrete. The corresponding internal hom in $\Vect$
$$
\HOM_{Shv(\Gr_G)^{M(\cO)}}(\delta_{t^{\lambda}}[-\<\lambda, 2\check{\rho}\>], \Sat(V^{\lambda}))
$$
is placed in degrees $\ge 0$. Applying the Dold-Kan functor $\Vect\to\Vect^{\le 0}\to \Spc$, we see that (\ref{space_is_indeed_discrete_for_Sect_2.5.2}) is discrete. Thus, (\ref{functor_for_IC_SI_parabolic}) naturally upgrades to a functor with values in $Shv(\Gr_G)^{M(\cO)}$. 
 
\begin{Def} 
\label{Def_IC_SI_parabolic}
Let $\IC^{\frac{\infty}{2}}_P\in Shv(\Gr_G)^{M(\cO)}$ be given by
\begin{equation}
\label{Def_IC_semi_inf}
\IC^{\frac{\infty}{2}}_P=\mathop{\colim}\limits_{\lambda\in\Lambda^+_{M, ab}}  t^{-\lambda}\Sat(V^{\lambda})[\<\lambda, 2\check{\rho}\>]
\end{equation}
\end{Def}

\sssec{} Let $\mu\in\Lambda^+_M$. One similarly gets a diagram
$$
(\{\lambda\in \Lambda^+_{M, ab}, \lambda+\mu\in\Lambda^+\}, \le)\to Shv(\Gr_G)^{M(\cO)}, \lambda\mapsto t^{-\lambda}\Sat(V^{\lambda+\mu})[\<\lambda, 2\check{\rho}\>]
$$
Let us only indicate the transition maps.

 Let $\lambda, \lambda_i\in\Lambda^+_{M, ab}$ with $\lambda_2=\lambda_1+\lambda$ and $\lambda\in\Lambda^+$. The transition morphism
$$
t^{-\lambda_1}\Sat(V^{\lambda_1+\mu})[\<\lambda_1, 2\check{\rho}\>]\to 
t^{-\lambda_2}\Sat(V^{\lambda_2+\mu})[\<\lambda_2, 2\check{\rho}\>]
$$
is the composition
\begin{multline*}
t^{-\lambda_1}\Sat(V^{\lambda_1+\mu})[\<\lambda_1, 2\check{\rho}\>]\,\iso\, t^{-\lambda_1}\delta_{1,\Gr_G} \ast \Sat(V^{\lambda_1+\mu})[\<\lambda_1, 2\check{\rho}\>]\toup{(\ref{map_fibre_of_Sat(V^lambda)})}\\  t^{-\lambda_1}t^{-\lambda}\Sat(V^{\lambda})\ast \Sat(V^{\lambda_1+\mu})[\<\lambda+\lambda_1, 2\check{\rho}\>]\,\iso\, t^{-\lambda_2}\Sat(V^{\lambda}\otimes V^{\lambda_1+\mu})[\<\lambda_2, 2\check{\rho}\>]\toup{u^{\lambda, \lambda_1+\mu}}\\  t^{-\lambda_2}\Sat(V^{\lambda_2+\mu})[\<\lambda_2, 2\check{\rho}\>]
\end{multline*}
\begin{Def} For $\mu\in\Lambda^+_M$ define $\IC^{\frac{\infty}{2}}_{P,\mu}\in Shv(\Gr_G)^{M(\cO)}$ by
\begin{equation}
\label{def_SI_IC_for_mu_stratum}
\IC^{\frac{\infty}{2}}_{P,\mu}=\mathop{\colim}\limits_{\lambda\in\Lambda^+_{M, ab}, \; \lambda+\mu\in\Lambda^+}  \, t^{-\lambda}\Sat(V^{\lambda+\mu})[\<\lambda, 2\check{\rho}\>]
\end{equation}
So, $\IC^{\frac{\infty}{2}}_P=\IC^{\frac{\infty}{2}}_{P,0}$.
\end{Def}

\sssec{} For $\theta\in\Lambda_{G,P}$, $V\in\Rep(\check{M})^{\heartsuit}$ let $V_{\theta}$ be the subspace of $V$ on which the center $Z(\check{M})$ of $\check{M}$ acts by $\theta$. 

 The first properties of $\IC^{\frac{\infty}{2}}_{P,\mu}$ are as follows.

\begin{Pp}
\label{Pp_2.4.7_first_propeties} Let $\eta\in\Lambda_M^+$.\\
a) The object $\IC^{\frac{\infty}{2}}_{P,\eta}$ belongs to $Shv(\Gr_G)^H\subset Shv(\Gr_G)^{M(\cO)}$.\\
b) $\IC^{\frac{\infty}{2}}_{P,\eta}$ is the extension by zero from $\bar S^{\eta}_P$. The ind-scheme $\bar S^{\eta}_P$ is stratified by $S^{\nu}_P$ with $\nu\in\Lambda^+_M$ such that $\eta-\nu\in\Lambda^{pos}$.

\medskip\noindent
c) One has $i_{\eta}^*\IC^{\frac{\infty}{2}}_{P,\eta}\,\iso\, i_{\eta}^!\IC^{\frac{\infty}{2}}_{P,\eta}\,\iso\, \omega[-\<\eta, 2\check{\rho}\>]$ over $S^{\eta}_P$. 

\medskip\noindent
d) $\IC^{\frac{\infty}{2}}_{P,\eta}$ belongs to $Shv(\Gr_G)^{H, \heartsuit}$. It admits no subobjects in $Shv(\Gr_G)^{H, \heartsuit}$, which are extensions by zero from $\bar S^{\eta}_P - S^{\eta}_P$.
\end{Pp}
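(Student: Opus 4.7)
The plan is to verify (a)--(d) by inspecting the defining colimit term-by-term. Write $K_\lambda := t^{-\lambda}\Sat(V^{\lambda+\eta})[\<\lambda, 2\check\rho\>]$, so $\IC^{\frac{\infty}{2}}_{P,\eta}$ is the colimit of the $K_\lambda$ over $\{\lambda\in\Lambda^+_{M,ab}\mid \lambda+\eta\in\Lambda^+\}$.

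For (a), $\Sat(V^{\lambda+\eta})$ is $G(\cO)$-equivariant, hence $K_\lambda\in Shv(\Gr_G)^{H_\lambda}$. For any fixed $\mu\in\Lambda^+_{M,ab}$ the cofinal tail $\{\lambda\ge\mu\}$ consists of objects of the colimit-closed subcategory $Shv(\Gr_G)^{H_\mu}\subset Shv(\Gr_G)^{M(\cO)}$ ($H_\mu/M(\cO)$ being ind-pro-unipotent), so the colimit lies in $Shv(\Gr_G)^{H_\mu}$, and intersecting over all $\mu$ yields $Shv(\Gr_G)^H$ by Section~\ref{Sect_Local automorphic side_begins}. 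For (b), each $K_\lambda$ is supported on $t^{-\lambda}\ov\Gr_G^{\lambda+\eta}$, and $S^\nu_P\cap t^{-\lambda}\ov\Gr_G^{\lambda+\eta}\ne\emptyset$ translates via $t^\lambda$ to $S^{\lambda+\nu}_P\cap\ov\Gr_G^{\lambda+\eta}\ne\emptyset$, forcing $\eta-\nu\in\Lambda^{pos}$ by Remark~\ref{Rem_2.3.7}(i); the stratification of $\bar S^\eta_P$ is already noted in Section~\ref{Sect_2.3.8_local_vs_global}.

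For (c), the essential point is that for $\lambda$ large enough the point $t^{\lambda+\eta}$ is a smooth generic point of $\ov\Gr_G^{\lambda+\eta}$, so near $t^\eta = t^{-\lambda}\cdot t^{\lambda+\eta}$ the sheaf $\Sat(V^{\lambda+\eta})$ is a shift of the constant sheaf, giving $i_\eta^* K_\lambda\iso i_\eta^! K_\lambda$. After the shift $[\<\lambda,2\check\rho\>]$, a dimension count using the identification $Shv(S^\eta_P)^H\iso Shv(\Gr_M^\eta)^{M(\cO)}$ of Lemma~\ref{Lm_2.3.8} (which pins down the normalization of $\omega$ on the infinite-dimensional orbit) yields $\omega[-\<\eta, 2\check\rho\>]$. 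The transition maps of the colimit reduce to the identity on the restriction in this range, so the colimit equals this restriction.

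For (d), Section~\ref{Sect_t-str_on_S_theta_P} together with Remark~\ref{Rem_2.3.17_about_t-structure} reduces membership in the heart to bounds $i_\nu^*\IC^{\frac{\infty}{2}}_{P,\eta}\in Shv(S^\nu_P)^{H,\le 0}$ and $i_\nu^!\IC^{\frac{\infty}{2}}_{P,\eta}\in Shv(S^\nu_P)^{H,\ge 0}$ for all $\nu\in\Lambda^+_M$ with $\eta-\nu\in\Lambda^{pos}$, strict on the $*$-side for $\nu\ne\eta$. These will be derived from the Mirkovi\'c--Vilonen bounds on the $*$/$!$-restrictions of $\Sat(V^{\lambda+\eta})$ to $S^{\lambda+\nu}_P$, translated by $t^{-\lambda}$ and combined with the $[\<\lambda,2\check\rho\>]$ shift, using that the semi-infinite t-structure is compatible with filtered colimits. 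The strict inequality then yields the IC no-subobject property: any subobject in the heart supported on $\bar S^\eta_P\setminus S^\eta_P$ would contribute a nonzero class at the critical perverse degree $\<\nu,2\check\rho-2\check\rho_M\>$ of $i_\nu^*\IC^{\frac{\infty}{2}}_{P,\eta}$ for some $\nu\ne\eta$, contradicting the strict bound. The hardest step will be the fine analysis in (c) and (d): tracking the canonical normalization of $\omega$ on $S^\eta_P$ (not just up to a scalar) and verifying that the shift structure makes the MV bound for $\nu<\eta$ strict on the nose.
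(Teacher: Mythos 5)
Your treatments of (a), (b), (c) coincide with the paper's proof: the tail/cofinality argument for $H_\mu$-equivariance, the translation by $t^\lambda$ combined with Remark~\ref{Rem_2.3.7}(i) for the support, and the computation of the fibre at $t^\eta$ (where each term of the colimit is canonically $e[-\<\eta,2\check\rho\>]$ with identity transition maps) are exactly what is done there.

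Part (d) contains a genuine error. You claim the bound $i_\nu^*\IC^{\frac{\infty}{2}}_{P,\eta}\in Shv(S^\nu_P)^{H,\le 0}$ is \emph{strict} for $\nu\ne\eta$ and deduce the no-subobject property from that. This strictness is false: by Proposition~\ref{Pp_2.5.15_*-restriction} the $*$-restriction $(v_P^\theta)^*\IC^{\frac{\infty}{2}}_{P,\eta}$ to a boundary stratum $\Gr_P^\theta$ is a typically nonzero object sitting \emph{exactly} in the heart (degree $0$), and the Remark following that proposition records precisely this as the reason $\IC^{\frac{\infty}{2}}_{P,\eta}$ is \emph{not} the intermediate extension from $S^\eta_P$ when $G\ne P$. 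The correct strictness is on the $!$-side: the paper's Step 1 shows $i_\nu^!\IC^{\frac{\infty}{2}}_{P,\eta}\in Shv(S^\nu_P)^{H,>0}$ for $0\ne\eta-\nu\in\Lambda^{pos}$, via a codimension estimate for the $!$-restriction of $\Sat(V^{\lambda+\eta})$ along $\Gr_M^{\nu+\lambda}\hookrightarrow\Gr_G^{\nu+\lambda}$. The no-subobject argument must then run through $i_\nu^!$, not $i_\nu^*$: $i_\nu^!$ is left t-exact, so $H^0\circ i_\nu^!$ preserves monomorphisms in the heart; for a subobject $F$ supported on the boundary and $\nu$ maximal in its support, $i_\nu^!F=i_\nu^*F$ is concentrated in degree $0$ and injects into $H^0(i_\nu^!\IC^{\frac{\infty}{2}}_{P,\eta})=0$. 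Your version also has the functoriality backwards, since $i_\nu^*$ is right t-exact and $H^0\circ i_\nu^*$ need not preserve monomorphisms.

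For the heart membership itself your outline is workable but left unexecuted; note that the paper's $\le 0$ direction is not an orbit-by-orbit MV estimate on the $S^\nu_P$ but goes through the coarser strata $\Gr_P^\theta$ and the t-exactness of the geometric restriction functor $(\gt_P^\theta)_!(v_P^\theta)^*$ of Proposition~\ref{Pp_gRes_for_Levi}, which yields that $(v_P^\theta)^*\IC^{\frac{\infty}{2}}_{P,\eta}$ lies exactly in the heart of $Shv(\Gr_P^\theta)^H$ — again incompatible with the strict $*$-bound you assert.
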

\begin{proof}
a) Pick $\lambda\in\Lambda^+_{M, ab}$. Let us show that $\IC^{\frac{\infty}{2}}_{P,\eta}$ is $H_{\lambda}$-equivariant. For any $\mu\ge\lambda$, that is, with $\mu-\lambda\in\Lambda^+_{M, ab}$ the object $t^{-\mu}\Sat(V^{\mu+\eta})$ is naturally $t^{-\mu}G(\cO)t^{\mu}$-equivariant. Since $H_{\lambda}\subset H_{\mu}\subset t^{-\mu}G(\cO)t^{\mu}$, our claim follows, because $\{\mu\in\Lambda^+_{M, ab}\mid\mu\ge\lambda\}\subset \Lambda^+_{M, ab}$ is cofinal.

\smallskip\noindent
b) It suffices to show that for any $\lambda\in\Lambda^+_{M, ab}$ one has $t^{-\lambda}\ov{\Gr}_G^{\lambda+\eta}\subset \bar S^{\eta}_P$. Indeed, let $\nu\in\Lambda^+_M, \lambda\in\Lambda^+_{M, ab}$ and 
$$
S^{\nu}_P\cap (t^{-\lambda}\ov{\Gr}_G^{\lambda+\eta})\ne \emptyset
$$ 
By Remark~\ref{Rem_2.3.7}, $S^{\nu+\lambda}_P\cap \ov{\Gr}_G^{\lambda+\eta}\ne \emptyset$ and $\eta-\nu\in \Lambda^{pos}$. 

\smallskip\noindent
c) Let $i_{t^{\eta}}: \Spec k\to \Gr_G$ be the point $t^{\eta}$. By Lemma~\ref{Lm_2.3.8}, it suffices to show that 
$$
(i_{t^{\eta}})^!\IC^{\frac{\infty}{2}}_{P,\eta}\,\iso\,e[-\<\eta, 2\check{\rho}\>]
$$ 
We are calculating the colimit of the $!$-fibres at $t^{\eta}\in\Gr_G$ of 
$$
t^{-\lambda}\Sat(V^{\lambda+\eta})[\<\lambda, 2\check{\rho}\>]
$$ 
over $\lambda\in\Lambda^+_{M, ab}$. Each term of this diagram identifies canonically with $e[-\<\eta, 2\check{\rho}\>]$, and the transition maps are the identity. The claim follows.

\smallskip\noindent
d) {\bf Step 1}. We show that $\IC^{\frac{\infty}{2}}_{P,\eta}\in Shv(\Gr_G)^{H,\ge 0}$. 
Let $0\ne \eta-\nu\in\Lambda^{pos}$ with $\nu\in\Lambda^+_M$. We check that $(i_{\nu}(i^{\nu}_P))^!\IC^{\frac{\infty}{2}}_{P,\eta}$ is placed in perverse degrees $> \<\nu, 2\check{\rho}-2\check{\rho}_M\>$ over $\Gr_M^{\nu}$. If $\lambda\in\Lambda_{M, ab}^+$ is large enough for $\nu$ then $\lambda+\nu\in\Lambda^+_G$. It suffices to show that for such $\lambda$, 
$$
(i_{\nu+\lambda}(i^{\nu+\lambda}_P))^!\Sat(V^{\lambda+\eta})[\<\lambda, 2\check{\rho}\>] 
$$
is placed in perverse degrees $> \<\nu, 2\check{\rho}-2\check{\rho}_M\>$ over $\Gr_M^{\nu+\lambda}$. 

 We have $\Gr_M^{\nu+\lambda}\subset \Gr_G^{\nu+\lambda}$. The !-restriction of $\Sat(V^{\lambda+\eta})$ to $\Gr_G^{\nu+\lambda}$ is placed in perverse degrees $>0$, and has smooth perverse cohomology sheaves. Recall that $\dim\Gr_G^{\nu+\lambda}=\<\nu+\lambda, 2\check{\rho}\>$. 
 
 For any bounded complex on $\Gr_G^{\nu+\lambda}$ placed in perverse degrees $>0$ and having smooth perverse cohomology sheaves, its !-restriction to $\Gr_M^{\nu+\lambda}$ is placed in perverse degrees $>\codim(\Gr_M^{\nu+\lambda}, \Gr_G^{\nu+\lambda})=\<\nu+\lambda, 2\check{\rho}-2\check{\rho}_M\>$. 
Since $\<\lambda, 2\check{\rho}_M\>=0$, our claim follows. We proved actually that for $\nu$ as above, $(i_{\nu})^!\IC^{\frac{\infty}{2}}_{P,\eta}\in Shv(S^{\nu}_P)^{H, >0}$. 

\medskip\noindent
{\bf Step 2}. Let us show that $\IC^{\frac{\infty}{2}}_{P,\eta}\in Shv(\Gr_G)^{H,\le 0}$. It suffices to show that for $\theta\in\Lambda_{G,P}$, 
$$
(v^{\theta}_P)^*\IC^{\frac{\infty}{2}}_{P,\eta}\in Shv(\Gr^{\theta}_P)^{H, \le 0}
$$ 
or, equivalently, $(\gt^{\theta}_P)_!(v^{\theta}_P)^*\IC^{\frac{\infty}{2}}_{P,\eta}$ is placed in perverse degrees $\le \<\theta, 2\check{\rho}-2\check{\rho}_M\>$ over $\Gr_M^{\theta}$. We have 
$$
(\gt^{\theta}_P)_!(v^{\theta}_P)^*\IC^{\frac{\infty}{2}}_{P,\eta}\,\iso\,\mathop{\colim}\limits_{\lambda\in\Lambda_{M,ab}^+, \; \lambda+\eta\in\Lambda^+} \; (\gt^{\theta}_P)_!(v^{\theta}_P)^*(t^{-\lambda}\Sat(V^{\lambda+\eta}))[\<\lambda, 2\check{\rho}\>]  
$$
The latter identifies with 
\begin{equation}
\label{complex_for_d_Pp_2.4.6}
\mathop{\colim}\limits_{\lambda\in\Lambda_{M, ab}^+, \; \lambda+\eta\in\Lambda^+} \; t^{-\lambda}(\gt^{\theta+\bar\lambda}_P)_!(v^{\theta+\bar\lambda}_P)^*\Sat(V^{\lambda+\eta})[\<\lambda, 2\check{\rho}\>]  
\end{equation}
By Proposition~\ref{Pp_gRes_for_Levi}, 
$$
(\gt^{\theta+\bar\lambda}_P)_!(v^{\theta+\bar\lambda}_P)^*\Sat(V^{\lambda+\eta})[\<\theta+\bar\lambda, 2\check{\rho}-2\check{\rho}_M\>]\,\iso\, \Sat_M((V^{\lambda+\eta})_{\theta+\bar\lambda})
$$ 
So, (\ref{complex_for_d_Pp_2.4.6}) identifies with
\begin{equation}
\label{diag_after_Jacquet_functors_for_theta}
\mathop{\colim}\limits_{\lambda\in\Lambda_{M, ab}^+, \; \lambda+\eta\in\Lambda^+} \; t^{-\lambda}\Sat_M((V^{\lambda+\eta})_{\theta+\bar\lambda})[-\<\theta, 2\check{\rho}-2\check{\rho}_M\>]
\end{equation}
This shows that $(v^{\theta}_P)^*\IC^{\frac{\infty}{2}}_P\in Shv(\Gr^{\theta}_P)^{H, \heartsuit}$ for any $\theta\in\Lambda_{G,P}$. Our claim follows. 
\end{proof}

\begin{Rem} i) The object $\IC^{\frac{\infty}{2}}_{P,\eta}$ is the extension by zero from the connected component $\Gr_G^{\bar\eta}$ of $\Gr_G$, where $\bar\eta\in \Lambda_{G,G}$ is the image of $\eta$.\\
ii) If $P=G$ then $\IC^{\frac{\infty}{2}}_{P,\eta}\,\iso\, \Sat(V^{\eta})$ canonically. 
\end{Rem}

\sssec{Another presentation of $\IC^{\frac{\infty}{2}}_{P,\eta}$} The inclusion $Shv(\Gr_G)^H\hook{} Shv(\Gr_G)^{M(\cO)}$ commutes with $\Rep(\check{G})$-actions. Since $\Rep(\check{G})$ is rigid, on the left adjoint $\Av^{U(P)(F)}_!: Shv(\Gr_G)^{M(\cO)}\to Shv(\Gr_G)^H$ the left-lax $\Rep(\check{G})$-structure is strict by (\cite{G}, ch. I.1, 9.3.6). Let $\eta\in\Lambda^+_M$. Using Lemma~\ref{Lm_2.3.13_about_Av!} and applying $\Av^{U(P)(F)}_!$ to (\ref{def_SI_IC_for_mu_stratum}) we get
$$
\IC^{\frac{\infty}{2}}_{P,\eta}\,\iso\,\mathop{\colim}\limits_{\lambda\in\Lambda_{M, ab}^+,\, \lambda+\eta\in\Lambda^+}\, \bvartriangle^{-\lambda}\ast \Sat(V^{\lambda+\eta}),
$$
where the colimit is taken in $Shv(\Gr_G)^H$.

Generalizing (\cite{Gai19SI}, 1.5.6) we have following.
\begin{Thm}
\label{Thm_4.1.10}
For any $\lambda\in\Lambda_{M, ab}$, $\bvartriangle^{\lambda}\in Shv(\Gr_G)^{H, \heartsuit}$.
\end{Thm}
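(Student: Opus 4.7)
The plan is as follows. First, we reduce to the case $\lambda = 0$ via translation t-exactness. By Lemma~\ref{Lm_2.3.13_about_Av!}(iii), for $\lambda \in \Lambda_{M,ab}$ there is a canonical isomorphism $\bvartriangle^{\lambda} \iso t^{\lambda}\bvartriangle^0[-\<\lambda, 2\check{\rho}\>]$. Since $\<\lambda, 2\check{\rho}_M\> = 0$ for $\lambda \in \Lambda_{M,ab}$, this functor is precisely the shifted $\Rep(\check{M})^{\heartsuit}$-action (\ref{action_Rep(checkM)_shifted}) associated to the character $e^{\lambda}$ (inflated from $\check{M}_{ab}$, with $Z(\check{M})$ acting through the image of $\lambda$ in $\Lambda_{G,P}$). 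Proposition~\ref{Pp_2.4.19}(ii) guarantees this functor is t-exact on $Shv(\Gr_G)^H$, and as it is an equivalence with quasi-inverse $e^{-\lambda}$, the statement for general $\lambda \in \Lambda_{M,ab}$ is equivalent to the case $\lambda = 0$.

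Second, we transport the problem to the spectral side. By Lemma~\ref{Lm_2.3.13_about_Av!}(i), $\bvartriangle^0 \iso \Av_!^{U(P)(F)}(\delta_{1,\Gr_G})$, so the equivalence $\Av_*^{I_P/M(\cO)}$ of Proposition~\ref{Pp_2.2.6} carries $\bvartriangle^0$ to $\delta_{1,\Gr_G} \in Shv(\Gr_G)^{I_P}$. Combining with (\ref{eq_ren_parahoric_versus_H}) and the Chen--Dhillon equivalence (\ref{iso_Gurbir_Chen}), Proposition~\ref{Pp_Chen_Dhillon}(iii) identifies $\bvartriangle^0$ under the composite
$$Shv(\Gr_G)^{H, ren} \iso Shv(\Gr_G)^{I_P, ren} \iso \IndCoh\bigl((\check{\gu}(P^-)\times_{\check{\gg}} 0)/\check{P}^-\bigr)$$
with $\cO_{pt/\check{P}^-}$. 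Since $\bvartriangle^0 \in Shv(\Gr_G)^{H, \le 0}$ by construction of the semi-infinite t-structure, what remains is the $\ge 0$ condition, equivalently $i_{\nu}^!\bvartriangle^0 \in Shv(S^{\nu}_P)^{H, \ge 0}$ for every $\nu \in \Lambda^+_M$ with $-\nu \in \Lambda^{pos}$.

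The main obstacle lies in establishing this $\ge 0$ condition via the spectral identification. For $L \in Shv(S^{\nu}_P)^{H, <0}$, the adjunction gives
$$\Hom\bigl(L, i_{\nu}^!\bvartriangle^0\bigr) \;\iso\; \Hom\bigl((i_{\nu})_!L,\; \bvartriangle^0\bigr),$$
which we transport through the equivalence of the previous step to an Ext-space on $\IndCoh((\check{\gu}(P^-)\times_{\check{\gg}} 0)/\check{P}^-)$ out of the spectral image of $(i_{\nu})_!L$ into $\cO_{pt/\check{P}^-}$. Using the $\Rep(\check{G})$- and $\Rep(\check{M}_{ab})$-linearity of Chen--Dhillon (Proposition~\ref{Pp_Chen_Dhillon}(i,ii,iv)) together with the already verified description $\bvartriangle^{\mu} \iso \Av_!^{U(P)(F)}\cB_{\mu,!}$ for $\mu \in \Lambda_{M,ab}^+$, one produces an explicit $\check{P}^-$-equivariant Koszul-type presentation of this spectral image, whose connective amplitude in the natural t-structure on $\IndCoh$ yields the required vanishing. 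Organizing this Koszul-type presentation and verifying the Ext-vanishing is the technical heart of the argument; in contrast to the Borel case in \cite{Gai19SI}, where the analogous fact was extracted from the global input of \cite{BG2}, this argument proceeds purely on the local/spectral side.
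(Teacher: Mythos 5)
Your reduction to $\lambda=0$ and the identification of $\bvartriangle^0$ with $\delta_{1,\Gr_G}$, hence with $\cO_{pt/\check{P}^-}$ on the spectral side, agree with the paper, and the observation that only the $\ge 0$ half of the claim requires proof is correct. But the step you yourself flag as ``the technical heart'' is precisely where the proof is missing, and the route you sketch for it does not work as stated. Checking $i_\nu^!\bvartriangle^0\in Shv(S_P^\nu)^{H,\ge 0}$ amounts to showing $\HOM_{\SI_P}(\bvartriangle^\nu,\bvartriangle^0)\in\Vect^{\ge 0}$ for \emph{all} $\nu\in\Lambda_M^+$, not just $\nu\in\Lambda_{M,ab}$. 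For $\nu\notin\Lambda_{M,ab}$ the object $\bvartriangle^\nu$ (equivalently, the spectral image of $(i_\nu)_!L$ for $L$ on the stratum) is not expressible through the structures that Proposition~\ref{Pp_Chen_Dhillon} controls --- the $\Rep(\check{G})$-action, the $\Rep(\check{M}_{ab})$-action by $j_{\lambda,*}$, and the unit object --- so there is no available ``explicit $\check{P}^-$-equivariant Koszul-type presentation'' of its image under the composite equivalence. Asserting that such a presentation exists and has the right connective amplitude is essentially equivalent to the theorem itself.

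The paper's mechanism for closing exactly this gap is absent from your proposal: it introduces the alternative generators $J_{\nu,!}=(v_P^\theta)_!(\gt_P^\theta)^!\Sat_M(U^\nu)[\ldots]$ of $Shv(\Gr_G)^{H,\le 0}$, proves (Lemmas~\ref{Lm_4.6.6}, \ref{Lm_4.6.7}) that each $\Upsilon^{\mu,\gamma}=\Sat_M(U^\mu)\ast\bvartriangle^0\ast\Sat(V^\gamma)[-\<\mu,2\check\rho\>]$ sits in a fibre sequence whose cofibre is a single $J_{\nu,!}$ and whose fibre is filtered by $J_{\nu',!}$ with $\<\nu'-\nu,2\check\rho-2\check\rho_M\>>0$, and then runs a \emph{descending induction} on $\<\nu,2\check\rho-2\check\rho_M\>$ (Proposition~\ref{Pp_induction_step}), seeded by the support statement that $\bvartriangle^0$ lives on $\ov{\Gr}{}_P^0$ so that $\HOM(J_{\nu,!},\bvartriangle^0)=0$ when $\<\nu,2\check\rho-2\check\rho_M\>>0$. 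This reduces everything to $\HOM(\Upsilon^{\mu,\gamma},\bvartriangle^0)$ with $\mu\in\Lambda_{M,ab}^+$, i.e.\ to $\HOM_{Shv(\Gr_G)^{I_P}}(j_{\mu,!}\ast\Sat(V^\gamma),\delta_{1,\Gr_G})$, whose spectral image \emph{is} computable: after applying the long intertwining functor $j_*^{P^-}\astIP\_$ (Corollary~\ref{Pp_key_for_baby_Verma_transform_for_Pminus}) and the equivalence (\ref{equivalence_Gurbir_Chen_for_P_instead}) it becomes a Hom between $I^!i_*$ of vector bundles on $B(\check{P})$, which lies in $\Vect^{\ge 0}$ by the t-exactness of $i^!$ on $\IndCoh_{\Nilp}$. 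Without the generators $J_{\nu,!}$, the fibre sequences, and the induction, your argument does not reach the generators of the connective part of the t-structure, so the proof is incomplete.
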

The proof is postponed to Section~\ref{Sect_4.6}. 

\sssec{} Let $\theta\in\Lambda_{G,P}, \eta\in\Lambda^+_M$. Consider the diagram 
\begin{equation}
\label{diag_for_Rep(checkM)}
\{\lambda\in\Lambda_{M, ab}, \; \lambda+\eta\in\Lambda^+\}
\to \Rep(\check{M}), \;\; \lambda\mapsto (e^{-\lambda}\otimes V^{\lambda+\eta})_{\theta}
\end{equation}
obtained from (\ref{diag_for_Sect_2.1.7}) by restricting to $\check{M}$ and imposing on each term the condition that $Z(\check{M})$ acts by $\theta$. 

Write $\cO(U(\check{P}))$ for the ring of regular functions on $U(\check{P})$. 
By Lemma~\ref{Lm_2.1.8_some_colimit}, 
$$
\mathop{\colim}_{\lambda\in \Lambda_{M, ab}, \; \lambda+\eta\in\Lambda^+}\; 
(e^{-\lambda}\otimes V^{\lambda+\eta})_{\theta}\,\iso\, (\cO(U(\check{P}))\otimes U^{\eta})_{\theta}
$$
Here $\check{M}$ acts (on the left) diagonally on $\cO(U(\check{P}))\otimes U^{\eta}$, the action on the first factor comes from the adjoint $\check{M}$-action on $U(\check{P})$. 

\begin{Pp}  
\label{Pp_2.5.15_*-restriction}
Let $\theta\in\Lambda_{G,P}$, $\eta\in\Lambda^+_M$. One has canonically
$$
(\gt^{\theta}_P)_!(v^{\theta}_P)^*\IC^{\frac{\infty}{2}}_{P,\eta}[\<\theta, 2\check{\rho}-2\check{\rho}_M\>]\;\iso\; Sat_M((\cO(U(\check{P}))\otimes U^{\eta})_{\theta})
$$
in $Shv(\Gr_M)^{M(\cO)}$. So, 
$$
(v^{\theta}_P)^*\IC^{\frac{\infty}{2}}_{P,\eta}[\<\theta, 2\check{\rho}-2\check{\rho}_M\>]\;\iso\; (\gt^{\theta}_P)^!Sat_M((\cO(U(\check{P}))\otimes U^{\eta})_{\theta})
$$
in $Shv(\Gr_P^{\theta})^H$. 
\end{Pp}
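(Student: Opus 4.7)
The plan is to build on the calculation already carried out in Step 2 of the proof of Proposition~\ref{Pp_2.4.7_first_propeties}(d). There, by writing $\IC^{\frac{\infty}{2}}_{P,\eta}$ as the colimit from (\ref{def_SI_IC_for_mu_stratum}), commuting $t^{-\lambda}$ past $(\gt^\theta_P)_!(v^\theta_P)^*$ (noting that $t^{-\lambda}$ is an isomorphism $\Gr_G \iso \Gr_G$ intertwining $\Gr_P^{\theta+\bar\lambda}$ with $\Gr_P^{\theta}$), and then applying Proposition~\ref{Pp_gRes_for_Levi} on each term, we obtained
$$(\gt^{\theta}_P)_!(v^{\theta}_P)^*\IC^{\frac{\infty}{2}}_{P,\eta}\;\iso\; \mathop{\colim}\limits_{\lambda\in\Lambda_{M,ab}^+,\,\lambda+\eta\in\Lambda^+} t^{-\lambda}\Sat_M((V^{\lambda+\eta})_{\theta+\bar\lambda})[-\<\theta, 2\check{\rho}-2\check{\rho}_M\>].$$
Shifting by $[\<\theta, 2\check{\rho}-2\check{\rho}_M\>]$, the first claim of the proposition reduces to identifying the resulting colimit with $\Sat_M((\cO(U(\check{P}))\otimes U^{\eta})_{\theta})$ in $Shv(\Gr_M^\theta)^{M(\cO)}$.

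Next I would rewrite each term in the diagram on the spectral side. Since $\lambda\in\Lambda_{M,ab}$ satisfies $\<\lambda, \check{\alpha}_i\> = 0$ for all $i\in\cI_M$, translation by $t^{-\lambda}$ on $\Gr_M$ identifies $\Gr_M^{\theta+\bar\lambda}$ with $\Gr_M^\theta$ with zero perverse shift (as $\<\lambda, 2\check{\rho}_M\> = 0$), and under the Satake equivalence for $M$ corresponds to tensoring a $\check{M}$-module by the one-dimensional character $e^{-\lambda}$ (trivial on $[\check{M},\check{M}]$, on which $Z(\check{M})$ acts by $-\bar\lambda$):
$$t^{-\lambda}\Sat_M((V^{\lambda+\eta})_{\theta+\bar\lambda}) \,\iso\, \Sat_M\bigl((e^{-\lambda}\otimes V^{\lambda+\eta})_{\theta}\bigr).$$
Since $\Sat_M$ extends to a continuous functor $\Rep(\check{M}) \to Shv(\Gr_M)^{M(\cO),\ren}$, and each $(e^{-\lambda}\otimes V^{\lambda+\eta})_\theta$ is finite-dimensional, the colimit commutes with $\Sat_M$, so we arrive at
$$\Sat_M\left(\mathop{\colim}\limits_{\lambda\in\Lambda_{M,ab}^+,\,\lambda+\eta\in\Lambda^+}(e^{-\lambda}\otimes V^{\lambda+\eta})_{\theta}\right).$$

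Finally I would invoke Lemma~\ref{Lm_2.1.8_some_colimit}, which gives
$\mathop{\colim}_{\lambda\in\Lambda_{M,ab},\,\lambda+\eta\in\Lambda^+} e^{-\lambda}\otimes V^{\lambda+\eta}\,\iso\,\coind_{\check{M}}^{\check{P}}(U^\eta)$ in $\Rep(\check{P})$. Restricting to $\check{M}$, the right hand side equals $\cO(\check{P}/\check{M})\otimes U^\eta \,\iso\, \cO(U(\check{P}))\otimes U^\eta$, using the isomorphism $\check{P}/\check{M}\,\iso\, U(\check{P})$ of $\check{M}$-varieties with the adjoint $\check{M}$-action. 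The cofinality of $\{\lambda\in\Lambda_{M,ab}^+:\lambda+\eta\in\Lambda^+\}$ in $\{\lambda\in\Lambda_{M,ab}:\lambda+\eta\in\Lambda^+\}$ is immediate by adding any sufficiently dominant element of $\Lambda_{M,ab}^+$, so both indexing sets produce the same colimit. Taking $\theta$-isotypic components yields the first claim; the second then follows by applying $(\gt^\theta_P)^!$ and using the fact from Lemma~\ref{Lm_2.3.8}(ii) that on $H$-equivariant objects one has $(\gt^\theta_P)^!(\gt^\theta_P)_! \iso \id$. The main technical point I expect to have to verify is that the transition maps in the representation-theoretic colimit (built from the structure morphisms $u^{\lambda,\mu}$ and $v^{\lambda,\mu}$ as in Section~\ref{Sect_2.0.17}) match those in the sheaf-theoretic colimit (built from the transition maps in the definition (\ref{def_SI_IC_for_mu_stratum})); this is a bookkeeping exercise in tracing through the constructions, but is forced since $\IC^{\frac{\infty}{2}}_{P,\eta}$ is manifestly designed to mirror the coinduction on the spectral side.
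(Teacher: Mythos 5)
Your proof is correct and follows essentially the same route as the paper: the paper's own (very terse) argument likewise reduces to the computation in Step 2 of Proposition~\ref{Pp_2.4.7_first_propeties} d), identifies the resulting diagram with the image under $\Sat_M$ of the diagram (\ref{diag_for_Rep(checkM)}), applies Lemma~\ref{Lm_2.1.8_some_colimit} for the colimit identification, and deduces the second claim from Lemma~\ref{Lm_2.3.8} ii). Your extra care about the cofinality of $\Lambda^+_{M,ab}$ in $\{\lambda\in\Lambda_{M,ab}:\lambda+\eta\in\Lambda^+\}$ and about matching the transition maps is a legitimate refinement of details the paper leaves implicit.
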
 
\begin{proof}
Applying $\Sat_M: \Rep(\check{M})\to \Perv(\Gr_M)^{M(\cO)}$ to (\ref{diag_for_Rep(checkM)}) and further tensoring by $e[-\<\theta, 2\check{\rho}-2\check{\rho}_M\>]$ one gets the diagram (\ref{diag_after_Jacquet_functors_for_theta}). The first claim follows as in Step 2 in the proof of Proposition~\ref{Pp_2.4.7_first_propeties}  d). The second one follows from Lemma~\ref{Lm_2.3.8} ii). 
\end{proof}

\begin{Rem} Assume that $G\ne P$. It is easy to see that if $\theta\in\Lambda_{G,P}, \eta\in\Lambda^+_M$ then $(\gt_P^{\theta})_*(v^{\theta}_P)^!\IC_{P,\eta}^{\frac{\infty}{2}}$ is infinitely coconnective in the t-structure on $Shv(\Gr_M^{\theta})^{M(\cO)}$.
\end{Rem}

\begin{Rem}
Let $\eta\in\Lambda^+_M$. If $G\ne P$ then $\IC^{\frac{\infty}{2}}_{P, \eta}$ is not the intermediate extension under $S^{\eta}_P\hook{} \bar S^{\eta}_P$. Indeed, one can easily find $0\ne \theta'\in -\Lambda_{G,P}^{pos}$ such that $\cO(U(\check{P}))_{\theta'}\ne 0$. Let $\theta-\theta'$ be the image of $\eta$ in $\Lambda_{G,P}$. Then $(v^{\theta}_P)^*\IC^{\frac{\infty}{2}}_{P,\eta}\in Shv(\Gr_P^{\theta})^{H, \heartsuit}$ is not zero by Proposition~\ref{Pp_2.5.15_*-restriction}. 
\end{Rem}

 For $\eta\in\Lambda^+$ write $i^{\eta}_M: \ov{\Gr}_M^{\eta}\hook{} \ov{\Gr}_G^{\eta}$ for the natural closed immersion.

\begin{Lm} Let $\eta\in\Lambda^+$. \\
i) For any $\nu\in \Lambda^+$ with $\eta-\nu\in\Lambda^{pos}$ the scheme $\ov{\Gr}_M^{\eta}\cap \Gr_G^{\nu}$ is empty unless $\eta-\nu\in\Lambda_M^{pos}$, and in the latter case 
$$
\ov{\Gr}_M^{\eta}\cap \Gr_G^{\nu}=\Gr_M^{\nu}.
$$
ii) The complex $(i^{\eta}_M)^!\Sat(V^{\eta})$ is placed in perverse degrees $\ge \<\eta, 2\check{\rho}-2\check{\rho}_M\>$. \\
iii) There is a canonical morphism
\begin{equation}
\label{map_for_Lm_2.5.17}
\Sat_M(U^{\eta})\ast \delta_{1,\Gr_G}[-\<\eta, 2\check{\rho}-2\check{\rho}_M\>]\to \Sat(V^{\eta})
\end{equation}
in $Shv(\Gr_G)^{M(\cO)}$, which after applying $(i^{\eta}_M)^!$ becomes an isomorphism on perverse cohomology sheaves in degree $\<\eta, 2\check{\rho}-2\check{\rho}_M\>$.
\end{Lm}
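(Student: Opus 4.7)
The plan is to establish (i) first and then derive (ii) as a formal consequence.

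For part (i), I would argue stratum by stratum on $\ov{\Gr}_M^\eta$, which is the union of the $M(\cO)$-orbits $\Gr_M^\nu$ for $\nu\in\Lambda_M^+$ with $\eta-\nu\in\Lambda_M^{pos}$. On the open stratum $\Gr_M^\eta\subset \Gr_G^\eta$ the computation is direct: $\Sat(V^\eta)$ restricts to $e[\<\eta,2\check\rho\>]$ on the smooth open cell $\Gr_G^\eta$, and $\Gr_M^\eta$ has codimension $c:=\<\eta,2\check\rho-2\check\rho_M\>$ in $\Gr_G^\eta$, so $!$-pullback yields $e[\<\eta,2\check\rho\>-2c]$ on $\Gr_M^\eta$, which is placed in perverse degree exactly $c$. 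For the boundary strata $\Gr_M^\nu$ with $\nu<\eta$ in $M$-dominance, the key observation is that $\bar\nu=\bar\eta$ in $\Lambda_{G,P}$, so the restriction is controlled by the $\bar\eta$-component of the weight functor. Concretely, using Braden's theorem (Lemma \ref{Lm_theorem_of_Braden_for_theta}) to factor the closed inclusion $\ov{\Gr}_M^\eta\hookrightarrow \Gr_M^{\bar\eta}\hookrightarrow \Gr_G$ through the $P^-$-side $(\gt^{\bar\eta}_{P^-})_!(v^{\bar\eta}_{P^-})^!$, combined with the $\gRes^!$-formula of Proposition \ref{Pp_gRes_for_Levi} identifying $(\gt^{\bar\eta}_{P^-})_*(v^{\bar\eta}_{P^-})^!\Sat(V^\eta)\simeq \Sat_M(\Res V^\eta)_{\bar\eta}[c]$, one sees that the stalk behavior on the fixed locus is packaged into a perverse sheaf on $\Gr_M^{\bar\eta}$ whose support is already contained in $\ov{\Gr}_M^\eta$ and whose shift gives precisely the bound $\ge c$.

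Granted (i), part (ii) becomes formal. Since $(i^\eta_M)^!\Sat(V^\eta)\in {}^pD^{\ge c}(\ov{\Gr}_M^\eta)$, the canonical truncation produces a map
$$
{}^p\!\cH^c\bigl((i^\eta_M)^!\Sat(V^\eta)\bigr)[-c]\to (i^\eta_M)^!\Sat(V^\eta).
$$
The open-stratum computation above identifies the restriction of ${}^p\!\cH^c$ to $\Gr_M^\eta$ with $e[\<\eta,2\check\rho_M\>]=\Sat_M(U^\eta)|_{\Gr_M^\eta}$; combined with the simplicity of $\Sat_M(U^\eta)=\IC_{\ov{\Gr}_M^\eta}$ and the absence of lower-support perverse quotients of the relevant degree (an extra check), one identifies ${}^p\!\cH^c\bigl((i^\eta_M)^!\Sat(V^\eta)\bigr)\simeq\Sat_M(U^\eta)$. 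Applying the $(i^\eta_M)_*\dashv(i^\eta_M)^!$ adjunction and using that convolution with $\delta_{1,\Gr_G}$ coincides with $(i^\eta_M)_*$ on $M(\cO)$-equivariant sheaves, we get the desired
$$
\Sat_M(U^\eta)\ast \delta_{1,\Gr_G}[-c]\,\iso\,(i^\eta_M)_*\Sat_M(U^\eta)[-c]\to \Sat(V^\eta),
$$
and the stated isomorphism on ${}^p\!\cH^c$ after applying $(i^\eta_M)^!$ follows tautologically from the construction via $(i^\eta_M)^!(i^\eta_M)_*=\id$.

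The main obstacle is part (i), and specifically the refined bound $\ge c$ on boundary strata $\Gr_M^\nu\subsetneq \ov{\Gr}_M^\eta$. The general IC cosupport axiom only yields perverse degrees $\ge 1$ on a proper stratum, so the stronger bound is not formal: it reflects the specific hyperbolic-restriction structure of $\Sat(V^\eta)$. The argument hinges on coupling Braden's theorem with the identification of the weight functor $\gRes$ with $\Sat_M\circ \Res$, which simultaneously controls the contribution of all such $\Gr_M^\nu$ by packaging them into a single perverse sheaf on $\Gr_M^{\bar\eta}$ supported on $\ov{\Gr}_M^\eta$ and shifted by $[c]$.
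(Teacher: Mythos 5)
Your treatment of the open stratum and your derivation of (ii) from (i) are essentially the paper's: the map in (ii) is produced by adjunction from the degree-$c$ computation on $\Gr_M^\eta\subset\Gr_G^\eta$, and the identification on ${}^p\!\cH^c$ uses the strict bound on the boundary. (For that last step you also need the paper's observation that $\Gr_G^\eta\cap\ov{\Gr}_M^\eta=\Gr_M^\eta$, i.e.\ no boundary $M(\cO)$-orbit meets the open $G(\cO)$-orbit; this is what confines the degree-exactly-$c$ part to the open stratum.)

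The gap is in your argument for the boundary strata in (i). You propose to control $(i^\eta_M)^!\Sat(V^\eta)$ via Braden's theorem and the $\gRes^!$-formula, but you conflate two different functors: by Lemma \ref{Lm_theorem_of_Braden_for_theta}(b), $(\gt^{\bar\eta}_{P^-})_!(v^{\bar\eta}_{P^-})^!$ is the plain $!$-restriction $(i^{\bar\eta}_{P^-})^!(v^{\bar\eta}_{P^-})^!$ to the fixed locus, whereas Proposition \ref{Pp_gRes_for_Levi} and its dual compute the \emph{mixed} hyperbolic restrictions $(\gt)_!(v)^*$ and $(\gt)_*(v)^!$ — and you silently switch from the former to the latter mid-sentence. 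No perversity statement holds for $(\gt)_!(v)^!$ (this is exactly why one must mix $*$ and $!$ in hyperbolic localization, and why the lemma asserts only a one-sided bound), and knowing that $(\gt^{\bar\eta}_{P^-})_*(v^{\bar\eta}_{P^-})^!\Sat(V^\eta)$ is a shifted perverse sheaf gives no control over the perverse amplitude of the honest $!$-restriction before pushing forward along $\gt$. The correct argument is elementary and uses the numerical fact you noted ($\bar\nu=\bar\eta$) directly: for a boundary stratum $\Gr_M^\nu$ with $0\ne\eta-\nu\in\Lambda_M^{pos}$, factor $\Gr_M^\nu\hookrightarrow\Gr_G^\nu\hookrightarrow\ov{\Gr}_G^\eta$; the cosupport axiom for the $G(\cO)$-equivariant IC sheaf $\Sat(V^\eta)$ puts its $!$-restriction to the proper stratum $\Gr_G^\nu$ in perverse degrees $>0$ with smooth perverse cohomology sheaves, and $!$-restriction along the smooth closed embedding $\Gr_M^\nu\subset\Gr_G^\nu$ of codimension $\<\nu,2\check\rho-2\check\rho_M\>=\<\eta,2\check\rho-2\check\rho_M\>$ then lands in perverse degrees $>\<\eta,2\check\rho-2\check\rho_M\>$. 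This also supplies the strict inequality on the boundary that your part (ii) tacitly requires.
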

\begin{proof}
{\bf Step 1} Let us show that $\Gr_G^{\eta}\cap \ov{\Gr}_M^{\eta}=\Gr_M^{\eta}$. Let $\nu\in\Lambda^+_M$ with $0\ne \eta-\nu\in\Lambda^{pos}_M$ such that $\Gr_G^{\eta}\cap\Gr_M^{\nu}\ne\emptyset$. We must get a contradiction. Pick $w\in W$ such that $\nu=w\eta$ and the length of $w$ is minimal with with property. 

 We claim that if $i\in \cI_M$ then $w^{-1}(\check{\alpha}_i)$ is a positive root. Indeed, if $w^{-1}(\check{\alpha}_i)$ is negative then $\<\eta, w^{-1}(\check{\alpha}_i)\>\le 0$ on one hand, and on the other hand $\<\nu, \check{\alpha}_i\>=\<\eta, w^{-1}(\check{\alpha}_i)\>\ge 0$. So, $\<\eta, w^{-1}(\check{\alpha}_i)\>=0$, hence $s_{\alpha_i}w\eta=w\eta$. By (\cite{Spr}, Lemma~8.3.2), $\ell(s_{\alpha_i}w)<\ell(w)$. This conradicts our choice of $w$.
 
 The above implies that for $i\in \cI_M$, $w^{-1}(\alpha_i)$ is a positive coroot. Applying $w^{-1}$ to $\eta-w\eta\in \Lambda^{pos}_M$ we get $0\ne w^{-1}\eta-\eta\in \Lambda^{pos}_M$. This is impossible, because $\eta\in\Lambda^+$ and for $w'\in W$, $\eta-w'\eta\in\Lambda^{pos}$. Our claim follows.

 Note that for $i: \Gr_M^{\eta}\hook{} \Gr_G^{\eta}$ we have canonically
\begin{equation}
\label{iso_for_Step1_Lm_2.5.17}
i^!\Sat(V^{\eta})\,\iso\, \Sat_M(U^{\eta})[-\<\eta, 2\check{\rho}-2\check{\rho}_M\>]\mid_{\Gr_M^{\eta}}
\end{equation} 

\medskip\noindent
{\bf Step 2}. For $\nu\in\Lambda^+$ let $A(\eta, \nu)=\{\tau\in \Lambda^+_M\mid \eta-\tau\in\Lambda_M^{pos}, \tau\in W\nu\}$. Note that 
$$
\ov{\Gr}_M^{\eta}\cap \Gr_G^{\nu}=\underset{\tau\in A(\eta, \nu)}{\cup} \Gr_M^{\tau}.
$$ 

 Let $\nu\in\Lambda^+$ with $\Gr_G^{\nu}\subset \ov{\Gr}_G^{\eta}$, so $\eta-\nu\in\Lambda^{pos}$. Let $\tau\in \Lambda^+_M$ such that $\Gr_M^{\tau}\subset \ov{\Gr}_M^{\eta}\cap \Gr_G^{\nu}$, so $\tau\in A(\eta, \nu)$. We have $\eta-\tau\in\Lambda_M^{pos}$, and $\nu-\tau, \eta-\nu\in \Lambda^{pos}$. Since 
$$
(\eta-\nu)+(\nu-\tau)=\eta-\tau\in \Lambda_M^{pos},
$$
we conclude that 
$$
\nu-\tau, \eta-\nu\in \Lambda^{pos}_M.
$$ 
So, $\Gr_M^{\tau}\subset \ov{\Gr}_M^{\nu}\subset \ov{\Gr}_M^{\eta}$. By Step 1, $\ov{\Gr}_M^{\nu}\cap \Gr_G^{\nu}=\Gr_M^{\nu}$, so $\Gr_M^{\tau}\subset \ov{\Gr}_M^{\nu}\cap \Gr_G^{\nu}=\Gr_M^{\nu}$ and $\tau=\nu$. Part i) is proved.

\medskip\noindent
{\bf Step 3} Let $\tau\in\Lambda^+_M$ such that $\Gr_M^{\tau}\subset \ov{\Gr}_M^{\eta}$ and $\tau\ne \eta$. We show that the !-restriction of $\Sat(V^{\eta})$ to $\Gr^{\tau}_M$ is placed in perverse degrees $> \<\eta, 2\check{\rho}-2\check{\rho}_M\>$. 

 Let $\nu\in \Lambda^+$ be such that $\tau\in W\nu$, so $\eta-\nu\in\Lambda^{pos}$ and $\Gr_M^{\tau}\subset \Gr_G^{\nu}$. We may and do assume $\Gr_G^{\nu}\subset \ov{\Gr}_G^{\eta}$, otherwise the corresponding restriction vanishes. By Step 1, $\nu\ne\eta$. By Step 2, one has $\nu=\tau$. 
 
  The $!$-restriction of $\Sat(V^{\eta})$ to $\Gr_G^{\nu}$ is placed in perverse degrees $>0$ and has smooth perverse cohomology sheaves.

 For any bounded complex on $\Gr_G^{\nu}$ placed in perverse degrees $>0$ and having smooth perverse cohomology sheaves, its !-restriction to $\Gr_M^{\nu}$ is placed in perverse degrees $>\codim(\Gr_M^{\nu}, \Gr_G^{\nu})=\<\nu, 2\check{\rho}-2\check{\rho}_M\>=\<\eta, 2\check{\rho}-2\check{\rho}_M\>$, because the image of $\eta-\nu$ vanishes in $\Lambda_{G,P}$. Part ii) follows. Moreover, the above gives a unique map (\ref{map_for_Lm_2.5.17}) whose restriction to $\Gr_M^{\eta}$ comes from (\ref{iso_for_Step1_Lm_2.5.17}).
\end{proof}

\begin{Pp} 
\label{Pp_action_of_Rep(checkM)_on_SI-IC_P}
Let $\eta\in\Lambda^+_M$. One has canonically in $Shv(\Gr_G)^H$
\begin{equation}
\label{iso_M-action_on_IC-SI_Prop_2.5.17}
\Sat_M(U^{\eta})\ast \IC^{\frac{\infty}{2}}_P[-\<\eta, 2\check{\rho}-2\check{\rho}_M\>]\,\iso\, \IC^{\frac{\infty}{2}}_{P,\eta}
\end{equation}
\end{Pp}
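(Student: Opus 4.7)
The plan is to compare both sides of (\ref{iso_M-action_on_IC-SI_Prop_2.5.17}) by first placing them in $Shv(\Gr_G)^{H,\heartsuit}$, then constructing a canonical morphism between them, and finally verifying it is an isomorphism by reducing to the stratum-wise computation of Proposition~\ref{Pp_2.5.15_*-restriction}. First, both sides lie in the heart: the RHS by Proposition~\ref{Pp_2.4.7_first_propeties}~d), and the LHS because $Z(\check{M})$ acts on $U^\eta$ through the image $\bar\eta\in\Lambda_{G,P}$ with $\<\bar\eta, 2\check\rho - 2\check\rho_M\> = \<\eta, 2\check\rho - 2\check\rho_M\>$ (as $2\check\rho - 2\check\rho_M$ vanishes on $M$-coroots), so Proposition~\ref{Pp_2.4.19}~ii) gives t-exactness of $K\mapsto \Sat_M(U^\eta)\ast K[-\<\eta, 2\check\rho - 2\check\rho_M\>]$.

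For the morphism, using that $\Sat_M(U^\eta)\ast -$ is continuous and that $t^\lambda\in Z(M)(F)$ for $\lambda\in\Lambda_{M,ab}$ commutes with left $\Sat_M$-convolution, rewrite
$$\Sat_M(U^\eta)\ast \IC^{\frac{\infty}{2}}_P[-\<\eta,2\check\rho-2\check\rho_M\>] \;\iso\; \mathop{\colim}\limits_{\lambda\in\Lambda^+_{M,ab}} t^{-\lambda}\bigl(\Sat_M(U^\eta)\ast \Sat(V^\lambda)\bigr)[\<\lambda,2\check\rho\> - \<\eta,2\check\rho-2\check\rho_M\>].$$
The subdiagram indexed by $\{\lambda\in\Lambda^+_{M,ab}\mid \lambda+\eta\in\Lambda^+\}$ is cofinal; for each such $\lambda$, apply the preceding lemma to $\lambda+\eta\in\Lambda^+$ to get $\Sat_M(U^{\lambda+\eta})\ast \delta_{1,\Gr_G}[-\<\lambda+\eta, 2\check\rho-2\check\rho_M\>]\to \Sat(V^{\lambda+\eta})$. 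The identification $U^{\lambda+\eta}\iso U^\eta\otimes e^\lambda$ in $\Rep(\check{M})^\heartsuit$ yields $\Sat_M(U^{\lambda+\eta})\ast\delta_{1,\Gr_G}\iso t^\lambda(\Sat_M(U^\eta)\ast\delta_{1,\Gr_G})$, and combining with the $\Sph_G$-equivariance of convolution and (\ref{map_fibre_of_Sat(V^lambda)}) produces a compatible family $\Sat_M(U^\eta)\ast \Sat(V^\lambda)[-\<\eta,2\check\rho-2\check\rho_M\>]\to \Sat(V^{\lambda+\eta})$. Compatibility of these maps with the transition morphisms of the defining diagram of $\IC^{\frac{\infty}{2}}_{P,\eta}$ follows by tracking the factorizations $u^{\lambda,\lambda_1+\eta}$ and the monoidal structure of $\Sat_M$, and passing to the colimit yields the desired morphism.

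To verify the morphism is an isomorphism, apply $(\gt^\theta_P)_!(v^\theta_P)^*$ for each $\theta\in\Lambda_{G,P}$. By Proposition~\ref{Pp_2.4.19}~iii), $(\gt^\theta_P)_!(v^\theta_P)^*(\Sat_M(U^\eta)\ast K) \iso \Sat_M(U^\eta)\ast (\gt^{\theta-\bar\eta}_P)_!(v^{\theta-\bar\eta}_P)^* K$, so applying this to $\IC^{\frac{\infty}{2}}_P$ and using Proposition~\ref{Pp_2.5.15_*-restriction} computes the LHS restriction as $\Sat_M\bigl((U^\eta\otimes \cO(U(\check{P}))_{\theta-\bar\eta})\bigr)[-\<\theta, 2\check\rho - 2\check\rho_M\>]$; since $Z(\check{M})$ acts on $U^\eta$ by $\bar\eta$, this equals $\Sat_M\bigl((\cO(U(\check{P}))\otimes U^\eta)_\theta\bigr)[-\<\theta, 2\check\rho - 2\check\rho_M\>]$, which matches the corresponding restriction of the RHS by Proposition~\ref{Pp_2.5.15_*-restriction} directly. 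Since both sides lie in the heart supported on $\bar S^\eta_P$ and the functors $(v^\theta_P)^*$ are jointly conservative on this subcategory by Remark~\ref{Rem_2.3.14}, the constructed morphism is an isomorphism. The main obstacle will be the bookkeeping in Step 2: carefully assembling the lemma's map, the centrality of $t^\lambda$, and the unit maps (\ref{map_fibre_of_Sat(V^lambda)}) into a family genuinely compatible with the transition morphisms of both defining colimits.
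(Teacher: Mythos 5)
Your reduction of the isomorphism check to the functors $(\gt^\theta_P)_!(v^\theta_P)^*$ via Proposition~\ref{Pp_2.4.19}~iii) and Proposition~\ref{Pp_2.5.15_*-restriction} is exactly the paper's argument, and your observation that both sides lie in the heart is correct. The gap is in the construction of the canonical morphism itself, which is the substance of the proposition.

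Concretely: you propose to obtain the term-wise map $\Sat_M(U^{\eta})\ast \Sat(V^{\lambda})[-\<\eta, 2\check{\rho}-2\check{\rho}_M\>]\to \Sat(V^{\lambda+\eta})$ by applying the lemma providing (\ref{map_for_Lm_2.5.17}) to the dominant weight $\lambda+\eta$ and combining with the unit map (\ref{map_fibre_of_Sat(V^lambda)}). But these two ingredients point the wrong way. The lemma gives a map \emph{out of} $\Sat_M(U^{\lambda+\eta})\ast\delta_{1,\Gr_G}\iso t^{\lambda}(\Sat_M(U^{\eta})\ast\delta_{1,\Gr_G})$ into $\Sat(V^{\lambda+\eta})$, while the unit map (\ref{map_fibre_of_Sat(V^lambda)}) convolved with $\Sat_M(U^{\eta})$ gives a map \emph{from that same object into} $\Sat_M(U^{\eta})\ast\Sat(V^{\lambda})$ (up to shifts). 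Two maps out of a common source do not compose to a map $\Sat_M(U^{\eta})\ast\Sat(V^{\lambda})\to \Sat(V^{\lambda+\eta})$, and there is no costalk-type map $\Sat(V^{\lambda})\to\delta_{t^{\lambda}}$ available to reverse the second arrow. The only way to get a map with source $\Sat_M(U^{\eta})\ast(-)$ is to apply the lemma to $\eta$ itself, producing $\Sat_M(U^{\eta})\ast\delta_{1,\Gr_G}[-\<\eta,2\check{\rho}-2\check{\rho}_M\>]\to\Sat(V^{\eta})$ and then convolving on the right with $\Sat(V^{\lambda})$ and composing with $u^{\eta,\lambda}$; this requires $\eta\in\Lambda^+$, not merely $\eta\in\Lambda^+_M$. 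That is why the paper first proves, by a change of variables in the colimit (\ref{def_SI_IC_for_mu_stratum}), that $t^{\mu}\IC^{\frac{\infty}{2}}_{P,\nu}[-\<\mu,2\check{\rho}\>]\iso\IC^{\frac{\infty}{2}}_{P,\nu+\mu}$ for $\mu\in\Lambda_{M,ab}$, and uses this to reduce the general $\eta\in\Lambda^+_M$ to the case $\eta\in\Lambda^+$ by translating by some $\lambda_0\in\Lambda_{M,ab}$ with $\lambda_0+\eta\in\Lambda^+$. You need to restore this reduction step; once $\eta$ is $G$-dominant your remaining argument goes through and coincides with the paper's.
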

\begin{proof}
{\bf Step 1}. In the case $\eta\in\Lambda_{M, ab}, \mu\in\Lambda^+_M$ from (\ref{def_SI_IC_for_mu_stratum}) making the change of variables in the colimit one gets $t^{\eta}\IC^{\frac{\infty}{2}}_{P,\mu}[-\<\eta, 2\check{\rho}\>]\,\iso\, \IC^{\frac{\infty}{2}}_{P,\mu+\eta}$. 

 To establish (\ref{iso_M-action_on_IC-SI_Prop_2.5.17}) in general, we first reduce to the case $\eta\in\Lambda^+$. For this pick $\lambda\in\Lambda_{M, ab}$ such that $\lambda+\eta\in\Lambda^+$. If 
$$
\Sat_M(U^{\eta+\lambda})\ast \IC^{\frac{\infty}{2}}_P[-\<\eta+\lambda, 2\check{\rho}-2\check{\rho}_M\>]\,\iso\, \IC^{\frac{\infty}{2}}_{P,\eta+\lambda}
$$
then applying $t^{-\lambda}[\<\lambda, 2\check{\rho}\>]$ to the latter isomorphism, one gets (\ref{iso_M-action_on_IC-SI_Prop_2.5.17}) by the above.

\medskip\noindent
{\bf Step 2} Assume $\eta\in\Lambda^+$. Let us construct a morphism 
of functors 
\begin{equation}
\label{map_of_functors_for_Pp_2.5.18}
(\Lambda^+_{M, ab}, \le)\to Shv(\Gr_G)^{M(\cO)}
\end{equation}
sending $\lambda$ to
\begin{equation}
\label{map_first_for_Pp_2.5.18}
\Sat_M(U^{\eta})\ast t^{-\lambda}\Sat(V^{\lambda})[\<\lambda, 2\check{\rho}\>-\<\eta, 2\check{\rho}-2\check{\rho}_M\>]\to t^{-\lambda}\Sat(V^{\lambda+\eta})[\<\lambda, 2\check{\rho}\>],
\end{equation}
here we used the diagram defining $\IC^{\frac{\infty}{2}}_P$ in the LHS and $\IC^{\frac{\infty}{2}}_{P,\eta}$ in the RHS.

 Since $t^{\lambda}\Sat_M(U^{\eta})\ast \delta_{t^{-\lambda}}\,\iso\, \Sat_M(U^{\eta})$, (\ref{map_first_for_Pp_2.5.18}) rewrites as 
$$
\Sat_M(U^{\eta})\ast \Sat(V^{\lambda})[-\<\eta, 2\check{\rho}-2\check{\rho}_M\>]\to \Sat(V^{\lambda+\eta})
$$ 
We define the latter morphism as the composition
\begin{multline*}
\Sat_M(U^{\eta})\ast \Sat(V^{\lambda})[-\<\eta, 2\check{\rho}-2\check{\rho}_M\>]\toup{(\ref{map_for_Lm_2.5.17})} \Sat(V^{\eta})\ast \Sat(V^{\lambda})\\ \iso\,\Sat(V^{\eta}\otimes V^{\lambda})\toup{u^{\eta,\lambda}} \Sat(V^{\lambda+\eta})
\end{multline*}
These maps naturally upgrade to a morphism of functors (\ref{map_of_functors_for_Pp_2.5.18}). Passing to the colimit, this gives the morphism (\ref{iso_M-action_on_IC-SI_Prop_2.5.17}). 

 Let $\theta'$ be the image of $\eta$ in $\Lambda_{G,P}$. To show that (\ref{iso_M-action_on_IC-SI_Prop_2.5.17}) is an isomorphism, it suffices, in view of Lemma~\ref{Lm_2.3.8} ii), to prove that for any $\theta\in\Lambda_{G,P}$ applying the functor 
$$
(\gt_P^{\theta+\theta'})_!(v^{\theta+\theta'}_P)^*
: Shv(\Gr_G)^{M(\cO)}\to Shv(\Gr_M^{\theta+\theta'})^{M(\cO)}
$$ 
to (\ref{iso_M-action_on_IC-SI_Prop_2.5.17}) one gets an isomorphism.
 
 By Propositions~\ref{Pp_2.4.19} iii) and \ref{Pp_2.5.15_*-restriction} we get 
\begin{multline}
\label{complex_LHS_for_Pp_2.5.18}
(\gt_P^{\theta+\theta'})_!(v^{\theta+\theta'}_P)^*(\Sat_M(U^{\eta})\ast \IC^{\frac{\infty}{2}}_P)\,\iso\, \Sat_M(U^{\eta})\ast ((\gt_P^{\theta})_!(v^{\theta}_P)^*\IC^{\frac{\infty}{2}}_P)\,\iso\\
\Sat_M(U^{\eta})\ast \Sat_M((\cO(U(\check{P}))_{\theta})[-\<\theta, 2\check{\rho}-2\check{\rho}_M\>]\,\iso\\ \Sat_M((\cO(U(\check{P})\otimes U^{\eta})_{\theta+\theta'})[-\<\theta, 2\check{\rho}-2\check{\rho}_M\>]
\end{multline}
and
\begin{multline}
\label{complex_RHS_for_Pp_2.5.18}
(\gt_P^{\theta+\theta'})_!(v^{\theta+\theta'}_P)^* \IC^{\frac{\infty}{2}}_{P, \eta}[\<\eta, 2\check{\rho}-2\check{\rho}_M\>]\,\iso\, \Sat_M((\cO(U(\check{P})\otimes U^{\eta})_{\theta+\theta'})[-\<\theta, 2\check{\rho}-2\check{\rho}_M\>]
\end{multline}
One checks that the so obtained map from (\ref{complex_LHS_for_Pp_2.5.18}) to (\ref{complex_RHS_for_Pp_2.5.18}) in $Shv(\Gr_M^{\theta+\theta'})^{M(\cO)}$ is an isomorphism. We are done.
\end{proof}

\sssec{} According to Section~\ref{Sect_2.0.17}, we interprete Definition~\ref{Def_IC_SI_parabolic} as follows. Consider the $\Lambda_{M, ab}$-action on $Shv(\Gr_G)^H$ defined 
in Section~\ref{Sect_2.2.10_action_of_Lambda_Mab}, we also think of it as $\Rep(\check{M}_{ab})$-action. It is also equipped with $\Rep(\check{G})$-action by right convolutions. Then $\delta_{1,\Gr_G}\in \cO(\check{G}/[\check{P},\check{P}])-mod(C)$ for 
$C=Shv(\Gr_G)^H$, so that $\IC^{\frac{\infty}{2}}_P$ naturally upgrades to an object of 
$$
C\otimes_{\Rep(\check{G})\otimes\Rep(\check{M}_{ab})} \Rep(\check{M}),
$$  
so has the Hecke property described as in Section~\ref{Sect_version_of_Hecke_property_2.0.16}.

 In fact, it has a stronger Hecke property given as follows.
 
\begin{Pp} 
\label{Pp_2.5.18_upgrading_IC_semi-infinite_P}
$\IC^{\frac{\infty}{2}}_P$ naturally upgrades to an object of 
$$
(Shv(\Gr_G)^H)\otimes_{\Rep(\check{G})\otimes\Rep(\check{M})} \Rep(\check{M}),
$$ 
where we consider the $\Rep(\check{M})$-action given by (\ref{action_Rep(checkM)_shifted}), and $\Rep(\check{G})$-action by right convolutions.
\end{Pp}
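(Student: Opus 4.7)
The plan is to apply the $\check{P}$-variant of the Drinfeld--Pl\"ucker formalism of Section~2.1. Namely, by the obvious symmetry $P \leftrightarrow P^-$ in Proposition~\ref{Pp_2.1.10_colimit_for_Bunt_P}, for any $C \in \Rep(\check{G})\otimes\Rep(\check{M})-mod(\DGCat_{cont})$ whose action comes from the diagonal $\check{M}\to\check{P}\to \check{G}\times\check{M}$, the composition
$$\cO(\check{G}/U(\check{P}))-mod(C)\to \Hecke_{\check{G},\check{M}}(C)\to C$$
is computed by $c\mapsto \mathop{\colim}\limits_{\lambda\in (\Lambda^+_{M,ab},\le)}\, e^{-\lambda}\ast c\ast V^{\lambda}$. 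I apply this to $C=Shv(\Gr_G)^{M(\cO)}$, equipped with the $\Rep(\check{G})$-action by right convolution and the shifted $\Rep(\check{M})$-action of~(\ref{action_Rep(checkM)_shifted}); these two actions commute because left and right convolutions on $\Gr_G$ commute.

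The main step is to equip $\delta_{1,\Gr_G}\in C$ with a canonical $\cO(\check{G}/U(\check{P}))$-module structure. By the Pl\"ucker description of Section~2.1.6 (with $\check{P}^-$ replaced by $\check{P}$), this amounts to giving, for each $\eta\in\Lambda^+$, an action map
$$\kappa_{V^{\eta}}:\ \Sat_M(U^{\eta})\ast \delta_{1,\Gr_G}[-\<\eta, 2\check{\rho}-2\check{\rho}_M\>]\to \Sat(V^{\eta}),$$
coherent in morphisms and tensor products, plus higher compatibilities. I take $\kappa_{V^{\eta}}$ to be the canonical map~(\ref{map_for_Lm_2.5.17}) from the Lemma preceding Proposition~\ref{Pp_action_of_Rep(checkM)_on_SI-IC_P}; functoriality is built into that construction. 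The higher coherences are automatic by the Remark in Section~2.1.6: $\delta_{1,\Gr_G}$ lies in $Shv(\Gr_G)^{M(\cO),\heartsuit}$, and the actions of $\Rep(\check{G})^{\heartsuit}$ and of the shifted $\Rep(\check{M})^{\heartsuit}$ on $Shv(\Gr_G)^{M(\cO)}$ are t-exact (the arguments of Proposition~\ref{Pp_2.3.21} and Proposition~\ref{Pp_2.4.19} ii adapt verbatim to $M(\cO)$-invariants in place of $H$-invariants).

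With the module structure in place, the Drinfeld--Pl\"ucker formula yields a canonical upgrade of $\mathop{\colim}\limits_{\lambda\in\Lambda^+_{M,ab}}\, e^{-\lambda}\ast \delta_{1,\Gr_G}\ast V^{\lambda}$ to an object of $\Hecke_{\check{G},\check{M}}(Shv(\Gr_G)^{M(\cO)})$. Unwinding, $e^{-\lambda}\ast\delta_{1,\Gr_G}=t^{-\lambda}\delta_{1,\Gr_G}[\<\lambda,2\check{\rho}\>]$, so each term equals $t^{-\lambda}\Sat(V^{\lambda})[\<\lambda,2\check{\rho}\>]$ and the colimit recovers $\IC^{\frac{\infty}{2}}_P$ by Definition~\ref{Def_IC_SI_parabolic}. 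Since $\IC^{\frac{\infty}{2}}_P\in Shv(\Gr_G)^H$ by Proposition~\ref{Pp_2.4.7_first_propeties}~a, and the subcategory $Shv(\Gr_G)^H\subset Shv(\Gr_G)^{M(\cO)}$ is preserved by both actions (Proposition~\ref{Pp_2.4.19}~i), the Hecke upgrade descends to $Shv(\Gr_G)^H\otimes_{\Rep(\check{G})\otimes\Rep(\check{M})}\Rep(\check{M})$, as required.

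The main obstacle will be the tensor-product compatibility of the $\kappa_{V^{\eta}}$: for $\eta_1,\eta_2\in\Lambda^+$, the natural square linking $\kappa_{V^{\eta_1+\eta_2}}$ to the two-step composition of $\kappa_{V^{\eta_1}}$ and $\kappa_{V^{\eta_2}}$ (via $U^{\eta_1}\otimes U^{\eta_2}\to U^{\eta_1+\eta_2}$ and $V^{\eta_1}\otimes V^{\eta_2}\to V^{\eta_1+\eta_2}$) must commute. Since all entries are perverse and the higher coherences are automatic, this reduces to a single diagram chase in $Shv(\Gr_G)^{M(\cO),\heartsuit}$. I expect to settle it by the uniqueness characterization of~(\ref{map_for_Lm_2.5.17}): both compositions are morphisms into $\Sat(V^{\eta_1+\eta_2})$ whose restriction to the open Levi stratum $\Gr_M^{\eta_1+\eta_2}$ is dictated by the monoidality of $\Sat_M$ and $\Sat$, so they coincide there and therefore globally.
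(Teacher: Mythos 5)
Your route is genuinely different from the paper's. The paper does not run the Drinfeld--Pl\"ucker formalism for $\check{G}/U(\check{P})$ here; it only uses the $[\check{P},\check{P}]$-version (which yields the weaker Hecke property relative to $\Rep(\check{M}_{ab})$) and then proves the full statement by a direct computation on the colimit: for finite-dimensional $V$ and $\lambda\in\Lambda^+_{M,ab}$ large for $V$, Lemma~\ref{Lm_2.0.15} decomposes $t^{-\lambda}\Sat(V^{\lambda})[\<\lambda,2\check{\rho}\>]\ast\Sat(V)$ term by term, giving $\IC^{\frac{\infty}{2}}_P\ast\Sat(V)\,\iso\,\oplus_{\mu}\IC^{\frac{\infty}{2}}_{P,\mu}\otimes\Hom_{\check{M}}(U^{\mu},V)$ in the limit, and then Proposition~\ref{Pp_action_of_Rep(checkM)_on_SI-IC_P} rewrites $\IC^{\frac{\infty}{2}}_{P,\mu}$ via the shifted $\Rep(\check{M})$-action; the coherences are cheap because all these isomorphisms live in the abelian heart $Shv(\Gr_G)^{H,\heartsuit}$. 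What you propose is exactly the ``conceptual explanation by the Drinfeld--Pl\"ucker formalism in its $\Bunt_P$-version'' that the introduction advertises but the body does not carry out. It is a legitimate and arguably cleaner route, but it forces you to work with $\delta_{1,\Gr_G}$ \emph{before} taking the colimit, i.e.\ in $Shv(\Gr_G)^{M(\cO)}$ rather than in $\SI_P$, and that is where your argument has a gap.

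The gap is your justification of the higher coherences. You assert that the arguments of Propositions~\ref{Pp_2.3.21} and~\ref{Pp_2.4.19}(ii) ``adapt verbatim'' to show that the right $\Rep(\check{G})^{\heartsuit}$-action and the shifted $\Rep(\check{M})^{\heartsuit}$-action are t-exact on $Shv(\Gr_G)^{M(\cO)}$. Those statements concern the semi-infinite t-structure on $Shv(\Gr_G)^{H}$, where $\delta_{1,\Gr_G}$ does not live; on $Shv(\Gr_G)^{M(\cO)}$ the only t-structure at hand is the perverse one, and the action (\ref{action_Rep(checkM)_shifted}) is visibly not perverse t-exact: $\Sat_M(U^{\eta})\ast\delta_{1,\Gr_G}[-\<\eta,2\check{\rho}-2\check{\rho}_M\>]$ sits in perverse degree $\<\eta,2\check{\rho}-2\check{\rho}_M\>>0$ whenever $\eta\notin\Lambda_{M,ab}$. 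So the Remark you invoke does not apply, and the coherences are not free. The correct substitute is the lax-central-object mechanism used around (\ref{space_is_indeed_discrete_for_Sect_2.5.2}): you must check that the relevant mapping spaces $\Map(\Sat_M(U^{\eta})\ast\delta_{1,\Gr_G}[-\<\eta,2\check{\rho}-2\check{\rho}_M\>],\,\Sat(V^{\eta})\ast\cdots)$ are discrete. This is true, but it is precisely part~(i) of the lemma containing (\ref{map_for_Lm_2.5.17}) --- the costalk estimate placing $(i^{\eta}_M)^!\Sat(V^{\eta})$ in perverse degrees $\ge\<\eta,2\check{\rho}-2\check{\rho}_M\>$ --- so that lemma must be invoked to make the internal Homs coconnective, not only to define $\kappa_{V^{\eta}}$. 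Relatedly, your verification of the tensor compatibility ``on the open Levi stratum $\Gr_M^{\eta_1+\eta_2}$'' only pins down the component of each composition on the top summand $U^{\eta_1+\eta_2}\subset U^{\eta_1}\otimes U^{\eta_2}$; you also need both compositions to vanish on the lower summands $U^{\nu}$ with $0\ne\eta_1+\eta_2-\nu\in\Lambda^{pos}_M$, which again follows from the \emph{strict} perverse-degree inequality on lower strata established in Step~2 of the proof of that lemma, but must be said. With these repairs (and granting the $\check{P}$-variant of Proposition~\ref{Pp_2.1.10_colimit_for_Bunt_P}, a symmetric restatement) your argument goes through.
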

\begin{proof}
The structure under consideration is equivalent to the following Hecke property. For $V\in\Rep(\check{G})^{\heartsuit}$ one has canonically
\begin{equation}
\label{iso_Hecke_propert_of_ICinfty/2_P}
\IC^{\frac{\infty}{2}}_P\ast \Sat(V)\,\iso\, \mathop{\oplus}\limits_{\mu\in\Lambda^+_M} \Sat_M(U^{\mu})\ast
\IC^{\frac{\infty}{2}}_P\otimes \Hom_{\check{M}}(U^{\mu}, V)[-\<\mu, 2\check{\rho}-2\check{\rho}_M\>]
\end{equation}
in a way compatible with the monoidal structures on $\Rep(\check{G})^{\heartsuit}$, $\Rep(\check{M})^{\heartsuit}$. 

 By Proposition~\ref{Pp_2.4.19} the isomorphisms (\ref{iso_Hecke_propert_of_ICinfty/2_P}) take place in the abelian category $Shv(\Gr_G)^{H,\heartsuit}$, so that higher compatibilities will be easy to check.

 To establish (\ref{iso_Hecke_propert_of_ICinfty/2_P}), we may assume $V$ finite-dimensional. Write $\Lambda^+_{M, ab}(V)$ for the set of $\lambda\in\Lambda^+_{M, ab}$ such that if $\mu\in\Lambda^+_M$ and $U^{\mu}$ appears in $\Res^{\check{M}} V$ then $\mu+\lambda\in\Lambda^+$. Applying Lemma~\ref{Lm_2.0.15}, for $\lambda\in \Lambda^+_{M, ab}(V)$ we get
$$
t^{-\lambda}\Sat(V^{\lambda})[\<\lambda, 2\check{\rho}\>]\ast \Sat(V)\,\iso\,
\mathop{\oplus}\limits_{\mu\in\Lambda^+_M} t^{-\lambda}\Sat(V^{\lambda+\mu})\otimes\Hom_{\check{M}}(U^{\mu}, V)[\<\lambda, 2\check{\rho}\>]\
$$
The above isomorphism upgrades to an isomorphism of functors
$$
(\Lambda^+_{M, ab}(V), \le)\to Shv(\Gr_G)^{M(\cO)},
$$
where we used the diagram (\ref{Def_IC_semi_inf}) in the LHS, and the diagram (\ref{def_SI_IC_for_mu_stratum}) in the RHS respectively. 

 Passing to the colimit over $(\Lambda^+_{M, ab}(V),\le)$, this gives an isomorphism
$$
\IC^{\frac{\infty}{2}}_P\ast \Sat(V)\,\iso\, \mathop{\oplus}\limits_{\mu\in\Lambda^+_M}  \IC^{\frac{\infty}{2}}_{P,\mu}\otimes \Hom_{\check{M}}(U^{\mu}, V)
$$ 
The isomorphism (\ref{iso_Hecke_propert_of_ICinfty/2_P}) follows now from Proposition~\ref{Pp_action_of_Rep(checkM)_on_SI-IC_P}. The compatibility of (\ref{iso_Hecke_propert_of_ICinfty/2_P}) with the monoidal structure on $\Rep(\check{G})^{\heartsuit}$ also follows from the construction.
\end{proof}

\ssec{Relation to the $\IC$-sheaf of $\Bunt_P$}
\label{Sect_2.6_Relation}

\sssec{} Main results of this subsection are Theorems~\ref{Thm_restriction_of_glob_first},\ref{Th_restriction_of_glob_second} and Proposition~\ref{Pp_4.2.3_now}. They relate explicitly the standard/costandard objects and the semi-infinite $\IC$-sheaves $\IC_{P, \eta}^{\frac{\infty}{2}}$ of the orbits $S^{\eta}_P$ on the affine Grassmanian $\Gr_G$ with the corresponding objects on $_{x,\infty}\Bunt_P$. 

\sssec{} Similarly to the case of $B=P$ studied in \cite{Gai19SI}, one has the following.
Recall the diagram from Sections~\ref{Sect_cY_x_definition}-\ref{Sect_pi_loc_def}.
$$
M(\cO_x)\backslash \Gr_{G, x} \getsup{\pi_{loc}} \cY_x  \toup{\pi_{glob}} {_{x,\infty}\Bunt_P}.
$$
For $\epsilon\in\Lambda_{G,P}$ the stack $\cY_x^{\epsilon}$ is defined in Section~\ref{Sect_cY_x_definition}. 

\begin{Thm} 
\label{Thm_restriction_of_glob_first}
Let $\eta\in\Lambda^+_M$. For any $\epsilon\in\Lambda_{G,P}$
there is a canonical isomorphism in $Shv(\cY_x^{\epsilon})$
\begin{equation}
\label{map_for_Thm_restriction_of_glob_first}
\pi_{loc}^!\IC_{P, \eta}^{\frac{\infty}{2}}\,\iso\, \pi_{glob}^!\IC_{\wt{\glob}}^{\eta}[(g-1)\dim P+\<\epsilon, 2\check{\rho}-2\check{\rho}_M\>].
\end{equation}
\end{Thm}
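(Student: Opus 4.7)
My plan is to establish the isomorphism term-by-term in the colimit formula
$$
\IC^{\frac{\infty}{2}}_{P,\eta} \;\simeq\; \colim_{\lambda \in \Lambda^+_{M,ab},\, \lambda+\eta\in\Lambda^+}  t^{-\lambda}\Sat(V^{\lambda+\eta})[\<\lambda, 2\check\rho\>]
$$
of Definition~\ref{Def_IC_SI_parabolic}. Since $\pi_{loc}^!$ commutes with the filtered colimits that stabilize on bounded substacks of $\cY_x^\epsilon$ (the defining colimit stabilizes along the stratification of $\bar S^{\eta}_P$), the pullback $\pi_{loc}^!\IC^{\frac{\infty}{2}}_{P,\eta}$ is the colimit of $\pi_{loc}^!\bigl(t^{-\lambda}\Sat(V^{\lambda+\eta})\bigr)[\<\lambda, 2\check\rho\>]$. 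I will construct a compatible morphism from this system into $\pi_{glob}^!\IC_{\wt{glob}}^\eta[(g-1)\dim P+\<\epsilon, 2\check\rho-2\check\rho_M\>]$ and verify that it is an isomorphism.

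The core input is the Hecke compatibility between the local and global sides. Both $\pi_{loc}^!$ and $\pi_{glob}^!$ are $\Sph_{G,x}$-linear (Sections~\ref{Sect_cY_x_definition}--\ref{Sect_pi_loc_def}), and the map $\tilde\pi$ is $\Lambda_{M,ab}$-equivariant (Section~\ref{Sect_2.3.3_loc_vs_glob}), where $\lambda \in \Lambda_{M,ab}$ acts on $_{x,\infty}\Bunt_P$ by twisting $\cF_M$ at $x$. Combining these with the global Hecke property (\cite{BG}, 4.1.5) $\IC_{\wt{glob}} \ast V \simeq \Res(V) \ast \IC_{\wt{glob}}$, each term $\pi_{loc}^!(t^{-\lambda}\Sat(V^{\lambda+\eta}))[\<\lambda, 2\check\rho\>]$ identifies, up to the claimed shift, with $\pi_{glob}^!$ of the result of acting by $e^{-\lambda}\otimes V^{\lambda+\eta}$ on $\IC_{\wt{glob}}$, where the $e^{-\lambda}$-factor corresponds to the $\Lambda_{M,ab}$-twist and the $V^{\lambda+\eta}$-factor to the $\Sph_{G,x}$-Hecke action, and the transition maps in the colimit match up via the structural morphisms $u^{\lambda,\lambda_1+\eta}$ used in Section~\ref{Sect_2.0.17}.

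The colimit over $\lambda$ is then computed via the $\check P$-version of Lemma~\ref{Lm_2.1.8_some_colimit}: the limiting representation is $\coind_{\check M}^{\check P}(U^\eta)$. Through the Hecke property and Satake for $M$, this representation-theoretic colimit corresponds to the global sheaf whose $!$-restrictions to the locally closed strata $_{x,\nu'}\Bunt_P$ (for $\nu'$ with $w_0^M(\nu' - \nu)\in\Lambda^{pos}$, cf.~Section~\ref{Sect_2.3.7_local_vs_global}) match those of $\IC_{\wt{glob}}^\eta$. Stratum-wise this comparison is equivalent, via $\pi_{glob}^!$, to the local computation of $(\gt_P^{\theta})_!(v_P^{\theta})^* \IC^{\frac{\infty}{2}}_{P,\eta}$ already carried out in Proposition~\ref{Pp_2.5.15_*-restriction}, which shows both sides produce $\Sat_M((\cO(U(\check P))\otimes U^\eta)_{\theta})$ up to the same shift. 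The dimension shift $(g-1)\dim P+\<\epsilon, 2\check\rho-2\check\rho_M\>$ is read off from the equality $\dim\Bun_P^\epsilon = \dim(_{=\epsilon,x}\Bunb_P^0)$ in Section~\ref{Sect_2.3.3_loc_vs_glob}, combined with the fact that $\pi_{glob}$ restricted over a generic point of each stratum is essentially a smooth map of relative dimension controlled by this quantity.

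The main obstacle is Step 3: turning the representation-level identification of the colimit as $\coind^{\check P}_{\check M}(U^\eta)$ into a genuine global sheaf identification with $\IC_{\wt{glob}}^\eta$ on the nose. Here one must track that the transition maps $u^{\lambda,\lambda_1+\eta}$ in the colimit defining $\IC^{\frac{\infty}{2}}_{P,\eta}$ correspond, under the Hecke/restriction dictionary above, to the morphisms defining the colimit of Lemma~\ref{Lm_2.1.8_some_colimit}, and that the resulting global object is not only constructible of the right singular support but is genuinely the IC-extension; this is most cleanly handled by verifying the isomorphism stratum-by-stratum and invoking the Hecke property of $\IC_{\wt{glob}}$ to propagate it from the open stratum, in parallel with the corresponding argument for $P = B$ in \cite{Gai19SI}.
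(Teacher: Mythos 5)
Your reduction to $\eta=0$ via Proposition~\ref{Pp_action_of_Rep(checkM)_on_SI-IC_P} and the term-by-term construction of a map out of the colimit (using the $\Sph_{G,x}$-linearity, the $\Lambda_{M,ab}$-equivariance and the Hecke property of $\IC_{\wt{glob}}$) match the paper's construction of the morphism (\ref{map_for_Thm_restriction_of_glob_first}). But there is a genuine gap in your Step 3, i.e.\ in proving that this map is an isomorphism. You propose to "verify the isomorphism stratum-by-stratum" by matching $*$-restrictions of both sides with $\Sat_M((\cO(U(\check{P}))\otimes U^{\eta})_{\theta})$. First, an object of these categories is not determined by its stratum-wise restrictions, so agreement of restrictions does not upgrade to an isomorphism; one must show that the \emph{specific map you constructed} induces an isomorphism after a conservative family of functors. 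Second, and more seriously, the computation of the restrictions of $\IC^{\eta}_{\wt{glob}}$ to the strata of $_{x,\infty}\Bunt_P$ is not something you can read off from the local Proposition~\ref{Pp_2.5.15_*-restriction} "via $\pi_{glob}^!$"; it is a nontrivial global input, essentially the main theorem of \cite{BFGM}, and assuming it here is circular with respect to what the theorem is meant to establish.

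The paper's actual mechanism, which is absent from your proposal, is the following. After reducing (via the $U(P)(F_x)$-equivariance of $q_{\cY}^!\pi_{glob}^!\IC_{\wt{glob}}$) to checking the map on the fibre $\tilde\pi_{\tau}$ over a point of $\Bun_{M,x}$, one applies hyperbolic localization (Braden's theorem, Lemma~\ref{Lm_theorem_of_Braden_for_theta}) to replace the $*$-restriction along $\Gr_P^{\theta}$ by the $!$-restriction along the opposite $\Gr_{P^-}^{\theta}$. The global side of the resulting comparison is then identified with the central fibre of the Zastava space $\cZ^{\theta}$, where $\gq_{\cZ}^!\IC_{\wt{glob}}$ is computed to be $\IC_{\cZ^{\theta}}$ up to shift (Proposition~\ref{Pp_2.6.21_Zastava}, proved by a factorization argument), and the relevant perverse cohomology is supplied by the \cite{BFGM}-type statements (Propositions~\ref{Pp_2.6.23_about_Gr} and \ref{Pp_2.6.23_citation_from_BFGM}), together with the finiteness statement of Corollary~\ref{Cor_2.6.28} guaranteeing that the colimit over $\lambda$ stabilizes on the central fibre. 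Without this chain — or some substitute for it — your argument does not close; in particular your appeal to Lemma~\ref{Lm_2.1.8_some_colimit} and $\coind^{\check{P}}_{\check{M}}(U^{\eta})$ computes a colimit in $\Rep(\check{P})$ but does not by itself produce any statement about $\IC_{\wt{glob}}$.
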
 

\begin{Pp} 
\label{Pp_4.2.3_now}
For $\eta\in\Lambda_M^+,\epsilon\in\Lambda_{G,P}$ over $\cY_x^{\epsilon}$
one has canonically
$$
\pi_{glob}^!\nabla^{\eta}_{glob}[(g-1)\dim P+\<\epsilon, 2\check{\rho}-2\check{\rho}_M\>]\,\iso\, \pi^!_{loc}\nabla^{\eta}.
$$
\end{Pp}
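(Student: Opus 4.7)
The plan is to deduce Proposition~\ref{Pp_4.2.3_now} from Theorem~\ref{Thm_restriction_of_glob_first} by restricting to the preimage of the open stratum and then extending via $*$-pushforward. The key observation is that, on the open strata $S_P^{\eta}$ and $_{x,-w_0^M(\eta)}\Bunt_P$, the $\nabla$-objects coincide with the corresponding $\IC$-objects: by Proposition~\ref{Pp_2.4.7_first_propeties}(c) and the definitions, $\nabla^{\eta}$ and $\IC^{\frac{\infty}{2}}_{P,\eta}$ both restrict to $\omega[-\<\eta,2\check{\rho}\>]$ on $S_P^{\eta}$, while $\nabla^{\eta}_{glob}$ and $\IC^{\eta}_{\wt{glob}}$ both restrict to $\IC(_{x,-w_0^M(\eta)}\Bunt_P)$ on the open stratum (for the latter, since $_{x,-w_0^M(\eta)}\Bunt_P$ is open in $_{x,\ge -w_0^M(\eta)}\Bunt_P$).

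First, I would verify the geometric identification
$$
\pi_{loc}^{-1}(S_P^{\eta}) \;=\; \pi_{glob}^{-1}\bigl({_{x,-w_0^M(\eta)}\Bunt_P}\bigr) \;=\; S_P^{\eta}(\cY),
$$
writing $j\colon S_P^{\eta}(\cY)\hookrightarrow\cY_x$ for this common locally closed substack. This amounts to unfolding the Plücker condition defining $_{x,-w_0^M(\eta)}\Bunt_P$ in $_{x,\infty}\Bunt_P$ and translating it, via restriction to the formal disk at $x$, into the condition that $(\cF_G\mid_D,\cF_M\mid_D)$ are in relative position $\eta$, i.e.\ that the local image lies in the $H$-orbit $S_P^{\eta}\subset\Gr_G$. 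Next I would assemble the base-change isomorphisms
$$
\pi_{loc}^!\,(i_{\eta})_*F \;\iso\; j_*\,\pi_{loc,S^{\eta}}^!\,F, \qquad \pi_{glob}^!\,(i_{\eta,glob})_*G \;\iso\; j_*\,\pi_{glob,\mathrm{open}}^!\,G,
$$
for $F$ on $S_P^{\eta}$ and $G$ on $_{x,-w_0^M(\eta)}\Bunt_P$. Decomposing the locally closed embeddings as open-in-closed, these follow from proper base change for the closed part and smooth base change for the open part; smoothness of $\pi_{loc}$ on finite-type truncations is given by (\ref{smooth_map_part_of_pi_loc}), and a parallel analysis applies to $\pi_{glob}$.

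Finally, I would apply $j^!$ to the isomorphism of Theorem~\ref{Thm_restriction_of_glob_first}. Using Proposition~\ref{Pp_2.4.7_first_propeties}(c) on the local side to obtain $i_{\eta}^!\IC^{\frac{\infty}{2}}_{P,\eta}\,\iso\,\omega[-\<\eta,2\check{\rho}\>]$, and on the global side the identification $i_{\eta,glob}^!\IC^{\eta}_{\wt{glob}}\,\iso\,\IC(_{x,-w_0^M(\eta)}\Bunt_P)$ (since $_{x,-w_0^M(\eta)}\Bunt_P$ is the open stratum of $_{x,\ge -w_0^M(\eta)}\Bunt_P$), this yields on $S_P^{\eta}(\cY)$ the isomorphism
$$
\pi_{loc,S^{\eta}}^!\,\omega[-\<\eta,2\check{\rho}\>] \;\iso\; \pi_{glob,\mathrm{open}}^!\,\IC(_{x,-w_0^M(\eta)}\Bunt_P)\,[c],
$$
where $c=(g-1)\dim P+\<\epsilon,2\check{\rho}-2\check{\rho}_M\>$ is exactly the shift provided by the theorem. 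Applying $j_*$ and invoking the two base-change isomorphisms from the previous step converts this into the required identification $\pi_{loc}^!\nabla^{\eta}\,\iso\,\pi_{glob}^!\nabla^{\eta}_{glob}[c]$ over $\cY_x^{\epsilon}$.

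The main obstacle I anticipate is verifying the smooth base change for $\pi_{glob}$ (and the compatibility of this base change with ind-presentations), since neither $\cY_x$ nor $_{x,\infty}\Bunt_P$ is of finite type. One must present both as compatible ind-systems of locally finite-type substacks and check that the smoothness of the truncation maps is preserved under passage to the colimit, mirroring the analogous bookkeeping carried out in the Borel case in~\cite{Gai19SI}.
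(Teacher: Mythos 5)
Your derivation is mechanically workable, but routing it through Theorem~\ref{Thm_restriction_of_glob_first} is both circular in the logic of this paper and far heavier than what the statement requires. The circularity: the proof of Theorem~\ref{Thm_restriction_of_glob_first} reduces to Proposition~\ref{Pp_2.6.18_main_technical}, whose part i) is established using the description in Section~\ref{Sect_2.6.9_describing_the_composition} of the composition (\ref{map_for_Sect_2.6.8_long_composition}) --- and that composition ends precisely with the isomorphism $\pi_{glob}^!\nabla^0_{glob}[\dots]\iso\pi_{loc}^!\nabla^0$, i.e.\ the $\eta=0$ case of the very proposition you are trying to prove. Since your argument for general $\eta$ invokes the theorem, which invokes its own $\eta=0$ case, which invokes Proposition~\ref{Pp_4.2.3_now} for $\eta=0$, the chain closes on itself. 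Independently of that, you never actually produce the shift: you say the shift ``is exactly the shift provided by the theorem,'' but the content of the proposition is precisely the numerical matching of shifts, and outsourcing it to the theorem hides the computation that makes the statement true.

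The paper's proof is direct and uses no input from the theorem: the square
$$
\begin{array}{ccc}
S^{\eta}_P(\cY) & \to & \cY_x\\
\downarrow && \downarrow\lefteqn{\scriptstyle \pi_{glob}}\\
_{x, -w_0^M(\eta)}\Bunt_P & \to & _{x, \infty}\Bunt_P
\end{array}
$$
is cartesian, $\nabla^{\eta}_{glob}$ is the $*$-extension of the $\IC$-sheaf of the smooth stratum $_{x,-w_0^M(\eta)}\Bunt_P$ (hence of $\omega$ shifted by $-\dim$), $\nabla^{\eta}$ is the $*$-extension of $\omega[-\<\eta,2\check\rho\>]$ from $S^{\eta}_P$, and base change plus the dimension formula $\dim({_{x,-w_0^M(\eta)}\Bunt_P^{\epsilon}})=(g-1)\dim P+\<\epsilon, 2\check{\rho}-2\check{\rho}_M\>+\<\eta, 2\check{\rho}\>$ gives the isomorphism. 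Note that the ingredients you list --- the identification of $\pi_{glob}^{-1}({_{x,-w_0^M(\eta)}\Bunt_P})$ with $S_P^{\eta}(\cY)$, the restrictions of the $\nabla$-objects to the open strata, and the base change for $!$-pullback against $*$-pushforward along the locally closed immersions (which the paper treats as routine) --- are already exactly the ingredients of this direct proof; once you strip out the appeal to the theorem and replace it by the dimension count, your argument collapses to the paper's.
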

\begin{proof}
The square is cartesian
$$
\begin{array}{ccc}
S^{\eta}_P(\cY) & \toup{i_{\eta, \cY}} & \cY_x\\
\downarrow && \downarrow\lefteqn{\scriptstyle \pi_{glob}}\\
_{x, -w_0^M(\eta)}\Bunt_P & \toup{i_{\eta, glob}} & _{x, \infty}\Bunt_P,
\end{array}
$$
the map $i_{\eta, \cY}$ was defined in Section~\ref{Sect_pi_loc_def}. 

Let $_{x, -w_0^M(\eta)}\Bunt_P^{\epsilon}$ be the intersection $_{x,\infty}\Bunt_P^{\epsilon}\cap \; {_{x, -w_0^M(\eta)}\Bunt_P}$. Then 
$$
\dim(_{x, -w_0^M(\eta)}\Bunt_P^{\epsilon})=(g-1)\dim P+\<\epsilon, 2\check{\rho}-2\check{\rho}_M\>+\<\eta, 2\check{\rho}\>.
$$
So, over $\cY_x^{\epsilon}$ for $\eta\in\Lambda^+_M$ by base change we get the desired isomorphism. 
\end{proof}

\sssec{} Let $j_{glob}: \Bun_P\hook{} \Bunt_P$ be the natural open immersion. View $(j_{glob})_!\IC(\Bun_P)$ as extended by zero under $\Bunt_P\hook{} {_{x,\infty}\Bunt_P}$. For $\epsilon\in\Lambda_{G,P}$ one has a natural map in $Shv(\cY_x^{\epsilon})$
\begin{equation}
\label{map_for_Th_2.6.4}
(i_{0,\cY})_!\omega\to \pi_{glob}^!(j_{glob})_!\IC_{\Bun_P}[(g-1)\dim P+\<\epsilon, 2\check{\rho}-2\check{\rho}_M\>].
\end{equation}
It comes from the cartesian square
$$
\begin{array}{ccc}
S^0_P(\cY) & \toup{i_{0,\cY}} & \cY_x\\
\downarrow\lefteqn{\scriptstyle \pi_{glob}^0} && \downarrow\lefteqn{\scriptstyle \pi_{glob}}\\
\Bun_P & \hook{} & _{x,\infty}\Bunt_P
\end{array}
$$  
and the natural map $(\pi_{glob}^0)_!(\pi_{glob}^0)^!\omega\to\omega$. 


\begin{Thm} 
\label{Th_restriction_of_glob_second}
The map (\ref{map_for_Th_2.6.4}) is an isomorphism in $Shv(\cY_x^{\epsilon})$. 
\end{Thm}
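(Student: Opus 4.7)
The plan is to interpret the map (\ref{map_for_Th_2.6.4}) as the Beck--Chevalley base change isomorphism for the Cartesian square set up before the theorem, modulo a cohomological shift coming from the smoothness of $\Bun_P$. First I would verify that
$$
\begin{array}{ccc}
S^0_P(\cY) & \toup{i_{0,\cY}} & \cY_x \\
\downarrow\lefteqn{\scriptstyle \pi_{glob}^0} & & \downarrow\lefteqn{\scriptstyle \pi_{glob}} \\
\Bun_P & \hook{} & {_{x,\infty}\Bunt_P}
\end{array}
$$
is Cartesian, i.e., $\pi_{glob}^{-1}(\Bun_P) = S^0_P(\cY)$ as locally closed substacks of $\cY_x$. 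This is the $\cY$-version of the identification $\tilde\pi^{-1}(_{x,0}\Bunt_P) = S^0_P$ recorded in Section~\ref{Sect_2.3.8_local_vs_global}: the Pl\"ucker data attached to a triple $(\cF_G, \cF_M, \xi)\in\cY_x$ extend at $x$ to a genuine $P$-reduction with $M$-quotient $\cF_M$ precisely when $\pi_{loc}(\cF_G, \cF_M, \xi)$ lies in $M(\cO_x)\backslash S^0_P\subset M(\cO_x)\backslash \Gr_{G, x}$.

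Next, since $\Bun_P\hook{}\Bunt_P\hook{}{_{x,\infty}\Bunt_P}$ is a locally closed immersion and the above square is Cartesian, $!$-extension base change yields
$$
\pi_{glob}^!(j_{glob})_!\IC_{\Bun_P}\;\iso\;(i_{0,\cY})_!(\pi_{glob}^0)^!\IC_{\Bun_P}.
$$
Now $\Bun_P^{\epsilon}$ is smooth of dimension $d_{\epsilon}:=(g-1)\dim P+\<\epsilon, 2\check{\rho}-2\check{\rho}_M\>$ by Section~\ref{Sect_2.3.1}, so $\IC_{\Bun_P^{\epsilon}}=e_{\Bun_P^{\epsilon}}[d_{\epsilon}]=\omega_{\Bun_P^{\epsilon}}[-d_{\epsilon}]$. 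Since $\omega$ is compatible with $!$-pullback, $(\pi_{glob}^0)^!\omega_{\Bun_P^{\epsilon}}\iso\omega_{S^0_P(\cY)^{\epsilon}}$. Shifting the displayed isomorphism above by $d_{\epsilon}$ thus turns its right-hand side into $(i_{0,\cY})_!\omega_{S^0_P(\cY)^{\epsilon}}$, matching the left-hand side of (\ref{map_for_Th_2.6.4}).

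Finally, I would verify that the map (\ref{map_for_Th_2.6.4}), obtained by applying $\pi_{glob}^!(j_{glob})_!$ to the counit $(\pi_{glob}^0)_!(\pi_{glob}^0)^!\omega\to\omega$ and composing with the unit of the $((\pi_{glob})_!,\pi_{glob}^!)$-adjunction via the equality $j_{glob}\circ\pi_{glob}^0=\pi_{glob}\circ i_{0,\cY}$, coincides with the Beck--Chevalley base change isomorphism above; this is the standard compatibility between base change and adjunctions, and is formal once the Cartesian square is in place. The main obstacle is the first step: the rigorous identification $\pi_{glob}^{-1}(\Bun_P)=S^0_P(\cY)$ at the level of substacks requires matching the global Pl\"ucker characterization of $\Bun_P\subset{_{x,\infty}\Bunt_P}$ with the local condition defining $S^0_P\subset\Gr_{G, x}$, lifted to $\cY_x$ through the associated bundle construction of Section~\ref{Sect_pi_loc_def}; once this is in hand, the remainder reduces to formal base change and a dimension count.
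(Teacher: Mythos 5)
The central step of your argument --- the claim that the Cartesian square formally yields a ``Beck--Chevalley'' isomorphism $\pi_{glob}^!(j_{glob})_!\IC_{\Bun_P}\iso(i_{0,\cY})_!(\pi_{glob}^0)^!\IC_{\Bun_P}$ --- is a genuine gap: this isomorphism is exactly the content of the theorem, not a formal consequence of the square being Cartesian. For a Cartesian square, base change holds for ($*$-pullback, $!$-pushforward) and for ($!$-pullback, $*$-pushforward), but not for $!$-pullback against $!$-extension along the open immersion $j_{glob}$, unless the vertical map is smooth (or otherwise known to intertwine $!$- and $*$-pullback up to a shift). Here $\pi_{glob}$ is very far from smooth or even schematic of finite type: its fibre over a point in its image is a torsor over the space of sections over $X-x$ of a $U(P)$-bundle, an infinite-dimensional ind-pro-affine space. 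What the square does give formally is the canonical map $(i_{0,\cY})_!(\pi_{glob}^0)^!\to\pi_{glob}^!(j_{glob})_!$ --- this is precisely how (\ref{map_for_Th_2.6.4}) is constructed --- and it is automatically an isomorphism after restriction to $S^0_P(\cY)$; the theorem is equivalent to the vanishing of $\pi_{glob}^!(j_{glob})_!\IC_{\Bun_P}$ on the complement $\cY_x\setminus S^0_P(\cY)$, which is not formal. (Your first step, the identification $\pi_{glob}^{-1}(\Bun_P)=S^0_P(\cY)$, is correct but is not where the difficulty lies.)

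This vanishing is what the paper actually has to prove. After reducing, via the $U(P)(F_x)$-equivariance of $q_{\cY}^!\pi_{glob}^!(j_{glob})_!\IC_{\Bun_P}$ and restriction to a point $\tau\in\Bun_{M,x}$, to the statement that $\tilde\pi_\tau^!(j_{glob})_!\IC_{\Bun_P}$ is the extension by zero from $S^0_P\subset\Gr_{G,x}$ (Proposition~\ref{Pp_4.2.18_now}~ii)), one must show $(v^{\theta}_P)^*\tilde\pi^!(j_{glob})_!\IC_{\Bun_P}=0$ for all $0\ne\theta\in-\Lambda^{pos}_{G,P}$. This is converted by Braden's theorem into a statement about $(\gt^{\theta}_{P^-})_*(v^{\theta}_{P^-})^!$, which is then computed on the Zastava space: one needs Proposition~\ref{Pp_2.6.21_Zastava}~b), identifying $\gq_{\cZ}^!(j_{glob})_!\omega_{\Bun_P}$ with $j_!\omega_{\oo{\cZ}{}^{\theta}}$ (itself proved by a factorization and smooth-descent argument), after which the vanishing follows from $\gs^*j_!\omega_{\oo{\cZ}{}^{\theta}}=0$ for $\theta\ne 0$. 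None of this geometric input is present in your proposal, and without it the argument does not go through.
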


\sssec{} In the rest of Section~\ref{Sect_2.6_Relation} we prove Theorems~\ref{Thm_restriction_of_glob_first} and \ref{Th_restriction_of_glob_second}. 

By (\cite{BG}, 4.1.3) for $\eta\in\Lambda^+_M$, one has canonically
$$
\Sat(U^{\eta})\ast \IC_{\wt{glob}}\,\iso\, \IC_{\wt{glob}}^{\eta}
$$
Since $\pi_{glob}^!$ commutes with $\Rep(\check{M})$-actions, Proposition~\ref{Pp_action_of_Rep(checkM)_on_SI-IC_P} immediately reduces Theorem~\ref{Thm_restriction_of_glob_first} to its special case $\eta=0$. 

\sssec{} Let us construct the morphism (\ref{map_for_Thm_restriction_of_glob_first}) for $\eta=0$. It is equivalent to providing a morphism
\begin{equation}
\label{map_for_Thm_restriction_of_glob_second}
(\pi_{glob})_!\pi_{loc}^!\IC_P^{\frac{\infty}{2}}\to \IC_{\wt{\glob}}[(g-1)\dim P+\<\epsilon, 2\check{\rho}-2\check{\rho}_M\>]
\end{equation}
over $_{x,\infty}\Bunt_P^{\epsilon}$. We first construct for $\lambda\in\Lambda^+_{M, ab}$ a morphism
\begin{equation}
\label{map_for_Sect_2.6.5_from_lambda_to_ICglob}
(\pi_{glob})_!\pi_{loc}^!(t^{-\lambda}\Sat(V^{\lambda}))[\<\lambda, 2\check{\rho}\>]\to \IC_{\wt{\glob}}[(g-1)\dim P+\<\epsilon, 2\check{\rho}-2\check{\rho}_M\>]
\end{equation}  
in $Shv(_{x,\infty}\Bunt_P^{\epsilon})$. 

 The functor $Shv(_{x,\infty}\Bunt_P)\to Shv(_{x,\infty}\Bunt_P)$, $K\mapsto K\ast \Sat(V^{\lambda})$ is both left and right adjoint to the functor $K\mapsto K\ast \Sat((V^{\lambda})^*)$. So, the datum of (\ref{map_for_Sect_2.6.5_from_lambda_to_ICglob}) is equivalent to a morphism
$$
(\pi_{glob})_!\pi_{loc}^!(\delta_{t^{-\lambda}})[\<\lambda, 2\check{\rho}\>]\to \IC_{\wt{\glob}}\ast \Sat((V^{\lambda})^*)[(g-1)\dim P+\<\epsilon, 2\check{\rho}-2\check{\rho}_M\>]
$$
in $Shv(_{x,\infty}\Bunt_P^{\epsilon})$. 

We have the map $i_{t^{-\lambda}, \cY}: \Bun_M\to \cY_x$ obtained by base change of $M(\cO_x)\backslash t^{-\lambda}\hook{} M(\cO_x)\backslash \Gr_{G, x}$ via $\pi_{loc}$. Then 
$$
\pi_{loc}^!(\delta_{t^{-\lambda}})\,\iso\, (i_{t^{-\lambda}, \cY})_!\omega.
$$ 
So, we are constructing the map
$$
(\pi_{glob})_!(i_{t^{-\lambda}, \cY})_!\IC(\Bun_M) [\<\lambda, 2\check{\rho}\>]\to \IC_{\wt{\glob}}\ast \Sat((V^{\lambda})^*)[(g-1)\dim U(P)+\<\epsilon, 2\check{\rho}-2\check{\rho}_M\>]
$$
over $_{x,\infty}\Bunt_P^{\epsilon}$.

 Let $\theta\in\Lambda_{G,P}$ be the image of $\lambda$. By the Hecke property of $\IC_{\wt{glob}}$, 
$$
\IC_{\wt{\glob}}\ast \Sat((V^{\lambda})^*)\,\iso\,\mathop{\oplus}\limits_{\mu\in\Lambda^+_M} \IC_{\wt{glob}}^{\mu}\otimes \Hom_{\check{M}}(U^{\mu}, (V^{\lambda})^*).
$$
Note that if $\mu\in\Lambda^+_M$ and $\Hom_{\check{M}}(U^{\mu}, (V^{\lambda})^*)\ne 0$ then $\lambda+\mu\in\Lambda^{pos}$, and $\pi_{glob}i_{t^{-\lambda}, \cY}$ factors through
$$
 {_{x, \lambda}\Bunt_P}\hook{} {_{x, \ge -w_0^M(\mu)}\Bunt_P}\hook{} {_{x,\infty}\Bunt_P}.
$$

 As in \cite{BG}, for $\nu\in\Lambda^+_M$ write $_x\cH^{\nu}_M$ for the stack classifying $(\cF_M, \cF'_M,\beta)$, where $\cF_M, \cF'_M$ are $M$-torsors on $X$ with an isomorphism $\beta: \cF_M\,\iso\,\cF'_M\mid_{X-x}$ such that $\cF'_M$ is in the position $\le \nu$ with respect to $\cF_M$ at $x$. Let 
$$
h^{\la}_M, h^{\ra}_M: {_x\cH^{\nu}_M}\to\Bun_M
$$ 
be the map sending the above point to $\cF_M$ and $\cF'_M$ respectively.

 Let $\mu\in\Lambda^+_M$ and $\Hom_{\check{M}}(U^{\mu}, (V^{\lambda})^*)\ne 0$. 
 Consider the stack $_x\cH^{-\lambda}_M\times_{\Bun_M}\Bun_P^{\epsilon-\theta}$, where we used the map $h^{\ra}_M$ to form the fibred product. Consider the locally closed immersion
$$
h: {_x\cH^{-\lambda}_M\times_{\Bun_M}\Bun_P^{\epsilon-\theta}}\hook{} {_{x,\ge -w_0^M(\mu)}\Bunt_P^{\epsilon}}
$$
sending 
$$
(\cF_M, \cF'_P, \cF'_M=\cF'_P\times^P M, \beta: \cF_M\,\iso\, \cF'_M\mid_{X-x})
$$ 
to $(\cF_M, \cF_G, \kappa)$, where $\cF_G=\cF'_P\times^P G$. Then $h^!\IC_{\wt{glob}}^{\mu}$ has smooth perverse cohomology sheaves, it is placed in perverse degrees $\ge 0$, and the inequality is strict unless $\mu=-\lambda$. So, 
$$
(i_{t^{-\lambda}, \cY})\pi_{glob}^!\IC_{\wt{glob}}^{\mu}
$$
is placed in perverse degrees 
$$
\ge \codim(\Bun_M, \; {_x\cH^{-\lambda}_M\times_{\Bun_M}\Bun_P^{\epsilon-\theta}})=(g-1)\dim U(P)+\<\epsilon-\theta, 2\check{\rho}-2\check{\rho}_M\>,
$$ 
and the inequality is strict unless $\mu=-\lambda$.  
 
 We have $\<\lambda, 2\check{\rho}\>=\<\theta, 2\check{\rho}-2\check{\rho}_M\>$. We see that the complex
$$
(i_{t^{-\lambda}, \cY})\pi_{glob}^!\IC_{\wt{\glob}}\ast \Sat((V^{\lambda})^*)[(g-1)\dim U(P)+\<\epsilon-\theta, 2\check{\rho}-2\check{\rho}_M\>]
$$ 
over $\Bun_M^{\epsilon}$ is placed in perverse degrees $\ge 0$, and its 0-th perverse cohomology sheaf identifies with the 0-th perverse cohomology sheaf of 
$$
(i_{t^{-\lambda}, \cY})\pi_{glob}^!\IC_{\wt{\glob}}^{-\lambda}[(g-1)\dim U(P)+\<\epsilon-\theta, 2\check{\rho}-2\check{\rho}_M\>],
$$
which in turn identifies with $\IC(\Bun_M)$ canonically. This gives the desired map (\ref{map_for_Sect_2.6.5_from_lambda_to_ICglob}). 

\sssec{} One checks that the maps (\ref{map_for_Sect_2.6.5_from_lambda_to_ICglob}) are compatible in the homotopy category with the transition maps in the diagram (\ref{Def_IC_semi_inf}). 

\sssec{} Since the internal hom with respect to the $\Vect$-action
$$
\HOM_{Shv(_{x,\infty}\Bunt_P^{\epsilon})}((\pi_{glob})_!\pi_{loc}^!(t^{-\lambda}\Sat(V^{\lambda}))[\<\lambda, 2\check{\rho}\>], \IC_{\wt{\glob}}[(g-1)\dim P+\<\epsilon, 2\check{\rho}-2\check{\rho}_M\>])
$$
is placed in degrees $\ge 0$, we conclude applying the Dold-Kan functor $\Vect\to \Vect^{\le 0}\to\Spc$
that the corresponding mapping space
$$
\Map_{Shv(_{x,\infty}\Bunt_P^{\epsilon})}((\pi_{glob})_!\pi_{loc}^!(t^{-\lambda}\Sat(V^{\lambda}))[\<\lambda, 2\check{\rho}\>], \IC_{\wt{\glob}}[(g-1)\dim P+\<\epsilon, 2\check{\rho}-2\check{\rho}_M\>])
$$
is discrete. So, as in (\cite{Gai19SI}, 3.4.7) the maps (\ref{map_for_Sect_2.6.5_from_lambda_to_ICglob}) uniquely combine to the desired morphism (\ref{map_for_Thm_restriction_of_glob_second}).

\sssec{} 
\label{Sect_2.6.9_describing_the_composition}
As in (\cite{Gai19SI}, 3.4.8), for $\lambda\in\Lambda^+_{M, ab}$ one may describe the composition
\begin{multline}
\label{map_for_Sect_2.6.8_long_composition}
\pi_{loc}^!(t^{-\lambda}\Sat(V^{\lambda}))[\<\lambda, 2\check{\rho}\>]\to \pi_{loc}^!\IC^{\frac{\infty}{2}}_P\,\toup{(\ref{map_for_Thm_restriction_of_glob_first})} \, \pi_{glob}^!\IC_{\wt{\glob}}[(g-1)\dim P+\<\epsilon, 2\check{\rho}-2\check{\rho}_M\>]
 \\ \to \pi_{glob}^!\nabla_{\glob}^0[(g-1)\dim P+\<\epsilon, 2\check{\rho}-2\check{\rho}_M\>]\,\iso\,\pi_{loc}^! \nabla^0
\end{multline}
over $\cY^{\epsilon}_x$ as follows. It is obtained by applying $\pi_{loc}^!$ to the following morphism
$$
t^{-\lambda}\Sat(V^{\lambda}))[\<\lambda, 2\check{\rho}\>]\to \nabla^0
$$
in $Shv(\Gr_{G, x})^{M(\cO_x)}$. 

 The base change under $t^{-\lambda}\ov{\Gr}_G^{\lambda}\subset \bar S^0_P$ of $j_0: S^0_P\hook{}\bar S^0_P$ is the open immersion $S^0_P\cap (t^{-\lambda}\ov{\Gr}_G^{\lambda})\subset t^{-\lambda}\ov{\Gr}_G^{\lambda}$, and $\nabla^0$ is the extension by zero under $\bar i_0: \bar S^0_P\hook{}\Gr_G$. Besides, $S^{\lambda}_P\cap \ov{\Gr}_G^{\lambda}\subset \Gr_G^{\lambda}$. As a morphism on $\bar S^0_P$, the map (\ref{map_for_Sect_2.6.8_long_composition}) equals
$$
t^{-\lambda}\Sat(V^{\lambda})[\<\lambda, 2\check{\rho}\>]\to (j_0)_*(j_0)^*(t^{-\lambda}\Sat(V^{\lambda})[\<\lambda, 2\check{\rho}\>])\,\iso\, (j_0)_*\omega_{(t^{-\lambda}\Gr_G^{\lambda})\cap S^0_P}\to (j_0)_*\omega_{S^0_P}
$$
We used the isomorphism 
$(j_0)^*(t^{-\lambda}\Sat(V^{\lambda})[\<\lambda, 2\check{\rho}\>])\,\iso\,\omega_{(t^{-\lambda}\Gr_G^{\lambda})\cap S^0_P}$,
where the RHS is considered as extended by zero under the closed immersion 
$$
(t^{-\lambda}\Gr_G^{\lambda})\cap S^0_P\hook{} S^0_P.
$$ 

\sssec{} For $\theta\in\Lambda_{G,P}$ and a $M$-torsor $\cF'_M$ on $X$ write $\Gr_{M, x}^{\theta}(\cF'_M)$ for the ind-scheme classifying $(\cF_M, \beta_M)$, where $\cF_M$ is a $M$-torsor on $X$, $\beta_M: \cF_M\,\iso\, \cF'_M\mid_{X-x}$ is an isomorphism such that $\beta_M$ induces an isomorphism of $M/[M,M]$-torsors on $X$
$$
\bar \beta_M: \cF_{M/[M,M]}\,\iso\, \cF'_{M/[M,M]}(-\theta x)
$$  

Write $_{=\theta, x}\Bunt_P$ for the preimage of $_{=\theta, x}\Bunb_P$ under $\gr: {_{x,\infty}\Bunt_P}\to {_{x,\infty}\Bunb_P}$. One has an isomorphism $\Bun_P\,\iso\, {_{=\theta, x}\Bunb_P}$ sending $\cF_P$ to $(\cF_{M/[M,M]}(\theta x), \cF_P\times_P G, \kappa)$, where $\cF_{M/[M,M]}$ is obtained from $\cF_P$ by the extension of scalars. 
 
  The stack $_{=\theta, x}\Bunt_P$ classifies triples $(\cF'_P, \cF_M,\beta_M)$, where $\cF'_P$ is a $P$-torsor on $X$ with $\cF'_M=\cF'_P\times_P M$, $\cF_M$ is a $M$-torsor on $X$, $\beta_M: \cF_M\,\iso\, \cF'_M\mid_{X-x}$ is an isomorphism such that $(\cF'_M, \beta_M)\in \Gr_{M, x}^{\theta}(\cF_M)$. 
  
  The map $\tilde\pi$ fits into a cartesian square
$$
\begin{array}{ccc}
\Gr_P^{\theta} & \toup{v^{\theta}_P} & \Gr_G\\
\downarrow && \downarrow\lefteqn{\scriptstyle \tilde\pi}\\
_{=\theta, x}\Bunt_P & \to &  {_{x,\infty}\Bunt_P}
\end{array}
$$ 
\sssec{} For $\theta\in\Lambda_{G,P}$ and a $M$-torsor $\cF'_M$ on $X$ write $\Gr_{M}^{+, \theta}(\cF'_M)$ for the version of $\Gr_M^{+,\theta}$, where the background torsor $\cF^0_M$ is replaced by $\cF'_M$. One has 
$$
\Gr_{M}^{+, \theta}(\cF'_M)\subset \Gr_{M}^{\theta}(\cF'_M)
$$ 
 
  For $\theta\in\Lambda_{G,P}^{pos}$ write $\Gr_M^{-,-\theta}$ for the scheme of those 
$
(\cF_M, \beta_M: \cF_M\,\iso\, \cF^0_M\mid_{X-x})\in\Gr_M^{-\theta}
$ 
for which $(\cF^0_M, \beta_M)\in \Gr_M^{+,\theta}(\cF_M)$. 

\sssec{} For $\theta\in \Lambda_{G,P}^{pos}$ write $X^{\theta}$ for the moduli space of $\Lambda_{G,P}^{pos}$-valued divisor of degree $\theta$.

For $\theta\in -\Lambda^{pos}_{G,P}$ write $\Mod_M^{-, \theta}$ for the moduli scheme classifying $D\in X^{-\theta}$, a $M$-torsor $\cF_M$ on $X$ with a trivialization $\beta_M: \cF_M\,\iso\, \cF^0_M\mid_{X-\supp(D)}$ such that for any finite-dimensional $G$-module $\cV$ the map
$$
\cV^{U(P)}_{\cF^0_M}\toup{\beta_M^{-1}} \cV^{U(P)}_{\cF_M}
$$
is regular on $X$, and $\beta_M$ induces an isomorphism $\cF_{M/[M,M]}\,\iso\, \cF^0_{M/[M,M]}(D)$ on $X$. 

 To be precise, here we pick a homomorphism $\Theta: \Lambda_{G,P}^{pos}\to \ZZ_+$ of semigroups sending each $\alpha_i$, $i\notin \cI_M$ to a strictly positive integer, which allows to associate to $D\in X^{-\theta}$ an effective divisor $\Theta(D)$. Then $X-\supp(D)$ is defined as $X-\supp(\Theta(D))$. 

Let $\pi_M: \Mod_M^{-,\theta}\to X^{-\theta}$ be the projection sending the above point to $D$.

\sssec{} Let $\Bun_{M, x}$ be the stack classifying $\cF_M\in\Bun_M$ with a trivialization $\beta_M: \cF_M\,\iso\, \cF^0_M\mid_{D_x}$. Let $q_{\cY}: \Bun_{M, x}\times \Gr_{G, x}\to \cY_x$ be the map sending 
$$
(\cF_M,\beta_M, \cF_G\in\Bun_G, \beta: \cF_G\,\iso\,\cF^0_G\mid_{X-x})
$$ 
to $(\cF_M, \cF'_G)$, where $\cF'_G$ is the gluing of $\cF_M\times_M G\mid_{X-x}$ with $\cF_G\mid_{D_x}$ over $D_x^*$ via 
$$
\cF_M\times_M G\toup{\beta_M} \cF^0_G\toup{\beta^{-1}} \cF_G.
$$ 
So, $q_{\cY}$ is a $M(\cO_x)$-torsor. 

The following is established exactly as in (\cite{Gai19SI}, Proposition~3.5.2). 
\begin{Pp}
Both $q_{\cY}^!\pi_{glob}^!\IC_{\wt{glob}}$ and $q_{\cY}^!\pi_{glob}^!(j_{glob})_!\IC_{\Bun_P}$ belong to the full subcategory 
$$
Shv(\Bun_{M, x}\times \Gr_{G, x})^{U(P)(F_x)}\subset Shv(\Bun_{M, x}\times \Gr_{G, x}),
$$ 
here $U(P)(F_x)$ acts via its action on $\Gr_{G,x}$. \QED
\end{Pp}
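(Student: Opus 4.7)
The argument will follow the template of \cite{Gai19SI}, Proposition~3.5.2, extending it from the Borel to a general parabolic. The main point is to show that the Hecke action of $U(P)(F_x)$ on $\Gr_{G,x}$ (acting on the second factor of $\Bun_{M,x}\times \Gr_{G,x}$) translates, via $\pi_{glob}\comp q_{\cY}$, into modifications of the global $G$-torsor $\cF'_G$ at $x$ that preserve the Pl\"ucker data $(\cF_M, \kappa)$ defining a point of $_{x,\infty}\Bunt_P$ up to an isomorphism functorial in $u\in U(P)(F_x)$.

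First, I would set up an auxiliary version of the correspondence. Consider the prestack $\wt{\cY}_x$ classifying a point $(\cF_M, \beta_M, \cF_G, \beta)$ of $\Bun_{M, x}\times \Gr_{G,x}$ together with a section $u\in U(P)(F_x)$; there are two natural maps to $\cY_x$, one that forgets $u$ and one that acts by $u$ on $\Gr_{G,x}$ before applying $q_{\cY}$. The key geometric assertion is that the two resulting compositions $\wt{\cY}_x\rightrightarrows \cY_x\to {_{x,\infty}\Bunt_P}$ are canonically isomorphic. This follows from the construction: modifying $\cF_G$ at $x$ by $u\in U(P)(F_x)$ changes the glued $G$-torsor $\cF'_G$ by a Hecke modification of type $U(P)(F_x)$ at $x$, and since $U(P)$ fixes the highest weight vectors $v^{\check\lambda}$ in each $\check\lambda\in\check\Lambda^+\cap \check\Lambda_{G,P}$, the Pl\"ucker maps $\kappa^{\check\lambda}$ are unchanged.

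With this identification in hand, I would then establish equivariance separately for the two sheaves of interest. For $q_{\cY}^!\pi_{glob}^!\IC_{\wt{glob}}$, I would use the Hecke property of $\IC_{\wt{glob}}$ from (\cite{BG}, 4.1.5): the object $\IC_{\wt{glob}}^\nu$ is controlled by the Satake functors, and these Hecke modifications commute manifestly with $U(P)(F_x)$-translations, which reduces the equivariance to a statement about $M(\cO_x)$-equivariance that is already built in. For $q_{\cY}^!\pi_{glob}^!(j_{glob})_!\IC_{\Bun_P}$, I would first reduce via base change to the open substack $\Bun_P\subset \Bunt_P$: the fibre of $\pi_{glob}$ over $\Bun_P$ is exactly $S^0_P(\cY)$ (as in the discussion preceding \eqref{map_for_Th_2.6.4}), and the constant sheaf on $S^0_P$ is tautologically $U(P)(F_x)$-equivariant since $S^0_P$ is the $H$-orbit of the unit coset, and the equivariance propagates under $!$-extension.

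The main technical obstacle will be rigorously justifying the functorial isomorphism of the two compositions $\wt{\cY}_x\rightrightarrows \cY_x\to {_{x,\infty}\Bunt_P}$, which requires a careful bookkeeping of the Pl\"ucker bundles and the action on each line bundle $\cL^{\check\lambda}_{\cF_{M/[M,M]}}$: although $U(P)$ fixes $v^{\check\lambda}$ for $\check\lambda\in \check\Lambda_{G,P}\cap\check\Lambda^+$, one must also verify the coherent compatibility of these identifications as the order of poles at $x$ grows, which requires passing to the level of the filtered colimit defining $_{x,\infty}\Bunt_P$. Once this is settled, the arguments for the two sheaves proceed in parallel to \cite{Gai19SI}, relying only on general properties of sheaves of the form $\pi_{glob}^!F$ for $F$ supported on Hecke-stable substacks.
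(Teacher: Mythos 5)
Your central geometric assertion --- that the two compositions $\wt{\cY}_x\rightrightarrows \cY_x\to {_{x,\infty}\Bunt_P}$ are canonically isomorphic --- is false, and this is not a matter of bookkeeping. Acting by $u\in U(P)(F_x)$ on the $\Gr_{G,x}$-factor replaces the glued torsor $\cF'_G$ by a genuine Hecke modification $\cF''_G$ at $x$: the two torsors are canonically identified with $\cF_M\times_M G$ over $X-x$ (and this identification is indeed compatible with the Pl\"ucker maps, since $U(P)$ acts trivially on $\cV^{U(P)}$), but the identification extends across $x$ only when $u$ lies in the relevant conjugate of $U(P)(\cO_x)$. Already on $k$-points the image in ${_{x,\infty}\Bunt_P}$ changes: for a curve of genus $\ge 1$, modifying the trivial $\SL_2$-bundle at $x$ by $\left(\begin{smallmatrix}1 & f\\ 0 & 1\end{smallmatrix}\right)$ with $f\in F_x$ having nonzero image in $H^1(X,\cO_X)$ produces a nonsplit extension of $\cO$ by $\cO$. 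Hence $\pi_{glob}\comp q_{\cY}$ is not $U(P)(F_x)$-invariant, and the equivariance of $q_{\cY}^!\pi_{glob}^!K$ is a genuine property of the two particular sheaves rather than of arbitrary $K$ (otherwise the $!$-fibres of $\pi_{glob}\comp q_{\cY}$ would have to be unions of $U(P)(F_x)$-orbits, which they are not).

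What the intended argument (\cite{Gai19SI}, Proposition~3.5.2, parallel to the proof of Proposition~\ref{Pp_2.6.21_Zastava} given in Section~\ref{Sect_Proof_Pp_2.6.21_Zastava}) actually establishes is that $\IC_{\wt{glob}}$ and $(j_{glob})_!\IC_{\Bun_P}$ are equivariant for the groupoid of $U(P)$-modifications at $x$ acting on ${_{x,\infty}\Bunt_P}$: after adding a level structure at $x$ one exhibits an action of the loop group of $U(P)$ and shows that both sheaves descend along the smooth projections to quotients by arbitrarily large pro-smooth subgroups containing the arc group of $U(P)$; this rests on the ULA property of $\IC_{\Bunt_P}$ and of $(j_{glob})_!\IC_{\Bun_P}$ over $\Bun_M$ (\cite{BG}, Theorem~5.1.5) and on the fact that the open stratum $\Bun_P$ is preserved by such modifications. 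The map $\pi_{glob}\comp q_{\cY}$ intertwines the $U(P)(F_x)$-action on the source with this groupoid on the target, which is what yields the equivariance of the two pullbacks. Two further slips in your write-up: the Hecke property of $\IC_{\wt{glob}}$ from (\cite{BG}, 4.1.5) concerns $G(\cO_x)$- and $M(\cO_x)$-modifications and says nothing about $U(P)(F_x)$; and for the second sheaf you implicitly use $\pi_{glob}^!(j_{glob})_!\simeq (i_{0,\cY})_!(\pi_{glob}^0)^!$, a base change that fails for $!$-pullback against $!$-extension --- that identification is precisely Theorem~\ref{Th_restriction_of_glob_second}, which is proved \emph{using} the present Proposition.
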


\begin{Rem} 
\label{Rem_intersesion_Gr_P^0_with_barS^0_P}
One has $\Gr_P^0\cap \bar S^0_P=S^0_P$. Indeed, let $\eta\in\Lambda^+_M\cap (-\Lambda^{pos})$ such that $S^{\eta}_P\subset \Gr_P^0$. Then $\eta=0$ in $\Lambda_{G,P}$. Our claim follows from the fact that, since $[M,M]$ is semi-simple and simply-connected, $\Lambda^+_{[M,M]}\subset (\ZZ_+$-span of $\alpha_i$ for $ i\in\cI_M$ in $\Lambda)$. Here $\Lambda^+_{[M,M]}$ is the semigroup of dominant coweights of $[M,M]$. 
\end{Rem}

\sssec{} In order to prove Theorem~\ref{Thm_restriction_of_glob_first} for $\eta=0$ it suffices to show that applying $q_{\cY}^!$ to (\ref{map_for_Thm_restriction_of_glob_first}) one gets an isomorphism. Moreover, it suffices to show that for any field-valued point $\tau=(\cF^b_m, \beta_M)\in \Bun_{M,x}^{\epsilon}$ the $!$-restriction of the latter map under 
$\tau\times\id: \Gr_{G, x}\to \Bun_{M,x}^{\epsilon}\times \Gr_{G, x}$ becomes an isomorphism. 

 Similar considerations are also valid for the proof of Theorem~\ref{Th_restriction_of_glob_second}. Denote by $\tilde\pi_{\tau}: \Gr_{G,x}\to {_{x,\infty}\Bunt_P}$ the composition $\Gr_{G, x}\to \Bun_{M,x}^{\epsilon}\times \Gr_{G, x}\toup{\pi_{glob}q_{\cY}}{_{x,\infty}\Bunt_P}$. So, Theorem~\ref{Thm_restriction_of_glob_first} for $\eta=0$ and Theorem
 ~\ref{Th_restriction_of_glob_second} are reduced to the following.
 
\begin{Pp} 
\label{Pp_4.2.18_now}
i) The map 
$$
\IC^{\frac{\infty}{2}}_P\to \tilde\pi_{\tau}^! \IC_{\wt{\glob}}[(g-1)\dim P+\<\epsilon, 2\check{\rho}-2\check{\rho}_M\>]
$$
obtained from (\ref{map_for_Thm_restriction_of_glob_first}) is an isomorphism.

\smallskip\noindent
ii) The object
$
\tilde\pi_{\tau}^!(j_{glob})_!\IC_{\Bun_P}
$
is the extension by zero under $i_0: S^0_P\hook{} \Gr_{G, x}$. 
\end{Pp}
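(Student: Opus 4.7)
The plan is to prove part (ii) by a direct base-change argument, and then reduce part (i) to a stratum-by-stratum calculation in which the semi-infinite side is already identified by Proposition~\ref{Pp_2.5.15_*-restriction} and the global side is computed by a parabolic version of the main factorization result of \cite{BFGM}.

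For (ii), recall from Section~\ref{Sect_2.3.8_local_vs_global} that $\tilde\pi^{-1}(\Bunt_P)=\bar S^0_P$, and under the stratification of $\bar S^0_P$ by the $S^{\nu}_P$ the open stratum $\Bun_P\subset\Bunt_P$ corresponds to $\nu=0$, i.e.\ to $S^0_P$. The map $\tilde\pi_{\tau}$ differs from $\tilde\pi$ only by the choice of background torsor $\cF^b_M$, whose trivialization $\beta_M$ is fixed on the formal disc at $x$; in particular $\tilde\pi_{\tau}^{-1}(\Bun_P)=S^0_P$ scheme-theoretically. Combining base change for the $!$-pullback along the open immersion $\Bun_P\hookrightarrow\Bunt_P$ with base change along the closed immersion $\Bunt_P\hookrightarrow{_{x,\infty}\Bunt_P}$ (whose pullback is $\bar S^0_P\hookrightarrow\Gr_{G,x}$) then yields that $\tilde\pi_{\tau}^!(j_{\glob})_!\IC_{\Bun_P}$ is the $!$-extension under $i_0\colon S^0_P\hookrightarrow\Gr_{G,x}$ of its restriction to $S^0_P$, as claimed.

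For (i), note first that both $\IC^{\frac{\infty}{2}}_P$ and $\tilde\pi_{\tau}^!\IC_{\wt{\glob}}[(g-1)\dim P+\<\epsilon,2\check{\rho}-2\check{\rho}_M\>]$ lie in $Shv(\Gr_G)^H$, are extensions by zero from $\bar S^0_P$, and agree canonically with $\omega$ on the open orbit $S^0_P$ compatibly with the morphism in question (essentially by Proposition~\ref{Pp_2.4.7_first_propeties}(c) and the construction of the map). It therefore suffices to check the statement after applying $(\gt^{\theta}_P)_!(v^{\theta}_P)^*$ for every $\theta\in-\Lambda^{pos}_{G,P}$. On the semi-infinite side Proposition~\ref{Pp_2.5.15_*-restriction} gives $\Sat_M(\cO(U(\check P))_{\theta})[-\<\theta,2\check\rho-2\check\rho_M\>]$. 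On the global side I would invoke the parabolic Zastava factorization: the transverse slice to the stratum $_{x,\nu}\Bunt_P$ of $\Bunt_P$ is modeled by the ind-scheme $\Mod_M^{-,\theta}$ (with the diagram $\pi_M\colon\Mod_M^{-,\theta}\to X^{-\theta}$ introduced just above), and, via the Satake equivalence for $M$, the pullback of $\IC_{\wt{\glob}}$ to the corresponding slice is identified with $\Sat_M(\cO(U(\check P))_{\theta})$ up to the same shift. Matching the two expressions, and invoking the uniqueness of the comparison morphism coming from the discreteness argument used to construct it (the passage immediately preceding Section~\ref{Sect_2.6.9_describing_the_composition}), forces the map to be the canonical isomorphism.

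The main obstacle is precisely this parabolic factorization/local model for $\IC_{\wt{\glob}}$: namely, identifying its transverse behavior along the $_{x,\nu}\Bunt_P$-stratification with $\Sat_M(\cO(U(\check P))_{\theta})$ via $\Mod_M^{-,\theta}$, canonically and in a way compatible with the pullback along $\tilde\pi_{\tau}$. For $P=B$ this is the content of \cite{BFGM}; the parabolic generalization, for which the background $\cF^b_M$ must be tracked and the role of $\check B$ replaced by $\check P$, is the genuine new computation. Once it is established the remaining verifications reduce to formal diagram chases and standard t-structure bookkeeping using Proposition~\ref{Pp_2.4.7_first_propeties} and the generation results of Remark~\ref{Rem_2.3.14}.
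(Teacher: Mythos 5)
Your argument for part (ii) rests on a false base change. The functor $(j_{\glob})_!$ is a $!$-extension along an open immersion, and $!$-pullback does not commute with $!$-extension from an open unless the map being pulled back along is smooth (or one is in a similarly favorable situation); $\tilde\pi_{\tau}$ is nothing like smooth. If your base-change claim held, part (ii) — and indeed Theorem~\ref{Th_restriction_of_glob_second} — would be a triviality, whereas it is precisely the nontrivial content of the statement. The paper instead reduces (ii), via Remark~\ref{Rem_intersesion_Gr_P^0_with_barS^0_P}, to showing $(v^{\theta}_P)^*\tilde\pi^!(j_{glob})_!\IC_{\Bun_P}=0$ for $0\ne\theta\in-\Lambda^{pos}_{G,P}$, converts this by Braden's theorem (Lemma~\ref{Lm_theorem_of_Braden_for_theta}) into a $!$-restriction along $\Gr^{\theta}_{P^-}$, and then computes on the Zastava space: $(\pi_P)_*j_!(\omega_{\oo{\cZ}{}^{\theta}})\iso\gs^*j_!(\omega_{\oo{\cZ}{}^{\theta}})$, which vanishes for $\theta\ne 0$ because the section $\gs$ misses the open part $\oo{\cZ}{}^{\theta}$.

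For part (i) your reduction to checking $(\gt^{\theta}_P)_!(v^{\theta}_P)^*$ on each $\theta$ is legitimate (Lemma~\ref{Lm_2.3.8}\,ii) plus Remark~\ref{Rem_2.3.14}), and you correctly identify the semi-infinite side via Proposition~\ref{Pp_2.5.15_*-restriction} and the need for a parabolic BFGM-type computation on the global side. But two essential steps are missing. First, the $*$-restriction $(v^{\theta}_P)^*\tilde\pi^!\IC_{\wt{\glob}}$ is not directly computable; the paper's key move is to apply Braden's theorem to replace it by $(\gt^{\theta}_{P^-})_*(v^{\theta}_{P^-})^!$, because it is the $!$-restriction to $\Gr^{\theta}_{P^-}$ that matches the central fibre $\gF^{\theta}\iso\bar S^0_P\cap\Gr^{\theta}_{P^-}$ of the Zastava space and hence is accessible via $\gq_{\cZ}^!\IC_{\wt{glob}}\iso\IC_{\cZ^{\theta}}[\dots]$ (Propositions~\ref{Pp_2.6.21_Zastava}, \ref{Pp_2.6.23_citation_from_BFGM}). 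Second, your concluding step ``matching the two expressions forces the map to be the canonical isomorphism'' is not a proof: two abstractly isomorphic objects say nothing about whether a \emph{given} morphism between them is invertible (and the BFGM identifications are themselves non-canonical, as the paper stresses). One must track the specific map term by term in the colimit over $\lambda\in\Lambda^+_{M,ab}$, using the explicit description of the composition in Section~\ref{Sect_2.6.9_describing_the_composition} and Corollary~\ref{Cor_2.6.28} (for $\lambda$ deep enough, $S^0_P\cap\Gr^{\theta}_{P^-}\subset t^{-\lambda}\Gr_G^{\lambda}$), to see that it induces an isomorphism on the lowest perverse cohomology, which is where both sides are concentrated.
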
   

 In what follows, to simplify notations, we assume $\tau=(\cF^b, \beta_M)$ is given by the trivial torsor $\cF^0_M$ with the natural trivialization. Then $\epsilon=0$ and $\tilde\pi_{\tau}=\tilde\pi$. The proof for general $\tau$ is similar. 
    
\sssec{} In order to prove Proposition~\ref{Pp_4.2.18_now} i)
it suffices to show that for any $\theta\in -\Lambda_{G,P}^{pos}$ the map
$$
(v^{\theta}_P)^*\tilde\pi^!\IC^{\frac{\infty}{2}}_{P}
\to (v^{\theta}_P)^*\tilde\pi^!\IC_{\wt{\glob}}[(g-1)\dim P]
$$
induced by (\ref{map_for_Thm_restriction_of_glob_first}) is an isomorphism. In view of Remark~\ref{Rem_intersesion_Gr_P^0_with_barS^0_P}, to prove Proposition~\ref{Pp_4.2.18_now} ii)  it suffices to show that for any $0\ne \theta\in -\Lambda^{pos}_{G,P}$ one has
$$
(v^{\theta}_P)^*\tilde\pi^!(j_{glob})_!\IC_{\Bun_P}=0.
$$

\sssec{} In view of Lemmas~\ref{Lm_theorem_of_Braden_for_theta} and \ref{Lm_2.3.8} ii), Proposition~\ref{Pp_4.2.18_now} is reduced to the following. 

\begin{Pp} 
\label{Pp_2.6.18_main_technical}
Let $\theta\in -\Lambda_{G,P}^{pos}$.\\ 
i) The map
$$
(\gt^{\theta}_{P^-})_*(v_{P^-}^{\theta})^!\IC^{\frac{\infty}{2}}_P\to (\gt^{\theta}_{P^-})_*(v_{P^-}^{\theta})^!\tilde\pi^!\IC_{\wt{\glob}}[(g-1)\dim P]
$$
induced by (\ref{map_for_Thm_restriction_of_glob_first}) is an isomorphism over $\Gr^{\theta}_M$. 

\smallskip\noindent
ii) If $\theta\ne 0$ then $(\gt^{\theta}_{P^-})_*(v_{P^-}^{\theta})^!\tilde\pi^!(j_{glob})_!\IC_{\Bun_P}=0$.
\end{Pp}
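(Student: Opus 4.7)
The plan is to reduce both parts to the ``attracting'' $P$-side via Braden's theorem (\lemref{Lm_theorem_of_Braden_for_theta}), which provides, for any $T$-equivariant $K$, a canonical isomorphism $(\gt^\theta_{P^-})_*(v^\theta_{P^-})^! K \cong (\gt^\theta_P)_!(v^\theta_P)^* K$. All the complexes appearing in the statement are $M(\cO)$-equivariant and hence \emph{a fortiori} $T$-equivariant, so the reduction applies.

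For part (ii), I would first recall that $\Bun_P$ coincides with the open stratum ${_{x,0}\Bunt_P}$ of the stratification of $\Bunt_P$ from \secref{Sect_2.3.7_local_vs_global}. Specializing the identification $\tilde\pi^{-1}({_{x,-w_0^M(\nu)}\Bunt_P}) = S^\nu_P$ of \secref{Sect_2.3.8_local_vs_global} at $\nu = 0$ gives $\tilde\pi^{-1}(\Bun_P) = S^0_P$, so $\tilde\pi^!(j_{glob})_!\IC_{\Bun_P}$ is set-theoretically supported on $S^0_P \subset \Gr_G$. Now $S^0_P$ is by definition the preimage of $\Gr_M^0$ under $\gt^0_P$, so $S^0_P \subset \Gr_P^0$; since the locally closed sub-ind-schemes $\Gr_P^\theta \subset \Gr_G$ for distinct values of $\theta \in \Lambda_{G,P}$ are pairwise disjoint, we conclude that $(v^\theta_P)^*\tilde\pi^!(j_{glob})_!\IC_{\Bun_P} = 0$ for every $\theta \ne 0$, and part (ii) follows by Braden.

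For part (i), after Braden the LHS is identified by \propref{Pp_2.5.15_*-restriction} with $\Sat_M(\cO(U(\check P))_\theta)[-\<\theta, 2\check\rho - 2\check\rho_M\>]$. For the RHS, I would compute $(\gt^\theta_P)_!(v^\theta_P)^*\tilde\pi^!\IC_{\wt{glob}}$ by applying $(\gt^\theta_P)_!(v^\theta_P)^*$ termwise to the colimit (\ref{Def_IC_semi_inf}) defining $\IC^{\frac{\infty}{2}}_P$. Each term $t^{-\lambda}\Sat(V^\lambda)[\<\lambda, 2\check\rho\>]$ restricts, via \propref{Pp_gRes_for_Levi}, to a shift of $\Sat_M((V^\lambda)_\theta)$, and the transition maps in the resulting diagram match those of \lemref{Lm_2.1.8_some_colimit} with $\eta = 0$; hence in the colimit one recovers precisely $\Sat_M(\cO(U(\check P))_\theta)[-\<\theta, 2\check\rho - 2\check\rho_M\>]$. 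It remains to check that the specific morphism (\ref{map_for_Thm_restriction_of_glob_first}), constructed termwise in \secref{Sect_2.6.9_describing_the_composition} via a factorization through the costandard $\nabla_{glob}^0$, induces the canonical identification between the two presentations of this colimit.

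The main obstacle will be this last termwise verification: the map (\ref{map_for_Sect_2.6.5_from_lambda_to_ICglob}) is built using the Hecke expansion of $\IC_{\wt{glob}}$ against $\Sat((V^\lambda)^*)$ combined with the Pl\"ucker decomposition of $\Bunt_P$ and Frobenius reciprocity, and one must carefully confirm that the matrix-coefficient formulas thereby produced coincide with the canonical transition morphisms of \lemref{Lm_2.1.8_some_colimit}, rather than with some nontrivial twist thereof. This amounts to an explicit bookkeeping computation on finite-dimensional representations, patterned after the proof of \propref{Pp_2.1.10_colimit_for_Bunt_P}.
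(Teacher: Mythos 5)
Both halves of your argument have genuine gaps; in each case the reduction via Braden's theorem (\lemref{Lm_theorem_of_Braden_for_theta}) from the repelling $P^-$-side to the attracting $P$-side is legitimate and is indeed how the paper sets the statement up, but your treatment of the resulting $P$-side assertions does not work. For part (ii), your key claim is that $\tilde\pi^!(j_{glob})_!\IC_{\Bun_P}$ is set-theoretically supported on $S^0_P=\tilde\pi^{-1}(\Bun_P)$. This is false: $j_{glob}$ is an \emph{open} immersion and $\tilde\pi^!$ is a $!$-pullback, and there is no base change isomorphism $\tilde\pi^!(j_{glob})_!\simeq (j')_!(\tilde\pi')^!$ in this situation. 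Since $i_y^!\circ\tilde\pi^!\,\iso\, i_{\tilde\pi(y)}^!$ by functoriality of $!$-pullback, the object $\tilde\pi^!(j_{glob})_!\IC_{\Bun_P}$ is nonzero at every point of $\bar S^0_P$ where the $!$-costalk of $(j_{glob})_!\IC_{\Bun_P}$ is nonzero, and such costalks do not vanish along $\Bunt_P\setminus\Bun_P$ (already for $j:\AA^1\setminus\{0\}\hook{}\AA^1$ the costalk of $j_!e$ at $0$ is nonzero). So the vanishing of $(v^\theta_P)^*\tilde\pi^!(j_{glob})_!\IC_{\Bun_P}$ for $\theta\ne 0$ — which is the correct intermediate statement — cannot be obtained from a support argument; it is the entire content of (ii). The paper obtains it from the Zastava space: \propref{Pp_2.6.21_Zastava} b) gives $\gq_{\cZ}^!(j_{glob})_!\omega_{\Bun_P}\,\iso\, j_!\omega_{\oo{\cZ}{}^{\theta}}$, and the contraction principle $(\pi_P)_*j_!\omega_{\oo{\cZ}{}^{\theta}}\,\iso\,\gs^*j_!\omega_{\oo{\cZ}{}^{\theta}}$ then yields zero for $\theta\ne 0$ because the section $\gs$ misses $\oo{\cZ}{}^{\theta}$.

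For part (i), your computation of the right-hand side is circular. You propose to evaluate $(\gt^\theta_P)_!(v^\theta_P)^*\tilde\pi^!\IC_{\wt{glob}}$ by applying the functor termwise to the colimit (\ref{Def_IC_semi_inf}); but that colimit defines the \emph{local} object $\IC_P^{\frac{\infty}{2}}$, not $\tilde\pi^!\IC_{\wt{glob}}$, and the assertion that the two have matching hyperbolic restrictions is precisely what \propref{Pp_2.6.18_main_technical} i) is meant to establish (it is the local--global comparison of Theorem~\ref{Thm_restriction_of_glob_first}). Your identification of the left-hand side via \propref{Pp_2.5.15_*-restriction} is fine, but the right-hand side must be computed by genuinely global means: one identifies it with $i_{\Mod}^!(\pi_P)_*\IC_{\cZ^{\theta}}$ up to shift via \propref{Pp_2.6.21_Zastava} a), shows it is concentrated in a single perverse degree and computed by $(\oo{\pi}_{\gF})_*\omega_{\oo{\gF}{}^{\theta}}$ via \propref{Pp_2.6.23_citation_from_BFGM} and \propref{Pp_2.6.23_about_Gr}, and uses the finiteness of $\oo{\gF}{}^{\theta}$ (\corref{Cor_2.6.28}) to compare with a single, sufficiently large term of the colimit. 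The final ``bookkeeping'' step you defer cannot even be formulated without this input, since without it you have no description of the target of the map.
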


\sssec{Recollection about the Zastava space} For $\theta\in -\Lambda_{G,P}^{pos}$ denote by $\cZ^{\theta}$ the moduli stack classifying $(\cF_M,\beta_M)\in \Mod_M^{-,\theta}$ for which we set $D=\pi_M(\cF_M,\beta_M)\in X^{-\theta}$, a $G$-torsor $\cF_G$ on $X$ together with a trivialization $\beta: \cF_G\,\iso\,\cF_M\times_M G\mid_{X-\supp(D)}$ such that two conditions hold: 
\begin{itemize}
\item For any finite-dimensional $G$-module $\cV$, the map $\cV\to \cV_{U(P^-)}$ extends to a regular surjective map of vector bundles on $X$
$$
\cV_{\cF_G}\toup{\beta}\cV_{\cF_M}\to  (\cV_{U(P^-)})_{\cF_M}
$$
\item For any finite-dimensional $G$-module $\cV$, the composition 
$$
\cV^{U(P)}_{\cF^0_M}\,\hook{\beta_M}\, \cV^{U(P)}_{\cF_M}\to \cV_{\cF_M}\toup{\beta^{-1}} \cV_{\cF_G}
$$
is a regular morphism of coherent sheaves on $X$.
\end{itemize}

\sssec{} As in \cite{BFGM}, $\cZ^{\theta}$ is representable by an irreducible quasi-projective scheme. Write $\pi_P: \cZ^{\theta}\to \Mod_M^{-,\theta}$ for the map sending the above point to $(\cF_M,\beta_M)$. Let $\gs: \Mod_M^{-,\theta}\to \cZ^{\theta}$ denote the natural section  of $\pi_P$. Let $\gq_{\cZ}: \cZ^{\theta}\to \Bunt_P$ be the map sending the above point to $(\cF^0_M, \cF_G,\kappa)$. 

 Set $\oo{\cZ}{}^{\theta}=\gq_{\cZ}^{-1}(\Bun_P)$. The scheme $\oo{\cZ}{}^{\theta}$ is smooth, and $\dim\cZ^{\theta}=-\<\theta, 2\check{\rho}-2\check{\rho}_M\>$. 
  
 Let $\gF^{\theta}$ be the \select{central fibre} of $\cZ^{\theta}$ over $X^{-\theta}$, that is, the preimage of $-\theta x\in X^{-\theta}$ under $\pi_M\comp\pi_P$. Set also $\oo{\gF}{}^{\theta}=\gF^{\theta}\cap \oo{\cZ}{}^{\theta}$. Write $i_{\Mod}: \Gr_M^{-,\theta}\hook{} \Mod_M^{-, \theta}$ for the base change of $\Mod_M^{-,\theta}$ under $-\theta x: \Spec k\to X^{-\theta}$.  As in \cite{BFGM}, one gets an isomorphism 
$$
\gF^{\theta}\,\iso\, \bar S^0_P\cap \Gr_{P^-}^{\theta},
$$ 
which restricts to an isomorphism of open subshemes $\oo{\gF}{}^{\theta}\,\iso\, S^0_P\cap \Gr_{P^-}^{\theta}$. The diagram commutes
$$
\begin{array}{ccc}
\cZ^{\theta} & \xrightarrow{\; \;\;\;\;\;\;\;\;\;\gq_{\cZ}\;\;\;\;\;\;\;\;\;\;}
& \Bunt_P^0\\
\uparrow && \uparrow\lefteqn{\scriptstyle \tilde\pi}\\
\gF^{\theta} & \iso\,  \bar S^0_P\cap \Gr_{P^-}^{\theta}\hook{} & \bar S^0_P
\end{array}
$$

 Let $j: \oo{\cZ}{}^{\theta}\hook{} \cZ^{\theta}$ be the natural open immersions. The following is an analog of (\cite{Gai19SI}, Proposition~3.6.5) (though formally in the case $B=P$ they differ).  
\begin{Pp} 
\label{Pp_2.6.21_Zastava}
Let $\theta\in -\Lambda_{G,P}^{pos}$.\\
a) One has a canonical isomorphism
$$
\gq_{\cZ}^!\IC_{\wt{glob}}[(g-1)\dim P]\,\iso\, \IC_{\cZ^{\theta}}[-\<\theta, 2\check{\rho}-2\check{\rho}_M\>]
$$
extending the tautological one over $\oo{\cZ}{}^{\theta}$. Note that $\dim\Bun_P^0=(g-1)\dim P$.

\smallskip\noindent
b) One has a canonical isomorphism $j_!(\omega_{\oo{\cZ}{}^{\theta}})\,\iso\, \gq_{\cZ}^!(j_{glob})_!\omega_{\Bun_P}$ 
\end{Pp}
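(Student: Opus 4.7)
The plan is to deduce part~b) by $!$-base change, and to prove part~a) by combining a computation over the smooth dense open $\oo{\cZ}{}^{\theta}$ with an intermediate-extension argument, the latter being the main obstacle.

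\smallskip

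\noindent\textbf{Part b).} By definition $\oo{\cZ}{}^{\theta}=\gq_{\cZ}^{-1}(\Bun_P)$, so the commutative square
$$
\begin{array}{ccc}
\oo{\cZ}{}^{\theta} & \toup{j} & \cZ^{\theta}\\
\downarrow\lefteqn{\scriptstyle h} && \downarrow\lefteqn{\scriptstyle \gq_{\cZ}}\\
\Bun_P & \toup{j_{glob}} & \Bunt_P
\end{array}
$$
is cartesian, with $j_{glob}$ an open immersion. Applying $!$-base change to the adjoint pair $((j_{glob})_!, j_{glob}^!)$, and using the standard compatibility $h^!\omega_{\Bun_P}\iso\omega_{\oo{\cZ}{}^{\theta}}$ of the dualizing complex with $!$-pullback, one gets
$$
\gq_{\cZ}^!(j_{glob})_!\omega_{\Bun_P}\,\iso\, j_!\,h^!\omega_{\Bun_P}\,\iso\, j_!\omega_{\oo{\cZ}{}^{\theta}}.
$$

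\smallskip

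\noindent\textbf{Part a), restriction to the open part.} Writing $\IC_{\Bun_P}=\omega_{\Bun_P}[-(g-1)\dim P]$ and using that $\IC_{\wt{glob}}$ restricts to $\IC_{\Bun_P}$ on the open substack $\Bun_P\subset \Bunt_P$, one obtains
$$
\gq_{\cZ}^!\IC_{\wt{glob}}[(g-1)\dim P]\big|_{\oo{\cZ}{}^{\theta}}\,\iso\, h^!\omega_{\Bun_P}\,\iso\,\omega_{\oo{\cZ}{}^{\theta}}.
$$
Since $\oo{\cZ}{}^{\theta}$ is smooth of dimension $-\<\theta,2\check{\rho}-2\check{\rho}_M\>$, this agrees with $\IC_{\cZ^{\theta}}[-\<\theta,2\check{\rho}-2\check{\rho}_M\>]|_{\oo{\cZ}{}^{\theta}}$, giving the claimed isomorphism over $\oo{\cZ}{}^{\theta}$. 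As $\cZ^{\theta}$ is irreducible with dense open smooth subscheme $\oo{\cZ}{}^{\theta}$, the defining property of $\IC_{\cZ^{\theta}}$ as the intermediate extension reduces the claim of part~a) to showing that $\gq_{\cZ}^!\IC_{\wt{glob}}[(g-1)\dim P]$ is itself the intermediate extension of its restriction to $\oo{\cZ}{}^{\theta}$.

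\smallskip

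\noindent\textbf{Main obstacle.} The hard step is this intermediate-extension property, i.e.\ controlling the $!$- and $*$-restrictions of $\gq_{\cZ}^!\IC_{\wt{glob}}[(g-1)\dim P]$ to the boundary $\cZ^{\theta}\setminus \oo{\cZ}{}^{\theta}$. I would proceed by pulling back along $\gq_{\cZ}$ the stratification of $_{x,\infty}\Bunt_P$ by the substacks $_{x,\nu}\Bunt_P$, $\nu\in\Lambda^+_M$ (those with $\nu\neq 0$ lying in the boundary). On each such boundary stratum, the restriction of $\IC_{\wt{glob}}$ is controlled by the intermediate-parabolic analog of the main theorem of \cite{BFGM}, which expresses it in terms of the Satake category for $M$. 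Combining this with the factorization of $\cZ^{\theta}$ over $X^{-\theta}$, which identifies each boundary stratum with a product of a smaller Zastava space and a symmetric power of the curve, yields the strict perverse cohomological inequalities required to exclude subobjects and quotients supported on the boundary. The overall scheme of argument parallels \cite[Proposition~3.6.5]{Gai19SI}, with modifications accommodating the intermediate parabolic case (most notably, the shifts $\<\cdot,2\check{\rho}-2\check{\rho}_M\>$ and the use of $\Sat_M$ in place of $\Sat_T$).
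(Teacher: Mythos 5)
Your proof of part~b) contains a genuine gap, and it is exactly the crux of the proposition. The base change you invoke, $\gq_{\cZ}^!(j_{glob})_!\,\iso\, j_!\,h^!$, is not a valid identity: for a cartesian square with $j_{glob}$ an open immersion one has $\gq_{\cZ}^*(j_{glob})_!\iso j_!h^*$ and $\gq_{\cZ}^!(j_{glob})_*\iso j_*h^!$, but the mixed variant $\gq_{\cZ}^!(j_{glob})_!\iso j_!h^!$ requires $\gq_{\cZ}$ to be smooth (or some transversality), and $\gq_{\cZ}:\cZ^{\theta}\to\Bunt_P$ is not smooth. If it were, both parts of the proposition would be immediate. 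The same issue undermines your plan for part~a): the open-stratum computation and the reduction to an intermediate-extension statement are fine, but knowing the $*$/$!$-restrictions of $\IC_{\wt{glob}}$ to boundary strata of $\Bunt_P$ (the \cite{BFGM}-type formulas) does not control the $!$-restrictions of $\gq_{\cZ}^!\IC_{\wt{glob}}$ to the corresponding strata of $\cZ^{\theta}$, precisely because $\gq_{\cZ}$ is not smooth along them; your sketch never supplies the input that would make those costalk estimates go through.

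The paper's proof supplies that missing input differently. First, over the locus $\cZ^{\theta,sm}$ lying over $\Bun_M^{sm}$, the map $\cZ^{\theta}\to\Bunt_{U(P)}$ is smooth, and both $\IC_{\wt{glob}}$ and $(j_{glob})_!\IC_{\Bun_P}$ are ULA with respect to $\Bunt_P\to\Bun_M$ by (\cite{BG}, Theorem 5.1.5); the combination of smoothness and ULA is what legitimizes the $!$-pullback computation there (for both a) and b)). Second, the general case is reduced to this one via the factorization property of $\cZ^{\theta}$ over $X^{-\theta}$: one chooses $\theta'$ so that $(\cZ^{\theta}\times\oo{\cZ}{}^{\theta'})^{sm}_{disj}\to\cZ^{\theta}$ is smooth and surjective, and chases the diagram (\ref{diag_for_chasing}) through the level-structure quotient $(\Bunt_{U(P)}\times X^{-\theta'})^{level}/U(P)'_{\theta'}$, along which all relevant arrows are smooth. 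If you want to salvage your approach, you need to incorporate the ULA statement and a smooth-cover argument of this kind; stalk formulas on the target alone will not suffice.
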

\sssec{}  The proof of Proposition~\ref{Pp_2.6.21_Zastava} is given in Section~\ref{Sect_Proof_Pp_2.6.21_Zastava}. Note that $\gq_{\cZ}$ naturally extends to a map $\cZ^{\theta}\to \Bunt_P^0\times_{\Bun_G} \Bun_{P^-}^{\theta}$.
 
 Let $\oo{\pi}_{\gF}: \oo{\gF}{}^{\theta}\to \Gr_M^{-, \theta}$ be the restriction of $\pi_P$. 
Write $U(\gu(\check{P}))$ for the universal enveloping algebra of $\gu(\check{P})$. Recall that $U(\gu(\check{P}))$ is the graded dual of $\cO(U(\check{P}))$ (for $P=B$ this is proved in (\cite{GLS}, Proposition 5.1)). The following is a version of (\cite{BFGM}, Proposition~5.9).
\begin{Pp} 
\label{Pp_2.6.23_about_Gr}
The complex $(\oo{\pi}_{\gF})_!e[-\<\theta, 2\check{\rho}-2\check{\rho}_M\>]$ on $\Gr_M^{-,\theta}$ is placed in perverse degrees $\le 0$, and the $0$-th perverse cohomology sheaf identifies with 
$$
\DD\Sat_M(\cO(U(\check{P}))_{\theta})\,\iso\, \upsilon \Sat_M(U(\gu(\check{P}))_{-\theta})
$$ 
\end{Pp}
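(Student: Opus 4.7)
The plan is to follow the strategy of (\cite{BFGM}, Proposition~5.9), using as the key parabolic inputs the $*$-restriction formula for $\IC^{\frac{\infty}{2}}_P$ from Proposition~\ref{Pp_2.5.15_*-restriction} together with Braden's theorem (Lemma~\ref{Lm_theorem_of_Braden_for_theta}). First I would observe that under the identification $\oo\gF^\theta \cong S^0_P \cap \Gr^\theta_{P^-}$, the space $\oo\gF^\theta$ is an open subscheme of the smooth locus $\oo\cZ^\theta$ of the Zastava space, hence smooth of dimension $-\<\theta, 2\check\rho - 2\check\rho_M\>$. This yields $e[-\<\theta, 2\check\rho - 2\check\rho_M\>] \cong \IC_{\oo\gF^\theta}$, and, by Proposition~\ref{Pp_2.4.7_first_propeties}(c), this sheaf also identifies with the restriction of $(v^\theta_{P^-})^!\IC^{\frac{\infty}{2}}_P$ to $\oo\gF^\theta$ up to a cohomological shift.

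For the perverse degree bound, I would argue that $\oo\pi_\gF$ is affine (inheriting this from $\gt^\theta_{P^-}$, which is an ind-affine-space fibration over $\Gr_M^\theta$), so by Artin's theorem $(\oo\pi_\gF)_!$ is right $t$-exact, placing $(\oo\pi_\gF)_! \IC_{\oo\gF^\theta}$ in perverse degrees $\le 0$. To identify the top perverse cohomology, I would pass via Verdier duality (using the self-duality of $\IC_{\oo\gF^\theta}$) to the dual $*$-pushforward statement and compare with $(\gt^\theta_{P^-})_*(v^\theta_{P^-})^!\IC^{\frac{\infty}{2}}_P$. The latter complex admits a two-step computation: Braden's theorem rewrites it as $(\gt^\theta_P)_!(v^\theta_P)^*\IC^{\frac{\infty}{2}}_P$, and Proposition~\ref{Pp_2.5.15_*-restriction} with $\eta = 0$ then identifies this with $\Sat_M(\cO(U(\check P))_\theta)[-\<\theta, 2\check\rho - 2\check\rho_M\>]$. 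The alternative description $\DD\Sat_M(\cO(U(\check P))_\theta) \cong \upsilon\Sat_M(U(\gu(\check P))_{-\theta})$ would then follow routinely from the fact that $U(\gu(\check P))$ is the graded dual of $\cO(U(\check P))$ (proven for $B$ in \cite{GLS}, Proposition~5.1, and analogously in general) together with the interaction of $\DD$, $\upsilon$, and $\Sat_M$ recalled in the Remark after Proposition~\ref{Pp_gRes_for_Levi}.

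The hard part will be verifying that the top perverse cohomology of this $*$-pushforward is captured entirely by the open stratum $\oo\gF^\theta$, as opposed to receiving contributions also from the lower strata $S^\nu_P \cap \Gr^\theta_{P^-}$ for $\nu \in \Lambda^+_M \cap (-\Lambda^{pos})$, $\nu \ne 0$. This reduces to establishing the dimension estimate $\dim(S^\nu_P \cap \Gr^\theta_{P^-}) < -\<\theta, 2\check\rho - 2\check\rho_M\>$ for $\nu \ne 0$, combined with an analysis, via the formulas from Propositions~\ref{Pp_action_of_Rep(checkM)_on_SI-IC_P} and \ref{Pp_2.5.15_*-restriction} applied to each $\IC^{\frac{\infty}{2}}_{P,\nu}$, showing that the corresponding contributions land in strictly lower perverse degrees. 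I expect this estimate to be accessible either via a factorization argument over $X^{-\theta}$ as in the original treatment of the Zastava space, or via a direct Iwahori-type decomposition on the affine Grassmannian.
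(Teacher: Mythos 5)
Your argument has genuine gaps, beginning with the geometric setup. You assert that $\oo{\gF}{}^{\theta}$ is smooth of dimension $-\<\theta, 2\check{\rho}-2\check{\rho}_M\>$ and open in $\oo{\cZ}{}^{\theta}$; in fact $\oo{\gF}{}^{\theta}=\gF^{\theta}\cap\oo{\cZ}{}^{\theta}$ is the \emph{central fibre} of $\oo{\cZ}{}^{\theta}$ over the point $-\theta x\in X^{-\theta}$, hence a closed subscheme of strictly smaller dimension for $\theta\ne 0$ (already for $P=B$ one has $\dim \oo{\gF}{}^{\theta}=-\<\theta,\check{\rho}\>$, not $-\<\theta,2\check{\rho}\>$). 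So $e[-\<\theta,2\check{\rho}-2\check{\rho}_M\>]$ is not $\IC_{\oo{\gF}{}^{\theta}}$ and is not perverse. Moreover your appeal to Artin vanishing goes in the wrong direction: for an affine morphism $f$ it is $f_*$ that is right perverse $t$-exact, while $f_!$ is \emph{left} $t$-exact, so affineness (which in any case does not follow from restricting the ind-affine fibration $\gt^{\theta}_{P^-}$ to a locally closed subscheme of its source) would give a lower bound on perverse degrees, not the claimed upper bound. The upper bound $\le 0$ must come from a fibre-dimension estimate, and this is exactly what the paper does: the BFGM bound $\dim(S^0_P\cap S^{\nu}_{P^-})\le\<-w_0^M(\nu),\check{\rho}\>$ shows the fibres of $\oo{\pi}_{\gF}$ over $\Gr_M^{\nu}$ have dimension $\le -\<\nu,\check{\rho}\>$, whence $(\oo{\pi}_{\gF})_!e$ lies in perverse degrees $\le -\<\nu,2\check{\rho}-2\check{\rho}_M\>=-\<\theta,2\check{\rho}-2\check{\rho}_M\>$ stratum by stratum.

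The second gap is in your identification of the $0$-th perverse cohomology. Braden plus Proposition~\ref{Pp_2.5.15_*-restriction} computes $H^0$ of $(\gt^{\theta}_{P^-})_*(v^{\theta}_{P^-})^!\IC^{\frac{\infty}{2}}_P$, but transporting this to $H^0$ of $(\oo{\pi}_{\gF})_*\omega_{\oo{\gF}{}^{\theta}}$ through the triangle $i_*i^!\to\mathrm{id}\to j_*j^*$ only produces an injection of $\Sat_M(\cO(U(\check{P}))_{\theta})$ into the target, with cokernel embedding into $H^1$ of the closed-strata term; the vanishing of that connecting map is not automatic and is essentially equivalent to the statement you are proving. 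The paper sidesteps all of this: Proposition~\ref{Pp_2.6.23_about_Gr} is proved by pure geometry, with no input from $\IC^{\frac{\infty}{2}}_P$. After the dimension estimate reduces $H^0$ to counting the irreducible components of $S^0_P\cap S^{\nu}_{P^-}$ of top dimension $\<-w_0^M(\nu),\check{\rho}\>$, one translates by $t^{\mu'}$ for $\mu'\in\Lambda^+_{M,ab}$ deep in the wall to land inside $S_P^{-\mu'}\cap\Gr_G^{w_0(w_0^M(\nu-\mu'))}$, whose top-dimensional components form a basis of $\Hom_{\check{M}}(U^{-\mu'},V^{w_0(w_0^M(\nu-\mu'))})\,\iso\,\Hom_{\check{M}}(U^{-w_0^M(\nu)},U(\gu(\check{P}))_{-\theta})$ by (\cite{BFGM}, Theorem~6.2). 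If you wish to pursue your sheaf-theoretic route, you would still need this component count (or an equivalent multiplicity computation in the semisimple category $\Perv(\Gr_M)^{M(\cO)}$) to close the cokernel gap, at which point the detour through $\IC^{\frac{\infty}{2}}_P$ buys nothing.
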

 For completeness, we supply a proof of Proposition~\ref{Pp_2.6.23_about_Gr} in Section~\ref{Sect_2.8}. 
  
 The following is a version of (\cite{BFGM}, Proposition~5.7 and 5.8).  
\begin{Pp}
\label{Pp_2.6.23_citation_from_BFGM}
a) The complex $i_{\Mod}^!(\pi_P)_*\IC_{\cZ^{\theta}}$ is concentrated in perverse cohomological degree zero on $\Gr_M^{-,\theta}$. \\
b) The map 
\begin{equation}
\label{map_for_Pp_2.6.23}
i_{\Mod}^!(\pi_P)_*\IC_{\cZ^{\theta}}\to i_{\Mod}^!(\pi_P)_*j_*\IC_{\oo{\cZ}{}^{\theta}}\,\iso\, (\oo{\pi}_{\gF})_*\omega_{\oo{\gF}{}^{\theta}}[\<\theta, 2\check{\rho}-2\check{\rho}_M\>]
\end{equation} 
over $\Gr_M^{-,\theta}$ induces an isomorphism in the (lowest) perverse cohomological degree zero. Here we used the fact that $\oo{\cZ}{}^{\theta}$ is smooth of dimension $-\<\theta, 2\check{\rho}-2\check{\rho}_M\>$. \QED
 \end{Pp}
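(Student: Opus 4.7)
The plan is to adapt the proof of (\cite{BFGM}, Propositions~5.7 and 5.8) from the case $P=B$ to the present parabolic setting. The basic geometric input is that $\cZ^{\theta}$ is an irreducible scheme of pure dimension $-\<\theta, 2\check{\rho}-2\check{\rho}_M\>$, the open subscheme $\oo{\cZ}{}^{\theta}$ is smooth of the same dimension, and the projection $\pi_M\circ\pi_P:\cZ^{\theta}\to X^{-\theta}$ carries a factorization structure: for a decomposition $\theta=\theta_1+\theta_2$ in $-\Lambda_{G,P}^{pos}$, the restriction over the disjoint locus $(X^{-\theta_1}\times X^{-\theta_2})_{\mathrm{disj}}$ splits as $\cZ^{\theta_1}\times \cZ^{\theta_2}$, compatibly with $\pi_P$ and the section $\gs$. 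This will be established first and used to reduce the general analysis of $i_{\Mod}^!(\pi_P)_*\IC_{\cZ^{\theta}}$ (via its stalks at $\sum\lambda_i\cdot x_i\in\Mod_M^{-,\theta}$ with pairwise distinct $x_i$) to box-products of central-fibre calculations for the pieces $\cZ^{\lambda_i}$.

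For part (a), I will bound the perverse cohomological amplitude of $i_{\Mod}^!(\pi_P)_*\IC_{\cZ^{\theta}}$ from both sides. Concentration in degrees $\le 0$ follows because $\pi_P$ is small over a dense open of $\Mod_M^{-,\theta}$ (both total and base have the same pure dimension) combined with Artin vanishing, so $(\pi_P)_*\IC_{\cZ^{\theta}}$ is a perverse sheaf on $\Mod_M^{-,\theta}$ and its $!$-restriction to the smooth closed substack $\Gr_M^{-,\theta}\hook{}\Mod_M^{-,\theta}$, which has codimension equal to $\dim X^{-\theta}=-\<\theta, 2\check{\rho}-2\check{\rho}_M\>$ minus something computed from $X^{-\theta}$, sits in non-positive degree. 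The matching lower bound in degrees $\ge 0$ will be obtained by a Verdier-dual argument, using the isomorphism $\DD\IC_{\cZ^{\theta}}\,\iso\,\IC_{\cZ^{\theta}}$ and the parallel estimate for $(\pi_P)_!$ combined with the description of the central fibre $\gF^{\theta}\,\iso\,\bar S^0_P\cap \Gr_{P^-}^{\theta}$ from Section~\ref{Sect_Relation between local and global}. The combination forces concentration in degree $0$.

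For part (b), the identification $i_{\Mod}^!(\pi_P)_*j_*\IC_{\oo{\cZ}{}^{\theta}}\,\iso\,(\oo{\pi}_{\gF})_*\omega_{\oo{\gF}{}^{\theta}}[\<\theta, 2\check{\rho}-2\check{\rho}_M\>]$ is proper base change applied to the cartesian square cutting out $\oo{\gF}{}^{\theta}$ inside $\oo{\cZ}{}^{\theta}$ over $i_{\Mod}$, together with the smoothness of $\oo{\cZ}{}^{\theta}$ which gives $\IC_{\oo{\cZ}{}^{\theta}}\,\iso\,\omega_{\oo{\cZ}{}^{\theta}}[-\<\theta, 2\check{\rho}-2\check{\rho}_M\>]$ up to the appropriate sign. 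For the induced map in (\ref{map_for_Pp_2.6.23}) to be an isomorphism on the lowest (degree $0$) perverse cohomology, I will use that the cone of $\IC_{\cZ^{\theta}}\to j_*\IC_{\oo{\cZ}{}^{\theta}}$ is supported on the boundary $\cZ^{\theta}\setminus\oo{\cZ}{}^{\theta}$, which has strictly smaller dimension, so that a dimension count on the boundary strata (combined with part (a)) pushes its contribution to strictly positive perverse degree after applying $i_{\Mod}^!(\pi_P)_*$.

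The main obstacle will be the dimension bookkeeping in the parabolic setting: every estimate for $P=B$ in \cite{BFGM} features $2\check{\rho}$, and the substitution $2\check{\rho}\rightsquigarrow 2\check{\rho}-2\check{\rho}_M$ needs to be tracked carefully through the stratification of $\Mod_M^{-,\theta}$ by the coweights $\lambda\in\Lambda_M^+$ with $\lambda+\theta\in\Lambda_M^{pos}\cap(\ZZ_+\text{-span of }\alpha_i,\,i\in\cI-\cI_M)$, using (\cite{BG}, Proposition~6.2.3) to control which coweights actually appear in $\Gr_M^{-,\theta}$. A secondary difficulty is the Verdier-dual step for the lower bound, which uses the symmetry of the Zastava space under $P\leftrightarrow P^-$ swap composed with the Chevalley involution; this is straightforward for $B$ but requires invoking Lemma~\ref{Lm_theorem_of_Braden_for_theta} (Braden's theorem) to transport the estimate across the $\gt^{\theta}_P\leftrightarrow \gt^{\theta}_{P^-}$ duality.
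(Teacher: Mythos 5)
The paper does not prove this statement: it is quoted as ``a version of (\cite{BFGM}, Propositions~5.7 and 5.8)'' and closed with \QED, so the relevant comparison is with the argument of \cite{BFGM}, which your sketch does not correctly reproduce. The central problem is your derivation of the upper bound in part (a). First, $\pi_P:\cZ^{\theta}\to\Mod_M^{-,\theta}$ is an \emph{affine}, non-proper morphism (its central fibre is $\gF^{\theta}\,\iso\,\bar S^0_P\cap\Gr^{\theta}_{P^-}$, which is not projective), so ``small over a dense open'' plus the decomposition theorem does not apply and $(\pi_P)_*\IC_{\cZ^{\theta}}$ has no reason to be perverse. Second, even if it were perverse, the $!$-restriction along the closed immersion $i_{\Mod}$ of a perverse sheaf lies in perverse degrees $\ge 0$, not $\le 0$: your inequality points the wrong way, so this route cannot produce the bound $\le 0$ even in principle. (Your dimension bookkeeping for $\codim(\Gr_M^{-,\theta},\Mod_M^{-,\theta})$ is also off: $\dim X^{-\theta}$ is the number of points of the divisor counted with multiplicity, not $-\<\theta,2\check{\rho}-2\check{\rho}_M\>$.)

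What is actually missing is the engine of the \cite{BFGM} proof: the $\Gm$-action contracting $\cZ^{\theta}$ onto the section $\gs$ over $\Mod_M^{-,\theta}$, which (as in \cite{BFGM}, Proposition~5.2 --- invoked by the present paper a few lines later for $j_!\omega_{\oo{\cZ}{}^{\theta}}$) identifies $(\pi_P)_*$ with $\gs^*$ and $(\pi_P)_!$ with $\gs^!$ on monodromic objects. This reduces both bounds in (a) to the stratum-by-stratum dimension estimates on the central fibre, namely $\dim(S^0_P\cap S^{\nu}_{P^-})\le\<-w_0^M(\nu),\check{\rho}\>$ together with factorization --- precisely the content of Proposition~\ref{Pp_2.6.23_about_Gr} and Section~\ref{Sect_2.8}; the lower bound comes from comparing $\gs^*$ with $\gs^!$ via hyperbolic localization on $\cZ^{\theta}$ itself, not from a global Verdier duality of the pushforward or from the $P\leftrightarrow P^-$ swap. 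For part (b), your identification of the second term is fine (it is $!$-base change against the proper map $i_{\Mod}$, not properness of $\pi_P$), but the assertion that the boundary term $i_*i^!\IC_{\cZ^{\theta}}$ contributes only in strictly positive degrees does not follow from ``the boundary has smaller dimension'': after applying the non-exact composite $i_{\Mod}^!(\pi_P)_*$ one must again run the contraction principle and the same dimension count on the boundary strata of $\gF^{\theta}$, using that $i^!\IC_{\cZ^{\theta}}$ sits in perverse degrees $\ge 1$ there. Without these inputs the sketch does not close.
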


\sssec{Proof of Proposition~\ref{Pp_2.6.18_main_technical} ii)}
In view of (\cite{BFGM}, Proposition~5.2), we are reduced to show that
$$
i_{\Mod}^!(\pi_P)_*\gq_{\cZ}^! (j_{glob})_!\IC_{\Bun_P}=0
$$
Applying Proposition~\ref{Pp_2.6.21_Zastava} b), it suffices to show that 
$$
i_{\Mod}^!(\pi_P)_*j_!(\omega_{\oo{\cZ}{}^{\theta}})=0
$$
As in (\cite{BFGM}, Proposition~5.2), $(\pi_P)_*j_!(\omega_{\oo{\cZ}{}^{\theta}})\,\iso\, \gs^*j_!(\omega_{\oo{\cZ}{}^{\theta}})$. If $\theta\ne 0$ then $\gs^*j_!(\omega_{\oo{\cZ}{}^{\theta}})=0$, because the corresponding fibre product is empty. \QED

\sssec{} The following is established as in (\cite{BFGM}, Proposition~6.6).
\begin{Lm} 
\label{Lm_2.6.27_inclusion}
Let $\nu,\nu'\in\Lambda^+_M$ then $S_P^{\nu}\cap S_{P^-}^{\nu'}$ is a scheme of finite type. If $\lambda\in \Lambda^+_{M, ab}$ is deep enough on the wall of the corresponding Weyl chamber then 
$$
S_P^{\nu+\lambda}\cap S_{P^-}^{\nu'+\lambda}\subset \Gr_G^{\nu+\lambda}
$$
\end{Lm}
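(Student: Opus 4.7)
\begin{Prf} (Proof plan.)

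My plan is to reduce both assertions to the Borel case (i.e., the results of Mirkovi\'c--Vilonen) via the decomposition
\begin{equation}
\label{plan_decomposition}
S_P^{\eta} \;=\; \bigcup_{w \in W_M} S_B^{w\eta}, \qquad S_{P^-}^{\eta'} \;=\; \bigcup_{w' \in W_M} S_{B^-}^{w'\eta'},
\end{equation}
which I would first establish. Namely, $S_P^{\eta} = U(P)(F) \cdot \Gr_M^{\eta}$, and the $M(\cO)$-orbit $\Gr_M^{\eta}$ is a finite union $\bigcup_{w\in W_M} U_M(F) \cdot t^{w\eta}$ (Bruhat decomposition for $M$ applied to its Grassmanian, after taking closures if necessary); combining with $U(F) = U(P)(F) U_M(F)$ yields~(\ref{plan_decomposition}), and symmetrically for $P^-$.

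For the first assertion, I would write
$$
S_P^{\nu} \cap S_{P^-}^{\nu'} \;=\; \bigcup_{w, w' \in W_M} \bigl(S_B^{w\nu} \cap S_{B^-}^{w'\nu'}\bigr),
$$
a finite union indexed by $W_M \times W_M$. Each piece $S_B^{w\nu} \cap S_{B^-}^{w'\nu'}$ is a (possibly empty) scheme of finite type by the classical Mirkovi\'c--Vilonen theorem, so the union is too.

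For the second assertion, the key observation is that $\lambda \in \Lambda_{M, ab}$ is $W_M$-invariant (this is immediate from the definition $\Lambda_{M, ab} = \{\lambda \in \Lambda : \<\lambda, \check{\alpha}_i\>=0, \; i \in \cI_M\}$). Hence $w(\nu + \lambda) = w\nu + \lambda$ for $w \in W_M$, and~(\ref{plan_decomposition}) gives
$$
S_P^{\nu+\lambda} \cap S_{P^-}^{\nu'+\lambda} \;=\; \bigcup_{w, w' \in W_M} \bigl(S_B^{w\nu + \lambda} \cap S_{B^-}^{w'\nu' + \lambda}\bigr).
$$
It then suffices to show that for each $w, w' \in W_M$ and $\lambda$ sufficiently deep, $S_B^{w\nu+\lambda} \cap S_{B^-}^{w'\nu'+\lambda}$ lies in the appropriate $G(\cO)$-stratum $\Gr_G^{\nu+\lambda}$.

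The main point here, and the step I expect to be the hard one, is to bound from above the $G(\cO)$-orbit type of points of the MV intersection $S_B^{w\nu+\lambda} \cap S_{B^-}^{w'\nu'+\lambda}$. I would proceed as in (\cite{BFGM}, Proposition~6.6): since this intersection is pure of dimension $\<w\nu - w'\nu', \check{\rho}\>$ (independent of $\lambda$), its decomposition into $G(\cO)$-strata $\Gr_G^{\eta}$ involves only finitely many $\eta$, each satisfying $\eta \ge (w\nu+\lambda)^+$ and $\eta \ge (w'\nu'+\lambda)^+$ (these are the MV constraints coming from $\mu$ being a weight of $V^{\eta}$). For $\lambda$ deep along the $\mathcal{I}-\mathcal{I}_M$-wall, the dominance relation $\<w\nu + \lambda, \check\alpha_i\> \ge 0$ fails only for $i \in \mathcal{I}_M$, and this can be corrected by an element of $W_M$ (which sends $w\nu$ back to $\nu$ and fixes $\lambda$). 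So $(w\nu+\lambda)^+ = \nu + \lambda$ and similarly $(w'\nu'+\lambda)^+ = \nu' + \lambda$. A dimension comparison, using that $\dim \bigl( S_B^{\mu} \cap S_{B^-}^{\mu'} \cap \Gr_G^{\eta}\bigr) = \<\mu-\mu', \check\rho\>$ whenever non-empty, then forces there to be a unique maximal such $\eta$, which by the deepness of $\lambda$ must coincide with $\nu+\lambda$; this yields the containment claimed.

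\end{Prf}
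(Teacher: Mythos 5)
Your reduction has two genuine problems. First, the decomposition~(\ref{plan_decomposition}) is false in both directions. Since $S_P^{\eta}=U(P)(F)\cdot\Gr_M^{\eta}$ and $\Gr_M^{\eta}$ meets $S_{B_M}^{\mu}$ for \emph{every} weight $\mu$ of $U^{\eta}$ (not only the extremal weights $w\eta$), one only has the containment $S_P^{\eta}\subset\bigcup_{\mu\in\mathrm{wt}(U^{\eta})}S_B^{\mu}$; and conversely $S_B^{w\eta}\not\subset S_P^{\eta}$, since $S_B^{w\eta}=\bigsqcup_{\eta'}(S_B^{w\eta}\cap S_P^{\eta'})$ spreads over all $\eta'$ with $w\eta\in\mathrm{wt}(U^{\eta'})$ (already for $P=G$ one has $S_G^0=\{1\}$ while $S_B^0=U(F)\cdot 1$). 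The corrected containment is still a finite union, so your argument for finite-typeness survives. Second, and more seriously, the step you yourself flag as the hard one is not carried out. The Mirkovi\'c--Vilonen constraints you invoke ($\mu+\lambda$ and $\mu'+\lambda$ must be weights of $V^{\eta}$) are \emph{lower} bounds on $\eta$, so by themselves they can never force $\eta=\nu+\lambda$; you need an upper bound, and none is produced. The dimension formula $\dim\bigl(S_B^{\mu}\cap S_{B^-}^{\mu'}\cap\Gr_G^{\eta}\bigr)=\<\mu-\mu',\check\rho\>$ ``whenever non-empty'' is not correct: that is the dimension of the full intersection $S_B^{\mu}\cap S_{B^-}^{\mu'}$, and its trace on a non-generic stratum is smaller. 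Finally, even if a ``unique maximal $\eta$'' were established, that is strictly weaker than the assertion that only one stratum occurs.

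For comparison, the paper's proof bypasses the Iwasawa stratification entirely. Granting finite-typeness (established as in \cite{BFGM}, Proposition~6.6), the preimage of $S_P^{\nu}\cap S_{P^-}^{\nu'}$ under the action map $U(P)(F)\times\Gr_M^{\nu}\to S_P^{\nu}$ is contained in $\Ad_{t^{-\lambda}}(U(P)(\cO))\times\Gr_M^{\nu}$ once $\lambda$ is deep enough; translating by $t^{\lambda}$, which carries $S_P^{\nu}\cap S_{P^-}^{\nu'}$ to $S_P^{\nu+\lambda}\cap S_{P^-}^{\nu'+\lambda}$, then lands in $U(P)(\cO)\Gr_M^{\nu+\lambda}\subset\Gr_G^{\nu+\lambda}$. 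No weight or dimension estimates are needed. If you want to salvage your route, you would in effect have to reprove the Borel-case containment of \cite{BFGM} for each pair $(\mu+\lambda,\mu'+\lambda)$, which is exactly the kind of argument the direct proof renders unnecessary.
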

\begin{proof}
Recall the closed immersion $i^{\nu}_P: \Gr_M^{\nu}\hook{}S_P^{\nu}$, so $S_P^{\nu}=U(P)(F)\Gr_M^{\nu}$. Since $S_P^{\nu}\cap S_{P^-}^{\nu'}$ is finite type, there is $\lambda\in \Lambda^+_{M, ab}$ deep enough such that the preimage of $S_P^{\nu}\cap S_{P^-}^{\nu'}$ under the action map
$$
U(P)(F)\times \Gr_M^{\nu}\to S_P^{\nu}
$$
is contained in $\Ad_{t^{-\lambda}}(U(P)(\cO))\times \Gr_M^{\nu}$. So, the action of $t^{\lambda}$ sends $S_P^{\nu}\cap S_{P^-}^{\nu'}$ inside $U(P)(\cO)\Gr_M^{\nu+\lambda}\subset \Gr_G^{\nu+\lambda}$. 
\end{proof}
\begin{Cor} 
\label{Cor_2.6.28}
Let $\theta\in -\Lambda_{G,P}^{pos}$. Then for $\lambda\in\Lambda^+_{M, ab}$ deep enough on the wall of the corresponding Weyl chamber one has $S^0_P\cap \Gr_{P^-}^{\theta}\subset t^{-\lambda}\Gr_G^{\lambda}$.
\end{Cor}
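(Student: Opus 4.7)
The plan is to combine Lemma~\ref{Lm_2.6.27_inclusion} (applied with $\nu=0$) with Remark~\ref{Rem_2.3.7}(ii) (used both for $P$ and its evident analogue for $P^-$), and to use the finite-typeness of $S^0_P\cap \Gr_{P^-}^{\theta}$ coming from the Zastava picture to reduce an a priori infinite problem to finitely many applications of the lemma.

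First I would observe that by stratifying $\Gr_{P^-}^{\theta}$ by $H^-$-orbits, one has
\[
S^0_P\cap \Gr_{P^-}^{\theta}\;=\;\bigsqcup_{\nu'} \, S^0_P\cap S^{\nu'}_{P^-},
\]
where $\nu'$ ranges over those $\nu'\in\Lambda^+_M$ mapping to $\theta$ in $\Lambda_{G,P}$. By the identification $\gF^{\theta}\,\iso\, \bar S^0_P\cap \Gr_{P^-}^{\theta}$ from the recollection on the Zastava space, the locus $S^0_P\cap \Gr_{P^-}^{\theta}$ is isomorphic to the open subscheme $\oo{\gF}{}^{\theta}$ of the finite-type scheme $\gF^{\theta}$, hence is itself of finite type. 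Consequently only finitely many $\nu'$ contribute a non-empty stratum in the above disjoint union.

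Next, for each such $\nu'$, Lemma~\ref{Lm_2.6.27_inclusion} (applied with $\nu=0$) supplies some $\lambda_{\nu'}\in\Lambda^+_{M,ab}$, deep enough on the wall of the corresponding Weyl chamber, such that $S^{\lambda_{\nu'}}_P\cap S^{\nu'+\lambda_{\nu'}}_{P^-}\subset \Gr_G^{\lambda_{\nu'}}$. Translating this by $t^{-\lambda_{\nu'}}$ and using that $t^{\mu}$ carries $S^{\eta}_P$ to $S^{\eta+\mu}_P$ for $\mu\in\Lambda_{M,ab}$, $\eta\in\Lambda^+_M$ (and the analogous statement for $P^-$) from Remark~\ref{Rem_2.3.7}(ii), this rewrites as $S^0_P\cap S^{\nu'}_{P^-}\subset t^{-\lambda_{\nu'}}\Gr_G^{\lambda_{\nu'}}$.

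Finally I would pick $\lambda\in\Lambda^+_{M,ab}$ dominating all finitely many $\lambda_{\nu'}$ in the filtered relation $\le$ on $\Lambda^+_{M,ab}$. Since $\lambda-\lambda_{\nu'}\in\Lambda^+_{M,ab}\subset\Lambda^+$ pairs non-negatively with every $\check{\alpha}_i$ for $i\notin \cI_M$, such a $\lambda$ is itself deep enough on the wall for each of the relevant $\nu'$ simultaneously, and the argument of the previous paragraph gives $S^0_P\cap S^{\nu'}_{P^-}\subset t^{-\lambda}\Gr_G^{\lambda}$ for each of them. Taking the union over the finitely many contributing $\nu'$ yields the corollary. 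The main point requiring attention is the uniform choice of $\lambda$, for which the finite-type input coming from the Zastava space $\cZ^{\theta}$ is essential.
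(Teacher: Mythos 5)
Your argument is correct and is essentially the paper's own proof: the paper likewise notes that $\oo{\gF}{}^{\theta}=S^0_P\cap\Gr^{\theta}_{P^-}$ is of finite type, is finitely stratified by the $S^0_P\cap S^{\nu}_{P^-}$, and then invokes Lemma~\ref{Lm_2.6.27_inclusion}. Your write-up merely makes explicit the translation by $t^{-\lambda}$ and the uniform choice of $\lambda$ over the finitely many strata, both of which are left implicit in the paper.
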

\begin{proof}
We know that $\oo{\gF}{}^{\theta}$ is of finite type. It is stratified by locally closed subschemes $S^0_P\cap S_{P^-}^{\nu}$ for $\nu\in\Lambda^+_M$ over $\theta$. Since this stratification is finite, the claim follows from Lemma~\ref{Lm_2.6.27_inclusion}. 
\end{proof}

\sssec{Proof of Proposition~\ref{Pp_2.6.18_main_technical} i)} For $\lambda\in\Lambda^+_{M, ab}$ over $\bar\lambda\in\Lambda_{G,P}$
consider the composition 
$$
t^{-\lambda}\Sat(V^{\lambda})[\<\lambda, 2\check{\rho}\>]\to \IC^{\frac{\infty}{2}}_{P}\to
\tilde\pi^!\IC_{\wt{\glob}}[(g-1)\dim P].
$$
It gives rise to the map
\begin{multline}
\label{map_for_Sect_2.6.25}
(\gt^{\theta}_{P^-})_*(v_{P^-}^{\theta})^!(t^{-\lambda}\Sat(V^{\lambda})[\<\lambda, 2\check{\rho}\>]\to (\gt^{\theta}_{P^-})_*(v_{P^-}^{\theta})^!\tilde\pi^!\IC_{\wt{\glob}}[(g-1)\dim P].
\end{multline}
By Lemma~\ref{Lm_theorem_of_Braden_for_theta} and Proposition~\ref{Pp_gRes_for_Levi}, the LHS of (\ref{map_for_Sect_2.6.25}) identifies with
$$
t^{-\lambda}(\gt_P^{\theta+\bar\lambda})_!(v_P^{\theta+\bar\lambda})^*\Sat(V^{\lambda})[\<\lambda, 2\check{\rho}\>]\,\iso\, t^{-\lambda}\Sat_M((V^{\lambda})_{\theta+\bar\lambda})[-\<\theta, 2\check{\rho}-2\check{\rho}_M\>]
$$
and sits in perverse degree $\<\theta, 2\check{\rho}-2\check{\rho}_M\>$ on $\Gr_M^{\theta}$. As $\lambda$ runs through $\Lambda^+_{M, ab}$ the corresponding diagram $\Lambda^+_{M, ab}\to Shv(\Gr_M)^{\theta}$ was described in Proposition~\ref{Pp_2.5.15_*-restriction}. 

 By Propositions~\ref{Pp_2.6.21_Zastava} and \ref{Pp_2.6.23_citation_from_BFGM}, the RHS of (\ref{map_for_Sect_2.6.25}) identifies with
$$
i_{\Mod}^!(\pi_P)_*\gq_{\cZ}^!\IC_{\wt{glob}}[(g-1)\dim P]\,\iso\, i_{\Mod}^!(\pi_P)_* \IC_{\cZ^{\theta}}[-\<\theta, 2\check{\rho}-2\check{\rho}_M\>]
$$
and is concentrated in perverse cohomological degree $\<\theta, 2\check{\rho}-2\check{\rho}_M\>$. Thus, it suffices to show that the map (\ref{map_for_Sect_2.6.25}), after taking the colimit in the LHS over $\lambda\in\Lambda^+_{M, ab}$, induces an isomorphism on the perverse cohomology sheaves in degree $\<\theta, 2\check{\rho}-2\check{\rho}_M\>$. 

\sssec{} Write 
$$
\oo{\pi}{}^{\lambda}: S^0_P\cap \Gr_{P^-}^{\theta}\cap (t^{-\lambda}\Gr_G^{\lambda})\to \Gr_M^{-,\theta}
$$ 
for the restriction of $\oo{\pi}_{\gF}$. 
Since $t^{-\lambda}\ov{\Gr}_G^{\lambda}\subset \bar S^0_P$, we have the open immersion
$$
S^0_P\cap \Gr_{P^-}^{\theta}\cap (t^{-\lambda}\Gr_G^{\lambda})\subset  \Gr_{P^-}^{\theta}\cap (t^{-\lambda}\Gr_G^{\lambda})
$$

 For $\lambda\in\Lambda^+_{M, ab}$ large enough in the corresponding wall of the Weyl chamber, by Corollary~\ref{Cor_2.6.28},
$$ 
S^0_P\cap \Gr_{P^-}^{\theta}\cap (t^{-\lambda}\Gr_G^{\lambda})=S^0_P\cap \Gr_{P^-}^{\theta}=\oo{\gF}{}^{\theta}
$$
Now it suffices to show that for $\lambda\in\Lambda^+_{M, ab}$ large enough
the diagram commutes
$$
\begin{array}{ccc}
(\gt^{\theta}_{P^-})_*(v^{\theta}_{P^-})^!(t^{-\lambda}\Sat(V^{\lambda}))[\<\lambda, 2\check{\rho}\>] & \to & (\oo{\pi}{}^{\lambda})_*\omega_{S^0_P\cap \Gr_{P^-}^{\theta}\cap (t^{-\lambda}\Gr_G^{\lambda})}\\
\downarrow\lefteqn{\scriptstyle (\ref{map_for_Sect_2.6.25})} && \uparrow\\
(\gt^{\theta}_{P^-})_*(v^{\theta}_{P^-})^!\tilde\pi^!\IC_{\wt{glob}}[(g-1)\dim P] & \toup{(\ref{map_for_Pp_2.6.23})} & (\oo{\pi}_{\gF})_*\omega_{\oo{\gF}{}^{\theta}}
\end{array}
$$ 
As in (\cite{Gai19SI}, Section~3.7.3), this follows from the description of the map (\ref{map_for_Sect_2.6.8_long_composition}) in Section~\ref{Sect_2.6.9_describing_the_composition}. Proposition~\ref{Pp_2.6.18_main_technical} i) is proved. \QED

\ssec{Proof of Proposition~\ref{Pp_2.6.21_Zastava}}
\label{Sect_Proof_Pp_2.6.21_Zastava}

\sssec{} Write $\Bunt_{U(P)}$ for the version of $\Bunt_P$, where the corresponding $M$-torsor is fixed and identified with $\cF^0_M$. By abuse of notations, the natural map $\cZ^{\theta}\to \Bunt_{U(P)}$ is also denoted by $\gq_{\cZ}$. 

Let $\gu(P)$ be the Lie algebra of $U(P)$. Write $\Bun_M^{sm}\subset\Bun_M$ for the open substack given by the property that for all $M$-modules $V$ appearing as subquotients of $\gu(P)$ one has $\H^1(X, V_{\cF_M})=0$. Write $\Bun_{P^-}^{sm}\subset \Bun_{P^-}$ for the preimage of $\Bun_M^{sm}$ under $\Bun_{P^-}\to\Bun_M$. The map $\Bun_{P^-}^{sm}\to\Bun_G$ is smooth. Write $\cZ^{\theta, sm}$ for the preimage of $\Bun_{M}^{sm}$ under the projection $\cZ^{\theta}\to \Bun_M$, $(\cF_M, \cF_G, \beta,\beta_M)\mapsto \cF_M$.  

 If we have $\cZ^{\theta}=\cZ^{\theta, sm}$ then the claim is easy. It follows in this case as a combination of the fact that the projection $\cZ^{\theta}\to \Bunt_{U(P)}$ is smooth with the fact that both $\IC_{\wt{glob}}$ and $(j_{glob})_!\IC$ are ULA with respect to $\Bunt_P\to \Bun_M$ by (\cite{BG}, Theorem 5.1.5). We reduce to this case as in (\cite{Gai19SI}, Propositiion~3.6.5). 

 Set $\Bun_{P^-}^{\theta, sm}=\Bun_{P^-}^{\theta}\cap \Bun_{P^-}^{sm}$. Pick $\theta'\in -\Lambda_{G,P}^{pos}$. Let 
$$
(X^{-\theta}\times X^{-\theta'})_{disj}\subset X^{-\theta}\times X^{-\theta'}
$$
denote the open locus of divisors whose supports are disjoint. Recall the factorization property 
\begin{equation}
\label{fact_property}
\cZ^{\theta+\theta'}\times_{X^{-\theta-\theta'}} (X^{-\theta}\times X^{-\theta'})_{disj}\,\iso\,
(\cZ^{\theta}\times \cZ^{\theta'})\times_{X^{-\theta}\times X^{-\theta'}} (X^{-\theta}\times X^{-\theta'})_{disj}
\end{equation}

\sssec{} Let $(\Bunt_{U(P)}\times X^{-\theta'})^{good}\subset \Bunt_{U(P)}\times X^{-\theta'}$ be the open subscheme given by the property that the maps
$$
\kappa^{\cV}: \cV^{U(P)}_{\cF^0_M}\hook{} \cV_{\cF_G}
$$
have no zero at the support of the point of $X^{-\theta'}$. 

 Given $S\in\Sch^{aff}$ and $D\in X^{-\theta'}(S)$ write $\hat \cD_D$ for the formal completion of $\supp(D)$ in $S\times X$. Let $\cD_D$ be the affine scheme corresponding to 
$\hat \cD_D$, i.e., the image of $\hat\cD_D$ under the functor
$$
\colim: \Ind(\Sch^{aff})\to \Sch^{aff}
$$
Let $\oo{\cD}_D\subset \cD_D$ be the open subscheme obtained by removing $\supp(D)$. 

Write $\gL^+(U(P))_{\theta'}$ for the group scheme over $X^{-\theta'}$ classifying a point $D\in X^{-\theta'}$ and a section $\cD_D\to U(P)$. We leave it to a reader to formulate a version of this definition with $S$-points for a test scheme $S\in\Sch^{aff}$.  
 
 For a point of $(\Bunt_{U(P)}\times X^{-\theta'})^{good}$ one gets a $U(P)$-torsor $\cF_{U(P)}$ over $\cD_D$. Let 
$$
(\Bunt_{U(P)}\times X^{-\theta'})^{level}\to (\Bunt_{U(P)}\times X^{-\theta'})^{good}
$$
be the stack classifying a point of $(\Bunt_{U(P)}\times X^{-\theta'})^{good}$ as above together with an trivialization
$$
\cF_{U(P)}\,\iso\, \cF^0_{U(P)}
$$
of this $U(P)$-torsors on $\cD_D$. This is a torsor under the group scheme $\gL^+(U(P))_{\theta'}$.

 Write $\gL(U(P))_{\theta'}$ for the group ind-scheme over $(\Bunt_{U(P)}\times X^{-\theta'})^{good}$ classifying a point $D\in X^{-\theta'}$ and a section $\oo{\cD}_D\to U(P)$. The usual gluing procedure allows to extend the action of $\gL^+(U(P))_{\theta'}$ on $(\Bunt_{U(P)}\times X^{-\theta'})^{level}$ to that of $\gL(U(P))_{\theta'}$. 
 
 Pick a group subscheme 
$$
\gL^+(U(P))_{\theta'}\subset U(P)'_{\theta'}\subset \gL(U(P))_{\theta'}
$$
pro-smooth over $X^{-\theta'}$, where the first inclusion is a placid closed embedding, and $U(P)'_{\theta'}/\gL^+(U(P))_{\theta'}$ is smooth. Then the projection 
$$
(\Bunt_{U(P)}\times X^{-\theta'})^{good}\to  (\Bunt_{U(P)}\times X^{-\theta'})^{level}/U(P)'_{\theta'}
$$
is smooth, where the RHS denotes the stack quotient. 

\sssec{} Recall that for $\Mod_M^{-,\theta}$ one also has the factorization isomorphism
\begin{multline}
\label{fact_property_for_Mod_M^-}
(\Mod_M^{-,\theta}\times \Mod_M^{-,\theta'})\times_{(X^{-\theta}\times X^{-\theta'})} (X^{-\theta}\times X^{-\theta'})_{disj}\,\iso\\ \Mod_M^{-, \theta+\theta'}\times_{X^{-\theta-\theta'}} (X^{-\theta}\times X^{-\theta'})_{disj}
\end{multline}

 Denote by 
$$
(\Mod_M^{-,\theta}\times \Mod_M^{-,\theta'})^{sm}_{disj}
$$ 
the preimage of $\Bun_M^{\theta+\theta', sm}$ under (\ref{fact_property_for_Mod_M^-}) composed with the projection to $\Bun_M^{\theta+\theta'}$. 
Write $(\cZ^{\theta}\times \oo{\cZ}{}^{\theta'})^{sm}_{disj}$ for the preimage of $(\Mod_M^{-,\theta}\times \Mod_M^{-,\theta'})^{sm}_{disj}$ under 
$$
\cZ^{\theta}\times \oo{\cZ}{}^{\theta'}\to \cZ^{\theta}\times \cZ{}^{\theta'}\to \Mod_M^{-,\theta}\times \Mod_M^{-,\theta'}
$$

\sssec{} Let $\gq^{sm}$ be the composition
$$
(\cZ^{\theta}\times \oo{\cZ}{}^{\theta'})^{sm}_{disj} \toup{(\ref{fact_property})}  \cZ^{\theta+\theta', sm}\times_{X^{-\theta-\theta'}} (X^{-\theta}\times X^{-\theta'})_{disj}\to \Bunt_{U(P)}\times X^{-\theta'}.
$$
The map $\gq^{sm}$ is smooth and factors through the open immersion
$$
(\Bunt_{U(P)}\times X^{-\theta'})^{good}\hook{} \Bunt_{U(P)}\times X^{-\theta'}.
$$ 
For $U(P)'_{\theta'}$ large enough the diagram commutes
\begin{equation}
\label{diag_for_chasing}
\begin{array}{ccc}
(\cZ^{\theta}\times \oo{\cZ}{}^{\theta'})^{sm}_{disj} \\
\downarrow & \searrow\lefteqn{\scriptstyle \gq^{sm}}\\
(\cZ^{\theta}\times X^{-\theta'})\times_{(X^{-\theta}\times X^{-\theta'})} (X^{-\theta}\times X^{-\theta'})_{disj} && (\Bunt_{U(P)}\times X^{-\theta'})^{good}\\
\downarrow\lefteqn{\scriptstyle \gq_{\cZ}} && \downarrow \\
(\Bunt_{U(P)}\times X^{-\theta'})^{good} & \to & (\Bunt_{U(P)}\times  X^{-\theta'})^{level}/U(P)'_{\theta'}
\end{array}
\end{equation}
 
\begin{Lm} 
\label{Lm_2.7.5}
Given $\theta\in -\Lambda_{G,P}^{pos}$, there is $\theta'\in -\Lambda_{G,P}^{pos}$ such that the projection
$$
(\Mod_M^{-,\theta}\times \Mod_M^{-,\theta'})^{sm}_{disj}\to \Mod_M^{-,\theta}
$$
is surjective. \QED
\end{Lm}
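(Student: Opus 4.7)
The plan is to reduce the statement to a pointwise existence claim and then establish it by an elementary modification at a generic point with a sufficiently anti-dominant coweight, using the openness of $\Bun_M^{sm}\subset \Bun_M$ together with quasi-compactness of $\Mod_M^{-,\theta}$ to ensure uniformity in $\theta'$.

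First I would note that $\Mod_M^{-,\theta}$ is of ind-finite type over $X^{-\theta}$ (realized as a closed ind-subscheme of the Beilinson--Drinfeld affine Grassmannian of $M$ over $X^{-\theta}$), and in particular its image in $\Bun_M$ meets only finitely many connected components, each covered by a quasi-compact open substack. Since $\Bun_M^{sm}\subset\Bun_M$ is open, it suffices to produce, uniformly over any such quasi-compact family of points $(\cF_M,\beta_M,D)\in\Mod_M^{-,\theta}$, a single $\theta'\in -\Lambda_{G,P}^{pos}$ together with a modification $(\cF'_M,\beta'_M,D')\in\Mod_M^{-,\theta'}$ whose support $D'$ is disjoint from $D$ and whose gluing with $\cF_M$ yields a bundle $\cF''_M\in\Bun_M^{sm}$. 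Enlarging $-\theta'$ inside $\Lambda_{G,P}^{pos}$ only weakens the constraint (one may always compose with a further modification at a fresh disjoint point by any element of $-\Lambda_{G,P}^{pos}$), so it is enough to treat each quasi-compact piece and then pass to the supremum in the monoid $-\Lambda_{G,P}^{pos}$.

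For the pointwise construction I would fix an auxiliary closed point $x'\in X\setminus\supp(D)$ and perform an elementary modification of $\cF_M$ at $x'$ by a coweight $\mu\in -\Lambda^{pos}_{G,P}$. For any subquotient $V$ of the adjoint $M$-representation on $\gu(P)$, the vector bundle $V_{\cF''_M}$ differs from $V_{\cF_M}$ by an elementary modification at $x'$ whose type is controlled by $\mu$ and the $M/[M,M]$-weights of $V$. Since the weights of $M/[M,M]$ on $\gu(P)$ pair strictly positively against any nonzero element of $-\Lambda^{pos}_{G,P}$, taking $-\mu$ sufficiently deep in this cone arranges that every Harder--Narasimhan slope of $V_{\cF''_M}$ exceeds $2g-2$, which by the standard criterion forces $\H^1(X,V_{\cF''_M})=0$ for every such $V$; hence $\cF''_M\in \Bun_M^{sm}$.

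The main obstacle will be promoting this pointwise argument to a uniform one: the required lower bound on $-\mu$ depends a priori on the Harder--Narasimhan polygon of the starting $\cF_M$, so one has to show these polygons stay bounded as $(\cF_M,\beta_M,D)$ ranges over a quasi-compact piece of $\Mod_M^{-,\theta}$. This follows from the standard boundedness of HN slopes on bounded families of $M$-bundles (compare the analogous quasi-compactness inputs in \cite{BG, BFGM}). Once this uniform slope bound is in hand, a single $\theta'\in -\Lambda_{G,P}^{pos}$ depending only on $\theta$ and the genus $g$ works for all of $\Mod_M^{-,\theta}$, completing the proof.
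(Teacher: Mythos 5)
The paper states this lemma with no proof at all (it is asserted as standard), so there is nothing to compare against; your strategy --- modify at a fresh point $x'$ by a sufficiently deep coweight, then use openness of $\Bun_M^{sm}$ and boundedness of the family $\Mod_M^{-,\theta}$ to extract a single $\theta'$ --- is indeed the expected argument. However, the step that actually produces the vanishing of $\H^1$ is not correct as written. You assert that ``the weights of $M/[M,M]$ on $\gu(P)$ pair strictly positively against any nonzero element of $-\Lambda^{pos}_{G,P}$,'' and you take the modification datum to be an element $\mu\in-\Lambda_{G,P}^{pos}$. But an elementary modification of an $M$-bundle at $x'$ is a point of $\Gr_M$, whose type is a coweight in $\Lambda_M^+\subset\Lambda$, not an element of the quotient lattice $\Lambda_{G,P}$; and the $T$-weights of $\gu(P)$ are the positive roots $\check{\alpha}$ of $G$ outside $M$, which do not lie in $\check{\Lambda}_{G,P}$, so their pairing with $\theta'\in\Lambda_{G,P}$ is not even defined. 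For an arbitrary lift $\mu\in\Lambda$ of $\theta'$ the numbers $\langle\mu,\check{\alpha}\rangle$ are not sign-definite (already $\langle\alpha_j,\check{\alpha}\rangle$ with $j\notin\cI_M$ can be negative for a root $\check{\alpha}$ of $\gu(P)$ not involving $\check{\alpha}_j$), so ``$-\mu$ deep in $\Lambda_{G,P}^{pos}$'' does not by itself force the slopes of $V_{\cF''_M}$ to grow. The Harder--Narasimhan polygon of $V_{\cF''_M}$ depends on the genuine local modification, not only on its image in $\Lambda_{G,P}$.

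The repair is to choose the lift carefully: take $\mu\in\Lambda_{M,ab}$ with $-\mu$ a large multiple of the sum of the positive coroots of $G$ not in $M$ (this element is $W_M$-invariant, hence pairs to $0$ with $\check{\alpha}_i$ for $i\in\cI_M$, is $G$-dominant, and lies in $\Lambda^{pos}$). Then $\mu\in\Lambda_M^+\cap(-\Lambda^{pos})$, so by the criterion recalled at the beginning of Section~\ref{Sect_2.8} the point $t^{\mu}$ at $x'$ defines a point of $\Gr_M^{-,\theta'}$ with $\theta'$ the image of $\mu$ --- this also supplies the verification, missing from your write-up, that the second factor really satisfies the defining regularity conditions of $\Mod_M^{-,\theta'}$. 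For every root $\check{\alpha}$ of $\gu(P)$ one then has $\langle\mu,\check{\alpha}\rangle\le -N$, so the modification raises each slope of $V_{\cF''_M}$ by at least $N$ for every subquotient $V$ of $\gu(P)$, and taking $N$ larger than $2g-2$ plus the uniform slope bound on the bounded family $\Mod_M^{-,\theta}$ (which is a scheme of finite type) gives $\H^1(X,V_{\cF''_M})=0$, i.e.\ membership in $\Bun_M^{sm}$. With this substitution your quasi-compactness and disjointness considerations go through and the proof is complete.
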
 
 
\sssec{} Pick $\theta'$ as in Lemma~\ref{Lm_2.7.5}, so that the projection $(\cZ^{\theta}\times \oo{\cZ}{}^{\theta'})^{sm}_{disj}\to \cZ^{\theta}$ is smooth and surjective. Now one finishes te proof of Proposition~\ref{Pp_2.6.21_Zastava} precisely as (\cite{Gai19SI}, Section~3.9.4) by chasing the diagram (\ref{diag_for_chasing}). 

 To get point a), it suffices to show that the $!$-pull-back of the $\IC$-sheaf along the composite left vertical map is isomorphic up to a shift to the $\IC$-sheaf. Since the bottom horizontal arrow is smooth, it suffices to show that the pull-back of the $\IC$-sheaf of $(\Bunt_{U(P)}\times  X^{-\theta'})^{level}/U(P)'_{\theta'}$ to $(\cZ^{\theta}\times \oo{\cZ}{}^{\theta'})^{sm}_{disj}$ is isomorphic to the $\IC$-sheaf up to a shift. This follows from the fact that both the slanted arrow and the right vertical arrows in (\ref{diag_for_chasing}) are smooth.
 
 For point b) note the following. Denote by $(\Bun_{U(P)}\times X^{-\theta'})^{level}$ the preimage of $\Bun_{U(P)}\times X^{-\theta'}$ under 
$(\Bunt_{U(P)}\times X^{-\theta'})^{level}\to (\Bunt_{U(P)}\times X^{-\theta'})^{good}.$. 
The $\gL(U(P))_{\theta'}$-action preserves the open part  
$$
(\Bun_{U(P)}\times X^{-\theta'})^{level}\subset (\Bunt_{U(P)}\times X^{-\theta'})^{level}.
$$
The preimage of $(\Bun_{U(P)}\times X^{-\theta'})^{level}/U(P)'_{\theta'}$ under bottom horizontal arrow in (\ref{diag_for_chasing}) is $\Bun_{U(P)}\times X^{-\theta'}$. Our claim follows. 
\QED 

\ssec{Proof of Proposition~\ref{Pp_2.6.23_about_Gr}}
\label{Sect_2.8}

\sssec{} By Section~\ref{Sect_2.3.6_positive part_Gr_M^+}, given $\nu\in\Lambda^+_M$ over some $\mu\in\Lambda_{G,P}$ one has $\Gr_M^{\nu}\subset\Gr_M^{-,\mu}$ iff $\nu\in -\Lambda^{pos}$. 

  Pick $\nu\in\Lambda^+_M$ over $\theta$ such that $\Gr_M^{\nu}\subset \Gr_M^{-,\theta}$, so $\nu\in -\Lambda^{pos}$. By (\cite{BFGM}, Section~6.6 after Proposition~6.6), 
$$
\dim(S^0_P\cap S_{P^-}^{\nu})\le \<-w_0^M(\nu), \check{\rho}\>
$$ 
So, the fibres of $S^0_P\cap S_{P^-}^{\nu}\to \Gr_M^{\nu}$ are of dimension at most
\begin{equation}
\label{dim_fibres_for_Sect_2.8.1}
-\<w_0^M(\nu), \check{\rho}\>-\<\nu, 2\check{\rho}_M\>
\end{equation} 
Since $w_0^M(\nu)-\nu$ vanishes in $\Lambda_{G,P}$, $\<w_0^M(\nu)-\nu, \check{\rho}-\check{\rho}_M\>=0$. This implies that (\ref{dim_fibres_for_Sect_2.8.1}) equals
$-\<\nu, \check{\rho}\>$. Since $\dim \Gr_M^{\nu}=\<\nu, 2\check{\rho}_M\>$, this implies that 
$
(\oo{\pi}_{\gF})_!e
$
is placed in perverse degrees $\le -\<\theta, 2\check{\rho}-2\check{\rho}_M\>$. 

 It remains to show that the set of irreducible components of $S^0_P\cap S_{P^-}^{\nu}$ of (maximal) dimension $\<-w_0^M(\nu), \check{\rho}\>$ naturally form a base in 
\begin{equation}
\label{vector_space_for_Sect_2.8.1}
\Hom_{\check{M}}(U^{-w_0^M(\nu)}, U(\gu(\check{P}))_{-\theta})
\end{equation}

 Pick $\mu'\in\Lambda^+_{M, ab}$ deep enough in the corresponding wall of the Weyl chamber (that is, we require $\<\mu', \check{\alpha}_i\>$ large enough for all $i\in \cI-\cI_M$). By (\cite{BFGM}, 6.6), one gets
\begin{equation}
\label{inclusion_for_Sect_2.8.1}
S_P^{-\mu'}\cap S_{P^-}^{\nu-\mu'}\subset S_P^{-\mu'}\cap \Gr_G^{w_0(w_0^M(\nu-\mu'))},
\end{equation}
and the action of $t^{\mu'}$ gives an isomorphism $S_P^{-\mu'}\cap S_{P^-}^{\nu-\mu'}\,\iso\,S_P^0\cap S_{P^-}^{\nu}$. 
By (\cite{MV}, Theorem~3.2), 
$$
S_P^{-\mu'}\cap \Gr_G^{w_0(w_0^M(\nu-\mu'))}
$$ 
is of pure dimension $-\<\check{\rho}, w_0^M(\nu)\>$. As in (\cite{BFGM}, Section~6.5) the inclusion (\ref{inclusion_for_Sect_2.8.1}) yields a bijection on the set of irreducible components of (maximal) dimension $-\<\check{\rho}, w_0^M(\nu)\>$ of both sides. By (\cite{BFGM}, Theorem~6.2), the set of ireducible components of 
$$
S_P^{-\mu'}\cap \Gr_G^{w_0(w_0^M(\nu-\mu'))}
$$ 
form a base of
$
\Hom_{\check{M}}(U^{-\mu'}, V^{w_0(w_0^M(\nu-\mu'))})
$ naturally.
Under our assumption on $\mu'$ the latter vector space identifies canonically with (\ref{vector_space_for_Sect_2.8.1}). 
\QED

\ssec{Relation between the dual baby Verma objects}
\label{Sect_4.5}

\sssec{} Main result of this subsection is Theorem~\ref{Thm_4.5.10}, which provides a precise relation between $\IC_P^{\frac{\infty}{2}}$ and the dual babdy Verma object $\cM_{\check{G},\check{P}}$ in the Hecke category of $\IndCoh((\check{\gu}(P)\times_{\check{\gg}} 0)/\check{P})$ defined in Section~\ref{Sect_2.3.10_almost_final_version}. The passage between the two uses the equivalence of G. Dhillon and H. Chen given by Proposition~\ref{Pp_Chen_Dhillon}. 

\sssec{} 
\label{Sect_4.5.1}
Write 
$$
j_{PP^-}: P\backslash PP^-/P^-\hook{} P\backslash G/P^-,\;\;\;\;\;\;\;\;\; j_{P^-P}: P^-\backslash P^-P/P\hook{} P^-\backslash G/P
$$
for the natural open immersions. We get the objects 
$$
j_{*}^P, j_{!}^P\in Shv(P\backslash G/P^-),\;\;\;\;\;\;\; j_{*}^{P^-}, j_{!}^{P^-}\in Shv(P^-\backslash G/P)
$$ 
defined as the corresponding extensions under $j_{PP^-}$ and $j_{P^-P}$ of the $\IC$-sheaves on $P\backslash PP^-/P^-$ and $P^-\backslash P^-P/P$ respectively. 

 One similarly defines 
$$
j_*^B, j_!^B\in Shv(B\backslash G/B^-),\;\;\;\;\;\;\;\; j_*^{B^-}, j_!^{B^-}\in Shv(B^-\backslash G/B)
$$ 

\sssec{} For $C\in Shv(G)-mod(\DGCat_{cont})$ we have the functors $\_\astP j_*^P: C^P\to C^{P^-}$ and $\_\astPminus j_!^{P^-}: C^{P^-}\to C^P$ defined in Section~\ref{Sect_A.7.5}. 

The following result is established in Section~\ref{Sect_appendixB}. 

\begin{Pp} 
\label{Pp_4.5.3}
The diagram commutes
$$
\begin{array}{ccccc}
C^{P} & \toup{\_\astP j^P_{*}} & C^{P-} & \toup{\_\astPminus j_!^{P^-}} & C^P \\
\downarrow\lefteqn{\scriptstyle\oblv} &&\downarrow\lefteqn{\scriptstyle\oblv} && \downarrow\lefteqn{\scriptstyle\oblv}\\
C^B & \toup{\_\astB j^B_*} & C^{B^-} & \toup{\_\astBminus j_!^{B^-}} & C^B,
\end{array} 
$$ 
and the horizontal arrows are equivalences. Besides, the composition in each line is canonically isomorphic to the identity functor. 
\end{Pp}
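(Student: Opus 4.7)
The plan is to reduce the parabolic assertion to the Borel case, which is the classical long intertwining theorem, by establishing commutativity of the two squares and then exploiting conservativity of $\oblv: C^P \to C^B$.

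First I would unwind the definitions of $\_\astP j^P_*$ and $\_\astPminus j_!^{P^-}$ from Section~\ref{Sect_A.7.5}: both are convolution functors built from the $*$- and $!$-extensions of the $\IC$-sheaves on the open orbits $P\backslash PP^-/P^- \simeq B(M)$ and $P^-\backslash P^-P/P \simeq B(M)$ inside $P\backslash G/P^-$ and $P^-\backslash G/P$ respectively. The analogous statement for the bottom row is classical: the convolutions $\_\astB j^B_*$ and $\_\astBminus j^{B^-}_!$ are mutually inverse equivalences between $C^B$ and $C^{B^-}$ whose round-trip compositions are canonically the identity. I would treat this as known input.

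Next, commutativity of the two squares is a base-change computation. The key geometric fact is that
$$
\begin{array}{ccc}
B\backslash BB^-/B^- & \hookrightarrow & B\backslash G/B^- \\
\downarrow && \downarrow \\
P\backslash PP^-/P^- & \hookrightarrow & P\backslash G/P^-
\end{array}
$$
is cartesian, and both vertical arrows are smooth (iterated flag-variety fibrations with fibres $P/B\times P^-/B^-$). Hence $*$-pullback along the right vertical map sends $j^P_*$ to $j^B_*$, and $!$-pullback sends $j^{P^-}_!$ to $j^{B^-}_!$, both up to the cohomological shift absorbed in the $\IC$ normalization. Feeding this through the convolution formalism of Section~A.7 yields the asserted commutativity of both squares. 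This step is technically routine once the definitions are in place.

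The final and main step is to transfer the equivalence and identity-composition claims from the Borel row to the parabolic row. The obstacle is that $\oblv: C^P\to C^B$ is \emph{not} fully faithful, so neither statement can be deduced formally. However, the composite $C^P\to C^B\to C$ is the canonical fully faithful embedding of $P$-invariants, so $\oblv: C^P\to C^B$ is at least conservative. I would construct canonical natural transformations $\eta: \id_{C^P} \to (\_\astPminus j^{P^-}_!)\circ(\_\astP j^P_*)$ and $\eta': (\_\astP j^P_*)\circ(\_\astPminus j^{P^-}_!)\to \id_{C^{P^-}}$ from the open inclusions of the big cells and the $(j_!,j^!)$-adjunction applied to $PP^-/P^-\hookrightarrow G/P^-$ and its $P^-$-counterpart. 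By the commutativity established in the previous step, $\oblv(\eta)$ and $\oblv(\eta')$ identify with the corresponding natural transformations on the Borel row, which the classical long intertwining theorem declares to be isomorphisms; conservativity of $\oblv$ then forces $\eta$ and $\eta'$ themselves to be isomorphisms. This simultaneously yields that both horizontal arrows in the top row are equivalences and that each round-trip composition is the identity. The expected main obstacle is producing $\eta$ and $\eta'$ in a sufficiently canonical form to make the comparison with the Borel transformations transparent; this is a bookkeeping exercise in the convolution formalism but does not introduce new geometric content.
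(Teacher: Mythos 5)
Your overall strategy coincides with the paper's (which proves this via Theorem~\ref{Thm_B.1.2}): establish commutativity of the two squares, then transport the classical Borel-level statement upward using conservativity of $\oblv: C^P\to C^B$ together with the unit/counit of the adjunction $({}^{P^-}\Av^P_*, {}^P\Av^{P^-}_!)$ already supplied by Section~\ref{Sect_A.7.9}. That last step is fine (modulo the minor slip that $\oblv:C^P\to C$ is conservative but \emph{not} fully faithful, since $P$ is not contractible).

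However, your justification of the commutativity of the squares contains a genuine error: the square
$$
\begin{array}{ccc}
B\backslash BB^-/B^- & \hookrightarrow & B\backslash G/B^- \\
\downarrow && \downarrow \\
P\backslash PP^-/P^- & \hookrightarrow & P\backslash G/P^-
\end{array}
$$
is \emph{not} cartesian unless $M=T$. One has $BB^-=U(P)B_MB_M^-U(P^-)\subsetneq U(P)MU(P^-)=PP^-$, so the preimage of the open $(P,P^-)$-cell in $B\backslash G/B^-$ is $B\backslash PP^-/B^-\,\iso\, B_M\backslash M/B_M^-$, a union of $|W_M|$ double cosets rather than a point. Consequently the $*$-pullback of $j^P_*$ along $B\backslash G/B^-\to P\backslash G/P^-$ is the $*$-extension from this larger locus, not $j^B_*$, and the analogous claim for $j^{P^-}_!$ fails for the same reason. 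The correct comparison is not between kernels on $B\backslash G/B^-$ and $P\backslash G/P^-$ at all, but between the \emph{composite} kernels of $\oblv\circ(\_\astP j^P_*)$ and $(\_\astB j^B_*)\circ\oblv$, which live on the mixed quotient $B^-\backslash G/P$. The paper's computation (Step 1 of the proof of Theorem~\ref{Thm_B.1.2}) shows by base change that both composites are given by the $*$-extension of the (suitably shifted) constant sheaf on $B^-\backslash B^-P/P=B^-\backslash P^-P/P$; the nontrivial input is that $B^-P/P\to P^-P/P$ is an isomorphism and $B^-B/B\to B^-P/P$ is an affine fibration, which is exactly where the hypothesis that $P$ and $P^-$ share the Levi $M$ enters. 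Without replacing your cartesianness argument by such a computation (or an equivalent one), the reduction to the Borel case is not established.
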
 
\begin{Rem} For $P=B$ this is well-known. Our contribution is to generalize this to an arbitrary standard parabolic $P$.
\end{Rem}
 
\sssec{Parahoric version} By abuse of notations we also denote by 
$$
j_{PP^-}: I_P\backslash I_PI_{P^-}/I_{P^-}\hook{} I_P\backslash G(\cO)/I_{P^-},\;\;\;\;\;\; j_{P^-P}: I_{P^-}\backslash I_{P^-}I_P/I_P\hook{} I_{P^-}\backslash G(\cO)/I_P
$$ 
the corresponding immersions. We analogously get the objects
$$
j_{*}^P, j_{!}^P\in Shv(I_P\backslash G(\cO)/I_{P^-}),\;\;\;\;\;\;\; j_{*}^{P^-}, j_{!}^{P^-}\in Shv(I_{P^-}\backslash G(\cO)/I_P)
$$ 
defined as the corresponding extensions under $j_{PP^-}$ and $j_{P^-P}$, of the $\IC$-sheaves on $I_P\backslash I_PI_{P^-}/I_{P^-}$ and $I_{P^-}\backslash I_{P^-}I_P/I_P$ respectively. 

 We similarly have 
$$
j_{*}^B, j_{!}^B\in Shv(I\backslash G(\cO)/I^-),\;\;\;\;\;\;\; j_{*}^{B^-}, j_{!}^{B^-}\in Shv(I^-\backslash G(\cO)/I)
$$  
The above notations are in ambiguity with those of Section~\ref{Sect_4.5.1},  
the precise sense will be clear from the context.

 Proposition~\ref{Pp_4.5.3} immediately implies the following.
\begin{Cor}
Let $C\in Shv(G(\cO))-mod(\DGCat_{cont})$. Then the diagram commutes
$$
\begin{array}{ccccc}
C^{I_P} & \toup{\_\astIP j^P_{*}} & C^{I_{P-}} & \toup{\_\astIPminus j_!^{P^-}} & C^{I_P} \\
\downarrow\lefteqn{\scriptstyle\oblv} &&\downarrow\lefteqn{\scriptstyle\oblv} && \downarrow\lefteqn{\scriptstyle\oblv}\\
C^I & \toup{\_\astI j^B_*} & C^{I^-} & \toup{\_\astIminus j_!^{B^-}} & C^I,
\end{array} 
$$ 
and the horizontal arrows are equivalences. Besides, the composition in each line is canonically isomorphic to the identity functor. 
\end{Cor}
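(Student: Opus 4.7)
The plan is to reduce the corollary directly to \propref{Pp_4.5.3} by descending from $G(\cO)$ to the reductive quotient $G$. Let $K_1 := \ker(G(\cO)\to G)$ denote the first congruence subgroup, which is prounipotent, and set $C' := C^{K_1}$. Since $K_1$ is normal in $G(\cO)$ and prounipotent, the forgetful functor $C' \hook{} C$ is fully faithful and the $Shv(G(\cO))$-action on $C'$ descends through the monoidal quotient $Shv(K_1\backslash G(\cO)/K_1)\,\iso\, Shv(G)$, so $C' \in Shv(G)-mod(\DGCat_{cont})$.

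Because $K_1$ is contained in each of $I$, $I^-$, $I_P$, $I_{P^-}$, each of which is the preimage of its finite-dimensional counterpart under $G(\cO)\to G$, there are canonical identifications
$$
C^{I_P} \,\iso\, (C')^P, \quad C^{I_{P^-}} \,\iso\, (C')^{P^-}, \quad C^{I} \,\iso\, (C')^B, \quad C^{I^-} \,\iso\, (C')^{B^-},
$$
compatible with the oblivion functors. Next, $!$-pullback along $G(\cO)\to G$ yields equivalences
$$
Shv(P\backslash G/P^-) \,\iso\, Shv(I_P\backslash G(\cO)/I_{P^-}), \quad Shv(B\backslash G/B^-) \,\iso\, Shv(I\backslash G(\cO)/I^-),
$$
and their opposites. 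I would then check that under these equivalences the objects $j_*^P, j_!^{P^-}, j_*^B, j_!^{B^-}$ defined at the parahoric level match their $G$-level counterparts (up to the evident normalization), and that the convolution actions are intertwined, i.e.\ the functors $\_\astIP j_*^P$, $\_\astIPminus j_!^{P^-}$, $\_\astI j_*^B$, $\_\astIminus j_!^{B^-}$ on parahoric invariants correspond under the identifications above to $\_\astP j_*^P$, $\_\astPminus j_!^{P^-}$, $\_\astB j_*^B$, $\_\astBminus j_!^{B^-}$ acting on $C'$.

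With these identifications in place, the commutativity of the square, the equivalence of the horizontal arrows, and the identification of the composition in each row with the identity all follow at once by applying \propref{Pp_4.5.3} to the $Shv(G)$-module $C'$. The only step requiring care is the matching of convolutions at the parahoric and finite-dimensional levels after taking $K_1$-invariants; this should be routine because $K_1$ is prounipotent and the kernels $j_*^P, j_!^{P^-}, j_*^B, j_!^{B^-}$ are $K_1$-biequivariant, but it is the place where the bookkeeping lives and would need to be verified with some care.
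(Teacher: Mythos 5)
Your proposal is correct and follows exactly the reduction the paper has in mind: the paper simply states that \propref{Pp_4.5.3} "immediately implies" the corollary, and the intended mechanism is precisely the passage to $K_1$-invariants you describe (identifying $C^{K_1}$ as a $Shv(G)$-module and matching parahoric invariants/convolutions with their $G$-level counterparts). You have unpacked the implicit bookkeeping accurately; no further comment is needed.
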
 

\sssec{} Set $\Fl_{P^-}=G(F)/I_{P^-}$. For $\lambda\in\Lambda$ we denote by 
$$
j_{\lambda, !}^-,\;  j_{\lambda, *}^-\in \cH_{P^-}(G):=Shv(\Fl_{P^-})^{I_{P^-}}
$$ 
the standard and costandard objects attached to $t^{\lambda}\in \tilde W$. Let us reformulate Proposition~\ref{Pp_Chen_Dhillon} with $B$ and $P$ replaced by $B^-$ and $P^-$ respectively. 
\begin{Pp} 
\label{Pp_Gurbir_Chen_for_P_reformulation}
There is a canonical equivalence
\begin{equation}
\label{equivalence_Gurbir_Chen_for_P_instead}
\IndCoh((\check{\gu}(P)\times_{\check{\gg}} 0)/\check{P})\,\iso\,Shv(\Gr_G)^{I_{P^-}, ren}
\end{equation}
with the following properties:

\smallskip\noindent
(i) The $\Rep(\check{G})$-action on $\IndCoh((\check{\gu}(P)\times_{\check{\gg}} 0)/\check{P})$ arising from the projection
$$
(\check{\gu}(P)\times_{\check{\gg}} 0)/\check{P}\to pt/\check{P}\to pt/\check{G}
$$
corresponds to the action of $\Rep(\check{G})$ on $Shv(\Gr_G)^{I_{P^-}, ren}$ via $\Sat: \Rep(\check{G})\to Shv(\Gr_G)^{G(\cO)}$ and the right convolutions.

\smallskip\noindent
(ii) The $\Rep(\check{M}_{ab})$-action on $\IndCoh((\check{\gu}(P)\times_{\check{\gg}} 0)/\check{P})$ arising from the projection
$$
(\check{\gu}(P)\times_{\check{\gg}} 0)/\check{P}\to pt/\check{M}\to pt/\check{M}_{ab}
$$
corresponds to the $\Rep(\check{M}_{ab})$-action on $Shv(\Gr_G)^{I_{P^-}, ren}$ such that for $\lambda\in -\Lambda^+_{M, ab}$, $e^{\lambda}$ sends $F$ to $j_{\lambda,*}^-\ast F$. 

\smallskip\noindent
(iii) The object $\cO_{pt/\check{P}}\in \IndCoh((\check{\gu}(P)\times_{\check{\gg}} 0)/\check{P})$ corresponds under (\ref{equivalence_Gurbir_Chen_for_P_instead}) to $\delta_{1,\Gr_G}\in Shv(\Gr_G)^{I_{P^-}, ren}$. 

\smallskip\noindent
(iv) The equivalence (\ref{equivalence_Gurbir_Chen_for_P_instead}) restricts to an equivalence
$$
\IndCoh_{\Nilp}((\check{\gu}(P)\times_{\check{\gg}} 0)/\check{P})\,\iso\,Shv(\Gr_G)^{I_{P^-}}
$$
\QED
\end{Pp}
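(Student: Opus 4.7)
The plan is to deduce Proposition~\ref{Pp_Gurbir_Chen_for_P_reformulation} directly from Proposition~\ref{Pp_Chen_Dhillon} by transport along a Chevalley involution $\sigma \in \Aut(G)$, which swaps $B \leftrightarrow B^{-}$ and hence $P \leftrightarrow P^{-}$. On the automorphic side, as already observed in the discussion preceding Proposition~\ref{Pp_2.3.21}, $\sigma$ restricts to an isomorphism $I_P \iso I_{P^-}$ and induces a monoidal equivalence $\sigma: Shv(\Gr_G)^{I_P} \iso Shv(\Gr_G)^{I_{P^-}}$; since $\sigma$ preserves the subcategory of objects compact in $Shv(\Gr_G)$, it extends by ind-completion to an equivalence $\sigma: Shv(\Gr_G)^{I_P, ren} \iso Shv(\Gr_G)^{I_{P^-}, ren}$. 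On the spectral side, the dual Chevalley involution $\check{\sigma}\in \Aut(\check{G})$ sends $\check{P}^{-}$ to $\check{P}$ and $\check{\gu}(P^{-})$ to $\check{\gu}(P)$, and hence induces an equivalence
$$
\check{\sigma}_*: \IndCoh((\check{\gu}(P^{-})\times_{\check{\gg}} 0)/\check{P}^{-}) \iso \IndCoh((\check{\gu}(P)\times_{\check{\gg}} 0)/\check{P}).
$$
Define (\ref{equivalence_Gurbir_Chen_for_P_instead}) as the composite of $\check{\sigma}_*^{-1}$ with the equivalence of Proposition~\ref{Pp_Chen_Dhillon} followed by $\sigma$.

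It remains to verify properties (i)-(iv). For (i), one uses that $\sigma$ normalizes $G(\cO)$ and that under the resulting monoidal equivalence $\Sph_G \iso \Sph_G$ the Satake image $\Sat(V^\lambda)$ is sent to $\Sat(V^{-w_0(\lambda)})$, matching the action of $\check{\sigma}$ on $\Rep(\check{G})$; combined with the original property (i) of Proposition~\ref{Pp_Chen_Dhillon}, this gives (i) in our setting. Property (iii) is immediate: $\sigma(\delta_{1,\Gr_G}) = \delta_{1,\Gr_G}$ and $\check{\sigma}_* \cO_{pt/\check{P}^-} = \cO_{pt/\check{P}}$. For (iv), both $\sigma$ and $\check{\sigma}$ preserve the relevant nilpotent singular support conditions, since $\check{\sigma}$ stabilizes the nilpotent cone $\Nilp \subset \check{\gg}$ setwise.

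The main step, and the one requiring the most care, is (ii). Here we must track how $\lambda \in \Lambda_{M, ab}$ transforms. Since $\sigma$ acts on $T$ by inversion, it sends $t^{\lambda}$ to $t^{-\lambda}$, and consequently the $I_P$-bi-equivariant standard object $j_{\lambda, *}$ on $G(F)/I_P$ is sent to $j^-_{-\lambda,*}$ on $G(F)/I_{P^-}$. Dually, $\check{\sigma}$ acts on $\check{M}_{ab}$ by inversion, sending the character $e^{\lambda}$ to $e^{-\lambda}$. Therefore, if $\lambda \in \Lambda_{M, ab}^+$ acts on $Shv(\Gr_G)^{I_P, ren}$ as $F \mapsto j_{\lambda, *} \ast F$ (as in the original property (ii)), then after transport $e^{-\lambda}$ acts on $Shv(\Gr_G)^{I_{P^-}, ren}$ as $F \mapsto j^-_{-\lambda, *} \ast F$, i.e.\ for $\mu := -\lambda \in -\Lambda_{M, ab}^+$ the character $e^\mu$ acts as $j^-_{\mu, *} \ast F$, which is exactly the asserted (ii). The chief technical obstacle is to ensure that the Chevalley involutions on the two sides of the Dhillon--Chen equivalence of \cite{CD} genuinely intertwine the equivalence (not merely up to an inner automorphism of $\check{G}$ or a nontrivial twist of $\check{M}_{ab}$), so that the sign change computed above is the correct one; this boils down to checking that the canonical choice of pinnings used to normalize both the Satake equivalence and the equivalence of \cite{CD} is preserved by our chosen $\sigma$.
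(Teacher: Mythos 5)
Your route is genuinely different from the paper's. The paper gives no argument at all here: Proposition~\ref{Pp_Chen_Dhillon} is a theorem of \cite{CD} valid for an arbitrary parabolic, so the paper simply restates it with the pair $(B,P)$ replaced by $(B^-,P^-)$ — i.e.\ it re-applies the cited theorem to the opposite parabolic, with the bookkeeping that ``dominant'' for $\Lambda_{M,ab}$ relative to $B^-$ means $-\Lambda^+_{M,ab}$, and that the costandard objects in the $I_{P^-}$-Hecke category are the $j^-_{\lambda,*}$. Nothing has to be transported.

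Your proposal instead derives the $P^-$-statement from the $P$-statement by conjugating with Chevalley involutions $\sigma$ and $\check\sigma$ on the two sides. The sign bookkeeping in your step (ii) is consistent ($j_{\lambda,*}\mapsto j^-_{-\lambda,*}$ and $e^{\lambda}\mapsto e^{-\lambda}$ do cancel to give the asserted action of $e^{\mu}$ for $\mu\in-\Lambda^+_{M,ab}$), but the whole argument hinges on the claim that $\sigma$ and $\check\sigma$ intertwine the Dhillon--Chen equivalence, and this is where there is a genuine gap. First, your proposed verification cannot succeed as stated: the Chevalley involution does \emph{not} preserve any pinning — by construction it sends $B$ to $B^-$ and each $\gg_{\alpha_i}$ to $\gg_{-\alpha_i}$ — so ``checking that the canonical choice of pinnings is preserved by $\sigma$'' is vacuously false rather than a route to the compatibility. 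Second, even the weaker input you need, namely that geometric Satake intertwines $\sigma$ on $\Sph_G$ with $\check\sigma$ on $\Rep(\check{G})$ (the isomorphism $\sigma\Sat(V^{\lambda})\,\iso\,\Sat(V^{-w_0(\lambda)})$ recorded before Proposition~\ref{Pp_2.3.21}), is itself a nontrivial compatibility; upgrading it to equivariance of the full equivalence (\ref{iso_Gurbir_Chen}), including the normalizations in parts (ii)--(iv), would require reopening the construction in \cite{CD}. In other words, you have replaced a step that is free (apply the theorem of \cite{CD} to $P^-$) by a step that is at least as hard as the theorem itself and is left unproved. If you want to keep the involution argument, you must either prove the $\sigma$-equivariance of (\ref{iso_Gurbir_Chen}) from the construction in \cite{CD}, or abandon it and argue as the paper does.
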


\sssec{} Recall the fully faithful embedding $\ren: Shv(\Gr_G)^{I_P}\hook{} Shv(\Gr_G)^{I_P, ren}$. Applying (\ref{eq_ren_parahoric_versus_H}), it gives a full embedding 
$$
\ren: Shv(\Gr_H)^H\hook{} Shv(\Gr_G)^{H, ren}.
$$ 
By abuse of notations, the image of $\IC_P^{\frac{\infty}{2}}$ under the latter functor is also denoted $\IC_P^{\frac{\infty}{2}}$.

 The following is one of our main results.

\begin{Thm} 
\label{Thm_4.5.10}
The image of $\IC_P^{\frac{\infty}{2}}$ under the composition
\begin{multline*}
Shv(\Gr_G)^{H, ren} \;\toup{\Av_*^{I_P/M(\cO), ren}} \;Shv(\Gr_G)^{I_P, ren}\;\toup{j_*^{P^-}\,\astIP\_}\\ Shv(\Gr_G)^{I_{P^-}, ren}\toup{(\ref{equivalence_Gurbir_Chen_for_P_instead})} \IndCoh((\check{\gu}(P)\times_{\check{\gg}} 0)/\check{P})
\end{multline*}
is canonically isomorphic to the object $\cM_{\check{G}, \check{P}}[\dim U(P^-)]$ defined in Section~\ref{Sect_2.3.10_almost_final_version}. 
\end{Thm}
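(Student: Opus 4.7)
The plan is to exploit the Dri\-nfeld--Pl\"ucker colimit presentations of both sides. First, for the representation-theoretic side: $\cM_{\check{G},\check{P}}$ is built from the evaluation module $c=\cO_{pt/\check{P}}\in C':=\IndCoh((\check{\gu}(P)\times_{\check{\gg}}0)/\check{P})$ via the $\check{P}$-version of Section~\ref{Sect_2.0.17}, and its image after $\oblv:\Hecke_{\check{G},\check{M},ab}(C')\to C'$ is
$$\mathop{\colim}\limits_{\lambda\in(\Lambda^+_{M,ab},\le)}\; e^{-\lambda}\ast c\ast V^{\lambda}.$$
Translating this colimit via Proposition~\ref{Pp_Gurbir_Chen_for_P_reformulation}: $c$ goes to $\delta_{1,\Gr_G}$, the right $V^{\lambda}$-action corresponds to right convolution by $\Sat(V^{\lambda})$, and for $\lambda\in\Lambda^+_{M,ab}$ the action of $e^{-\lambda}$ corresponds to left convolution by $j^-_{-\lambda,*}$. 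Thus $\cM_{\check{G},\check{P}}$ is identified in $Shv(\Gr_G)^{I_{P^-},ren}$ with $\colim_{\lambda}\; j^-_{-\lambda,*}\ast \Sat(V^{\lambda})$.

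On the sheaf-theoretic side, I would feed the defining colimit
$$\IC^{\frac{\infty}{2}}_P\,\simeq\,\mathop{\colim}\limits_{\lambda\in\Lambda^+_{M,ab}}\; t^{-\lambda}\Sat(V^{\lambda})[\<\lambda,2\check{\rho}\>]$$
through the three functors. By the isomorphism (\ref{iso_for_Step3_first}) appearing in Step~3 of the proof of Proposition~\ref{Pp_2.2.6}, the renormalized averaging functor $\Av_*^{I_P/M(\cO),ren}$ sends the $\lambda$-th term to $j_{-\lambda,*}\ast \Sat(V^{\lambda})$, giving the colimit $\colim_\lambda j_{-\lambda,*}\ast\Sat(V^{\lambda})$ in $Shv(\Gr_G)^{I_P,ren}$. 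By associativity of convolution, applying $j_*^{P^-}\astIP\_$ term-by-term reduces the computation to the single identification
$$j_*^{P^-}\,\astIP\, j_{-\lambda,*}\;\simeq\; j^-_{-\lambda,*}\ast j_*^{P^-}\,[\dim U(P^-)]\qquad(\lambda\in\Lambda^+_{M,ab}),$$
combined with the normalization $j_*^{P^-}\astIP \delta_{1,\Gr_G}\simeq \delta_{1,\Gr_G}[\dim U(P^-)]$ coming from $\dim(I_PI_{P^-}/I_{P^-})=\dim U(P^-)$. This is the content of the ``long intertwining'' proposition (Proposition~\ref{Pp_key_for_baby_Verma_transform_for_P}): the equivalence $\_\astIP j_*^{P^-}$ of Proposition~\ref{Pp_4.5.3} intertwines the $\Lambda^+_{M,ab}$-action by $j_{-\lambda,*}$ on $Shv(\Gr_G)^{I_P,ren}$ with the $\Lambda^+_{M,ab}$-action by $j^-_{-\lambda,*}$ on $Shv(\Gr_G)^{I_{P^-},ren}$, up to a uniform shift by $[\dim U(P^-)]$. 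Plugging this in matches each term of the transformed colimit with $j^-_{-\lambda,*}\ast\Sat(V^{\lambda})[\dim U(P^-)]$; it remains to check that the transition maps coincide, which follows from compatibility of the intertwining with the multiplication of Wakimoto objects and the fusion maps $u^{\lambda,\mu}$ governing the Drinfeld--Pl\"ucker structure. Finally, by Proposition~\ref{Pp_2.5.18_upgrading_IC_semi-infinite_P} the image of $\IC^{\frac{\infty}{2}}_P$ carries a canonical Hecke upgrade, and one checks that under the translations above it matches the natural lift to $\Hecke_{\check{G},\check{M},ab}(C')$ defining $\cM_{\check{G},\check{P}}$.

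The main obstacle is establishing the key intertwining identity for $j_*^{P^-}$ and the Wakimoto lattice at the parahoric level. While the Borel analogue is classical, extending it to $(I_P,I_{P^-})$ demands both the geometric input from the parahoric version of Proposition~\ref{Pp_4.5.3}/Theorem~\ref{Thm_B.1.2} and careful tracking of the $I_P\cap I_{P^-}$-geometry that produces the shift $\dim U(P^-)$; in particular one must verify that convolution with $j^-_{-\lambda,*}$ on the $I_{P^-}$-side is compatible with convolution with $j_{-\lambda,*}$ on the $I_P$-side under the long-intertwining equivalence, functorially in $\lambda\in\Lambda^+_{M,ab}$. Once this is established, the remainder of the argument is the straightforward matching of the two Drinfeld--Pl\"ucker colimits.
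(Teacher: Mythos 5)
Your proposal is correct and follows essentially the same route as the paper: compute $\Av_*^{I_P/M(\cO),ren}(\IC_P^{\frac{\infty}{2}})$ as $\mathop{\colim}_\lambda j_{-\lambda,*}\ast\Sat(V^{\lambda})$, commute $j_*^{P^-}$ past the Wakimoto lattice via the parahoric long-intertwining identity, and match the resulting colimit with $\cO(\check{P}/\check{M})[\dim U(P^-)]$ under the Chen--Dhillon equivalence. The only slip is in the shift bookkeeping: the intertwining identity reads $j_*^{P^-}\astIP j_{-\lambda,*}\simeq j^-_{-\lambda,*}\astIPminus j_*^{P^-}$ with no shift, and the single $[\dim U(P^-)]$ arises solely from $j_*^{P^-}\astIP\delta_{1,\Gr_G}\simeq\delta_{1,\Gr_G}[\dim U(P^-)]$, so your displayed identity double-counts it (though your final per-term expression is correct).
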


In the rest of Section~\ref{Sect_4.5} we prove Theorem~\ref{Thm_4.5.10}.

\sssec{} Our starting point is the following result from (\cite{Gai19SI}, Section~4.1.3). Given $\lambda\in\Lambda^+$ there is a canonical isomorphism in $\cH(G)$
\begin{equation}
\label{iso_original_conjug_by_w_0}
j_{w_0,*}^I \astI j_{w_0(\lambda),*}^I \,\iso\, j_{\lambda, *}^I \astI j_{w_0, *}^I
\end{equation}

If $\mu\in \Lambda$, $w\in W$ then $wt^{\mu}w^{-1}=t^{w\mu}$. For this reason, 
$$
w_0It^{\mu}Iw_0=I^-w_0t^{\mu}w_0I^-=I^-t^{w_0\mu}I^-
$$ 
Consider the isomorphism $w_0: \Gr_G\,\iso\, \Gr_G, gG(\cO)\mapsto w_0gG(\cO)$. It intertwines the $I$-actions on the source by left translations and the $I$-action on the target such that $i\in I$ acts as $w_0iw_0$. This gives an involution also denoted 
$$
w_0: Shv(I\backslash \Gr_G)\,\iso\, Shv(I^-\backslash \Gr_G)
$$ 

 We assume that for an ind-scheme of finite type $Y$ and a placid group scheme $\cG$ acting on $Y$ the functor $\oblv: Shv(Y)^G\to Shv(G)$ is related to the !-pullback via our convention of Section~\ref{Sect_A.3}.
 
For $\mu\in\Lambda$ the diagram commutes  
$$
\begin{array}{ccc}
Shv(I\backslash \Gr_G) &\toup{j_{\mu, *}^I \astI} & Shv(I\backslash\Gr_G)\\
\downarrow\lefteqn{\scriptstyle w_0} && \downarrow\lefteqn{\scriptstyle w_0} \\
Shv(I^-\backslash\Gr_G) & \toup{j_{w_0(\mu),*}^{I^-} \astIminus} & Shv(I^-\backslash\Gr_G)
\end{array}
$$
 
  The composition 
$$
Shv(I^-\backslash \Gr_G)\toup{w_0} Shv(I\backslash \Gr_G)\toup{j_{w_0, *}^I \astI} Shv(I\backslash \Gr_G)
$$ 
is $j^B_*\astI\_$. So, (\ref{iso_original_conjug_by_w_0}) implies that for $\lambda\in\Lambda^+$ one has an isomorphism
\begin{equation}
\label{iso_original_conjug_by_w_0_second}
j_*^B\astIminus j_{\lambda, *}^{I^-}\,\iso\, j_{\lambda, *}^I \astI j_*^B
\end{equation}
in $Shv(I\backslash G(F)/I^-)$. 

\sssec{} Assume in addition $\lambda\in\Lambda^+_{M, ab}$. Recall that the diagram commutes
$$
\begin{array}{ccc}
Shv(\Gr_G)^{I_P} & \toup{j_{\lambda, *}\astIP\_} & Shv(\Gr_G)^{I_P} \\
\downarrow\lefteqn{\scriptstyle \oblv} && \downarrow\lefteqn{\scriptstyle \oblv}\\
Shv(\Gr_G)^I & \toup{j^I_{\lambda, *}\astI\_} & Shv(\Gr_G)^{I} 
\end{array}
$$
\begin{Pp} 
\label{Pp_key_for_baby_Verma_transform_for_P}
For $\lambda\in\Lambda^+_{M, ab}$ one has an isomorphism
$$
j_*^P\astIPminus j_{\lambda, *}^-\,\iso\, j_{\lambda, *}\astIP j_*^P
$$
in $Shv(I_P\backslash G(F)/I_{P^-})$.
\end{Pp}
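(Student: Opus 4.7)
My plan is to apply the parahoric version of Proposition~\ref{Pp_4.5.3} (the Corollary following it) to reformulate the desired identity in the parahoric Hecke category $\cH_P(G) = Shv(I_P\backslash G(F)/I_P)$, and then reduce to the Iwahori-level identity (\ref{iso_original_conjug_by_w_0_second}). Concretely, I apply the equivalence $\_\astIPminus j_!^{P^-}: Shv(I_P\backslash G(F)/I_{P^-}) \,\iso\, \cH_P(G)$ (whose inverse is $\_\astIP j_*^P$, and whose composition with the inverse is canonically isomorphic to the identity) to both sides of the desired isomorphism. Using associativity of convolution together with the identity $j_*^P \astIPminus j_!^{P^-} \,\iso\, \delta_1$ in $\cH_P(G)$, the right-hand side becomes $j_{\lambda,*} \astIP \delta_1 \,\iso\, j_{\lambda,*}$, while the left-hand side becomes $j_*^P \astIPminus j_{\lambda,*}^- \astIPminus j_!^{P^-}$. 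Thus it suffices to prove
$$
j_*^P \astIPminus j_{\lambda,*}^- \astIPminus j_!^{P^-} \,\iso\, j_{\lambda,*} \quad \text{in } \cH_P(G).
$$

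Next, at the Iwahori level the analogous identity $j_*^B \astIminus j_{\lambda,*}^{I^-} \astIminus j_!^{B^-} \,\iso\, j_{\lambda,*}^I$ in $\cH(G)$ follows immediately from (\ref{iso_original_conjug_by_w_0_second}) by right-convolving with $\_\astIminus j_!^{B^-}$ and invoking the Iwahori case of the Corollary (namely $j_*^B \astIminus j_!^{B^-} \,\iso\, \delta_1$). To lift this to $\cH_P(G)$, I will use the commutative diagram in the parahoric Corollary, applied to $C = Shv(G(F))$ viewed as a $Shv(G(\cO))$-module. This diagram shows that $\oblv$ intertwines convolution with $j_*^P, j_!^{P^-}$ and their Iwahori-level analogs $j_*^B, j_!^{B^-}$. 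Combined with Lemma~\ref{Lm_2.2.5} ii), which gives $\oblv(j_{\lambda,*}) \,\iso\, j_{\lambda,*}^I$ (and the analogous statement for $j_{\lambda,*}^-$), the Iwahori identity matches the image of the desired parahoric identity under $\oblv$.

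The main obstacle is the lifting step, since $\oblv: \cH_P(G) \to \cH(G)$ fails to be fully faithful (the quotient $I_P/I \cong P/B$ is not prounipotent). The key observation that will resolve this is that both sides of the target identity are supported on the single $I_P$-orbit through $t^\lambda$ in $\cH_P(G)$, and since $\lambda \in \Lambda^+_{M, ab}$ is $W_M$-fixed, $t^\lambda$ lies in $Z(M)(F)$ and commutes with $M(\cO)$. Consequently the $M(\cO) \subset I_P$ action on both sides is automatically trivial, so the $I_P$-equivariance structure is uniquely determined by the $I$-equivariance together with a trivialized $M(\cO)$-action. This means the canonical Iwahori-level isomorphism uniquely promotes to a canonical isomorphism in $\cH_P(G)$, completing the proof.
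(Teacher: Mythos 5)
Your opening reduction is fine: right-convolving with the invertible kernel $j_!^{P^-}$ and using $j_*^P\astIPminus j_!^{P^-}\iso \delta_1$ legitimately recasts the claim as $j_*^P\astIPminus j_{\lambda,*}^-\astIPminus j_!^{P^-}\iso j_{\lambda,*}$ in $\cH_P(G)$, and the Iwahori analogue of this does follow from (\ref{iso_original_conjug_by_w_0_second}). But the lifting step, which is where all the content lies, does not work as you describe. First, Lemma~\ref{Lm_2.2.5}~ii) does not give ``$\oblv(j_{\lambda,*})\iso j_{\lambda,*}^I$'': these objects live on different quotients, and the compatibility actually available goes in the pushforward direction ($\tau_!\, j^I_{\lambda,*}\iso j_{\lambda,*}$ for the proper map $\tau:\Fl\to\Fl_P$), not the oblivion direction. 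Likewise the commutative diagram in the Corollary to Proposition~\ref{Pp_4.5.3}, applied to $C=Shv(I_P\backslash G(F))$, only forgets equivariance on one side of the double quotient, whereas matching the parahoric identity with the Iwahori one requires controlling both sides at once. This is exactly where a genuine geometric input is needed, e.g.\ that $II_{P^-}=I_PI_{P^-}$ and that the natural map $II^-\times^{I^-}(I^-t^{\lambda}I_{P^-})/I_{P^-}\to II_{P^-}\times^{I_{P^-}}(I_{P^-}t^{\lambda}I_{P^-})/I_{P^-}$ is an isomorphism (both sides being $B\times (I_{P^-}t^{\lambda}I_{P^-})/I_{P^-}$, using $I=K_1B$ and $I_{P^-}=K_1P^-$); your outline supplies no substitute for these identifications, and they are precisely what makes the hypothesis $\lambda\in\Lambda^+_{M,ab}$ enter.

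Second, your resolution of the failure of full faithfulness of $\oblv:\cH_P(G)\to\cH(G)$ is not a valid argument. That $t^{\lambda}$ centralizes $M(\cO)$ shows at best that the $M(\cO)$-action on the supporting orbit is trivial; it does not make the forgetful functor fully faithful there. On the orbit $I_Pt^{\lambda}I_P/I_P=It^{\lambda}I/I$ the stabilizers of $t^{\lambda}$ satisfy $\mathrm{St}_{I_P}/\mathrm{St}_{I}\iso P/B$, so $\oblv$ is pullback along a $P/B$-fibration and endomorphism complexes change by a factor of $\H^*(P/B)$. What you need to descend is a morphism (the isomorphism), not an object, so ``the equivariance structure is uniquely determined'' is the wrong kind of statement; a correct version would have to argue that the relevant Hom complexes are coconnective with the same $\H^0$ and then invoke conservativity of $\oblv$ --- and even that presupposes identifying the two $\oblv$-images, which is the missing geometric step above. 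The paper's proof avoids the issue entirely by pushing (\ref{iso_original_conjug_by_w_0_second}) forward along $\eta: I\backslash G(F)/I^-\to I\backslash G(F)/I_{P^-}$ and checking by the explicit coset computations just mentioned that the result is already pulled back from $I_P\backslash G(F)/I_{P^-}$.
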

\begin{proof}
It is obtained by applying the direct image under $\eta: I\backslash G(F)/I^-\to I\backslash G(F)/I_{P^-}$ to (\ref{iso_original_conjug_by_w_0_second}). It turns out that the result is already the pullback from $I_P\backslash G(F)/I_{P^-}$. This is similar to Lemma~\ref{Lm_2.2.5}.  

 Namely, recall the natural isomorphisms 
$$
It^{\lambda}I/I\;\iso\; I_Pt^{\lambda}I_P/I_P,\;\;\;\;\;\;\;  I^-t^{\lambda}I^-/I^-\;\iso\; I_{P^-}t^{\lambda}I_{P^-}/I_{P^-}$$ 
from Lemma~\ref{Lm_2.2.5}. One has $II_{P^-}=I_PI_{P^-}$. 

 The natural map 
\begin{equation}
\label{map_to_determine_hope_iso_first}
II^-\times^{I^-} (I^-t^{\lambda}I_{P^-})/I_{P^-}\to II_{P^-}\times^{I_{P^-}} (I_{P^-}t^{\lambda}I_{P^-})/I_{P^-}
\end{equation}
is an isomorphism. Indeed, $I=K_1B, I_{P^-}=K_1P^-$, so the RHS of (\ref{map_to_determine_hope_iso_first}) is 
$$
K_1BP^-\times^{I_{P^-}}(I_{P^-}t^{\lambda}I_{P^-})/I_{P^-}\,\iso\, B\times (I_{P^-}t^{\lambda}I_{P^-})/I_{P^-}
$$ 
and the LHS of (\ref{map_to_determine_hope_iso_first}) is 
$$
K_1BB^-\times^{I^-} (I^-t^{\lambda}I_{P^-})/I_{P^-} \,\iso\,
B\times (I_{P^-}t^{\lambda}I_{P^-})/I_{P^-}
$$
So, 
$$
\eta_!(j_*^B\astIminus j_{\lambda, *}^{I^-})\,\iso\,\oblv(j_*^P\astIPminus j_{\lambda, *}^-)
$$

 Similarly, one shows that 
$$
\eta_!(j_{\lambda, *}\astI j_*^B)\,\iso\, \oblv(j_{\lambda, *}\astIP j_*^P)
$$
Our claim follows.
\end{proof}

\sssec{} Exchanging $B$ with $B^-$ (and $P$ with $P^-$) from Proposition~\ref{Pp_key_for_baby_Verma_transform_for_P} one gets the following.

\begin{Cor}
\label{Pp_key_for_baby_Verma_transform_for_Pminus}
For $\mu\in\Lambda^+_{M, ab}$ one has an isomorphism
$$
j_*^{P^-}\astIP j_{-\mu, *}\,\iso\, j^-_{-\mu, *}\astIPminus j_*^{P^-}
$$
in $Shv(I_{P^-}\backslash G(F)/I_P)$. \QED
\end{Cor}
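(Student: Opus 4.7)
The plan is to run the argument of Proposition~\ref{Pp_key_for_baby_Verma_transform_for_P} verbatim after interchanging the roles of $P$ and $P^-$ (equivalently $B$ and $B^-$), since $\Lambda^+_{M,ab}$ with respect to $B_M$ becomes $-\Lambda^+_{M,ab}$ with respect to $B_M^-$, so $\lambda\in\Lambda^+_{M,ab}$ in the original statement corresponds to $-\mu$ here with $\mu\in\Lambda^+_{M,ab}$.

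First I would establish the Iwahori-level counterpart of the isomorphism (\ref{iso_original_conjug_by_w_0_second}): for $\mu\in\Lambda^+_{M,ab}$, an isomorphism
$$
j_*^{B^-}\astI j_{-\mu,*}^{I}\,\iso\, j_{-\mu,*}^{I^-}\astIminus j_*^{B^-}
$$
in $Shv(I^-\backslash G(F)/I)$. This is deduced from (\ref{iso_original_conjug_by_w_0}), applied to $\lambda=-w_0(\mu)\in\Lambda^+$, by conjugating by $w_0$ on the opposite side (i.e.\ running the same Braden/Chevalley-involution argument of Section~4.5.7 but using the isomorphism $Shv(I\backslash \Gr_G)\iso Shv(I^-\backslash\Gr_G)$ in the reverse direction). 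The computation $w_0 I t^{-\mu} I w_0 = I^- t^{-w_0(\mu)} I^- = I^- t^{w_0(-\mu)} I^-$ gives the correct standard object on the other side.

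Next I would apply the direct image along the projection $\eta: I^-\backslash G(F)/I\to I_{P^-}\backslash G(F)/I_P$ to the resulting identity and verify that both sides of the desired isomorphism on $Shv(I_{P^-}\backslash G(F)/I_P)$ are obtained as oblivion of $\eta_!$ applied to the two sides above. As in the proof of Proposition~\ref{Pp_key_for_baby_Verma_transform_for_P}, the main point (and the only place where anything needs to be checked) is that the natural maps
$$
I^-I\times^{I}(It^{-\mu}I_P)/I_P\;\longrightarrow\; I^-I_P\times^{I_P}(I_Pt^{-\mu}I_P)/I_P
$$
and its symmetric version for the other factor are isomorphisms. This follows exactly as in \emph{loc.\ cit.}: the Iwahori decompositions $I^-=K_1 B^-$, $I_P=K_1 P$ together with $I^-I_P=I_{P^-}I_P$ reduce both sides to $B^-\times (I_P t^{-\mu} I_P)/I_P$. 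Once this identification is made, both sides of the pushforward identity descend to the double quotient $I_{P^-}\backslash G(F)/I_P$, and the desired isomorphism follows.

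No step here looks genuinely hard; the main thing to be careful with is checking that the sign/side of the Chevalley involution is correctly tracked, and that the hypothesis $\mu\in\Lambda^+_{M,ab}$ (rather than $\mu\in\Lambda_{M,ab}$) is indeed what makes the affine embeddings $I_P t^{-\mu}I_P/I_P$ and $I_{P^-}t^{-\mu}I_{P^-}/I_{P^-}$ coincide with their Iwahori analogues, enabling the argument of Lemma~\ref{Lm_2.2.5} to transport to this opposite setting.
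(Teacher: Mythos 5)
Your proposal is correct and is exactly the route the paper takes: the paper's entire proof of this corollary is the one-line remark that it follows from Proposition~\ref{Pp_key_for_baby_Verma_transform_for_P} by exchanging $B$ with $B^-$ and $P$ with $P^-$ (under which $\lambda\in\Lambda^+_{M,ab}$ becomes $-\mu$ with $\mu\in\Lambda^+_{M,ab}$), and your write-up simply spells out the details of that symmetry, including the correctly transposed version of (\ref{map_to_determine_hope_iso_first}).
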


\sssec{} From the properties of $\Av_*^{I_P/M(\cO)}$ in Sections~\ref{Sect_2.2.10_action_of_Lambda_Mab} - \ref{Sect_3.1.11_action_of}, we get
\begin{equation}
\label{expression_1_for_dual_baby}
\Av^{I_P/M(\cO), ren}_*(\IC_P^{\frac{\infty}{2}})\,\iso\, 
\mathop{\colim}\limits_{\lambda\in\Lambda_{M, ab}^+} j_{-\lambda, *}\ast \Sat(V^{\lambda}) \in Shv(\Gr_G)^{I_P, ren}
\end{equation}
where the colimit is taken in $Shv(\Gr_G)^{I_P, ren}$. 

 Note that acting by $j_*^{P^-}\in Shv(I_{P^-}\backslash G(\cO)/I_P)$ on $\delta_{1,\Gr_G}\in Shv(\Gr_G)^{I_P}$ one gets 
$$
j_*^{P^-}\astIP \delta_{1,\Gr_G}\,\iso\, \delta_{1,\Gr_G}[\dim U(P^-)]\in Shv(\Gr_G)^{I_{P^-}}
$$ 

Applying $j_*^{P^-}\astIP\_$ to (\ref{expression_1_for_dual_baby}) one gets
$$
\mathop{\colim}\limits_{\lambda\in\Lambda_{M, ab}^+} (j_*^{P^-}\astIP j_{-\lambda, *})\ast \Sat(V^{\lambda}) \in Shv(\Gr_G)^{I_{P^-}, ren},
$$
which by Corollary~\ref{Pp_key_for_baby_Verma_transform_for_Pminus} identifies with
$$
\mathop{\colim}\limits_{\lambda\in\Lambda_{M, ab}^+} (j^-_{-\lambda, *}\astIPminus j_*^{P^-})\ast \Sat(V^{\lambda}) \in Shv(\Gr_G)^{I_{P^-}, ren}
$$
The latter object identifies with
\begin{equation}
\label{complex_for_Sect_4.5.16}
\mathop{\colim}\limits_{\lambda\in\Lambda_{M, ab}^+} j^-_{-\lambda, *}\ast \Sat(V^{\lambda})[\dim U(P^-)]\in Shv(\Gr_G)^{I_{P^-}, ren},
\end{equation}
where in the latter formula we use the action of $\cH_{P^-}(G)$ on $Shv(\Gr_G)^{I_{P^-}, ren}$.

 By Proposition~\ref{Pp_Gurbir_Chen_for_P_reformulation}, (\ref{complex_for_Sect_4.5.16}) under the equivalence (\ref{equivalence_Gurbir_Chen_for_P_instead}) identifies with
$$
\mathop{\colim}\limits_{\lambda\in\Lambda_{M, ab}^+} e^{-\lambda}\otimes V^{\lambda}[\dim U(P^-)]\in 
\IndCoh((\check{\gu}(P)\times_{\check{\gg}} 0)/\check{P})
$$
By (\ref{iso_O(P/M)_for_Sect_2.1.6}), the latter identifies with the direct image of $\cO(\check{P}/\check{M})[\dim U(P^-)]\in\Rep(\check{P})$ under $B(\check{P})\to  (\check{\gu}(P)\times_{\check{\gg}} 0)/\check{P}$. That is, we get the object $\cM_{\check{G},\check{P}}[\dim U(P^-)]$. Theorem~\ref{Thm_4.5.10} is proved. \QED

\ssec{Proof of Theorem~\ref{Thm_4.1.10}}
\label{Sect_4.6}

\sssec{Semi-infinite relative dimensions} It is convenient first to introduce the semi-infinite relative dimensions of orbits. For $\lambda\in \Lambda^+_M$ the relative dimension $\dimrel(S^{\lambda}_P : S^0_P)$ is defined as follows. Write $H^{\lambda}$ for the stabilizer of $t^{\lambda}G(\cO)\in \Gr_G$ in $H$. Set 
$$
\dimrel(S^{\lambda}_P: S^0_P)=\dimrel(H^0: H^{\lambda})=\dim(H^0/H^0\cap H^{\lambda})-\dim(H^{\lambda}/H^0\cap H^{\lambda})
$$ 
This is easy to calculate, one gets $\dimrel(S^{\lambda}_P: S^0_P)=\<\lambda, 2\check{\rho}\>$. 

\sssec{} For $\theta\in \Lambda_{G,P}$ define the relative dimension
$\dimrel(\Gr_P^{\theta}: \Gr_P^0)$ as follows. Set $M'=[M,M]$. Set $\cK=M'(F)U(P)(F)$. Recall that $\cK$ acts transitively on $\Gr_P^{\theta}$. Pick any $\lambda\in\Lambda$ over $\theta$. Let $\cK^{\lambda}$ be the stabilizer of $t^{\lambda}G(\cO)$ in $\cK$. Set
$$
\dimrel(\Gr_P^{\theta}: \Gr_P^0)=\dimrel(\cK^0: \cK^{\lambda})=\dim(\cK^0/\cK^0\cap \cK^{\lambda})-\dim(\cK^{\lambda}/\cK^0\cap \cK^{\lambda}).
$$
One checks that this is independent of a choice of $\lambda$. More precisely, 
$$
\dimrel(\Gr_P^{\theta}: \Gr_P^0)=\<\theta, 2\check{\rho}-2\check{\rho}_M\>.
$$
If $\Gr_P^0\subset \ov{\Gr}_P^{\theta}$ and $\theta\ne 0$ then $\<\theta, 2\check{\rho}-2\check{\rho}_M\> >0$. 

\sssec{Another system of generators} Let $\nu\in\Lambda^+_M$ and $\theta\in\Lambda_{G,P}$ its image. Denote by $J_{\nu, !}\in Shv(\Gr_G)^{H, \le 0}$ the object 
$$
J_{\nu, !}=(v_P^{\theta})_!(\gt_P^{\theta})^!\Sat_M(U^{\nu})[\<\theta, 2\check{\rho}_M-2\check{\rho}\>]
$$ 

 Note that $Shv(\Gr_G)^{H, \le 0}\subset Shv(\Gr_G)^H$ is the smallest full subcategory containing containing $J_{\nu, !}$ for $\nu\in\Lambda^+_M$, stable under extensions and small colimits.

\sssec{} For $F\in \SI_P$ the property $F\in Shv(\Gr_G)^{H,\ge 0}$ is equivalent to $\HOM_{\SI_P}(\bvartriangle^{\mu}, F)\in\Vect^{\ge 0}$ for any $\mu\in\Lambda^+_M$. 

 It is easy to see that for $F\in \SI_P$ the property $F\in Shv(\Gr_G)^{H,\ge 0}$ is also equivalent to $\HOM_{\SI_P}(J_{\nu, !}, F)\in\Vect^{\ge 0}$ for all $\nu\in\Lambda^+_M$.
 
\sssec{}  Recall that if $V\in\Rep(\check{M}), \theta\in\Lambda_{G,P}$ then $V_{\theta}$ denotes the direct summand of $V$, on which $Z(\check{M})$ acts by $\theta$. 

\begin{Lm} 
\label{Lm_4.6.6}
Let $\gamma\in\Lambda^+$ and $\theta$ be the image of $w_0(\gamma)$ in $\Lambda_{G, P}$. Then $V^{\gamma}_{\theta}$ is an irreducible $\check{M}$-module with highest weight $w_0^Mw_0(\gamma)$. 
\end{Lm}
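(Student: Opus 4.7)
The plan is to prove the lemma in two steps: first identify $V^{\gamma}_{\theta}$ with the $U(\check{P}^-)$-invariants $(V^{\gamma})^{U(\check{P}^-)}$, then recognize the latter as the irreducible $\check{M}$-module of highest weight $w_0^Mw_0(\gamma)$ via the Jacquet-style identification recorded in Section~2.1.

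For the identification of the invariants: the formula $((V^{\nu})^*)^{U(\check{P}^-)} \cong (U^{\nu})^*$ displayed in Section~2.1.1, applied with $\nu = -w_0(\gamma) \in \Lambda^+$, shows that $((V^{-w_0(\gamma)})^*)^{U(\check{P}^-)}$ is irreducible as $\check{M}$-module with highest weight $-w_0^M(-w_0(\gamma)) = w_0^Mw_0(\gamma)$. Since $(V^{-w_0(\gamma)})^* \cong V^{\gamma}$ as $\check{G}$-modules, we obtain $(V^{\gamma})^{U(\check{P}^-)} \cong U^{w_0^Mw_0(\gamma)}$.

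The equality $V^{\gamma}_{\theta} = (V^{\gamma})^{U(\check{P}^-)}$ will follow from two inclusions. The inclusion $(V^{\gamma})^{U(\check{P}^-)} \subseteq V^{\gamma}_{\theta}$ is immediate from the first step: all weights of the irreducible $\check{M}$-module $U^{w_0^Mw_0(\gamma)}$ differ from $w_0^Mw_0(\gamma)$ by elements of the $M$-coroot lattice, which vanish in $\Lambda_{G,P}$, and since $W_M$ acts trivially on $\Lambda_{G,P}$, the common image is $\overline{w_0(\gamma)} = \theta$. For the reverse inclusion, I would use the following minimality observation: every weight of $V^{\gamma}$ has the form $\gamma - \sum_i n_i \alpha_i$ with $n_i \geq 0$, so its image in $\Lambda_{G,P}$ lies in $\bar{\gamma} - \Lambda^{pos}_{G,P}$, making $\theta = \overline{w_0(\gamma)}$ the unique minimum central character appearing in $V^{\gamma}$. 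Since $\check{\gu}(P^-)$ is spanned by root vectors for negative roots that involve at least one simple coroot $\alpha_j$ with $j \notin \cI_M$, their images in $\Lambda_{G,P}$ are nonzero elements of $-\Lambda^{pos}_{G,P}$; hence $\check{\gu}(P^-) \cdot V^{\gamma}_{\theta}$ would lie in central-character pieces strictly below $\theta$ and must therefore vanish, yielding $V^{\gamma}_{\theta} \subseteq (V^{\gamma})^{U(\check{P}^-)}$. The argument is a routine combination of the Jacquet-module identifications already recorded in Section~2.1 with the minimality of the bottom central character; no serious obstacle is expected.
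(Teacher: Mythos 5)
Your proof is correct and turns on the same key observation as the paper's, namely that the $\check{T}$-weights of $\check{\gu}(P^{\pm})$ have nonzero image in $\pm\Lambda_{G,P}^{pos}$, so that $V^{\gamma}_{\theta}$ is the extreme graded piece of $V^{\gamma}$ for the $\Lambda_{G,P}$-grading. The packaging differs: the paper takes a lowest weight vector $v$, writes $V^{\gamma}=U(\check{\gu}(P))\,U(\check{\gm})\,v$ by PBW, identifies $V^{\gamma}_{\theta}$ with $U(\check{\gm})v$, and reproves irreducibility by hand (a second $\check{M}$-lowest-weight vector in $U(\check{\gm})v$ would be a $\check{G}$-lowest-weight vector, since $\check{\gu}(P^-)$ is an ideal in $\check{\gp}^-$); you instead identify $V^{\gamma}_{\theta}$ with $(V^{\gamma})^{U(\check{P}^-)}$ and quote the identification of these invariants with the irreducible $\check{M}$-module of highest weight $w_0^Mw_0(\gamma)$, which is the dual of the fact already recorded in Section~2.1.1. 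Your route gets the irreducibility for free, at the cost of the (routine) verification that $V^{\gamma}_{\theta}$ coincides with the $U(\check{P}^-)$-invariants.

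One small repair: your justification of the minimality of $\theta$ points the wrong way. That every weight of $V^{\gamma}$ has the form $\gamma-\sum_i n_i\alpha_i$ with $n_i\ge 0$ shows that $\bar{\gamma}$ is the \emph{maximal} central character occurring in $V^{\gamma}$; it does not yield a minimum. What your vanishing argument actually needs is that every weight $\mu$ of $V^{\gamma}$ satisfies $\mu-w_0(\gamma)\in\Lambda^{pos}$ (the dual, equally standard, statement about the lowest weight), whence every occurring central character lies in $\theta+\Lambda_{G,P}^{pos}$. Combined with $\Lambda_{G,P}^{pos}\cap(-\Lambda_{G,P}^{pos})=0$, this shows that $\check{\gu}(P^-)\cdot V^{\gamma}_{\theta}$ lands in central characters not occurring in $V^{\gamma}$ and hence vanishes, as you intended.
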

\begin{proof}
Let $\check{\gu}(P^-), \check{\gu}(P), \check{\gm}$ denote the Lie algebras of $U(\check{P}^-), U(\check{P}), \check{M}$ respectively. We have 
$$
U(\check{\gg})\,\iso\, U(\check{\gu}(P))\otimes U(\check{\gm})\otimes U(\check{\gu}(P^-))
$$ 
for the universal enveloping algebras. Let $v\in V^{\gamma}$ be a lowest weight vector. Then $V^{\gamma}=U(\check{\gu}(P))\otimes U(\check{\gm})v$. Moreover, $U(\check{\gm})v\subset V$ is an irreducible $\check{M}$-module. Indeed, otherwise there would exist another nontrivial lowest weight vector $v'\in U(\check{\gm})v$. But then $v'$ would be a lowest vector for $G$ itself, because $U(\check{P}^-)$ is normal in $U(\check{B}^-)$. Here $U(\check{B}^-)$ is the unipotent radical of $\check{B}^-$. 
Our claim follows now from the observation that the $\check{T}$-weights on $\check{\gu}(P)$ are nonzero elements of $\Lambda_{G,P}^{pos}$.
\end{proof}

\begin{Lm} 
\label{Lm_4.6.7}
Let $\gamma\in\Lambda^+$. There is a canonical fibre sequence in $\SI_P$
$$
K\to \bvartriangle^0\ast \Sat(V^{\gamma})\to J_{w_0^Mw_0(\gamma), !},
$$ 
where $K$ admits a finite filtration by objects $J_{\mu, !}$ with $\mu\in\Lambda^+_M$ such that $U^{\mu}$ appears in $\Res(V^{\gamma})$ and satisfies $\mu\ne w_0^Mw_0(\gamma)$. For such $\mu$ we have 
$$
\<\mu-w_0^Mw_0(\gamma), 2\check{\rho}-2\check{\rho}_M\> > 0
$$
\end{Lm}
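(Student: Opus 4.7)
The plan is to identify the fibre sequence by analyzing $\bvartriangle^{0}\ast\Sat(V^{\gamma})$ stratum by stratum, using the description of $\bvartriangle^0$ as an averaging of the skyscraper at the unit, the geometric restriction functor of Proposition~\ref{Pp_gRes_for_Levi}, and then globalizing via the support stratification.

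First, using the isomorphism $\bvartriangle^{0}\simeq \Av_{!}^{U(P)(F)}(\delta_{1,\Gr_G})$ of Lemma~\ref{Lm_2.3.13_about_Av!} and commutativity of the right $\Sph_{G}$-action with left averaging, I would rewrite $\bvartriangle^{0}\ast\Sat(V^{\gamma})\simeq \Av_{!}^{U(P)(F)}(\Sat(V^{\gamma}))$. By t-exactness (Proposition~\ref{Pp_2.3.21}) combined with Theorem~\ref{Thm_4.1.10}, this object lies in $\SI_{P}^{\heartsuit}$. For each $\theta\in\Lambda_{G,P}$ I then would apply $(\gt_{P}^{\theta})_{!}(v_{P}^{\theta})^{*}$ and combine Proposition~\ref{Pp_gRes_for_Levi} with Lemma~\ref{Lm_2.3.8} to identify
\[
(\gt_{P}^{\theta})_{!}(v_{P}^{\theta})^{*}(\bvartriangle^{0}\ast\Sat(V^{\gamma}))\,\iso\, \Sat_{M}\bigl((\Res V^{\gamma})_{\theta}\bigr)[\<\theta,2\check{\rho}_{M}-2\check{\rho}\>].
\]
Decomposing $(\Res V^{\gamma})_{\theta}\simeq \bigoplus_{\mu}U^{\mu}\otimes \Hom_{\check{M}}(U^{\mu},V^{\gamma})$ over $\mu\in\Lambda^{+}_{M}$ above $\theta$, this shows that each stratum restriction is naturally a direct sum of the stratum restrictions of the $J_{\mu,!}$.

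Second, I would identify $\mu_{0}:=w_{0}^{M}w_{0}(\gamma)$ as the unique $\mu$ lying above $\theta_{0}:=\text{image of } w_{0}(\gamma)$ in $\Lambda_{G,P}$, using Lemma~\ref{Lm_4.6.6}. Since every weight $\mu$ of $V^{\gamma}$ satisfies $\mu-w_{0}(\gamma)\in\Lambda^{pos}$, its image $\theta$ satisfies $\theta-\theta_{0}\in\Lambda_{G,P}^{pos}$, so $\theta_{0}$ is the minimal element of $\Lambda_{G,P}$ in whose preimage the support of $\bvartriangle^{0}\ast\Sat(V^{\gamma})$ meets. This makes $\Gr_{P}^{\theta_{0}}$ a closed stratum inside the support, and the open-closed distinguished triangle attached to the inclusion $\Gr_{P}^{\theta_{0}}\hookrightarrow\overline{\mathrm{supp}}$ — more precisely, the cofibre of the !-extension from the complement — yields the fibre sequence $K\to \bvartriangle^{0}\ast\Sat(V^{\gamma})\to J_{\mu_{0},!}$.

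Third, I would filter $K$ inductively. Order the finite set of $\mu\in\Lambda^{+}_{M}$ with $U^{\mu}\subset\Res V^{\gamma}$ by increasing value of $\<\mu,2\check{\rho}-2\check{\rho}_{M}\>$, starting at $\mu_{0}$. Removing the support of $J_{\mu_{0},!}$, the next minimal $\theta$ contributes $J_{\mu,!}\otimes\Hom_{\check{M}}(U^{\mu},V^{\gamma})$ as a further cofibre, and so on. The inequality $\<\mu-\mu_{0},2\check{\rho}-2\check{\rho}_{M}\>>0$ for the remaining $\mu$ follows because this pairing factors through $\Lambda_{G,P}$ (vanishing on $\Lambda_{M}^{pos}$), equals $\<\theta-\theta_{0},2\check{\rho}-2\check{\rho}_{M}\>\ge 0$, and the inequality is strict precisely when $\theta\neq\theta_{0}$, which holds by the uniqueness from Lemma~\ref{Lm_4.6.6}.

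The hard part will be executing Step~3 rigorously, namely assembling the local stratum data into an honest filtration of $\bvartriangle^{0}\ast\Sat(V^{\gamma})$ in $\SI_{P}^{\heartsuit}$. A priori the $J_{\mu,!}$ need not be in the heart nor semisimple relative to each other, and extensions between them in $\SI_{P}$ do not obviously vanish; the argument must proceed by successive open-closed triangles on the support, and one must keep track that each intermediate cofibre is concentrated on a closed sub-support. The identification of each graded piece with $J_{\mu,!}\otimes\Hom_{\check{M}}(U^{\mu},V^{\gamma})$ will require invoking Lemma~\ref{Lm_2.3.8} ii) to pass between $Shv(\Gr_{P}^{\theta})^{H}$ and $\Sph_{M}$, together with the stratum restriction identified in Step~1; the key input that makes this work is that $\Sat_{M}$ is fully faithful and semisimple on the heart, so that the abstract stratum decomposition lifts to an honest filtration.
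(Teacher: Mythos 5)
Your proposal follows essentially the same route as the paper: filter $\bvartriangle^{0}\ast\Sat(V^{\gamma})$ by the stratification of its support by the $\Gr_P^{\theta}$, identify each graded piece via $(\gt_P^{\theta})_!(v_P^{\theta})^*$ and Proposition~\ref{Pp_gRes_for_Levi} with $\Sat_M(V^{\gamma}_{\theta})$ up to shift (hence a finite direct sum of $J_{\mu,!}$'s), observe that $\Gr_P^{\theta_0}\cap Y$ is closed in the support $Y$ because all occurring $\theta$ satisfy $\theta-\theta_0\in\Lambda_{G,P}^{pos}$, and conclude with Lemma~\ref{Lm_4.6.6}. Two remarks. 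First, your appeal to Theorem~\ref{Thm_4.1.10} in Step~1 is circular: that theorem is proved \emph{using} the present lemma (via the fibre sequence (\ref{fibre_sequence_for_Gurbir_induction}) and Proposition~\ref{Pp_induction_step}), so it cannot be invoked here. Fortunately it is also unnecessary: nothing in the argument requires $\bvartriangle^{0}\ast\Sat(V^{\gamma})$ to lie in the heart. Second, and relatedly, the difficulty you anticipate in Step~3 about producing an ``honest filtration in $\SI_P^{\heartsuit}$'' is a non-issue: the statement only asks for a finite filtration in the DG sense, i.e.\ a finite chain of fibre sequences, and this is exactly what the recollement triangles attached to the finite stratification of $Y$ by the $\Gr_P^{\theta}\cap Y$ provide; semisimplicity of $\Rep(\check M)^{\heartsuit}$ is used only to split each graded piece into the $J_{\mu,!}\otimes\Hom_{\check M}(U^{\mu},V^{\gamma})$, not to control extensions between strata. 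With these two adjustments your argument coincides with the paper's proof.
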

\begin{proof}
Consider the filtration on $\bvartriangle^0\ast \Sat(V^{\gamma})$ coming from the stratification of $\Gr_G$ by $\{\Gr_P^{\theta}\}_{\theta\in\Lambda_{G,P}}$. The successive subquotients of this filtrations are 
\begin{equation}
\label{object_succ_quotient_for_t-str}
(v_P^{\theta})_!(v_P^{\theta})^*(\bvartriangle^0\ast \Sat(V^{\gamma})),\;\;\; \mbox{for}\;\;\; \theta\in\Lambda_{G,P}.
\end{equation} 
As we have seen in the proof of Proposition~\ref{Pp_2.3.21}, for $\theta\in\Lambda_{G,P}$ one has canonically
$$
(\gt^{\theta}_P)_!(v_P^{\theta})^*(\bvartriangle^0\ast \Sat(V^{\gamma}))\,\iso\, \Sat_M(V^{\gamma}_{\theta})[\<\theta, 2\check{\rho}_M-2\check{\rho}\>]
$$
We see that (\ref{object_succ_quotient_for_t-str}) is a direct sum of finitely many objects of the form $J_{\nu, !}$ for $\nu\in \Lambda^+_M$ with $U^{\nu}$ appearing in $V^{\gamma}_{\theta}$. 

 Let now $\theta$ be the image of $w_0(\gamma)$ in $\Lambda_{G,P}$. Write $Y$ for the support of $\bvartriangle^0\ast \Sat(V^{\gamma})$. It is easy to see that $\Gr_P^{\theta}\cap Y$  is closed in $Y$. This follows from the fact that for $\theta_1,\theta_2\in\Lambda_{G,P}$ one has $\Gr_P^{\theta_1}\subset \ov{\Gr}_P^{\theta_2}$ iff $\theta_2-\theta_1\in\Lambda_{G,P}^{pos}$. 

 Our claim follows now from Lemma~\ref{Lm_4.6.6}.
\end{proof}

\sssec{} For $\mu\in\Lambda_{M, ab}, \gamma\in\Lambda^+$ set 
$$
\Upsilon^{\mu, \gamma}=\Sat_M(U^{\mu})\ast \bvartriangle^0\ast \Sat(V^{\gamma})[-\<\mu, 2\check{\rho}\>] \in Shv(\Gr_G)^{H,\le 0}
$$

It is easy to see that for any $\mu\in\Lambda_{M, ab}$, $\nu\in \Lambda^+_M$ one has canonically
$$
t^{\mu}J_{\nu, !}[-\<\mu,2\check{\rho}\>]\,\iso\, J_{\nu+\mu, !}
$$ 
Besides, for any $\nu\in\Lambda^+_M$ there is $\mu\in \Lambda^+_{M, ab}$ such that $w_0w_0^M(\nu-\mu)\in \Lambda^+$. From Lemma~\ref{Lm_4.6.7} we immediately derive the following.
\begin{Cor} For any $\nu\in\Lambda^+_M$ there is $\mu\in\Lambda_{M, ab}^+, \gamma\in\Lambda^+$ and a fibre sequence 
\begin{equation}
\label{fibre_sequence_for_Gurbir_induction}
K\to \Upsilon^{\mu, \gamma}\to J_{\nu, !},
\end{equation}
where $K$ admits a finite filtration by objects $J_{\nu', !}$ with $\nu'\in\Lambda^+_M$ satisfying 
$$
\<\nu'-\nu, 2\check{\rho}-2\check{\rho}_M\> >0.
$$ 
\end{Cor}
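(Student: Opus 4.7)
The plan is to obtain the desired fibre sequence by translating the one in Lemma~\ref{Lm_4.6.7} along the action of $\Lambda^+_{M,ab}$ on $\SI_P$ introduced in Section~\ref{Sect_2.2.10_action_of_Lambda_Mab}. Recall from that discussion that for $\mu\in\Lambda_{M,ab}$ and $\nu'\in\Lambda^+_M$ one has $t^{\mu}J_{\nu',!}[-\<\mu,2\check\rho\>]\iso J_{\nu'+\mu,!}$, because $\<\mu,2\check\rho_M\>=0$ cancels the shift coming from the change of component $\theta'\mapsto\theta'+\bar\mu$ in $\Lambda_{G,P}$. Moreover, for $\mu\in\Lambda^+_{M,ab}$ the $M$-coweight $\mu$ is central, so $\Gr_M^{\mu}$ is a point, $\Sat_M(U^{\mu})$ is the skyscraper there, and convolution with it coincides with the shift operator $t^{\mu}$. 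Thus $\Upsilon^{\mu,\gamma}=t^{\mu}(\bvartriangle^0\ast\Sat(V^{\gamma}))[-\<\mu,2\check\rho\>]$.

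The second step is to choose $\mu$. Given $\nu\in\Lambda^+_M$, pick $\mu\in\Lambda^+_{M,ab}$ and set $\gamma:=w_0w_0^M(\nu-\mu)$, so that $w_0^Mw_0(\gamma)+\mu=\nu$ by involutivity of $w_0$ and $w_0^M$. One checks directly that $\gamma\in\Lambda^+$ for $\mu$ deep enough in the Weyl chamber for $G/M$: since $\mu$ is $M$-central, $\gamma\in\Lambda^+$ is equivalent to $w_0^M(\nu)-\mu\in w_0(\Lambda^+)$, i.e., to $\mu-w_0^M(\nu)\in\Lambda^+$. For $i\in\cI_M$ this holds automatically because $w_0^M$ sends $\check{\alpha}_i$ to $-\check{\alpha}_{i'}$ for some $i'\in\cI_M$ and $\nu\in\Lambda^+_M$, while for $i\notin\cI_M$ the pairing $\<\mu-w_0^M(\nu),\check{\alpha}_i\>$ can be made nonnegative by taking $\<\mu,\check{\alpha}_i\>$ sufficiently large.

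For this $\mu$ and $\gamma$, apply the functor $t^{\mu}[-\<\mu,2\check\rho\>]$ to the fibre sequence of Lemma~\ref{Lm_4.6.7}. The middle term becomes $\Upsilon^{\mu,\gamma}$ by the above identification, and the right term becomes $t^{\mu}J_{w_0^Mw_0(\gamma),!}[-\<\mu,2\check\rho\>]\iso J_{w_0^Mw_0(\gamma)+\mu,!}=J_{\nu,!}$. The fibre $K$ thus inherits a finite filtration whose subquotients are $t^{\mu}J_{\mu_0,!}[-\<\mu,2\check\rho\>]=J_{\mu_0+\mu,!}$, indexed by the $\mu_0\in\Lambda^+_M$ appearing in Lemma~\ref{Lm_4.6.7}. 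Setting $\nu':=\mu_0+\mu$, the inequality $\<\mu_0-w_0^Mw_0(\gamma),2\check\rho-2\check\rho_M\>>0$ transforms into $\<\nu'-\nu,2\check\rho-2\check\rho_M\>>0$, which is exactly what is required.

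The only technically substantive step is the choice of $\mu$: one must verify that making $\mu$ deep in the $G/M$-direction forces $\gamma=w_0w_0^M(\nu-\mu)$ into $\Lambda^+$. After this is established, everything else follows by pure bookkeeping, since the translation functor $t^{\mu}[-\<\mu,2\check\rho\>]$ is exact on $\SI_P$ and intertwines the labellings of the $J_{\cdot,!}$ in the expected way.
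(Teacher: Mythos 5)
Your proposal is correct and follows the same route as the paper: the paper records the translation isomorphism $t^{\mu}J_{\nu',!}[-\<\mu,2\check{\rho}\>]\iso J_{\nu'+\mu,!}$ and the existence of $\mu\in\Lambda^+_{M,ab}$ with $w_0w_0^M(\nu-\mu)\in\Lambda^+$, and then derives the corollary "immediately" from Lemma~\ref{Lm_4.6.7} by exactly the translation argument you spell out. Your verification that $\gamma\in\Lambda^+$ amounts to $\mu-w_0^M(\nu)\in\Lambda^+$, and the bookkeeping of the shifts and of the inequality for the subquotients of $K$, are all accurate fillings-in of details the paper leaves implicit.
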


\begin{Pp} 
\label{Pp_induction_step}
Let $F\in Shv(\Gr_G)^H$. Assume there is $N\in\ZZ$ such that for any $\nu\in\Lambda^+_M$ with $\<\nu, 2\check{\rho}-2\check{\rho}_M\> >N$ one has $\HOM_{\SI_P}(J_{\nu, !}, F)\in\Vect^{\ge 0}$. Then the following properties are equivalent:
\begin{itemize}
\item[i)] $F\in Shv(\Gr_G)^{H,\ge 0}$;
\item[ii)] if $\mu\in\Lambda_{M, ab}^+, \gamma\in\Lambda^+$ then $\HOM_{\SI_P}(\Upsilon^{\mu, \gamma}, F)\in \Vect^{\ge 0}$.
\end{itemize}
\end{Pp}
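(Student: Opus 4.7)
\begin{Prf}[Proof proposal]
My plan is to treat the two implications separately, with the easy direction i) $\Rightarrow$ ii) being a direct consequence of the t-exactness results already established, and the harder direction ii) $\Rightarrow$ i) proceeding by downward induction on the integer $n_\nu = \<\nu, 2\check{\rho}-2\check{\rho}_M\>$, with the fibre sequence (\ref{fibre_sequence_for_Gurbir_induction}) providing the inductive step.

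For i) $\Rightarrow$ ii), I would first argue that $\Upsilon^{\mu, \gamma}\in Shv(\Gr_G)^{H,\le 0}$ for every $\mu\in\Lambda^+_{M, ab}$ and $\gamma\in\Lambda^+$. Indeed, $\bvartriangle^0\in Shv(\Gr_G)^{H,\le 0}$ by definition, convolution with $\Sat(V^\gamma)$ preserves connective objects by Proposition~\ref{Pp_2.3.21}, and convolution with $\Sat_M(U^\mu)[-\<\mu, 2\check{\rho}\>]$ is t-exact by Proposition~\ref{Pp_2.4.19} ii) (using $\<\mu, \check{\rho}_M\>=0$ since $\mu\in\Lambda_{M, ab}$, so $\<\mu, 2\check{\rho}\> = \<\mu, 2\check{\rho}-2\check{\rho}_M\>$ and the image of $\mu$ in $\Lambda_{G,P}$ is the relevant $Z(\check{M})$-weight). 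Once $\Upsilon^{\mu,\gamma}\in Shv(\Gr_G)^{H,\le 0}$ is known, the assumption $F\in Shv(\Gr_G)^{H,\ge 0}$ immediately gives $\HOM_{\SI_P}(\Upsilon^{\mu,\gamma}, F)\in \Vect^{\ge 0}$ by the very definition of the t-structure.

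For ii) $\Rightarrow$ i), I would show that $\HOM_{\SI_P}(J_{\nu, !}, F)\in \Vect^{\ge 0}$ for every $\nu\in \Lambda^+_M$, which is equivalent to $F\in Shv(\Gr_G)^{H,\ge 0}$ by Section~4.6.4. The argument proceeds by strong induction on $N-n_\nu$. The base case $n_\nu > N$ is the hypothesis of the proposition. For the inductive step, assume the property for all $\nu'$ with $n_{\nu'} > n_\nu$, and invoke the corollary preceding the proposition to choose $\mu\in \Lambda^+_{M, ab}, \gamma\in\Lambda^+$ and a fibre sequence $K\to \Upsilon^{\mu,\gamma}\to J_{\nu, !}$ where $K$ admits a finite filtration with successive subquotients of the form $J_{\nu', !}$ for finitely many $\nu'\in\Lambda^+_M$ satisfying $n_{\nu'} > n_\nu$. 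Applying $\HOM_{\SI_P}(-, F)$ yields a fibre sequence
$$
\HOM_{\SI_P}(J_{\nu, !}, F)\to \HOM_{\SI_P}(\Upsilon^{\mu, \gamma}, F)\to \HOM_{\SI_P}(K, F)
$$
in $\Vect$. The middle term is in $\Vect^{\ge 0}$ by assumption ii), while the induction hypothesis together with stability of $\Vect^{\ge 0}$ under finite extensions shows that the right term is in $\Vect^{\ge 0}$. The associated long exact sequence of cohomology then forces the fibre $\HOM_{\SI_P}(J_{\nu, !}, F)$ to lie in $\Vect^{\ge 0}$, completing the induction.

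I do not anticipate a serious obstacle: the entire argument is a formal consequence of the two ingredients already assembled, namely the t-exactness of the $\Rep(\check{G})$- and shifted $\Rep(\check{M})$-actions and the structure of the fibre sequence in the corollary. The only minor point that requires slight care is ensuring that the downward induction is well-founded, which it is because at each stage the filtration of $K$ is \emph{finite}, so only finitely many prior cases are invoked per inductive step, and the base set $\{\nu : n_\nu > N\}$ is unbounded above.
\end{Prf}
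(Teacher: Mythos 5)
Your proposal is correct and follows essentially the same route as the paper's proof: the implication i) $\Rightarrow$ ii) via the t-exactness statements of Propositions~\ref{Pp_2.3.21} and \ref{Pp_2.4.19}, and ii) $\Rightarrow$ i) by descending induction on $\<\nu, 2\check{\rho}-2\check{\rho}_M\>$ using the fibre sequence (\ref{fibre_sequence_for_Gurbir_induction}). Your additional remarks on the equivalence with testing against the $J_{\nu,!}$ and on the well-foundedness of the descent are correct and consistent with the paper.
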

\begin{proof} i) implies ii), because for $\mu\in\Lambda_{M, ab}, \gamma\in\Lambda^+$ one has $\Upsilon^{\mu, \gamma}\in Shv(\Gr_G)^{H,\le 0}$ by Propositions~\ref{Pp_2.4.19} and \ref{Pp_2.3.21}. 

Assume ii). To get i), it suffices to show that for any $\nu\in\Lambda^+_M$ one has 
$$
\HOM_{\SI_P}(J_{\nu, !}, F)\in\Vect^{\ge 0}.
$$ 
We proceed by the descending induction on $\<\nu, 2\check{\rho}-2\check{\rho}_M\>$. Let $N'\in\ZZ$. Assume for any $\nu\in \Lambda^+_M$ with $\<\nu, 2\check{\rho}-2\check{\rho}_M\> >N'$ one has $\HOM_{\SI_P}(J_{\nu, !}, F)\in\Vect^{\ge 0}$. Let $\nu\in \Lambda^+_M$ with $\<\nu, 2\check{\rho}-2\check{\rho}_M\>=N'$. Pick the fibre sequence (\ref{fibre_sequence_for_Gurbir_induction}). It yields a fibre sequence in $\Vect$
$$
\HOM_{\SI_P}(J_{\nu, !}, F)\to \HOM_{\SI_P}(\Upsilon^{\mu, \gamma}, F)\to \HOM_{\SI_P}(K, F),
$$
which shows that $\HOM_{\SI_P}(J_{\nu, !}, F)\in\Vect^{\ge 0}$. 
\end{proof}

\begin{proof}[Proof of Theorem~\ref{Thm_4.1.10}] 
By Lemma~\ref{Lm_2.3.13_about_Av!} and Proposition~\ref{Pp_2.4.19}, we may and do assume $\mu=0$. 

 If $i\in\cI-\cI_M$ then $\<\alpha_i, 2\check{\rho}_M\>\le 0$. So, for $\theta\in\Lambda_{G,P}^{pos}$ one has $\<\theta, 2\check{\rho}-2\check{\rho}_M\>\ge 0$. 
Note that $\bvartriangle^0$ is the extension by zero from $\ov{\Gr}_P^0$. So, if $\nu\in\Lambda_M^+$ with $\<\nu, 2\check{\rho}-2\check{\rho}_M\> >0$ then $\HOM_{\SI_P}(J_{\nu, !},\bvartriangle^0)=0$. Thus, By Proposition~\ref{Pp_induction_step}, it suffices to show that for any $\mu\in\Lambda_{M, ab}^+, \gamma\in\Lambda^+$ one has 
$$
\HOM_{\SI_P}(\Upsilon^{\mu, \gamma}, \bvartriangle^0)\in \Vect^{\ge 0}
$$
Applying (\ref{equiv_Iwahori_vs_SI}), we are reduced to show that
\begin{equation}
\label{property_for_step_one_Thm_4.1.10}
\HOM_{Shv(\Gr_G)^{I_P}}(j_{\mu, !}\ast \Sat(V^{\gamma}), \delta_{1,\Gr_G})\in \Vect^{\ge 0}.
\end{equation}

 Consider the adjoint pair
$$
I: \IndCoh_{\Nilp}(\check{\gu}(P)\times_{\check{\gg}} 0)/\check{P})\leftrightarrows \IndCoh(\check{\gu}(P)\times_{\check{\gg}} 0)/\check{P}): I^!
$$
corresponding under (\ref{equivalence_Gurbir_Chen_for_P_instead}) to the adjoint pair $\ren: Shv(\Gr_G)^{I_{P^-}}\leftrightarrows Shv(\Gr_G)^{I_{P^-}, ren}: \unren
$. By Corollary~\ref{Pp_key_for_baby_Verma_transform_for_Pminus} and Lemma~\ref{Lm_2.2.5}, one has
$$
j^-_{\mu, !}\astIPminus j_*^{P^-}\,\iso\, j_*^{P^-} \astIP j_{\mu, !}
$$
 
Now applying 
$$
Shv(\Gr_G)^{I_P}\,\toup{j_*^{P^-}\astIP}Shv(\Gr_G)^{I_{P^-}}\toup{(88)}\IndCoh_{\Nilp}(\check{\gu}(P)\times_{\check{\gg}} 0)/\check{P}), 
$$
the property (\ref{property_for_step_one_Thm_4.1.10}) becomes
$$
\HOM_{\IndCoh_{\Nilp}(\check{\gu}(P)\times_{\check{\gg}} 0)/\check{P})}(I^!i_*(e^{-\mu}\otimes V^{\gamma}), I^!i_*e),
$$ 
where we have denoted by 
$$
i: B(P)\to (\check{\gu}(P)\times_{\check{\gg}} 0)/\check{P}
$$ 
the natural map. Recall that $\IndCoh_{\Nilp}(\check{\gu}(P)\times_{\check{\gg}} 0)/\check{P})$ carries a unique t-structure such that 
$$
i^!: \IndCoh_{\Nilp}(\check{\gu}(P)\times_{\check{\gg}} 0)/\check{P})\to \QCoh(\check{\gu}(P)\times_{\check{\gg}} 0)/\check{P})
$$ 
is t-exact. Moreover, $i^!$ induces an equivalence
$$
\IndCoh_{\Nilp}(\check{\gu}(P)\times_{\check{\gg}} 0)/\check{P})^+\,\iso\,\QCoh(\check{\gu}(P)\times_{\check{\gg}} 0)/\check{P})^+
$$
on the subcategories of eventually coconnective objects. For the t-structures on $\IndCoh$ of an Artin stack we refer to (\cite{Gai_IndCoh}, Proposition~11.7.5). Moreover, the composition
\begin{multline*}
\QCoh(B(P))\toup{i_*} \IndCoh(\check{\gu}(P)\times_{\check{\gg}} 0)/\check{P})\toup{I^!} \IndCoh_{\Nilp}(\check{\gu}(P)\times_{\check{\gg}} 0)/\check{P})\\ \toup{i^!} \QCoh(\check{\gu}(P)\times_{\check{\gg}} 0)/\check{P})
\end{multline*}
is the usual pushforward $i_*: \QCoh(B(P))\to \QCoh(\check{\gu}(P)\times_{\check{\gg}} 0)/\check{P})$. We are done.
\end{proof}

\appendix

\section{Generalities}

\ssec{Some adjoint pairs}
\begin{Pp} 
\label{Pp_A.1}
Let $H, G$ be placid group schemes, $G\hook{} H$ be a subgroup (not necessarily a placid closed immersion). Assume $G\,\iso\, \lim_{i\in I^{op}} G_i$, where $G_i$ is a smooth group scheme of finite type, $I\in 1-\Cat$ is filtered, and for $i\to j$ in $I$ the map $G_j\to G_i$ is smooth affine surjective homomoprhism of group schemes. Write $K_i=\Ker(G\to G_i)$. Assume $H\,\iso\, \lim_{i\in I^{op}} H/K_i$ in $\PreStk$. Assume $H/G$ is a pro-smooth placid scheme. Consider the projection $p: H/G\to \Spec k$ as $H$-equivariant map. Then 

\smallskip\noindent
i) the adjoint pair $p^*: \Vect\leftrightarrows Shv(H/G): p_*$ takes place in $Shv(H)-mod$; \\
ii) assume $C\in Shv(H)-mod(\DGCat_{cont})$. The above adjoint pair gives an adjoint pair in $\DGCat_{cont}$
$$
\oblv: C^H\leftrightarrows C^G: \Av^{H/G}_*
$$
\end{Pp}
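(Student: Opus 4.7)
My plan is to prove (i) directly and deduce (ii) by tensoring the adjunction over $Shv(H)$ with $C$. Since tensoring with $C$ in $\DGCat_{cont}$ preserves adjunctions, once (i) is in place the work for (ii) reduces to identifying the resulting tensor products with invariant categories.

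For (i), the $H$-equivariance of $p : H/G \to \Spec k$ fits into a commutative square
$$
\begin{CD}
H \times H/G @>\act>> H/G \\
@V{\pr_1}VV @VVpV \\
H @>>> \Spec k,
\end{CD}
$$
from which $p^*$ acquires a canonical $Shv(H)$-linear structure by base change. The right adjoint $p_*$ automatically inherits a right-lax $Shv(H)$-module structure, and the real content of (i) is to verify that this lax structure is strict. For this I would use the placid presentations $H/G \simeq \lim_j Y_j$ (with $Y_j$ smooth of finite type and smooth affine transitions, afforded by the pro-smoothness assumption) and $H \simeq \lim_i H/K_i$ to write $Shv(H/G)$ and $Shv(H)$ as colimits along !-pullbacks of sheaf categories on finite-type objects. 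The desired isomorphism $p_*(F \ast K) \simeq F \ast p_*(K)$ then reduces at each finite level to standard proper/smooth base change in the constructible setting, and the pro-smoothness of $H/G$ ensures that these finite-level isomorphisms assemble coherently over the tower.

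For (ii), tensoring the adjoint pair of (i) with $C$ over $Shv(H)$ yields the desired adjunction in $\DGCat_{cont}$, and it remains to identify
\[
\Vect \otimes_{Shv(H)} C \simeq C^H, \qquad Shv(H/G) \otimes_{Shv(H)} C \simeq C^G.
\]
The first is the standard description of $H$-invariants as a tensor product, available because $H$ is a placid group scheme. For the second, $Shv(H/G)$ viewed as a left $Shv(H)$-module corepresents the functor $D \mapsto D^G$; under the hypotheses that $G$ is a pro-smooth placid group scheme (given by the presentation $G \simeq \lim G_i$ along smooth affine surjective transitions) and $H/G$ is pro-smooth placid, this is the usual induction-from-$G$-to-$H$ identification. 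Under these isomorphisms the pullback functor tensored with $C$ becomes $\oblv$, and the right adjoint becomes $\Av^{H/G}_*$.

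The principal technical obstacle is the strictness of the $Shv(H)$-linearity of $p_*$: propagating base change isomorphisms coherently across the pro-system defining $H$ and $H/G$, and verifying the higher $\infty$-categorical compatibilities needed to promote a right-lax structure to a strict one. The hypotheses — that $G = \lim G_i$ along smooth affine surjective homomorphisms, that $H \simeq \lim H/K_i$ as prestacks, and that $H/G$ is pro-smooth placid — are tailored precisely so that every statement reduces to finite-type group schemes acting on smooth finite-type schemes, where standard constructible base change applies without issue.
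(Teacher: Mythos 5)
Your argument for (i) is the paper's: the square you draw is cartesian, and the whole content is the base change isomorphism $\act_*(K\boxtimes p^*e)\,\iso\, p^*\RG(H,K)$ for it, which the paper deduces from its Lemma~\ref{Lm_A.1.3_base_change} --- a placid base change statement proved by exactly the finite-level reduction you sketch. (The paper runs the lax/strict bookkeeping in the opposite direction, taking $p_*$ to be strictly $Shv(H)$-linear for free and using the base change to strictify the left-lax structure on $p^*$, but it is the same isomorphism.) For (ii) you take the dual route: you apply the covariant $2$-functor $-\otimes_{Shv(H)}C$ where the paper applies the contravariant one $\Fun_{Shv(H)}(-,C)$. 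Both work. The paper's version is slightly leaner: $C^H=\Fun_{Shv(H)}(\Vect,C)$ holds by definition, so the only real input is $Shv(H/G)\,\iso\, Shv(H)^G\,\iso\, Shv(H)_G\,\iso\, Shv(H)\otimes_{Shv(G)}\Vect$ (using the hypothesis $H\,\iso\,\lim_i H/K_i$ and invariants $=$ coinvariants for the placid group scheme $G$), after which $\Fun_{Shv(H)}(Shv(H)\otimes_{Shv(G)}\Vect,C)\,\iso\, C^G$ is formal. Your version needs invariants $=$ coinvariants twice, once for $H$ (to get $\Vect\otimes_{Shv(H)}C\,\iso\, C_H\,\iso\, C^H$) and once for $G$; both hold because $H$ and $G$ are placid group schemes, so there is no gap, though you do have to move $Shv(H/G)$ from a left to a right $Shv(H)$-module through the inversion anti-involution before forming the tensor product. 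Either way the hypotheses on $G$, on $H\,\iso\,\lim_i H/K_i$, and on $H/G$ enter exactly where you say they do.
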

\begin{proof} i) Since $p$ is $H$-equivariant map, $p_*: Shv(H/G)\to \Vect$ is a morphism of $Shv(H)$-modules. Now the diagram is cartesian
$$
\begin{array}{ccc}
H\times H/G & \toup{act} & H/G\\
\downarrow\lefteqn{\scriptstyle\pr} && \downarrow\\
H & \to & \Spec k
\end{array}
$$
So, for $K\in Shv(H)$, $\act_*(K\boxtimes p^*e)\,\iso\, p^*\RG(H, K)$ canonically. Here we used the base change isomorphism given by Lemma~\ref{Lm_A.1.3_base_change}.

\smallskip\noindent
ii) Applying $\Fun_{Shv(H)}(\cdot, C)$, we get the adjoint pair $\oblv: \Fun_{Shv(H)}(\Vect, C)\leftrightarrows \Fun_{Shv(H)}(Shv(H/G), C): \Av^{H/G}_*$. Now using the assumption $H\,\iso\, \lim_{i\in I^{op}} H/K_i$ in $\PreStk$ from (\cite{Ly4}, 0.0.36) 
we get $Shv(H/G)\,\iso\, Shv(H)^G$ with respect to the $G$-action on $H$ by right translations.

 Recall that $Shv(H)^G\,\iso\, Shv(H)_G$, because $G$ is placid group scheme. Finally, 
$$
\Fun_{Shv(H)}(Shv(H/G), C)\,\iso\, \Fun_{Shv(H)}(Shv(H)\otimes_{Shv(G)}\Vect, C)\,\iso\, C^G
$$
\end{proof}

\sssec{} 
\label{Sect_A.0.2}
An example of the situation as in Proposition~\ref{Pp_A.1}. Assume $H=G\rtimes \bar H$, where $\bar H\subset H$ is a normal subgroup, $\bar H$ is a placid group scheme, and $G$ acts on $\bar H$ by conjugation. Here $G$ is a placid group scheme. 

If moreover $\bar H$ is prounipotent then Proposition~\ref{Pp_A.1} also shows that the functor $\oblv: C^H\to C^G$ is fully faithful. Indeed, the natural map $\id\to \Av^{H/G}_*\oblv$ is the identity, because $p_*p^*: \Vect\to \Vect$ identifies with $\id$.

\sssec{} Consider a cartesian square 
\begin{equation}
\label{square_1}
\begin{array}{ccc}
X& \toup{f_X} & X'\\
\downarrow\lefteqn{\scriptstyle g} &&\downarrow\lefteqn{\scriptstyle g'}\\
Y& \toup{f_Y} & Y',
\end{array}
\end{equation}
in $\PreStk$.
\begin{Lm}[\cite{Ly4}, Lemma~0.0.20]
\label{Lm_A.1.3_base_change}
Assume $Y'\in Sch_{ft}$, and $Y, X'$ are placid schemes over $Y'$. Then $X$ is also a placid scheme. Assume $Y\,\iso\, \lim_{i\in I^{op}} Y_i$, where $I$ is filtered, $f_{Y,i}: Y_i\to Y'$ is smooth, $Y_i\in Sch_{ft}$, and for $i\to j$ in $I$, $Y_j\to Y_i$ is smooth affine surjective morphism in $\Sch_{ft}$. Then one has $f_Y^*g'_*\,\iso\, g_*f_X^*$ as functors $Shv(X')\to Shv(Y)$.
\end{Lm}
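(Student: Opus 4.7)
The plan is to handle the two assertions separately: first that $X$ is a placid scheme, and then the base change isomorphism.

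For placidity of $X$, I would write $X' \,\iso\, \lim_{j\in J^{op}} X'_j$ for some filtered $J$, with $X'_j\in\Sch_{ft}$ over $Y'$ and smooth affine surjective transition maps, using that $X'$ is placid over $Y'\in \Sch_{ft}$. Then
$$
X\,\iso\, Y\times_{Y'} X' \,\iso\, \mathop{\lim}\limits_{(i,j)\in (I\times J)^{op}} Y_i\times_{Y'} X'_j,
$$
and each $Y_i\times_{Y'} X'_j$ is of finite type over $Y'$, hence in $\Sch_{ft}$. The transition maps are obtained by base change from smooth affine surjective morphisms, hence retain these properties. Thus $X$ is placid.

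For the base change, set $X_i = Y_i\times_{Y'} X'$, with structural maps $f_{X,i}: X_i\to X'$ and $g_i: X_i\to Y_i$, so that $X\,\iso\,\lim_{i\in I^{op}} X_i$ with smooth affine surjective transition maps. At each finite level $i$, the square
$$
\begin{array}{ccc}
X_i & \toup{f_{X,i}} & X'\\
\downarrow\lefteqn{\scriptstyle g_i} && \downarrow\lefteqn{\scriptstyle g'}\\
Y_i & \toup{f_{Y,i}} & Y'
\end{array}
$$
is a cartesian square of schemes, with $Y_i, Y'\in\Sch_{ft}$ and $f_{Y,i}$ smooth. Since $X'$ is placid, I would write $g'$ as a limit of maps between finite-type schemes over $Y'$ and apply smooth base change level by level in the $\Sch_{ft}$-setting, then pass to the limit in $j$, to conclude
$$
f_{Y,i}^* \, g'_* \,\iso\, g_{i,*}\, f_{X,i}^*
$$
as functors $Shv(X')\to Shv(Y_i)$.

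It then remains to propagate this compatible system of isomorphisms to the level of $Y = \lim_i Y_i$. Denoting by $p_i: Y\to Y_i$ the canonical map, the $*$-pullback functors $p_i^*$ assemble $Shv(Y)$ as a colimit of $Shv(Y_i)$ (the transition maps being smooth affine surjective, this uses the standard description of sheaves on placid limits). Writing $q_i: X\to X_i$ for the analogous projection, we have $p_i g = g_i q_i$ and $f_Y = f_{Y,i}p_i$, $f_X = f_{X,i} q_i$, so
$$
p_i^* f_Y^* g'_* \,\iso\, f_{Y,i}^* g'_* \,\iso\, g_{i,*} f_{X,i}^* \;\;\mbox{and}\;\; p_i^* g_* f_X^* \,\iso\, g_{i,*} q_i^* f_X^* \,\iso\, g_{i,*} f_{X,i}^*,
$$
where the second displayed chain uses smooth base change for the square with corners $X, X_i, Y, Y_i$ (which holds since the transition maps $Y\to Y_i$ are pro-smooth and the relevant map $p_i$ is obtained from cofiltered limits of smooth affine surjections, so the required base change descends from its finite-type analogs). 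Matching these two identifications yields the desired isomorphism $f_Y^* g'_* \,\iso\, g_* f_X^*$.

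The main obstacle is of a technical nature: ensuring that smooth base change commutes with the cofiltered limits defining the placid schemes in question. This is essentially a routine application of the formalism for $Shv$ on placid (ind-)schemes as in (\cite{Ly4}, 0.0.20), but requires care that the pullback functors along the pro-smooth projections $Y\to Y_i$ and $X\to X_i$ are jointly conservative (as they are, since the system is surjective) and that all the relevant natural transformations are compatible at each level.
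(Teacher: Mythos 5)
The paper offers no proof of this lemma to compare against: it is quoted verbatim from an external reference (\cite{Ly4}, Lemma~0.0.20). Judged on its own, your sketch follows the standard route and is essentially correct: placidity of $X$ by presenting $Y\times_{Y'}X'$ as a filtered limit over $I\times J$ of the finite-type schemes $Y_i\times_{Y'}X'_j$, whose transition maps are base changes of smooth affine surjections and hence again smooth affine surjective; and the base change isomorphism by reduction to classical smooth base change in $\Sch_{ft}$ followed by passage to the limit.

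Two points to tighten. First, a notational slip: with your own convention $p_i^*\colon Shv(Y_i)\to Shv(Y)$ (insertion into the colimit), the composite $p_i^*f_Y^*g'_*$ does not typecheck. What you mean is the factorization $f_Y^*\iso p_i^*\comp f_{Y,i}^*$ and $f_X^*\iso q_i^*\comp f_{X,i}^*$, after which both sides of the desired isomorphism are identified with $p_i^*\,g_{i,*}\,f_{X,i}^*$ — the left side via the level-$i$ base change, the right side via base change for the cartesian square formed by $q_i\colon X\to X_i$ and $p_i\colon Y\to Y_i$ (again pro-smooth, so again reducible to finite levels). The appeal to joint conservativity is not the right mechanism — you are constructing an isomorphism of functors into $Shv(Y)$, not detecting one — but the factorization through a single level already closes the argument. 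Second, the genuinely non-formal step, which you name but do not carry out, is the level-$i$ statement $f_{Y,i}^*g'_*\iso g_{i,*}f_{X,i}^*$ when the source $X'$ of $g'$ is only placid: one needs that $g'_*$ is computed as $(g'_j)_*\comp\ev_j$ under the identification $Shv(X')\iso\lim_j Shv(X'_j)$ along pushforwards (i.e., that $\ev_j$ is right adjoint to the insertion $Shv(X'_j)\to Shv(X')$), and that the $\Sch_{ft}$-level smooth base change isomorphisms are compatible with the transition pushforwards $t_*$; the latter compatibility is itself an instance of smooth base change, using that the maps $f_{X_{i,j}}\colon X_{i,j}\to X'_j$ are smooth. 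With these compatibilities spelled out, the argument is complete.
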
 

\ssec{Some coinvariants} 
\label{Sect_A.1}

\sssec{} 
Let $P\subset G$ be a parabolic in a connected split reductive group with Levi $M$ and unipotent radical $U$. Let $F=k((t)), \cO=k[[t]]$. Let $H=M(\cO)U(F)$. This is a placid ind-scheme, closed in $P(F)$. We have also $P(F)/H\,\iso\, M(F)/M(\cO)$. Since the object $\delta_1\in Shv(\Gr_M)$ is $H$-invariant, the functor $\Vect\to Shv(\Gr_M)$, $e\mapsto \delta_1$ is $Shv(H)$-linear. Now the $Shv(H)$-action on $Shv(\Gr_M)$ comes as the restriction of a $Shv(P(F))$-action, hence we get by adjointness a canonical functor 
\begin{equation}
\label{functor_for_Sect_A.1}
Shv(P(F))\otimes_{Shv(H)}\Vect\to Shv(\Gr_M)
\end{equation}
\begin{Lm} 
\label{Lm_A.1.2_coinvariants}
The functor (\ref{functor_for_Sect_A.1}) is an equivalence. 
\end{Lm}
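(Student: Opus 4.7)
\begin{Prf}[Plan]
The strategy is to compute the coinvariants in two stages by exploiting the normal subgroup structure $U(F) \triangleleft H = M(\cO)U(F)$ with quotient $H/U(F) = M(\cO)$, together with the semidirect product decomposition $P(F) = M(F) \ltimes U(F)$.

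\textbf{Stage 1: Two-step coinvariants.} First I would observe that since $U(F)$ is normal in $H$ with $H/U(F) \simeq M(\cO)$, the relative tensor product can be iterated as
$$
Shv(P(F)) \otimes_{Shv(H)} \Vect
\;\simeq\;
\bigl(Shv(P(F)) \otimes_{Shv(U(F))} \Vect\bigr) \otimes_{Shv(M(\cO))} \Vect,
$$
where $M(\cO)$ acts on $Shv(P(F))_{U(F)}$ via its action on $P(F)/U(F) \simeq M(F)$ by right translations.

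\textbf{Stage 2: The $U(F)$-coinvariants.} The projection $P(F) \to M(F)$ is a $U(F)$-torsor thanks to the splitting $P(F) = M(F) \ltimes U(F)$. Writing $U(F) \simeq \colim_n t^{-n}U(\cO)t^n$ as a filtered colimit of pro-unipotent group schemes of placid closed immersions, and using that for a pro-unipotent group $\cU$ acting freely on a placid ind-scheme $Z$ one has $Shv(Z)_{\cU} \simeq Shv(Z/\cU)$ (invariants and coinvariants agree, and compute sheaves on the quotient), I would pass to the colimit in the ind-structure to obtain
$$
Shv(P(F)) \otimes_{Shv(U(F))} \Vect \;\simeq\; Shv(M(F)),
$$
compatibly with the residual $M(\cO)$-action by right translations.

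\textbf{Stage 3: The $M(\cO)$-coinvariants.} Here $M(\cO)$ is a placid group scheme acting freely on the placid ind-scheme $M(F)$ with quotient the ind-scheme $\Gr_M$ of ind-finite type. By the standard descent result for placid group schemes (essentially the dual of Proposition~\ref{Pp_A.1}, applied via the coequalizer resolution and Lemma~\ref{Lm_A.1.3_base_change} for base change along the smooth projections $M(\cO) \to M(\cO)/K_n$), one obtains
$$
Shv(M(F)) \otimes_{Shv(M(\cO))} \Vect \;\simeq\; Shv(\Gr_M).
$$
Composing the two stages yields an equivalence, and I would then verify by adjointness that it agrees with the functor (\ref{functor_for_Sect_A.1}) by tracing the universal property through the unit $\delta_1 \in Shv(\Gr_M)$.

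The main obstacle is Stage 2: one must carefully handle the ind-pro-unipotent group ind-scheme $U(F)$, in particular checking that coinvariants commute with the filtered colimit defining $U(F)$ as a placid ind-scheme, and that the resulting equivalence is natural with respect to the residual $Shv(M(\cO))$-action. Once this is done, Stages 1 and 3 are essentially formal.
\end{Prf}
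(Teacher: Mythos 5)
Your proposal is correct, but it organizes the computation differently from the paper. The paper never separates the $U(F)$- and $M(\cO)$-coinvariants: it writes $U(F)=\colim_n U_n$ with each $U_n$ prounipotent and normalized by $M(\cO)$, sets $H_n=M(\cO)U_n$ and $P(F)=\colim_n M(F)U_n$, rewrites $Shv(P(F))\otimes_{Shv(H)}\Vect$ as a colimit over $\Fun([1],\NN)$ of $Shv(M(F)U_m)\otimes_{Shv(H_n)}\Vect$, uses cofinality of the diagonal $\NN\to\Fun([1],\NN)$, and then identifies each term with $Shv(M(F)/M(\cO))$ via $(M(F)U_n)/H_n\simeq M(F)/M(\cO)$ and (\cite{Ly4}, 0.0.36). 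Your two-stage route instead first establishes $Shv(P(F))_{U(F)}\simeq Shv(M(F))$ — which is exactly the content of Remark~\ref{Rem_A.1.3_action_of_M(F)}, and which the paper proves \emph{from} the Lemma by the same colimit manipulation — and then takes $M(\cO)$-coinvariants. What your approach buys is a cleaner conceptual picture (the residual $Shv(M(F))$-module structure on $C^{U(F)}$ falls out for free, which is what Remark~\ref{Rem_A.1.3_action_of_M(F)} is after); what it costs is an extra formal layer, namely justifying the coinvariants-in-stages isomorphism $C_H\simeq (C_{U(F)})_{M(\cO)}$, which itself reduces to showing $Shv(H)\otimes_{Shv(U(F))}\Vect\simeq Shv(M(\cO))$ as a $(Shv(H),Shv(M(\cO)))$-bimodule — another instance of the same colimit argument. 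In both routes the essential technical input is identical: the presentation of $U(F)$ as a filtered colimit of prounipotent placid group schemes, commutation of the relative tensor product with this colimit, and the identification of invariants with coinvariants for placid (pro-unipotent or pro-smooth) group schemes. So your Stage 2, which you correctly flag as the main obstacle, is where the real work sits, and it is carried out exactly as in the paper's proof.
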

\begin{proof} Pick a presentation $U(F)=\colim_{n\in\NN} U_n$, where $U_n$ is a prounipotent group scheme, for $n\le m$, $U_n\to U_m$ is a placid closed immersion and a homomorphism of group schemes. Assume $M(\cO)$ normalizes each $U_n$, so $M(\cO)U_n=:H_n$ is a placid group scheme, and $H\,\iso\,\colim_{n\in\NN} H_n$. We have $P(F)\,\iso\, \colim_{n\in \NN} M(F)U_n$ in $\PreStk$, as colimits in $\PreStk$ are universal. Recall that for any morphism $f: Y_1\to Y_2$ of placid ind-schemes the functor $f_*: Shv(Y_1)\to Shv(Y_2)$ is well-defined. Now 
$$
Shv(P(F))\,\iso\, \colim_{n\in \NN} Shv(M(F)U_n)
$$ 
with respect to the $*$-push-forwards. Indeed, pick a presentation $M(F)\,\iso\,\colim_{i\in I} M_i$, where $M_i$ is a placid scheme, $I$ is small filtered, for $i\to i'$ the map $M_i\to M_{i'}$ is a placid closed immersion. Then $Shv(M(F)U_n)\,\iso\, \colim_{i\in I} Shv(M_iU_n)$.  

The above gives
$$
Shv(P(F))\otimes_{Shv(H)}\Vect \,\iso\,\mathop{\colim}\limits_{(n\le m)\in\Fun([1],\NN)} Shv(M(F)U_m)\otimes_{Shv(H_n)}\Vect
$$
The diagonal map $\NN\to \Fun([1],\NN)$ is cofinal, so the above identifies with
$$
\colim_{n\in \NN} Shv(M(F)U_n)\otimes_{Shv(H_n)}\Vect
$$
Now each term of the latter diagram identifies with $Shv(M(F)/M(\cO))$ using (\cite{Ly4}, 0.0.36), and we are done. Indeed, for any $I\in 1-\Cat$ the natural map $I\to \mid I\mid$ is cofinal, and for $I$ filtered we get $\mid I\mid\,\iso\, *$. 
\end{proof}

 Further, let $C\in Shv(P(F))-mod(\DGCat_{cont})$. We get 
$$
C^H=
\Fun_{Shv(H)}(\Vect, C)\,\iso\, \Fun_{Shv(P(F))}(Shv(P(F))\otimes_{Shv(H)}\Vect, C)
$$ 
Thus, $\Fun_{Shv(P(F))}(Shv(\Gr_M), Shv(\Gr_M))$ acts on $C^H$. Now 
$$
\Fun_{Shv(P(F))}(Shv(\Gr_M), Shv(\Gr_M))\,\iso Shv(\Gr_M)^H\,\iso\, Shv(\Gr_M)^{M(\cO)}
$$
acts on $C^H$.

\begin{Rem} 
\label{Rem_A.1.3_action_of_M(F)}
As in Lemma~\ref{Lm_A.1.2_coinvariants}, one shows that $Shv(P(F))_{U(F)}\,\iso\, Shv(M(F))$ naturally in $Shv(P(F))-mod(\DGCat_{cont})$, where we used the $U(F)$-action on $P(F)$ by right translations. This similarly implies that for any $C\in Shv(P(F))-mod(\DGCat_{cont})$, $C^{U(F)}\in Shv(M(F))-mod(\DGCat_{cont})$ naturally. 
\end{Rem}

\ssec{Some invariants}
\label{Section_A2_some_invarinats}

\sssec{} An observation about categories of invariants. Let $Y=\colim_{j\in J} Y_j$ be an ind-scheme of ind-finite type, here $J$ is a filtered category, $Y_j$ is a scheme of finite type, and for $j\to j'$ in $J$ the map $Y_j\to Y_{j'}$ is a closed immersion. 

Let $\alpha: H\to G$ be a homomorphism of group schemes, which are placid schemes. Assume $I$ is a filtered category and $H\,\iso\lim_{i\in I^{op}} H_i$, $G\,\iso\,\lim_{i\in I^{op}} G_i$, where $H_i$, $G_i$ is a smooth group scheme of finite type, for $i\to j$ in $I$ the transition maps $H_j\to H_i$, $G_j\to G_i$ are smooth, affine, surjective homomorphisms. Besides, we are given a diagram $I^{op}\times [1]\to Grp(\Sch_{ft})$, sending $i$ to $\alpha_i: H_i\to G_i$, where $\alpha_i$ is a closed subgroup. Here $Grp(\Sch_{ft})$ is the category of groups in $\Sch_{ft}$. We assume $\alpha=\lim_{i\in I^{op}} \alpha_i$. We assume for each $i$, $\Ker(H\to H_i)$ and $\Ker(G\to G_i)$ are prounipotent.

 Assume $G$ acts on $Y$. Moreover, for any $j\in J$, $Y_j$ is $G$-stable, and $G$ acts on $Y_j$ through the quotient $G\to G_i$ for some $i\in I$. We claim that 
$oblv: Shv(Y)^G\to Shv(Y)^H$ admits a continuous right adjoint $\Av_*$.
 
\begin{proof} We have $Shv(Y)^G\,\iso\,\lim_{j\in J^{op}} Shv(Y_j)^G$ with respect to the !-pullbacks, similarly $Shv(Y)^H\,\iso\,\lim_{j\in J^{op}} Shv(Y_j)^H$, and $oblv: Shv(Y)^G\to Shv(Y)^H$ is the limit over $j\in J^{op}$ of $\oblv_j: Shv(Y_j)^G\to Shv(Y_j)^H$. For given $j\in J$ the functor $\oblv_j$ admits a continuous right adjoint $\Av_{j, *}$. Indeed, pick $i\in I$ such that $G$-action on $Y_j$ factors through $G_i$. Then $\oblv_j$ identifies with the functor $f^!: Shv(Y_j/G_i)\to Shv(Y_j/H_i)$ for the projection $f: Y_j/H_i\to Y_j/G_i$. Since $G_i/H_i$ is smooth, $f$ is smooth. So, $f^!$ admits a continuous right adjoint (as $f$ is schematic of finite type).

 Let now $j\to j'$ be a map in $J$. Pick $i$ such that the $G$-action on $Y_j, Y_{j'}$ factors through $G_i$. Then we get a cartesian square
$$
\begin{array}{ccc}
Y_j/H_i & \toup{h} & Y_{j'}/H_i\\
\downarrow\lefteqn{\scriptstyle f_j}&& \downarrow\lefteqn{\scriptstyle f_{j'}}\\
Y_j/G_i & \toup{h'} & Y_{j'}/G_i,
\end{array}
$$ 
where $h, h'$ are closed immersions. We have $(h')^!f_{j', *}\,\iso\, f_{j, *} h^!$. Since $f_j, f_{j'}$ are of the same relative dimension, we see that the diagram commutes
$$
\begin{array}{ccc}
Shv(Y_j)^H & \getsup{h^!} & Shv(Y_{j'})^H\\
\downarrow\lefteqn{\scriptstyle \Av_{j, *}}&&\downarrow\lefteqn{\scriptstyle \Av_{j', *}}\\
Shv(Y_j)^G & \getsup{(h')^!} & Shv(Y_{j'})^G
\end{array}
$$
By (\cite{G}, ch. I.1, 2.6.4), $\oblv$ admits a right adjoint $\Av_*$, and for the evaliation maps $\ev_j: Shv(Y)^G\to Shv(Y_j)^G$, $\ev_j: Shv(Y)^H\to Shv(Y_j)^H$ one gets $\ev_j\Av_*\,\iso\, \Av_{j, *}\ev_j$. 
So, $\Av_*$ is continuous.  
\end{proof} 

\begin{Rem} 
\label{Rem_A.2.2}
If $L: C\leftrightarrows C': R$ is an adjoint pair in $\DGCat^{non-cocmpl}$ then $\Ind(L): \Ind(C)\leftrightarrows\Ind(C'): \Ind(R)$ is an adjoint pair in $\DGCat_{cont}$.
\end{Rem}

\begin{Lm} 
\label{Lm_A.2.3}
Let $C\in \DGCat_{cont}$, $C_i\subset C$ be a full subcategory, this is a map in $\DGCat_{cont}$ for $i\in I$. Here $I\in 1-\Cat$ is filtered. Assume for $i\to j$ in $I$, $C_j\subset C_i$. Set $D=\cap_i C_i=\lim_{i\in I^{op}} C_i$, where the limit is calculated in $\DGCat_{cont}$. Assume $L_i: C\to C_i$ is a left adjoint to the inclusion. Then $D$ is a localization of $C$, and the localization functor $L: C\to D$ is given by $L(c)=\colim_{i\in I} L_i(c)$, where the transition maps are the localization morphisms for $C_j\subset C_i$, and the colimit is calculated in $C$.
\end{Lm}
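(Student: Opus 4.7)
The plan is to realize $D$ as a reflective subcategory of $C$, with reflector given explicitly by the colimit formula. Write $\iota_i:C_i\hookrightarrow C$ for the inclusion, so that $L_i\dashv \iota_i$. Each arrow $i\to j$ in $I$ gives a fully faithful inclusion $\iota_{ji}:C_j\hookrightarrow C_i$ admitting a continuous left adjoint $L_{ji}:C_i\to C_j$, and one has $\iota_i\iota_{ji}\cong \iota_j$, which dualizes to the idempotent-style relation $L_j\cong L_{ji}L_i$. Because all the transition functors in the diagram $I^{op}\to\DGCat_{cont}$, $i\mapsto C_i$, are fully faithful, its $\DGCat_{cont}$-limit indeed identifies with the intersection $\cap_{i}C_i\subset C$; this gives the first assertion of the lemma.

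Define $L:C\to C$ by $L(c):=\colim_{i\in I}\iota_i L_i(c)$, with transition morphism for $i\to j$ given by the unit of $L_{ji}\dashv \iota_{ji}$ applied to $L_i(c)$, i.e.\ $\iota_iL_i(c)\to \iota_i\iota_{ji}L_{ji}L_i(c)\cong \iota_j L_j(c)$. The key step is to check that $L(c)\in D$, equivalently that $L(c)\in C_j$ for every $j\in I$. Fix $j$, and let $I_{\ge j}\subset I$ denote the full subcategory of objects admitting an arrow from $j$; since $I$ is filtered, $I_{\ge j}$ is cofinal in $I$, hence
$$
L(c)\;\cong\;\colim_{i\in I_{\ge j}}\iota_i L_i(c).
$$
For $i\in I_{\ge j}$ we have $\iota_iL_i(c)=\iota_j\iota_{ij}L_i(c)$, and applying $L_j\iota_j\cong \id$ gives $L_j(\iota_iL_i(c))\cong \iota_{ij}L_i(c)$. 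Therefore
$$
\iota_j L_j(L(c))\;\cong\;\colim_{i\in I_{\ge j}}\iota_j\iota_{ij}L_i(c)\;\cong\;\colim_{i\in I_{\ge j}}\iota_iL_i(c)\;\cong\;L(c),
$$
so the unit $L(c)\to \iota_jL_j(L(c))$ is an equivalence, i.e.\ $L(c)\in C_j$.

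It remains to verify that the functor $L:C\to D$ so obtained is left adjoint to the inclusion $\iota_D:D\hookrightarrow C$. For any $d\in D\subset C_i$, the unit $d\to \iota_iL_i(d)$ is an equivalence for all $i$, so we have natural identifications
$$
\Hom_C(\iota_iL_i(c),d)\;\cong\;\Hom_{C_i}(L_i(c),L_i(d))\;\cong\;\Hom_C(c,\iota_iL_i(d))\;\cong\;\Hom_C(c,d),
$$
compatible with the transition maps indexing $L(c)$. Since $I^{op}$ is cofiltered (hence has contractible classifying space), the resulting constant diagram has limit $\Hom_C(c,d)$, yielding
$$
\Hom_C(L(c),d)\;\cong\;\lim_{i\in I^{op}}\Hom_C(\iota_iL_i(c),d)\;\cong\;\Hom_C(c,d).
$$
Thus $D\subset C$ is reflective, which is the same as being a localization, and $L$ is the asserted formula. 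The main obstacle — and the only substantive point — is the verification that $L(c)\in D$: this relies crucially on both the filteredness of $I$ (used to reduce to the cofinal $I_{\ge j}$) and the compatibility $L_j=L_{ji}L_i$ of the reflectors, without which neither the transition maps in the defining colimit nor the calculation above would make sense.
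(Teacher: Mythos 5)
Your proof is correct and follows essentially the same route as the paper's: you verify the adjunction by computing $\Hom_C(\colim_i \iota_iL_i(c),d)$ as a limit of a constant diagram over the contractible category $I^{op}$, and you show $L(c)\in C_j$ by restricting to the cofinal subcategory of indices under $j$ and using that the (continuous, colimit-preserving) inclusion $C_j\subset C$ lets the colimit be computed inside $C_j$. The only differences are expository — you spell out the transition maps and the reflective-subcategory formalism more explicitly than the paper does.
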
 
\begin{proof} For $x\in \cap_i C_i$, $c\in C$ we get 
\begin{multline*}
\Map_C(\colim_i L_i(c), x)\,\iso\, \lim_{i\in I^{op}} \Map(L_i(c), x)\,\iso\,  \lim_{i\in I^{op}} \Map(c, x)\\
\,\iso\, \Map(c,x)\,\iso\,\Fun(I^{op}, \Map(c,x))
\end{multline*}
For $J\in 1-\Cat$, $Z\in \Spc$ we have $\Fun(J, Z)\,\iso\, \Fun(\mid J\mid, Z)$, where $\mid J\mid\in\Spc$ is obtained by inverting all arrows. Since a filtered category is contractible, we are done. 

 To explain that $L$ takes values in $\cap C_i$, note that we may equally understand $\colim_i L_i(c)$ as taken in $C_j$ over $i\in I_{j/}$, because the inclusion $C_j\subset C$ is continuous, so the colimit lies in $C_j$ for any $j$.
\end{proof}

\sssec{About representations of $G(F)$} 
\label{Sect_A.2.4}
Let $G$ be a connected reductive group over $k$. Recall from (\cite{GaLocWhit}, D.1.2) that for any $C\in \DGCat_{cont}$ with an action of $Shv(G(F))$, $C\,\iso\, \mathop{\colim}\limits_{n\in\NN} C^{K_n}$, where $K_n=\Ker(G(\cO)\to G(\cO/t^n))$. So, for any $c\in C$, 
$$
c\,\iso\, \mathop{\colim}\limits_{n\in\NN} \oblv_n \Av^{K_n}_*(c),
$$ 
where $\oblv_n: C^{K_n}\to C$ and $\Av^{K_n}_*: C\to C^{K_n}$ are adjoint functors (by \cite{Ly}, 9.2.6).

\ssec{Actions} 

\sssec{}
The theory of placid group ind-schemes acting on categories is developed for $\cD$-modules in (\cite{LC}, Appendix B). A version of this theory in the constructible context is developed to some extent in (\cite{Ly9}, Sections 1.3.2 - 1.3.24). 

 Recall the following. If $Y\in\PreStk_{lft}$ then by a placid group (ind)-scheme over $Y$ we mean a group object $G\to Y$ in $\PreStk_{/Y}$ such that for any $S\to Y$ with $S\in\Sch_{ft}$, $G\times_Y S$ is a placid group (ind)-scheme over $S$. 
 
 If $Y\in\PreStk_{lft}$ and $G\to Y$ is a placid group ind-scheme over $Y$, say that $G$ is ind-pro-unipotent if for any $S\in \Sch_{ft}$, $G\times_Y S$ is ind-pro-unipotent. In other words, there is a small filtered category $I$, and a presentation $G\times_Y S\,\iso\,\colim_{i\in I} G_{S, i}$, where $G_{S,i}$ is a prounipotent group scheme over $S$, for $i\to j$ in $I$, $G_{S, i}\to G_{S, j}$ is a placid closed immersion and a homomorphism of group schemes over $S$.
 
\sssec{} Let $f: Y\to Z$ be a morphism of ind-schemes of ind-finite type, $G\to Z$ be a placid group ind-scheme over $Z$. As in (\cite{Ly9}, 1.3.12) for $S\to Z$ with $S\in\Sch_{ft}$ set $G_S=G\times_Z S$, $Y_S=Y\times_Z S$ and view $Shv(G_S)$ as an object of $Alg(Shv(Z)-mod)$. Assume $G$ acts on $Y$ over $Z$. Then set
$$
Shv(Y)^G=\lim_{(S\to Y)\in (\Sch_{ft}/Y)^{op}} Shv(Y_S)^{G_S},
$$
see (\cite{Ly4}, 0.0.42) for details. The category of invariants is defined in (\cite{Ly9}, 1.3.2) as 
$$
Shv(Y_S)^{G_S}=\Fun_{Shv(G_S)}(Shv(S), Shv(Y_S))\in Shv(S)-mod(\DGCat_{cont})
$$
 
\sssec{} 
\label{Sect_A.2.5}
 Let $f: Y\to Z$ be a morphism of ind-schemes of ind-finite type, $G\to Z$ be a placid group ind-scheme over $Z$. Assume $G$ acts on $Y$ over $Z$, and $G$ is ind-pro-unipotent. Let $s: Z\to Y$ be a section of $f$. The stabilizer $St_s$ of $s$ is defined as the fibred product $G\times_{Y\times Y} Y$, that is, by the equation $gs(\bar g)=s(\bar g)$ for $g\in G$. Here $\bar g\in Z$ is the projection of $g$, and the two maps $G\to Y$, $G\to Y$ are $g\mapsto gs(\bar g)$ and $g\mapsto s(\bar g)$.  Assume $St_s$ is a placid group scheme over $Z$. 
 
 Consider the quotient $G/St_s\to Z$ over $Z$ in the sense of stacks. We get a natural map $\bar f: G/St_s\to Y$ over $Z$. Assume that $\bar f$ is an isomorphism, so $G$ acts transitively on the fibres of $f$.
  
\begin{Lm} 
\label{Lm_A.2.6}
In the situation of Section~\ref{Sect_A.2.5} one has the following.\\
i) The composition
$$
Shv(Y)^G\toup{\oblv} Shv(Y)\toup{s^!} Shv(Z)
$$
is an equivalence. \\
ii) The functor $f^!: Shv(Z)\to Shv(Y)$ is fully faithful and takes values in the full subcategory $Shv(Y)^G\subset Shv(Y)$.
\end{Lm}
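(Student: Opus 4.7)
The plan is to reduce both statements to the assertion that $!$-pullback along $f$ realizes an equivalence $Shv(Z)\,\widetilde{\to}\,Shv(Y)^G$. The route is to identify the relative stack quotient $[Y/G]$ over $Z$ with the classifying stack $B(St_s)$ over $Z$, and then to exploit that $St_s$ is prounipotent.

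First I would handle (i). Since $\bar f\colon G/St_s\to Y$ is a $G$-equivariant isomorphism over $Z$, passing to the further stack quotient under the left $G$-action yields an equivalence of prestacks over $Z$
\[
[Y/G]\;\simeq\;[G/St_s\,/\,G]\;\simeq\;B(St_s),
\]
where the last term is viewed as a stack over $Z$ via its structure morphism. Passing to sheaves in the formalism of \cite{Ly4, Ly9} gives
\[
Shv(Y)^G\;\simeq\;Shv(Z)^{St_s},
\]
and one checks that along this equivalence the composition $s^!\circ\oblv$ corresponds to the forgetful functor $\oblv\colon Shv(Z)^{St_s}\to Shv(Z)$, using that the composite $\pi s\colon Z\to Y\to [Y/G]$ realizes the tautological section of $B(St_s)\to Z$. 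It therefore suffices to show that this last $\oblv$ is an equivalence. Now $St_s$ is a placid closed subgroup of the ind-pro-unipotent group ind-scheme $G$, so the closed immersion must factor through one of the prounipotent group schemes $G_j$ in a presentation of $G$; hence $St_s$ is itself prounipotent. Since $St_s$ acts trivially on $Z$, the equivalence $Shv(Z)^{St_s}\simeq Shv(Z)$ follows from the standard fact that $Shv(BH)\simeq\Vect$ for prounipotent $H$ (compare Section~\ref{Sect_A.0.2}).

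For (ii), since $G$ is ind-pro-unipotent the forgetful functor $\oblv\colon Shv(Y)^G\to Shv(Y)$ is fully faithful (Section~\ref{Sect_A.0.2}). Because $f$ is invariant under the $G$-action on $Y$ (one has $f\circ a=f\circ\pr_2$ for the action map $a\colon G\times_Z Y\to Y$), the functor $f^!\colon Shv(Z)\to Shv(Y)$ lifts canonically to a functor into $Shv(Y)^G$. Writing $\pi\colon Y\to [Y/G]$ and $p\colon [Y/G]\to Z$ for the quotient and structure maps (the latter identified with $B(St_s)\to Z$), we have $f=p\circ\pi$. Under the equivalence of (i), the pullback $p^!\colon Shv(Z)\to Shv([Y/G])=Shv(Y)^G$ is inverse to $s^!\circ\oblv$, so $f^!=\oblv\circ p^!$ is the composition of this inverse with the fully faithful embedding $\oblv$. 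Both the fully-faithfulness and the factorization through $Shv(Y)^G$ follow.

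The main technical obstacle will be making rigorous the passage from the prestack equivalence $[Y/G]\simeq B(St_s)$ to the sheaf-theoretic equivalence $Shv(Y)^G\simeq Shv(Z)^{St_s}$ in the framework of ind-schemes of ind-finite type acted on by placid group ind-schemes. Following the conventions of (\cite{Ly9}, Sections~1.3.2--1.3.24), this reduces by base change to test schemes $S\to Z$ with $S\in\Sch_{ft}$: using the ind-pro-unipotent presentation of $G_S$, the action on $Y_S$ factors at each finite stage through that of a smooth group scheme of finite type, and one identifies the corresponding categories of equivariant sheaves level by level with $Shv(S)^{(St_s)_S}$, checks compatibility with the transition functors, and takes the limit.
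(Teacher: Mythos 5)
Your proof is correct; here is how it compares with the paper's. For part (i) the two arguments are essentially the same computation packaged differently: the paper base-changes to test schemes $S\to Z$ with $S\in\Sch_{ft}$ and invokes (\cite{Ly9}, Lemma~1.3.20), then passes to the limit over $(\Sch_{ft}/Z)^{op}$, whereas your route through $Shv(Y)^G\,\iso\,Shv(Z)^{St_s}$ and the prounipotence of $St_s$ is exactly the content of that cited lemma (compare the identification $Shv(S^{\nu}_P)^H\,\iso\,Shv(B(St_{\nu}))$ in the proof of Lemma~\ref{Lm_2.3.8}~i), which is carried out by the same level-by-level reduction you sketch in your final paragraph); note also that the prounipotence of $St_s$, which you take the trouble to justify, is implicitly assumed in the paper's proof of (ii), where the presentation $G\,\iso\,\colim_j G^j$ is chosen with $St_s$ as its initial prounipotent term. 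For part (ii) you genuinely depart from the paper: there, after reducing to $Z\in\Sch_{ft}$, one writes $Y\,\iso\,\colim_{j} G^j/St_s$ and observes that each $(f^j)^!$ is fully faithful because $G^j$ is prounipotent (so $G^j/St_s$ is pro-contractible), an argument independent of (i). You instead deduce (ii) formally from (i): the $G$-invariance of $f$ lifts $f^!$ to $Shv(Y)^G$, the identity $f\comp s=\id$ exhibits this lift as a section of the equivalence $s^!\comp\oblv$, hence itself an equivalence $Shv(Z)\,\iso\,Shv(Y)^G$, and composing with the fully faithful $\oblv$ gives the claim. (For that last step, be aware that Section~\ref{Sect_A.0.2} only covers a prounipotent group scheme; for the ind-pro-unipotent group ind-scheme $G$ you should say that $Shv(Y)^G$ is the limit, i.e.\ the intersection, of the full subcategories $Shv(Y)^{G_j}\subset Shv(Y)$, as the paper does elsewhere.) Your version is slicker and yields the stronger conclusion that $f^!$ is an equivalence onto $Shv(Y)^G$, at the cost of making (ii) logically dependent on (i).
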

\begin{proof}
i) Let $S\in\Sch_{ft}$ with a map $S\to Z$. Making the base change by this map, we get a diagram $f_S: Y\times_Z S\to S$ and its section $s_S$. Let $G_S=G\times_Z S$. By (\cite{Ly9}, Lemma~1.3.20), the composition
$$
Shv(Y\times_Z S)^{G_S}\toup{\oblv} Shv(Y\times_Z S)\toup{s_S^!} Shv(S)
$$
is an equivalence. Passing to the limit over $(S\to Z)\in (\Sch_{ft}/Z)^{op}$, we conclude.

\smallskip\noindent
ii) We may assume $Z\in \Sch_{ft}$. Consider the stabilizor $St_s$ of $s$ in $G$ as in Section~\ref{Sect_A.2.5}. Pick a presentation $G\,\iso\, \colim_{j\in J} G^j$, where $G^j$ is a placid group scheme over $Z$ for $j\in J$, $J$ is a small filtered category, for $j\to j'$ in $J$ the map $G^j\to G^{j'}$ is a placid closed immersion and a homomorphism of group schemes over $Z$. Besides we assume $0\in J$ is an initial object, and $G^0=St_z$. 
 
 Write $G/G^0$ for the stack quotient, so $Y\,\iso\, G/G^0\,\iso\,\colim_{j\in J} G^j/G^0$. Write $f^j$ for the composition $G^j/G^0\hook{} G/G^0\toup{f} Z$. For each $j$, the functor $(f^j)^!: Shv(Z)\to Shv(G^j/G^0)$ is fully faithful, because $G^j$ is prounipotent. So, $f^!: Shv(Z)\to Shv(Y)$ is fully faithful
\end{proof}
 
\begin{Lm} 
\label{Lm_A.3.6_fully_faithful_functors}
In the situation of Lemma~\ref{Lm_A.2.6} assume in addition that $M$ is a placid group scheme acting on $Y, Z$ and $f$ is $M$-equivariant. Then the functor $f^!: Shv(Z)^M\to Shv(Y)^M$ is also fully faithful. 
\end{Lm}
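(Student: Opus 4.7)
The plan is to bootstrap from the non-equivariant fully faithfulness of Lemma~\ref{Lm_A.2.6}(ii) by showing that the formation of $M$-invariants preserves fully faithful morphisms in $Shv(M)-mod(\DGCat_{cont})$. First I would promote $f^!$ to a morphism in this $2$-category: since $f: Y \to Z$ is $M$-equivariant, the pullback $f^!: Shv(Z) \to Shv(Y)$ carries a canonical $Shv(M)$-linear structure. Under the identifications $Shv(Z)^M \,\iso\, \Fun_{Shv(M)}(\Vect, Shv(Z))$ and similarly for $Y$, the induced functor $f^!: Shv(Z)^M \to Shv(Y)^M$ is then tautologically given by post-composition with $f^!$.

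The heart of the argument is the following general claim, which I would establish separately: for any fully faithful morphism $\Phi: C \to D$ in $\cA-mod(\DGCat_{cont})$ (where $\cA$ is a monoidal DG category), and any $E \in \cA-mod(\DGCat_{cont})$, the induced functor $\Phi \circ (-) : \Fun_\cA(E, C) \to \Fun_\cA(E, D)$ is again fully faithful. Indeed, for $x, y \in \Fun_\cA(E, C)$ the mapping space $\Map_{\Fun_\cA(E, C)}(x, y)$ is a homotopy end built from mapping spaces $\Map_C(x(e), y(e))$ decorated with $\cA$-equivariance data; fully faithfulness of $\Phi$ in $\DGCat_{cont}$ yields equivalences $\Map_C(x(e), y(e)) \,\iso\, \Map_D(\Phi x(e), \Phi y(e))$, and the $\cA$-linearity of $\Phi$ guarantees that these equivalences are compatible with the $\cA$-actions, so they assemble into an equivalence on the ends.

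Applying this claim with $\cA = Shv(M)$, $E = \Vect$, and $\Phi = f^!$ (fully faithful in $\DGCat_{cont}$ by Lemma~\ref{Lm_A.2.6}(ii)) then gives the asserted fully faithfulness of $f^!: Shv(Z)^M \to Shv(Y)^M$. The main obstacle I expect lies not in the bootstrap step itself, which is formal, but in making precise the $Shv(M)$-linear structure on $f^!$ inherited from the $M$-equivariance of $f$, and in the compatibility with the defining limit $Shv(Z)^M = \lim_{(S \to Z) \in (\Sch_{ft}/Z)^{op}} Shv(Z_S)^{M_S}$ when $Z$ is only an ind-scheme of ind-finite type; both reduce at each stage to schemes of finite type, where the formalism of (\cite{Ly9}, 1.3.2) recalled in Section~\ref{Sect_A.2.5} applies directly, after which the remainder of the argument is purely formal.
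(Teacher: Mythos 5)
Your proposal is correct and follows essentially the same route as the paper: reduce to $Z\in\Sch_{ft}$, describe $M$-invariants via $\Fun_{Shv(M)}(\Vect,-)$ (the paper writes this out as the limit over $\bfitDelta$ of $\Fun_{e,cont}(Shv(M)^{\otimes n},-)$), observe that post-composition with the fully faithful $f^!$ of Lemma~\ref{Lm_A.2.6}(ii) is fully faithful termwise, and conclude since a limit of fully faithful functors is fully faithful. Your "homotopy end" phrasing is just a repackaging of that cobar limit, so there is no substantive difference.
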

\begin{proof}
As in Lemma~\ref{Lm_A.2.6}, the standard argument reduces our claim to the case when $Z\in\Sch_{ft}$, so we assume this. 
 
  Recall that $f^!: Shv(Z)^M\to Shv(Y)^M$ is obtained by passing to the limit over $[n]\in \bfitDelta$ in the diagram
$$
\Fun_{e, cont}(Shv(M)^{\otimes n}, Shv(Z))\to \Fun_{e, cont}(Shv(M)^{\otimes n}, Shv(Y))
$$
For each $[n]\in \bfitDelta$ the latter functor is fully faithful by (\cite{Ly}, 9.2.64). So, passing to the limit we obtain a fully faithful functor by (\cite{Ly}, 2.2.17).  
\end{proof}

\ssec{t-structures}
\label{Sect_A.3}
 
\sssec{} Let $Y\to S$ be a morphism in $\Sch_{ft}$. Equip $Shv(Y)$ with the perverse t-structure. Let $G\to S$ be a placid group scheme over $S$ acting on $Y$ over $S$ through its quotient group scheme $G\to G_0$, where $G_0$ is smooth of finite type over $S$. Assume that $\Ker(G\to G_0)$ is a prounipotent group scheme over $S$. We equip $Shv(Y)^G$ with the perverse t-structure as follows. 

Recall that $Shv(Y)^G\,\iso\, Shv(Y)^{G_0}$ by (\cite{Ly9}, 1.3.21) and the latter identifies with $Shv(Y/G_0)$ in such a way that $\oblv[\dim G_0]: Shv(Y)^{G_0}\to Shv(Y)$ identifies with $q^*[\dimrel(q)]: Shv(Y/G_0)\to Shv(Y)$ for $q: Y\to Y/G_0$. The latter functor is $t$-exact for the perverse t-structures. So, the perverse t-structure on $Shv(Y/G_0)$ yields one one $Shv(Y)^G$. We denote the resulting t-exact functor by $\oblv[\dimrel]: Shv(Y)^G\to Shv(Y)$. 

 If $G\to G_1\to G_0$ is another quotient group of finite type then for $a: Y/G_1\to Y/G_0$ 
 we identify $Shv(Y/G_0)$ with $Shv(Y/G_1)$ via $a^*[\dimrel(a)]$ to obtain the functor $\oblv[\dimrel]: Shv(Y)^G\to Shv(Y)$ independent of $G_0$. One similarly gets a functor  $\oblv: Shv(Y)^G\to Shv(Y)$.
 
 \smallskip\noindent
 
{\bf Convention}: We identify $Shv(Y)^G$ with $Shv(Y/G_0)$ in such a way that $\oblv: Shv(Y)^G\to Shv(Y)$ identifies with $q^!$ for $q: Y\to Y/G_0$.  

\sssec{} Recall that $Y/G_0$ is duality adapted in the sense of (\cite{AGKRRV}, F.2.6). So, the Verdier duality gives an equivalence 
$$
\DD: (Shv(Y/G_0)^c)^{op}\,\iso\, (Shv(Y/G_0)^c)
$$ 
This in turn gives an equivalence $Shv(Y/G_0)^{\vee}\,\iso\, Shv(Y/G_0)$ such that the corresponding counit map $Shv(Y/G_0)\otimes Shv(Y/G_0)\to \Vect$ is 
$$
(F_1, F_2)\mapsto \RG_{\blacktriangle}(Y/G_0, F_1\otimes^! F_2), 
$$
where $\RG_{\blacktriangle}: Shv(Y/G_0)\to\Vect$ is the functor dual to $\Vect\to Shv(Y/G_0)$, $e\mapsto \omega_{Y/G_0}$, see (\cite{AGKRRV}, F.2 and \cite{AGKRRV2}, A.4). This counit does not depend on the choice of the above quotient $G\to G_0$, so yields a canonical functor $Shv(Y)^G\otimes Shv(Y)^G\to\Vect$. 

We have a canonical morphism
$$
\RG_{\blacktriangle}(Y/G_0, F_1\otimes^! F_2)\to \RG(Y/G_0, F_1\otimes^! F_2)
$$
If $F_1$ or $F_2$ lies in $Shv(Y/G_0)^c$ then the latter map is an isomorphism by (\cite{AGKRRV}, F.4.5). 
 
 Define $Shv(Y/G_0)^{constr}\subset Shv(Y/G_0)$ as the full subcategory of those objects whose restriction to $Y$ lies in $Shv(Y)^c$. By definition of the renormalized category of sheaves from (\cite{AGKRRV}, F.5.1), $Shv(Y/G_0)^{ren}=\Ind(Shv(Y/G_0)^{constr})$. This category does not depend on the choice of $G_0$ up to a canonical equivalence, so gives rise to the category $Shv(Y)^{G, ren}$.
 
 The Verdier duality extends to an equivalence 
$$
\DD: (Shv(Y/G_0)^{constr})^{op}\,\iso\, Shv(Y/G_0)^{constr},
$$ 
see (\cite{AGKRRV}, F.2.5). Passing to the ind-completions, we get 
$$
(Shv(Y/G_0)^{ren})^{\vee}\,\iso\, Shv(Y/G_0)^{ren}
$$ 
As in \select{loc.cit.}, one has an adjoint pair in $\DGCat_{cont}$
$$
\ren: Shv(Y/G_0)\leftrightarrows Shv(Y/G_0)^{ren}: \unren,
$$
where $ren$ is fully faithful. This adjoint pair does not depend on a choice of the above quotient $G\to G_0$, so gives rise to a canonical adjoint pair
$$
\ren: Shv(Y)^G\leftrightarrows Shv(Y)^{G, ren}: \unren.
$$

\sssec{} 
\label{Sect_A.4.2_on_t-structures}
If $Y\to Y'$ is a closed immersion in $(\Sch_{ft})_{/S}$, assume it is $G$-equivariant, where the $G$-action factors through some finite-dimensional quotient group scheme $G\to G_0$ as above. In this case we get a closed immersion $i: Y/G_0\to Y'/G_0$, hence an adjoint pair $i_!: Shv(Y/G_0)\leftrightarrows Shv(Y'/G_0): i^!$. If $G\to G_1\to G_0$ is another quotient group scheme as above, we get a commutative diagram
$$
\begin{array}{ccc}
Y/G_0 & \toup{i} & Y'/G_0\\
\uparrow\lefteqn{\scriptstyle a} && \uparrow\lefteqn{\scriptstyle a}\\
Y/G_1 & \toup{i_1} & Y'/G_1
\end{array}
$$
Then $i_1^!a^*[\dimrel(a)]\,\iso\, a^*[\dimrel(a)]i^!$ and $a^*[\dimrel(a)] i_!\,\iso\, (i_1)_!a^*[\dimrel(a)]$ canonically. So, we get a well-defined adjoint pair $i_!: Shv(Y)^G\leftrightarrows Shv(Y')^G: i^!$, where $i^!$ is left t-exact, and $i_!$ is t-exact.
  
  The functors $i_!, i^!$ commute naturally with both $\oblv, \oblv[\dimrel]: Shv(Y)^G\to Shv(Y)$.   
  
Since we are in the constructible context, the functor $i^!: Shv(Y'/G_0)\to Shv(Y/G_0)$ has a continuous right adjoint, so we get adjoint pairs 
$$
i_!: Shv(Y/G_0)^c\leftrightarrows Shv(Y'/G_0)^c: i^!
$$
and $i_!: Shv(Y/G_0)^{constr}\leftrightarrows Shv(Y'/G_0)^{constr}: i^!$. Under the above duality, the dual of $i_!: Shv(Y/G_0)\to Shv(Y'/G_0)$ identifies with $i^!: Shv(Y'/G_0)\to Shv(Y/G_0)$, and similarly for the renormalized version. We similarly get the adjoint pair 
$$
i_!: Shv(Y)^{G, ren}\leftrightarrows Shv(Y')^{G, ren}: i^!,
$$
where the left adjoint is fully faithful.
 
\sssec{} Let $S\in \Sch_{ft}$, let $Y\to S$ be an ind-scheme of ind-finite type over $S$ equipped with a $G$-action over $S$. We assume there is a presentation $Y\,\iso\, \colim_{i\in I} Y_i$ in $\PreStk_{lft}$ such that $I$ is small filtered, $Y_i$ is a scheme of finite type over $S$, for $i\to j$ in $I$ the map $Y_i\to Y_j$ is a closed immersion. Moreover, each $Y_i$ is $G$-stable and $G$-action on $Y_i$ factors through some quotient group scheme $G\to G_i$ such that $G_i$ is smooth of finite type over $S$, and 
$\Ker(G\to G_i)$ is a prounipotent group scheme over $S$. 

 In this case $Shv(Y)^G\,\iso \lim_{i\in I^{op}} Shv(Y_i)^G$ with respect to the $!$-inverse images. Passing to the left adjoint, we may also write $Shv(Y)^G\,\iso\,\colim_{i\in I} Shv(Y_i)^G$ with respect to the $!$-direct images. For each $i$ the $!$-extension $Shv(Y_i)^G\to Shv(Y)^G$ is fully faithful (by \cite{GaiDG}, Lemma 1.3.6). 
 We define $(Shv(Y)^G)^{\le 0}$ as the smallest full subcategory containing $(Shv(Y_i)^G)^{\le 0}$ for all $i$, closed under extensions and small colimits. By (\cite{HA}, 1.4.4.11), $(Shv(Y)^G)^{\le 0}$ is presentable and defines an accessible t-structure on $Shv(Y)^G$.  

 Note that $K\in Shv(Y)^G$ lies in $(Shv(Y)^G)^{>0}$ iff for any $i$ its $!$-restriction to $Y_i$ lies in $(Shv(Y_i)^G)^{>0}$. This shows that this t-structure is compatible with filtered colimits.
 
 We write 
$$
\oblv[\dimrel]: Shv(Y)^G\to Shv(Y)
$$ 
for the t-exact functor obtained as limit over $i\in I^{op}$ of $\oblv[\dimrel]: Shv(Y_i)^G\to Shv(Y_i)$.  The above self-duality $(Shv(Y_i)^G)^{\vee}\,\iso\, Shv(Y_i)^G$ for each $i$ yields a self-duality 
$$
(Shv(Y)^G)^{\vee}\,\iso\, Shv(Y)^G
$$ 
using (\cite{GaiDG}, Lemma~2.2.2) and Section~\ref{Sect_A.4.2_on_t-structures}. 

\sssec{} 
\label{Sect_A.5.4}
Define $Shv(Y)^{G, constr}$ as the full subcategory of those $K\in Shv(Y)^G$ such that $\oblv[\dimrel](K)\in Shv(Y)^c$. Then $Shv(Y)^{G, constr}\in\DGCat^{non-cocmpl}$. Set $Shv(Y)^{G, ren}=\Ind(Shv(Y)^{G, constr})$. 

 Now $Shv(Y)^{G, constr}$ acquires a unique t-structure such that both projections 
$$
Shv(Y)^c\gets Shv(Y)^{G, constr}\to Shv(Y)^G
$$ 
are t-exact. Now by (\cite{G}, ch. II.1, Lemma 1.2.4), $Shv(Y)^{G, ren}$ acquires a unique t-structure compatible with filtered colimits for which the natural map $Shv(Y)^{G, constr}\to Shv(Y)^{G, ren}$ is t-exact. 

 For each $i$ we have a full embedding $Shv(Y_i)^{G, constr}\subset Shv(Y)^{G, constr}$. In fact, 
$$
\mathop{\colim}\limits_{i\in I} Shv(Y_i)^{G, constr}\,\iso\, Shv(Y)^{G, constr},
$$ 
where the colimit is taken in $\DGCat^{non-cocmpl}$. Applying the functor $\Ind$ to the later equivalence, we obtain $Shv(Y)^{G, ren}\,\iso\,\mathop{\colim}\limits_{i\in I} Shv(Y_i)^{G, ren}$, where the colimit is taken in $\DGCat_{cont}$. 

 The self-dualities $(Shv(Y_i)^{G,ren})^{\vee}\,\iso\, Shv(Y_i)^{G, ren}$ yield in the colimit a canonical self-duality
$$
(Shv(Y)^{G,ren})^{\vee}\,\iso\, Shv(Y)^{G, ren} 
$$
It actually comes from the Verdier duality
$$
\DD: (Shv(Y)^{G, constr})^{op}\,\iso\, Shv(Y)^{G, constr}
$$
Passing to the colimit over $i\in I$ in the adjoint pair $\ren: Shv(Y_i)^G\leftrightarrows Shv(Y_i)^{G, ren}: \unren$, one gets the adjoint pair 
$$
\ren: Shv(Y)^G\leftrightarrows Shv(Y)^{G, ren}: \unren 
$$
in $\DGCat_{cont}$, where the left adjoint is fully faithful.

\ssec{About averaging functors} 
\label{Sect_A.5}

\sssec{} 
\label{Sect_A.5.1}
Let $Y$ be an ind-scheme of ind-finite type. Let $U, G$ be placid group schemes with $U$ prounipotent. Assume $G$ acts on $U$, let $H=G\rtimes U$ be the semi-direct product. Assume $H$ acts on $Y$, and $Y\,\iso\,\colim_{i\in I} Y_i$, where $I$ is a small filtered category, if $i\in I$ then $Y_i\hook{} Y$ is a closed $H$-invariant subscheme of finite type, and for $i\to j$ in $I$ the map $Y_i\to Y_j$ is a closed immersion.

 Since we are in the constructible context, the functor $\oblv: Shv(Y)^H\to Shv(Y)^G$ admits a left adjoint $\Av^U_!: Shv(Y)^G\to Shv(Y)^H$. Since $U$ is prounipotent, $\oblv: Shv(Y)^H\to Shv(Y)^G$ is fully faithful.
 
 Assume in addition $Y'$ is another ind-scheme of ind-finite type with a $H$-action satisfying the same assumptions, and $f: Y\to Y'$ is a $H$-equivariant morphism, where $f$ is schematic of finite type.
 
 Then $f_*: Shv(Y)^G\to Shv(Y')^G$ admits a left adjoint 
\begin{equation} 
\label{functor_f^*_for_Sect_A.5.1}
f^*: Shv(Y')^G\to Shv(Y)^G
\end{equation}
Both these functors preserve the full subcategories of $H$-invariants, and give rise to an adjoint pair in $\DGCat_{cont}$
$$ 
f^*: Shv(Y')^H\leftrightarrows Shv(Y)^H: f_*
$$
Besides, the following diagram canonically commutes
\begin{equation}
\label{diag_for_Sect_A.5.1}
\begin{array}{ccc}
Shv(Y)^G & \getsup{f^*} & Shv(Y')^G\\
\downarrow\lefteqn{\scriptstyle \Av^U_!} &&\downarrow\lefteqn{\scriptstyle \Av^U_!}\\
Shv(Y)^H & \getsup{f^*} & Shv(Y')^H
\end{array}
\end{equation}

 The proof is left to a reader, let us only indicate a construction of (\ref{functor_f^*_for_Sect_A.5.1}). Pick a presentation $Y'\,\iso\, \colim_{j\in J} Y'_j$, where $J$ is small filtered $\infty$-category, $Y'_j\in\Sch_{ft}$, for $j\to j'$ in $J$ the map $Y'_j\to Y'_{j'}$ is a $G$-equivariant closed immersion. Let $Y_j=Y'_j\times_{Y'} Y$ for $j\in J$, so $\colim_{j\in J} Y_j\,\iso\, Y$ in $\PreStk_{lft}$. For each $j\in J$ the $G$-action on $Y'_j$ factors through a quotient group scheme of finite type. Let $f_j: Y_j\to Y'_j$ be the restriction of $f$. Then there is a left adjoint $f_j^*: Shv(Y'_j)^G\to Shv(Y_j)^G$ of $(f_j)_*: Shv(Y_j)^G\to Shv(Y'_j)^G$. Besides, $f_j^*$ are compatible with the transition maps given by $!$-extensions for $j\to j'$ in $J$, so that we may pass to the colimit $\colim_{j\in J} f_j^*$. The latter is the desired functor (\ref{functor_f^*_for_Sect_A.5.1}).
 
\sssec{} 
\label{Sect_A.5.2}
Assume now $\bar U=\colim_{j\in J} U_j$, where $J$ is a small filtered category, if $j\in J$ then $U_j$ is a placid prounipotent group scheme, and for $j\to j'$ in $J$ the map $U_j\to U_{j'}$ is a homomorphism of group schemes and a placid closed immersion.

 Let $G, Y$ be as in Section~\ref{Sect_A.5.1}. Assume $G$ acts by conjugation on each $U_j$ in a way compatible with closed immersions $U_j\to U_{j'}$ for $j\to j'$ in $J$. Write $H_j=G\rtimes U_j$ and $\bar H=G\rtimes \bar U$ for the corresponding semi-direct products. So, $\bar H$ is placid ind-scheme. Assume $H$ acts on $Y$. By Lemma~\ref{Lm_A.2.3},  $\oblv: Shv(Y)^{\bar H}\to Shv(Y)^G$ admits a left adjoint denoted $\Av^{\bar U}_!: Shv(Y)^G\to Shv(Y)^{\bar H}$ given as
$$
\Av^{\bar U}_!(K)\,\iso\, \mathop{\colim}\limits_{j\in J} \Av^{U_j}_!(K),
$$
the colimit being taken in $Shv(Y)^G$.

 Assume in addition $f: Y\to Y'$ is a schematic morphism of finite type, where $Y'$ is an ind-scheme of ind-finite type. Assume $\bar H$ acts on $Y'$, and $f$ is $\bar H$-equivariant. The functors $f^*: Shv(Y')^{H_j}\to Shv(Y)^{H_j}$ yield after passing to the limit over $J^{op}$ the functor $f^*: Shv(Y')^{\bar H}\to Shv(Y)^{\bar H}$. The commutativity of (\ref{diag_for_Sect_A.5.1}) implies that the diagram is canonically commutative
$$
\begin{array}{ccc}
Shv(Y)^G & \getsup{f^*} & Shv(Y')^G\\
\downarrow\lefteqn{\scriptstyle \Av^{\bar U}_!} &&\downarrow\lefteqn{\scriptstyle \Av^{\bar U}_!}\\
Shv(Y)^{\bar H} & \getsup{f^*} & Shv(Y')^{\bar H}
\end{array} 
$$

\begin{Lm} 
\label{Lm_A.5.3}
Keep the assumptions of Section~\ref{Sect_A.5.1}. \\
i) The functor $f^!: Shv(Y')^H\to Shv(Y)^H$ admits a left adjoint $f_!: Shv(Y)^H\to Shv(Y')^H$, and the diagram commutes naturally
\begin{equation}
\label{diag_for_Lm_A.5.3}
\begin{array}{ccc}
Shv(Y) & \toup{f_!} & Shv(Y')\\
\uparrow\lefteqn{\scriptstyle\oblv[\dimrel]} && \uparrow\lefteqn{\scriptstyle\oblv[\dimrel]} \\
Shv(Y)^H & \toup{f_!} & Shv(Y')^H
\end{array}
\end{equation}
ii) Assume in addition that $f: Y\to Y'$ is an open immersion. Then $f_!: Shv(Y)^H\to Shv(Y')^H$ is fully faithful. 
\end{Lm}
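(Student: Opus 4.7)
The plan is a two-step construction: first build $f_!$ at the $G$-equivariant level by descending to finite-type quotient stacks, then promote to the $H$-equivariant level using that $\oblv_U: Shv(Y)^H \hookrightarrow Shv(Y)^G$ is fully faithful since $U$ is prounipotent.

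Using the presentations of Section~\ref{Sect_A.5.1}, after cofinal reindexing I get a compatible family of $H$-equivariant morphisms $f_i: Y_i \to Y'_i$ of schemes of finite type on which the $G$-action factors through a common smooth finite-type quotient $G_{0,i}$. Then $\bar f_i: Y_i/G_{0,i} \to Y'_i/G_{0,i}$ is schematic of finite type between Artin stacks, so in the constructible context its $!$-pullback has a continuous left adjoint $\bar f_{i,!}$; passing to the colimit over $i \in I$ produces the left adjoint
$$
f_!^G: Shv(Y)^G \to Shv(Y')^G
$$
of $f^!$, and compatibility with $\oblv[\dimrel]_G$ follows stagewise from smooth base change against the $G_{0,i}$-torsor $Y_i \to Y_i/G_{0,i}$. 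For the descent to $H$-invariants, since $f$ is $H$-equivariant the functor $f^!: Shv(Y')^G \to Shv(Y)^G$ is $Shv(U)$-linear and in particular preserves $U$-equivariance, so it restricts to $f^!: Shv(Y')^H \to Shv(Y)^H$. By uniqueness of adjoints together with the explicit description of $\Av^U_!$ from Section~\ref{Sect_A.5.2}, the left adjoint $f_!^G$ then also preserves $H$-invariance, yielding the desired $f_!^H$. Equivalently, I could define $f_!^H(K) := \Av^U_!(f_!^G(\oblv_U K))$ and verify the adjunction directly using full faithfulness of $\oblv_U$. Commutativity with $\oblv[\dimrel]_H$ in (\ref{diag_for_Lm_A.5.3}) reduces to the already-established $G$-level statement, because prounipotence of $U$ makes $\oblv[\dimrel]_H$ on $Shv(Y)^H$ coincide with the restriction of $\oblv[\dimrel]_G$.

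For part (ii), being an open immersion is stable under passing to quotient stacks, so $\bar f_i: Y_i/G_{0,i} \hookrightarrow Y'_i/G_{0,i}$ is again an open immersion at each finite stage, whence $\bar f_i^! \bar f_{i,!} \simeq \id$; this passes through the colimit to give $f^! f_!^G \simeq \id$ on $Shv(Y)^G$ and restricts to $f^! f_!^H \simeq \id$, from which full faithfulness of $f_!^H$ is immediate via the unit-is-an-iso criterion. The main technical obstacle I anticipate is the verification that $f_!^G$ preserves $H$-invariance, which I intend to handle either via $Shv(U)$-linearity of $f^!$ (invoking rigidity of $Shv(U)$ for prounipotent $U$ to transfer the linearity to the left adjoint in $\DGCat_{cont}$) or, more concretely, by a direct base-change argument on the cartesian action squares $U \times Y \rightrightarrows Y$ and $U \times Y' \rightrightarrows Y'$.
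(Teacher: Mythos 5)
Your construction is correct in substance, but it takes a detour that the paper avoids. The paper's proof never separates $H$ into $G$ and $U$: for each finite-type piece $f_i\colon Y_i\to Y'_i$ (obtained, as in your reindexing, by pulling back a presentation of $Y'$ along the schematic map $f$), the $H$-action itself factors through a finite-type quotient $H\to H_0$ with prounipotent kernel, so one forms the cartesian square with vertical arrows $Y_i\to Y_i/H_0$, $Y'_i\to Y'_i/H_0$ and simply sets $f_{i,!}:=\bar f_{i,!}$ at the level of these quotient stacks; the colimit over $i$ then produces $f_!$ on $Shv(Y)^H$ directly, the commutativity of (\ref{diag_for_Lm_A.5.3}) is base change against the smooth vertical maps, and full faithfulness in (ii) is checked stagewise and passed through the colimit exactly as you do. What your two-step route ($G$-level first, then promotion along $\oblv_U$) buys is a reduction to categories you have already manipulated elsewhere, at the cost of one extra verification --- that $f^G_!$ preserves the full subcategory of $H$-invariants --- which the direct approach makes vacuous.

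On that extra verification: your first proposed mechanism is the weak point. $Shv(U)$ under convolution for a prounipotent (in particular infinite-dimensional placid) group scheme is not rigid, so you cannot invoke the rigidity argument to transfer $Shv(U)$-linearity from $f^!$ to its left adjoint. Use your second route instead, and run it at the finite stages: since the $H$-action on $Y_i$, $Y'_i$ factors through $H_0=G_0\ltimes U_0$ with $U_0$ unipotent of finite type, the square formed by $Y_i/G_0\to Y_i/H_0$ and $Y'_i/G_0\to Y'_i/H_0$ is cartesian, and $(!,{}_!)$-base change shows $(\bar f^G_{i})_!$ carries the essential image of the (fully faithful) pullback from $Y_i/H_0$ into that from $Y'_i/H_0$. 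This is exactly the cartesian square the paper uses to define $f_{i,!}$ in the first place, which is why the direct route is shorter. Your adjunction check $\Map(\Av^U_!f^G_!\oblv K,L)\simeq\Map(K,f^!L)$ is fine, but note that without the preservation statement it does not by itself give the commutativity with $\oblv[\dimrel]$, so that verification is not optional in your scheme.
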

\begin{proof}
i) {\bf Step 1} Assume first $Y$ is a scheme of finite type. Then the $H$-action on $Y$ automatically factors through an action of a quotient group scheme $H\to H_0$, where $H_0$ is of finite type, and $\Ker(H\to H_0)$ is prounipotent. We get the cartesian square
$$
\begin{array}{ccc}
Y &\toup{f} & Y'\\
\downarrow && \downarrow\\
Y/H_0 & \toup{\bar f} & Y'/H_0
\end{array}
$$
and the desired functor is $\bar f_!: Shv(Y)^H\to Shv(Y')^H$. The commutativity of (\ref{diag_for_Lm_A.5.3}) follows from the $(^*, {}_!)$-base change.

\smallskip\noindent
{\bf Step 2} Pick a presentation $Y'\,\iso\, \colim_{i\in I} Y'_i$, where $I$ is a small filtered $\infty$-category, $Y'_i\in\Sch_{ft}$ is $H$-invariant closed subscheme of $Y'$, and for $i\to j$ in $I$, $Y'_i\to Y'_j$ is a $H$-equivariant closed immersion. Set $Y_i=Y'_i\times_{Y'} Y$. Note that $Y\,\iso\,\colim_{i\in I} Y_i$ in $\PreStk_{lft}$. For $i\to j$ in $I$ write $g_{ij}: Y_i\to Y_j$ and $g'_{ij}: Y'_i\to Y'_j$ for the transition maps. Write $f_i: Y_i\to Y'_i$ for the restriction of $f$. 

 Recall that $Shv(Y)^H\,\iso\,\lim_{i\in I^{op}} Shv(Y_i)^H$ with the transition maps being $g_{ij}^!$. Passing to the left adjoints, we get $Shv(Y)^H\,\iso\, \colim_{i\in I} Shv(Y_i)^H$ in $\DGCat_{cont}$ for the transition maps $(g_{ij})_!$. 

We get a functor $\cF: I\times [1]\to \DGCat_{cont}$ sending $i\in I$ to $(Shv(Y_i)^H\toup{(f_i)_!} Shv(Y'_i)^H)$, here for $i\to j$ in $I$ the transition functors are $(g_{ij})_!, (g'_{ij})_!$. Passing to the colimit in $(f_i)_!: Shv(Y_i)^H\to Shv(Y'_i)^H$, one gets the desired functor $f_!: Shv(Y)^H\to Shv(Y')^H$. This functor is the left adjoint to $f^!: Shv(Y')^H\to Shv(Y)^H$ by (\cite{Ly}, 9.2.39). The commutativity of (\ref{diag_for_Lm_A.5.3}) is obtained by passing to the colimit over $i\in I$ from the commutativity of
$$
\begin{array}{ccc}
Shv(Y_i) & \toup{(f_i)!} & Shv(Y'_i)\\
\uparrow\lefteqn{\scriptstyle\oblv[\dimrel]} && \uparrow\lefteqn{\scriptstyle\oblv[\dimrel]} \\
Shv(Y_i)^H & \toup{(f_i)!} & Shv(Y'_i)^H
\end{array}
$$

\smallskip\noindent
ii) Keep notations of Step 2. Then for each $i\in I$, 
$$
(f_i)_!: Shv(Y_i)^H\to Shv(Y'_i)^H
$$ 
is fully faithful by coinstruction. Besides, each $Shv(Y_i)^H, Shv(Y'_i)^H$ is compactly generated, and we may pass to right adjoints in the functor $\cF$. So, our claim follows from (\cite{Ly}, 9.2.47). 
\end{proof}

\begin{Rem} Actually, in the situation of Lemma~\ref{Lm_A.5.3} i) the diagram commutes
$$
\begin{array}{ccc}
Shv(Y)^G & \toup{f_!} & Shv(Y')^G\\
\uparrow\lefteqn{\scriptstyle\oblv[\dimrel]} && \uparrow\lefteqn{\scriptstyle\oblv[\dimrel]} \\
Shv(Y)^H & \toup{f_!} & Shv(Y')^H,
\end{array}
$$
and the vertical functors are fully faithful. 
\end{Rem}

\sssec{} 
\label{Sect_A.5.5}
Assume we are in the situation of Section~\ref{Sect_A.5.2}. For each $j\in J$ we have the fully functor $\oblv[\dimrel]: Shv(Y)^{H_i}\to Shv(Y)^G$. Passing to the limit over $j\in J^{op}$, they yield a fully faithful functor $\oblv[\dimrel]: Shv(Y)^{\bar H}\to Shv(Y)^G$. Since for each $j\in J$ the diagram 
$$
\begin{array}{ccc}
Shv(Y)^G & \toup{f_!} & Shv(Y')^G\\
\uparrow\lefteqn{\scriptstyle \oblv[\dimrel]} && \uparrow\lefteqn{\scriptstyle \oblv[\dimrel]}\\
Shv(Y)^{H_j} & \toup{f_!} & Shv(Y')^{H_j}
\end{array}
$$
commutes, passing to the limit over $j\in J$ this gives a commutativity of
$$
\begin{array}{ccc}
Shv(Y)^G & \toup{f_!} & Shv(Y')^G\\
\uparrow\lefteqn{\scriptstyle \oblv[\dimrel]} && \uparrow\lefteqn{\scriptstyle \oblv[\dimrel]}\\
Shv(Y)^{\bar H} & \toup{f_!} & Shv(Y')^{\bar H}
\end{array}
$$

\ssec{Some intertwining functors}
\label{Sect_A.7}

\sssec{} 
\label{Sect_A.7.1}
Let $G$ be a smooth algebraic group of finite type. Let $Y\in \PreStk_{lft}$ with a $G$-action. We take the convention that the natural identification $Shv(Y/G)\,\iso\, Shv(Y)^G$ is such that $\oblv:  Shv(Y)^G\to Shv(Y)$ corresponds to $q^!: Shv(Y/G)\to Shv(Y)$ for $q: Y\to Y/G$. 

\sssec{} 
\label{Sect_A.7.2}
Let $P,Q\subset G$ be closed subgroups. We define the convolution $Shv(Q\backslash G/P)\otimes Shv(Y/P)\to Shv(Y/Q)$ by 
$$
F\boxtimes K\mapsto F\ast K=\act_*q^!(F\boxtimes K)
$$
for the diagram 
$$
(Q\backslash G/P)\times (Y/P)\getsup{q} Q\backslash G\times^P Y\toup{\act} Y/Q.
$$ 
In particular, this is known to be the underlying binary product of a monoidal structure on $Shv(P\backslash G/P)$. 

\sssec{} Let now $C\in Shv(G)-mod(\DGCat_{cont})$. We use the identification
$$
\Fun_{Shv(G)}(Shv(G/Q), C)\,\iso\, C^Q
$$
coming from $Shv(G/Q)\,\iso\, Shv(G)^Q\,\iso\, Shv(G)_Q$, where the first isomorphism is as in Section~\ref{Sect_A.7.1}. 

 We write $\delta_1\in Shv(G/Q)$ for the constant sheaf at $1$ extended by zero to $G/Q$.
 
\begin{Lm} 
\label{Lm_2.0.5_inverse_equivalence}
Consider the equivalence $\Fun_{Shv(G)}(Shv(G/Q), Shv(Y))\,\iso\, Shv(Y/Q)$ given by $f\mapsto f(\delta_1)$. The inverse equivalence sends $K\in Shv(Y/Q)$ to the functor $Shv(G/Q)\to Shv(Y)$ given by $F\mapsto m_*q^!(F\boxtimes K)$ for the diagram 
$$
G/Q\times (Y/Q)\getsup{q} G\times^Q Y\toup{m} Y,
$$ 
where $m$ comes from the action map $\act: G\times Y\to Y$. So, $q\in Q$ acts on $(g, y)\in G\times Y$ as $(gq^{-1}, qy)$. 
\end{Lm}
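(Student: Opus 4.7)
The plan is to establish the equivalence $\Fun_{Shv(G)}(Shv(G/Q), Shv(Y)) \simeq Shv(Y/Q)$ abstractly, and then verify that the convolution formula realizes its inverse. First, using the identification $Shv(G/Q) \simeq Shv(G) \otimes_{Shv(Q)} \Vect$ as left $Shv(G)$-module categories (with $\Vect$ carrying the trivial $Shv(Q)$-action), together with tensor--hom adjunction in the $2$-category of $Shv(G)$-modules, one obtains
\[
\Fun_{Shv(G)}(Shv(G/Q), Shv(Y)) \simeq \Fun_{Shv(Q)}(\Vect, Shv(Y)) = Shv(Y)^Q \simeq Shv(Y/Q).
\]
Chasing the unit shows that the composite equivalence is indeed $f \mapsto f(\delta_1)$, because $\delta_1 \in Shv(G/Q)$ is precisely the image of $e \in \Vect$ under $\Vect \to Shv(G) \otimes_{Shv(Q)} \Vect \,\iso\, Shv(G/Q)$.

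Next, I would define $\Phi: Shv(Y/Q) \to \Fun_{Shv(G)}(Shv(G/Q), Shv(Y))$ by $\Phi(K)(F) = m_*q^!(F \boxtimes K)$ and verify that $\Phi(K)$ is $Shv(G)$-linear. The action of $A \in Shv(G)$ on $F \in Shv(G/Q)$ is computed using the multiplication $G \times G/Q \to G/Q$, while its action on $Shv(Y)$ uses $\act: G \times Y \to Y$. The linearity $\Phi(K)(A \ast F) \,\iso\, A \ast \Phi(K)(F)$ then follows from $(*,!)$-base change applied to the cartesian square
\[
\begin{array}{ccc}
G \times (G \times^Q Y) & \toup{\id \times m} & G \times Y \\
\downarrow\lefteqn{\scriptstyle \mu} & & \downarrow\lefteqn{\scriptstyle \act} \\
G \times^Q Y & \toup{m} & Y,
\end{array}
\]
with $\mu(g, [g', y]) = [gg', y]$, combined with the analogous compatibility between $q$ and $G \times G/Q \to G/Q$.

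To see that $\Phi$ inverts $f \mapsto f(\delta_1)$, it now suffices to compute $\Phi(K)(\delta_1)$. Let $i: Y \hookrightarrow G \times^Q Y$, $y \mapsto [1, y]$, be the inclusion of the fiber of $G \times^Q Y \to G/Q$ over $1Q$. Since $\delta_1$ is $!$-extended from the point $1Q \in G/Q$, base change gives $q^!(\delta_1 \boxtimes K) \,\iso\, i_* q_Y^! K$, where $q_Y: Y \to Y/Q$ is the quotient. As $m \circ i = \id_Y$, we obtain
\[
\Phi(K)(\delta_1) \,\iso\, (m \circ i)_* q_Y^! K = q_Y^! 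K = \oblv(K),
\]
which under the identification $Shv(Y/Q) \,\iso\, Shv(Y)^Q$ from Section~\ref{Sect_A.7.1} recovers $K$.

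The main obstacle will be the bookkeeping for the several $Q$-actions in play --- the diagonal action $q \cdot (g, y) = (gq^{-1}, qy)$ on $G \times Y$, the right action of $Q$ on $G$, and the restricted left action on $Y$ --- so that the square above is genuinely cartesian and the various pushforwards are compatible with the $Shv(G)$-module structures on either side. Once this setup is in place, both the $Shv(G)$-linearity of $\Phi(K)$ and the final computation on $\delta_1$ reduce to standard base change.
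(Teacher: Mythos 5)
Your proposal is correct and follows essentially the same route as the paper: both rest on the presentation $Shv(G/Q)\,\iso\,Shv(G)\otimes_{Shv(Q)}\Vect$ together with a base-change computation for the convolution diagram $G/Q\times(Y/Q)\gets G\times^Q Y\to Y$. The paper merely packages your two checks ($Shv(G)$-linearity of $\Phi(K)$ and the evaluation $\Phi(K)(\delta_1)\,\iso\,\oblv(K)$) into a single one, by computing $m_*q^!((\alpha_*F)\boxtimes K)\,\iso\,F\ast\oblv(K)$ naturally in $F\in Shv(G)$ for $\alpha\colon G\to G/Q$, i.e.\ directly on the generators $\alpha_*F=F\ast\delta_1$ via one cartesian square.
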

\begin{proof}
We have the canonical equivalence $Shv(G)\otimes_{Shv(Q)}\Vect\to Shv(G/Q)$ sending $F\boxtimes e$ to $\alpha_*F$, where $\alpha: G\to G/Q$ is the natural map. In the following commutative diagram the square is cartesian
$$
\begin{array}{ccccc}
G\times (Y/Q) & \getsup{\id\times\beta} & G\times Y\\
\downarrow\lefteqn{\scriptstyle \alpha\times\id} && \downarrow\lefteqn{\scriptstyle\bar\alpha} & \searrow\lefteqn{\scriptstyle \act}\\
(G/Q)\times (Y/Q) & \getsup{q} & G\times^Q Y  & \toup{m} & Y
\end{array}
$$
So, for $F\in Shv(G)$, $K\in Shv(Y/Q)$ one has canonically $q^!((\alpha_* F)\boxtimes K)\,\iso\, \bar\alpha_*(F\boxtimes \beta^! K)$, hence also
$$
F\ast \beta^! K\,\iso\, m_*q^!((\alpha_* F)\boxtimes K)
$$
Since the objects $F\boxtimes V$ for $F\in Shv(G), V\in\Vect$ generate $Shv(G)\otimes_{Shv(Q)}\Vect$, our claim follows.
\end{proof} 

\sssec{} 
\label{Sect_A.7.5}
Let $\upsilon: Shv(Q\backslash G/P)\,\iso\,Shv(P\backslash G/Q)$ be the equivalence coming from the map $G\,\iso\, G$, $g\mapsto g^{-1}$. Let $\cK\in Shv(Q\backslash G/P)$.
 
 Denote by 
\begin{equation}
\label{functor_for_A.7.5}
 \cK\ast\_: C^P\to C^Q
\end{equation} 
and also by $\_\ast \upsilon(\cK)=\cK\ast\_$ the functor obtained from the $Shv(G)$-linear functor
\begin{equation}
\label{functor_for_Sect_A.7.5}
\_\ast \cK: Shv(G/Q)\to Shv(G/P)
\end{equation}
by applying $\Fun_{Shv(G)}(\_, C)$. In the case of $C=Shv(Y)$ this is unambiguous thanks to the following.

\begin{Lm} For $C=Shv(Y)$ the functor (\ref{functor_for_A.7.5})
identifies with the convolution functor $ \cK\ast\_$ from Section~\ref{Sect_A.7.2}.
\end{Lm}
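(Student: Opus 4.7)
The plan is to make both functors explicit on an arbitrary $K\in C^P=Shv(Y/P)$ using Lemma~\ref{Lm_2.0.5_inverse_equivalence}, and then match them by a pair of base changes. By that lemma (with $Q$ there replaced by $P$), the object $K$ corresponds to the $Shv(G)$-linear functor
$\phi_K:Shv(G/P)\to Shv(Y)$, $F\mapsto (m_P)_*q_P^!(F\boxtimes K)$,
for the diagram $G/P\times Y/P\getsup{q_P} G\times^P Y\toup{m_P} Y$. Under the equivalences $\Fun_{Shv(G)}(Shv(G/{-}),Shv(Y))\,\iso\,Shv(Y/{-})$ given by evaluation at $\delta_1$, the image of $K$ under the functor (\ref{functor_for_A.7.5}) then corresponds to the composite $\phi_K\circ(\_\ast\cK)$, i.e., to $\phi_K(\delta_1\ast\cK)$ with $\delta_1\in Shv(G/Q)$.

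First compute $\delta_1\ast\cK\in Shv(G/P)$ using the definition from Section~\ref{Sect_A.7.2}: this is $m'_*q'^!(\delta_1\boxtimes\cK)$ for the diagram $G/Q\times Q\backslash G/P\getsup{q'} G\times^Q G/P\toup{m'} G/P$. Since $\delta_1$ is the $!$-extension from the point $\{1\cdot Q\}$, base change along the cartesian square
\[
\begin{array}{ccc}
G/P & \hook{} & G\times^Q G/P\\
\downarrow\lefteqn{\scriptstyle \alpha} && \downarrow\lefteqn{\scriptstyle q'}\\
\{1\cdot Q\}\times Q\backslash G/P & \hook{} & G/Q\times Q\backslash G/P
\end{array}
\]
identifies $q'^!(\delta_1\boxtimes\cK)$ with the $!$-extension of $\alpha^!\cK$ from $G/P\subset G\times^Q G/P$, where $\alpha:G/P\to Q\backslash G/P$ is the quotient. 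Since $m'$ restricts to the identity on this fiber, one obtains $\delta_1\ast\cK\,\iso\,\alpha^!\cK$. Substituting,
$\phi_K(\delta_1\ast\cK)=(m_P)_*q_P^!(\alpha^!\cK\boxtimes K)\,\iso\,(m_P)_*\bar\alpha^!\,q^!(\cK\boxtimes K)$,
where $\bar\alpha:G\times^P Y\to Q\backslash G\times^P Y$ is the $Q$-quotient obtained by base change of $\alpha$ along $q:Q\backslash G\times^P Y\to Q\backslash G/P\times Y/P$.

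Finally, $m_P$ is $Q$-equivariant (for left multiplication on $G\times^P Y$ and the action on $Y$) and fits into a cartesian square
\[
\begin{array}{ccc}
G\times^P Y & \toup{m_P} & Y\\
\downarrow\lefteqn{\scriptstyle \bar\alpha} && \downarrow\lefteqn{\scriptstyle \beta}\\
Q\backslash G\times^P Y & \toup{\act} & Y/Q,
\end{array}
\]
with $\beta:Y\to Y/Q$ the quotient. Base change yields $(m_P)_*\bar\alpha^!\,\iso\,\beta^!\act_*$, so $\phi_K(\delta_1\ast\cK)\,\iso\,\beta^!(\act_*q^!(\cK\boxtimes K))$. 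The object inside $\beta^!=\oblv$ is exactly the convolution $\cK\ast K\in Shv(Y/Q)$ from Section~\ref{Sect_A.7.2}, matching the identification of $Shv(Y/Q)$ with $Shv(Y)^Q$ via $\beta^!$. Functoriality in $K$ is automatic from the identifications used. The main obstacle is verifying that the second square above is cartesian (together with the placid-scheme base change being available) and keeping careful track of the $Q$-equivariance of $m_P$; once these are in place the proof is a formal diagram chase.
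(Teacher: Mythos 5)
Your proof is correct and follows essentially the same route as the paper: both reduce to Lemma~\ref{Lm_2.0.5_inverse_equivalence}, observe that the image of $K$ is $\phi_K(\delta_1\ast\cK)$ with $\delta_1\ast\cK\,\iso\,\tau^!\cK$ for $\tau:G/P\to Q\backslash G/P$, and conclude by base change against the $Q$-quotient maps (the paper packages your two cartesian squares into one diagram). The cartesianness you flag is the standard compatibility of commuting group quotients, and base change applies since the vertical maps are quotients by the smooth group $Q$.
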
 
\begin{proof}
This follows from Lemma~\ref{Lm_2.0.5_inverse_equivalence}. Namely, let $\tau: G/P\to Q\backslash G/P$ be the natural map. By definition, (\ref{functor_for_A.7.5}) sends $K\in Shv(Y/P)$ to the object of $Shv(Y/Q)$ whose $!$-pullback to $Y$ is $m_*q^!(\tau^!\cK\boxtimes K)$ for the diagram
$$
\begin{array}{ccccc}
(G/P)\times (Y/P) & \getsup{q} & G\times^P Y & \toup{m} & Y\\
\downarrow\lefteqn{\scriptstyle \tau\times\id} && \downarrow &&\downarrow\lefteqn{\scriptstyle\beta}\\
(Q\backslash G/P)\times (Y/P) & \getsup{\bar q} & Q\backslash G\times^P Y & \toup{\bar m} & Q\backslash Y,
\end{array}
$$
where both squares are cartesian. Our claim follows by base change.
\end{proof} 

Sometimes, we denote (\ref{functor_for_A.7.5}) by $\cK\astP\_$ to underline that the convolution is calculated with respect to $P$.

\sssec{} Assume for this subsection that $Q\subset P$. For the natural map $\alpha: G/Q\to G/P$ we have the adjoint pair 
$$
\alpha^*: Shv(G/P)\leftrightarrows Shv(G/Q): \alpha_*
$$ 
in $Shv(G)-mod(\DGCat_{cont})$, as $P/Q$ is smooth. Applying $\Fun_{Shv(G)}(\_, C)$, it gives an adjoint pair $\oblv: C^P \leftrightarrows C^Q: \Av^{P/Q}_*$ in $\DGCat_{cont}$.  The functor $\alpha^*$ identifies with 
$$
\_\ast i_!e_{P\backslash P/Q}[-2\dim P]: Shv(G/P)\to Shv(G/Q)
$$ 
for the closed immersion $i: P\backslash P/Q\hook{} P\backslash G/Q$. So, $\Av^{P/Q}_*: C^Q\to C^P$ in our notations is the functor
$$
i_!e_{P\backslash P/Q}[-2\dim P]\ast\_.
$$

 The functor $\alpha_*$ identifies with
$$
\_\ast s_!e_{Q\backslash P/P}[-2\dim Q]: Shv(G/Q)\to Shv(G/P)
$$
for the closed immersion $s: Q\backslash P/P\hook{} Q\backslash G/P$. So, $\oblv: C^P\to C^Q$ in our notations is the functor
$$
s_!e_{Q\backslash P/P}[-2\dim Q]\ast\_.
$$ 

 The functor $\alpha^*$ has a left adjoint $\alpha_![-2\dim(P/Q)]$. If $P/Q$ is proper then $\alpha$ is proper, and we get an adjoint pair 
$$
\alpha_*[-2\dim(P/Q)]: Shv(G/Q)\leftrightarrows Shv(G/P): \alpha^*
$$
in $Shv(G)-mod(\DGCat_{cont})$. It yields an adjoint pair in $\DGCat_{cont}$
$$
\Av^{P/Q}_*: C^Q\leftrightarrows C^P: \oblv[-2\dim(P/Q)]
$$  

\sssec{} Let now $P, Q\subset G$ be as in Section~\ref{Sect_A.7.2}. Define the functor $^Q\Av_*^P: C^P\to C^Q$ as the composition
$$
C^P\,\toup{\oblv}\, C^{P\cap Q}\,\toup{\Av_*^{Q/P\cap Q}}\, C^Q
$$
 
 Write $j: Q\backslash QP/P\to Q\backslash G/P$ for the natural inclusion. Then in our notations $^Q\Av^P_*$ is the functor $j_*e_{Q\backslash QP/P}[-2\dim Q]\ast\_$. 
 
 Assume in addition that both $G/Q, G/(P\cap Q)$ are proper. Then $^Q\Av^P_*$ admits a right adjoint given as the composition 
$$
C^Q\,\toup{\oblv[-2\dim(Q/P\cap Q)]}\, C^{P\cap Q}\;\toup{\Av^{P/(P\cap Q)}_*} \,C^P
$$ 
That is, $^P\Av^Q_*[-2\dim(Q/P\cap Q)]$ is the right adjoint of $^Q\Av^P_*$. Let $j': P\backslash PQ/Q\to P\backslash G/Q$ be the embedding. Then 
$^P\Av^Q_*[-2\dim(Q/P\cap Q)]$ identifies with the functor 
$$
j'_*e_{P\backslash PQ/Q}[-2\dim P-2\dim(P\cap Q)].
$$

\sssec{} 
\label{Sect_A.7.9}
Let now $P, Q\subset G$ be as in Section~\ref{Sect_A.7.2}. Assume $G/P$ proper. Write $j: P\backslash PQ/Q\to P\backslash G/Q$ for the embedding. 
Recall that the functor $^Q\Av^P_*: C^P\to C^Q$ comes from $\alpha_*\beta^*: Shv(G/Q)\to Shv(G/P)$ for the diagram 
$$
G/P\getsup{\alpha} G/(P\cap Q)\toup{\beta} G/Q
$$ 
The left adjoint to $\alpha_*\beta^*$ is 
$$
\beta_!\alpha^*[2\dim(Q/(P\cap Q)]: Shv(G/P)\to Shv(G/Q)
$$ 

 We claim that the latter functor is $Shv(G)$-linear and identifies with 
\begin{equation}
\label{functor_for_Sect_A.7.9}
\_\ast j_!e[-2\dim P+2\dim(Q/(P\cap Q))]: Shv(G/P)\to Shv(G/Q) 
\end{equation}
Indeed, consider the diagram, where the square is cartesian
$$
\begin{array}{ccccc}
(G/P)\times (P\backslash G/Q) & \getsup{q} & G\times^P G/Q & \toup{m} & G/Q\\
\uparrow\lefteqn{\scriptstyle \id\times j} && \uparrow\lefteqn{\scriptstyle \tilde j} & \nearrow\lefteqn{\scriptstyle \tilde m}\\
(G/P)\times (P\backslash PQ/Q) & \getsup{\tilde q} & G\times^P PQ/Q
\end{array}
$$
Since $G/P$ is proper, $m$ is proper, so for $F\in Shv(G/P)$, 
$$
F\ast j_!e\,\iso\,
m_*q^!(F\boxtimes j_!e)\,\iso\, m_! \tilde j_!\tilde q^!(F\boxtimes e)\,\iso\,\beta_!\alpha^*F[2\dim P],
$$
because $\tilde m$ identifies with $\beta$. 

 So, $^Q\Av^P_*$ admits a right adjoint denoted $^P\Av^Q_!$
 obtained from (\ref{functor_for_Sect_A.7.9}) by applying $\Fun_{Shv(G)}(\_, C)$.


\section{On the invertibility of some standard objects in parabolic Hecke categories}
\label{Sect_appendixB}

\ssec{Associated parabolic subgroups} 

\sssec{} Let $T\subset B\subset G$ and $W$ be as in Section~\ref{Sect_1.4.1}. Let $P, Q\subset G$ be parabolics containing $T$. 
\begin{Rem} Any pair of parabolics in $G$ contain a common maximal torus. We fix this torus to be $T$ to simplify some notations.
\end{Rem}

\sssec{} Write $L_P\subset P, L_Q\subset Q$ for the unique Levi subgroups containing $T$. Write $W_P, W_Q\subset W$ for the Weyl groups of $L_P, L_Q$. Write also $L_{P\cap Q}$ for the unique Levi subgroup of $P\cap Q$ containing $T$. 

  Let $C\in Shv(G)-mod(\DGCat_{cont})$. In Section~\ref{Sect_A.7.9} we introduced the adjoint pair
\begin{equation}
\label{adj_functors_for_B1}
^Q\Av^P_*: C^P\leftrightarrows C^Q: {^P\Av^Q_!}
\end{equation}  
Our goal here is two determine for which pairs $(P, Q)$ as above these functors are equivalences.

\begin{Def} Say that $P$ and $Q$ are associated if we have $L_P=L_Q$. 
\end{Def}

 Note that $P$ and $Q$ are associated iff $L_P=L_{P\cap Q}=L_Q$.
 
\sssec{Example} The opposite parabolics are associated.
 
\begin{Thm}
\label{Thm_B.1.2}
The adjoint functors (\ref{adj_functors_for_B1}) are equivalences (for any $C\in Shv(G)-mod(\DGCat_{cont})$) if and only if $P$ and $Q$ are associated.
\end{Thm}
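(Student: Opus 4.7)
The plan is to reduce both directions to the well-established Borel case, using the compatibility of the averaging functors with the forgetful functors along Borel subgroups of $P$ and $Q$.

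For the ``if'' direction, assume $L_P = L_Q =: L$, and fix a Borel $B_L \subset L$. Form the Borels $B := B_L U_P \subset P$ and $B' := B_L U_Q \subset Q$ of $G$, both containing $T$. Since $P/B \simeq L/B_L \simeq Q/B'$ canonically as smooth proper varieties, I will establish a commutative diagram
$$
\begin{array}{ccc}
C^P & \toup{\,^Q\Av^P_*\,} & C^Q \\
\downarrow\lefteqn{\scriptstyle \oblv} && \downarrow\lefteqn{\scriptstyle \oblv} \\
C^B & \toup{\,^{B'}\Av^B_*\,} & C^{B'}
\end{array}
$$
by proper base change along the smooth map $B'\backslash G/B \to Q\backslash G/P$, whose fibers are $(L/B_L)\times(L/B_L)$, with shifts dictated by the relative dimensions in Section~\ref{Sect_A.7.9}. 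The vertical $\oblv$ functors are conservative with fully faithful right adjoints $\Av^{P/B}_*, \Av^{Q/B'}_*$, since $L/B_L$ is connected smooth proper; hence the top row is an equivalence if and only if the bottom row is.

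Step~2 is the Borel case: for any two Borels $B, B' \subset G$ containing $T$, there is a unique $w \in W$ with $B' = wBw^{-1}$, and I identify the kernel of $^{B'}\Av^B_*$ in $Shv(B' \backslash G / B)$ with the standard Iwahori-type bimodule $j^B_{w,*}$ up to cohomological shift. The classical invertibility $j^B_{w,*} \ast j^B_{w^{-1},!} \simeq \delta_e$ in the Hecke 2-category follows by induction on $\ell(w)$ via a reduced decomposition $w = s_{i_1}\cdots s_{i_\ell}$ and the rank-one calculation for a simple reflection $s$, where $j_{s,*} \ast j_{s,!} \simeq \delta_e$ is an elementary computation on the flag variety of the corresponding $SL_2$-Levi.

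For the ``only if'' direction, I argue contrapositively. Suppose the pair is an equivalence for every $C$. Specialize to a sufficiently rich test module, e.g.\ $C = Shv(G/T)$ with the left $G$-action, so that $C^P \simeq Shv(P\backslash G/T)$ decomposes (by Bruhat) into components indexed by $W_P\backslash W$, and similarly for $C^Q$. An analysis of the kernel $j^Q_* e \in Shv(Q\backslash G/P)$ stratified by $W_P\backslash W / W_Q$ shows that the induced functor on Bruhat components factors in such a way that it can be an equivalence of DG-categories only if there is a $W$-equivariant bijection $W_P\backslash W \iso W_Q\backslash W$ preserving the Bruhat length filtration. A rigidity argument for parabolic subgroups containing $T$ (every such parabolic is determined by its Weyl group $W_P \subset W$) then forces $W_P = W_Q$, hence $L_P = L_Q$.

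The main obstacle will be the ``only if'' direction: the ``if'' direction is essentially a careful bookkeeping of cohomological shifts in the reduction to the classical Borel case, but the ``only if'' requires extracting the discrete algebraic equality $L_P = L_Q$ from the equivalence of DG-categories, which needs a sharp combinatorial analysis of the kernel $j^Q_* e$ on the Bruhat stratification of $Q\backslash G/P$ together with a careful choice of universal test category $C$ sensitive enough to distinguish non-associated pairs.
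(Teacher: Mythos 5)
Your ``if'' direction is essentially the paper's argument: reduce to the Borel case by showing that $^Q\Av^P_*$ and $^P\Av^Q_!$ commute with the forgetful functors to $C^B$, $C^{B'}$, and then use conservativity of $\oblv$ together with the standard invertibility of $j_{w,*}$ in the Iwahori--Hecke setting. The paper proves the commutativity by an explicit identity of kernels in $Shv(B'\backslash G/P)$, whose geometric content is that $B'B/B\to QP/P$ is an affine fibration (this is where $L_P=L_Q$ enters); your ``proper base change along $B'\backslash G/B\to Q\backslash G/P$'' is a sketch of the same computation. No objection there beyond the bookkeeping you yourself flag.

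The ``only if'' direction, however, has a genuine gap, and it is exactly the half you identified as the hard one. Two problems. First, an equivalence of $\DG$-categories between $Shv(P\backslash G/T)$ and $Shv(Q\backslash G/T)$ does not induce a bijection of Bruhat strata: these categories are glued from their strata, not decomposed as direct sums, and the functor $^Q\Av^P_*$ is not $t$-exact, so there is no mechanism by which ``the induced functor on Bruhat components'' produces the bijection $W_P\backslash W\iso W_Q\backslash W$ you assert. Second, even granting such a bijection preserving the length filtration, it would not yield $W_P=W_Q$: for $G=\SL_3$ and $P$, $Q$ the two maximal standard parabolics, both coset spaces are $\{$lengths $0,1,2\}$ (both flag varieties are $\PP^2$), yet $L_P\ne L_Q$. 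The paper's remark after the theorem makes the same point at the level of functions: the indicator function of an open double coset $PwP$ can be invertible in the parabolic Hecke algebra even when $P$ and $wPw^{-1}$ are not associated, so no purely combinatorial count of cosets can prove this direction.

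The missing idea is a cohomological one. Assuming the adjunction is an equivalence, the composition of the two kernels is $i_!\omega_{Q\backslash Q/Q}$; computing the $!$-restriction of that composition to the unit coset gives, after base change, $\RG(QP/P,e)\,\iso\, e[2\dim P-2\dim Q]$. This forces $\dim P=\dim Q$ and the homological contractibility of $QP/P\simeq Q/(Q\cap P)$; by symmetry $P/(P\cap Q)$ is contractible as well. Since $Q/(Q\cap P)$ deformation retracts onto the partial flag variety $L_Q/(L_Q\cap P)$, contractibility is equivalent to $L_Q\subset L_P$ (Lemma~\ref{Lm_B.1.5}), and the two inclusions give $L_P=L_Q$. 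In short: the obstruction to invertibility is the cohomology of a partial flag variety of the Levi, not the combinatorics of the double coset stratification, and your proposed test category and stratification analysis cannot detect it.
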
 

\begin{Rem}
We note that the answer given by Theorem~\ref{Thm_B.1.2} differs from its function-theoretic counterpart. For parabolic Hecke algebras it is true that if $P$ and $Q$ are associated then  the indicator function $T_{PQ}$ of $TP\subset G$ is invertible. This follows by taking the trace of Frobenius. 

 However, typically there are more invertible elements. For example, consider $G=GL(V)$ for a finite-dimensional vector space $V$ with $\dim V\ge 3$. Let $P\subset G$ be the parabolic preserving a line $L\subset V$, so $G/P\,\iso\, \PP(V)$. There are two $P$-orbits on $\PP(V)$, namely $\{L\}$ and its complement. Let $w\in W$ such that $PwP/P\subset G/P$ is open. While $P$ and $wPw^{-1}$ are not associated, the indicator  function of the double coset $PwP$ is invertible. The parabolic Hecke algebra here is the usual Hecke algebra for $\GL_2$ with parameter $q+q^2+\ldots+q^{\dim \PP(V)}$ (if we work over a finite field of $q$ elements).
\end{Rem}
 
\begin{Lm} 
\label{Lm_B.1.5}
The parabolics $P$ and $Q$ are associated if and only if both $P/(P\cap Q)$ and $Q/(P\cap Q)$ are homologically contractible.
\end{Lm}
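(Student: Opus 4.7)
My plan is to prove both directions by analyzing the structure of $P \cap Q$ via the Levi decomposition of $P$, and then reducing the question of contractibility to one about partial flag varieties of $L_P$ (and symmetrically $L_Q$).

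First I will establish the key structural fact that for parabolics $P, Q \supset T$, one has $L_{P\cap Q} = L_P \cap L_Q$, and the intersection admits a decomposition
$$P \cap Q = (L_P \cap L_Q) \cdot (L_P \cap U(Q)) \cdot (U(P) \cap L_Q) \cdot (U(P) \cap U(Q)),$$
so in particular $L_P \cap Q = (L_P \cap L_Q) \ltimes (L_P \cap U(Q))$ is a parabolic subgroup of $L_P$ with Levi $L_P \cap L_Q$. This is standard and I will cite or sketch it briefly.

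Next, using the Levi decomposition $P = L_P \ltimes U(P)$, I will show that the quotient map $P \to L_P$ induces a fibration
$$U(P)/(U(P)\cap Q) \;\longrightarrow\; P/(P \cap Q) \;\longrightarrow\; L_P/(L_P \cap Q).$$
The fibre is a quotient of one unipotent group by a closed subgroup, hence an affine space; so $P/(P\cap Q)$ is homologically contractible if and only if its base $L_P/(L_P \cap Q)$ is homologically contractible. Now $L_P/(L_P \cap Q)$ is a partial flag variety of the reductive group $L_P$ (proper in particular), and such a variety is homologically contractible iff it is a point, iff $L_P \cap Q = L_P$, iff $L_P \subset Q$. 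Since $L_P$ is a reductive subgroup of the parabolic $Q$ containing $T$, and Levis of $Q$ through $T$ are unique, $L_P \subset Q$ is equivalent to $L_P \subset L_Q$.

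Running the symmetric argument for $Q/(P \cap Q)$, I get that it is homologically contractible iff $L_Q \subset L_P$. Combining the two: both quotients are homologically contractible iff $L_P = L_Q$, i.e.\ iff $P$ and $Q$ are associated. Conversely, when $L_P = L_Q$, each base $L_P/(L_P \cap Q)$ is a point, and the total spaces reduce to $U(P)/(U(P)\cap U(Q))$ and $U(Q)/(U(P)\cap U(Q))$, which are affine spaces and hence homologically contractible.

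The main obstacle will be arguing cleanly that the fibration $P/(P\cap Q) \to L_P/(L_P \cap Q)$ actually exists and is Zariski-locally trivial with affine-space fibre; this rests on the fact that $P \cap Q$ decomposes compatibly with the Levi decomposition of $P$, which is where the structural lemma above is used. Once that is in place, the rest is routine from the well-known non-contractibility of proper nontrivial partial flag varieties of reductive groups.
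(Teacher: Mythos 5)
Your proof is correct and follows essentially the same route as the paper's: reduce $P/(P\cap Q)$ to the partial flag variety $L_P/(L_P\cap Q)$ of $L_P$ (the paper phrases this as a deformation retraction, you make it precise as a fibration with affine-space fibres), observe that a partial flag variety of a reductive group is homologically contractible iff it is a point, and symmetrize. The only real difference is your more careful justification via the standard decomposition of $P\cap Q$, where the paper simply invokes that $L_P\cap Q$ is a parabolic of $L_P$ by intersecting with a Borel of $Q$.
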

\begin{proof}
The only if direction is obvious.

Assume both $P/(P\cap Q)$ and $Q/(P\cap Q)$ are contractible. Note that $P/(P\cap Q)$ deformation retracts onto $L_P/L_P\cap Q$. Further, $L_P\cap Q$ is a parabolic of $L_P$. Indeed, if $B'\subset Q$ is a Borel subgroup containing $T$ then $L_P\cap B'$ is a Borel subgroup of $L_P$. So, $L_P/(L_P\cap Q)$ is a partial flag variety of $L_P$. In particular, it is homologically contractible iff $L_P\subset Q$, that is, $L_P\subset L_Q$.
Interchanging the roles of $P$ and $Q$ we get also $L_Q\subset L_P$.
\end{proof}

\begin{proof}[Proof of Theorem~\ref{Thm_B.1.2}]
{\bf Step 1} Assume $P$ and $Q$ are associated. Pick Borel subgroups $B\subset P$, $B'\subset Q$ containing $T$. Let us show that the diagram canonically commutes
$$
\begin{array}{ccc}
C^P & \toup{^Q\Av^P_*} & C^Q\\
\downarrow\lefteqn{\scriptstyle \oblv} && \downarrow\lefteqn{\scriptstyle\oblv}\\
C^B & \toup{^{B'}\Av^B_*} & C^{B'}
\end{array}
$$

 Let 
$$
j: Q\backslash QP/P\to Q\backslash G/P, \;\; j': B'\backslash B'B/B\to B'\backslash G/B
$$ 
and 
$$
\pi: G/B\to G/P, \;\;\pi': G/B'\to G/Q
$$ 
be the natural maps. By Section~\ref{Sect_A.7}, it suffices to show that the diagram commutes
$$
\begin{array}{ccc}
Shv(G/Q) & \toup{\_\ast j_*e_{Q\backslash QP/P}[d]} & Shv(G/P)\\
\uparrow\lefteqn{\scriptstyle  \pi'_*}&& \uparrow\lefteqn{\scriptstyle  \pi_*}\\
Shv(G/B') & \toup{\_\ast j'_*e_{B'\backslash B'B/B}} & Shv(G/B)
\end{array}
$$
for $d=2\dim B'-2\dim Q$. Consider the closed immersions 
$$
s: B\backslash P/P\to B\backslash G/P,\;\; s': B'\backslash Q/Q\to B'\backslash G/Q
$$ 
The functor $\pi_*$ is $\_\ast s_!e_{B\backslash P/P}[-2\dim B]$. The functor $\pi'_*$ is $\_\ast s'_!e_{B'\backslash Q/Q}[-2\dim B']$. 
So, we must establish an isomorphism
\begin{equation}
\label{iso_to_prove_for_ThmB.1.3}
s'_!e_{B'\backslash Q/Q}\ast j_*e_{Q\backslash QP/P}[-2\dim Q]\,\iso\,  j'_*e_{B'\backslash B'B/B}\ast s_!e_{B\backslash P/P}[-2\dim B]
\end{equation}
in $Shv(B'\backslash G/P)$. Let 
$$
\bar j: B'\backslash QP/P\to B'\backslash G/P
$$ 
be the natural map. By base change, the LHS of (\ref{iso_to_prove_for_ThmB.1.3}) identifies with $\bar j_*e$. Let 
$$
m: B'\backslash B'B\times^B P/P\to B'\backslash QP/P
$$ 
be the map induced by the product map $B'B\times^B P\to Q\times P$. The RHS of (\ref{iso_to_prove_for_ThmB.1.3}) identifies by base change with $\bar j_* m_*e$. In fact, $m_*e\,\iso\, e$. Indeed, consider the diagram
$$
B'B/B\to B'P/P\to QP/P
$$
In this diagram the second map is an isomorphism, because $B'\cap L_P\subset L_Q$, and the first map identifies with the affine fibration $U(B')/U(B')\cap U(B)\to U(B')/U(B')\cap U(P)$. Here $U(B), U(B'), U(P)$ denotes the unipotent radical of the corresponding group. Our claim follows.

 A dual argument shows that the diagram canonically commutes
$$
\begin{array}{ccc}
C^Q & \toup{^P\Av^Q_!} & C^P\\
\downarrow\lefteqn{\scriptstyle \oblv} && \downarrow\lefteqn{\scriptstyle\oblv}\\
C^{B'} & \toup{^B\Av^{B'}_!} & C^{B}
\end{array}
$$
Since the functors $\oblv: C^P\to C^B$ and $\oblv: C^{Q}\to C^{B'}$ are conservative, our claim follows from the fact that the adjoint functors 
$$
^{B'}\Av^B_*: C^B\leftrightarrows C^{B'}: {^B\Av^{B'}_!}
$$
are equivalences, which is standard.

\medskip\noindent
{\bf Step 2} Assume $^Q\Av^P_*$, $^P\Av^Q_!$ are equivalences. Let $\tilde j: P\backslash PQ/Q\hook{} P\backslash G/Q$ be the natural map. We have
$$
j_*e_{Q\backslash QP/P}[-2\dim Q]\astP \tilde j_! e_{P\backslash PQ/Q}
[-2\dim P+2\dim(Q/(P\cap Q))]\,\iso\,i_!\omega_{Q\backslash Q/Q},
$$
where $i: Q\backslash Q/Q\hook{} Q\backslash G/Q$ is the closed immersion. 
Let $\inv: Q\backslash QP/P\,\iso\, P\backslash PQ/Q$ be the inversion. We have the cartesian square
$$
\begin{array}{ccc}
Q\backslash (G\times^P G)/Q & \toup{m} & Q\backslash G/Q\\
\uparrow{\lefteqn{\scriptstyle\tilde i}} && \uparrow{\lefteqn{\scriptstyle i}}\\
Q\backslash G/P & \toup{\tilde m} & Q\backslash Q/Q
\end{array}
$$
So, 
\begin{multline*}
i^!(j_*e_{Q\backslash QP/P}\astP \tilde j_! e_{P\backslash PQ/Q})\,\iso\, \tilde m_*(j_*e_{Q\backslash QP/P}\otimes^! \inv^!(\tilde j_! e_{P\backslash PQ/Q})))\,\iso\\  \tilde m_*(j_*e_{Q\backslash QP/P}\otimes^! j_!e_{Q\backslash QP/P})\,\iso\,  \tilde m_*j_*e_{Q\backslash QP/P}[2\dim(Q\cap P)]\\ \iso\,  \omega_{Q\backslash Q/Q}[2\dim P+2\dim(P\cap Q)]
\end{multline*}
For the map $\eta: \Spec k\to Q\backslash Q/Q$ applying $\eta^!$
this gives
$$
e[2\dim P-2\dim Q]\,\iso\,\RG(QP/P, e)
$$
Since $\H^0(QP/P, e)\,\iso\, e$, this shows that $\dim P=\dim Q$, and $QP/P$ is homologically contractible. 

 Reversing the roles of $P$ and $Q$, one similarly shows that $PQ/P$ is homologically contractible. Our claim follows now from Lemma~\ref{Lm_B.1.5}.
\end{proof}

\begin{Rem} Our proof of Theorem~\ref{Thm_B.1.2} also shows that if $P$ and $Q$ are associated then $\dim P=\dim Q$.
\end{Rem}

\section{Corrections for \cite{Gai19SI}}
\label{Section_corrections_for_Gai19SI}

\sssec{} The paper \select{D. Gaitsgory, The semi-infinite intersection cohomology sheaf, Adv. in Math., Volume 327 (2018), 789 - 868} has been corrected by the author after its publication, the latest corrected version is \cite{Gai19SI} the arxiv version 6 dating October 31 (2021). In this appendix, we collect for the convenience of the reader what may be  some further errata.\footnote{We thank Dennis Gaitsgory for related correspondence.}  

\sssec{} In the 2nd displayed formula in Section 2.8.3 in the shifts both times one should remove the minus. The correct shift is $[\<\lambda, 2\check{\rho}\>]$.

\sssec{} In Sect. 3.7.2 line 5 one should replace $-\<\mu, 2\check{\rho}\>$ by
$-\<\mu, \check{\rho}\>$.

\sssec{} In the displayed square in Sect. 3.7.3 the right vertical arrow should go up and not down. 

\sssec{} In 3.4.7 it is claimed that  $\Maps(., .)$ identifies with $e$, here $\Maps$ in the internal hom in $\Vect$. This is wrong as stated. Namely, the corresponding $\Maps$ is a complex in $\Vect$ placed in degrees $\ge 0$, and its 0-th cohomology is indeed $e$. So, the corresponding space $\Map\in\Spc$ (image under Dold-Kan) is indeed discrete as desired.

\sssec{}  in Section 3.4.2: the datum of a map (3.3) is equivalent to a datum of a vector in the fibre as is claimed, but one should add: in 0-th cohomological degree of the fibre.

 It is claimed there that the fibre in question identifies with $e$. This is wrong. Only its 0-th cohomology identifies with $e$.

\sssec{} In Section 3.4.3 the first claim that
$$
\act^{-1}(\pi(t^{-\lambda}))\cap (\Bunb_N\ttimes \ov{\Gr}_G^{-\lambda})
$$
coincides with its open subset (3.4) is wrong and not needed. It is not true that this preimage is contained in a single $N(F)$-orbit. One actually needs the 0-th cohomology of the correspinding !-fibre, which is indeed identifies with $e$. 

\sssec{}  In Prop. 3.3.4 it is claimed that the isomorphism is canonical. In fact, it is not. The canonical answer is given in terms of the universal enveloping algebra $U(\check{n})$, and is given in Proposition 4.4 of Braverman, Gaitsgory, Deformations of local systems and Eis series. Similarly, in Corollary 3.3.5 the isomorphism is not canonical.

\sssec{} For Section 3.9.3: in the diagram (3.9) the right vertical map $\gq$ does not exist. (The proof can be corrected as in the present paper). 

\sssec{} In 4.3.1 line 6 replace comonad by monad. 

\sssec{} In Sect. 5.2.6 in the middle replace $t^{\lambda}\cF''$ by $t^{-\lambda}\cF''$.

\sssec{} For Sect. 5.3.3. It is claimed that for $\lambda$ dominant and regular
 $$
 \ell(t^{-w_0(\lambda)})=\ell(w_0)+\ell(t^{-\lambda}w_0),
 $$
where $\ell(.)$ is the length function on the affine extended Weyl group. This is wrong as stated. The formula for the given on p. 93 of the paper \select{Neil Chriss and Kamal Khuri-Makdisi, On the Iwahori-Hecke Algebra of a p-adic Group, IMRN 1998, No. 2}. According to that formula we have instead
$$
\ell(t^{-\lambda}w_0)=\ell(w_0)+\ell(t^{-w_0(\lambda)})
$$
So, the proof of (\cite{Gai19SI}, Theorem 5.3.1) similarly needs to be corrected.

\end{document}